\definecolor{MyBlue}{rgb}{0,0,1}
\definecolor{MyRed}{rgb}{1,0,0}
\definecolor{MyGreen}{rgb}{0,1,0}
\definecolor{MyIndigo}{rgb}{0.7254,0,1}
\definecolor{MyOrange}{rgb}{1,0.4431,0}
\DeclareFontFamily{U}{mathx}{\hyphenchar\font45}
\DeclareFontShape{U}{mathx}{m}{n}{
      <5> <6> <7> <8> <9> <10>
      <10.95> <12> <14.4> <17.28> <20.74> <24.88>
      mathx10
      }{}
\DeclareSymbolFont{mathx}{U}{mathx}{m}{n}
\DeclareMathAccent{\widecheck}{0}{mathx}{"71}
\newtheorem{theorem}{Theorem}[section]
\newtheorem{lemma}[theorem]{Lemma}
\newtheorem{corollary}[theorem]{Corollary}
\newtheorem{conjecture}[theorem]{Conjecture}
\newtheorem{question}[theorem]{Question}
\newtheorem{problem}[theorem]{Problem}
\theoremstyle{definition}
\newtheorem{definition}[theorem]{Definition}
\newenvironment{remark}
  {\pushQED{\qed}\remarkx}
  {\popQED\endremarkx}
\newenvironment{example}
  {\pushQED{\qed}\examplex}
  {\popQED\endexamplex}
\DeclareMathOperator{\InsCl}{\overline{Ins}}
\DeclareMathOperator{\Var}{Var}
\DeclareMathOperator{\Av}{Av}
\DeclareMathOperator{\SW}{SW}
\DeclareMathOperator{\con}{con}
\DeclareMathOperator{\VHC}{VHC}
\DeclareMathOperator{\NC}{NC}
\DeclareMathOperator{\skel}{skel}
\DeclareMathOperator{\des}{des}
\DeclareMathOperator{\peak}{peak}
\DeclareMathOperator{\black}{black}
\DeclareMathOperator{\hook}{hook}
\DeclareMathOperator{\Comp}{Comp}
\DeclareMathOperator{\tl}{tl}
\DeclareMathOperator*{\Res}{Res}
\DeclareMathOperator{\EDP}{EDP}
\DeclareMathOperator{\ALT}{ALT}
\newcommand{\dfn}[1]{\textcolor{blue}{\emph{#1}}}
\begin{document}
\title{Troupes, Cumulants, and Stack-Sorting}
\author{Colin Defant}
\address{Princeton University \\ Department of Mathematics \\ Princeton, NJ 08544}
\email{cdefant@princeton.edu}

\begin{abstract}
In several cases, a sequence of free cumulants that counts certain binary plane trees corresponds to a sequence of classical cumulants that counts the decreasing versions of the same trees. Using two new operations on colored binary plane trees that we call \emph{insertion} and \emph{decomposition}, we prove that this surprising phenomenon holds for families of trees that we call \emph{troupes}. We give a simple characterization of troupes, showing that they are plentiful. Troupes provide a broad framework for generalizing several of the results that are known about West's stack-sorting map $s$. 
Indeed, we give new proofs of some of the main theorems underlying techniques that have been developed recently for understanding $s$; these new proofs are far more conceptual than the original ones, explain how the objects called \emph{valid hook configurations} arise very naturally, and generalize to the context of troupes. To illustrate these general techniques, we enumerate $2$-stack-sortable and $3$-stack-sortable alternating permutations of odd length and $2$-stack-sortable and $3$-stack-sortable permutations whose descents are all peaks. 

The unexpected connection between troupes and cumulants provides a powerful new tool for analyzing the stack-sorting map that hinges on free probability theory. We give numerous applications of this method. For example, we show that if $\sigma\in S_{n-1}$ is chosen uniformly at random and $\des$ denotes the descent statistic, then the expected value of $\des(s(\sigma))+1$ is \[\left(3-\sum_{j=0}^n\frac{1}{j!}\right)n.\] Furthermore, the variance of $\des(s(\sigma))+1$ is asymptotically $(2+2e-e^2)n$. We obtain similar results concerning the expected number of descents of postorder readings of decreasing colored binary plane trees of various types. We also obtain improved estimates for $|s(S_n)|$ and an improved lower bound for the degree of noninvertibility of $s:S_n\to S_n$. The combinatorics of valid hook configurations allows us to give two novel formulas that convert from free to classical (univariate) cumulants. The first formula is given by a sum over noncrossing partitions, and the second is given by a sum over $231$-avoiding valid hook configurations. We pose several conjectures and open problems. 
\end{abstract}

\maketitle

\bigskip

\section{Introduction}\label{Sec:Intro} 

Given a sequence $(m_n)_{n\geq 1}$ of elements of a field $\mathbb K$, called a \emph{moment sequence}, one can consider the corresponding sequence $(c_n)_{n\geq 1}$ of \emph{classical cumulants}, as well as the corresponding sequence $(\kappa_n)_{n\geq 1}$ of \emph{free cumulants}. Cumulants are the fundamental combinatorial tools used in noncommutative probability theory. Each of these three sequences determines the other two via summation formulas involving partition lattices and noncrossing partition lattices.   

If $(\kappa_n)_{n\geq 1}$ is a sequence of free cumulants defined by $\kappa_n=-C_{n-1}$, where $C_r=\frac{1}{r+1}\binom{2r}{r}$ is the $r^\text{th}$ Catalan number, then the corresponding sequence of classical cumulants $(c_n)_{n\geq 1}$ is given by $c_n=-(n-1)!$. Indeed, this is equivalent to the fact that the sequences $((-1)^{n-1}C_{n-1})_{n\geq 1}$ and $((-1)^{n-1}(n-1)!)_{n\geq 1}$ give the M\"obius invariants of noncrossing partition lattices and partition lattices, respectively. On the other hand, $C_{n-1}$ is the number of binary plane trees with $n-1$ vertices, while $(n-1)!$ is the number of decreasing binary plane trees with $n-1$ vertices. This might seem like a mere coincidence; one of the primary goals of this paper is to show that it is not. 

We will give a vast generalization of the above observation by developing a theory of  \emph{troupes}. These are families of colored binary plane trees that are closed under two new operations that we call \emph{insertion} and \emph{decomposition}, which resemble a product and a coproduct on trees. We will see that many classical families of rooted plane trees found in the literature are troupes. In fact, we will give a characterization of troupes, which will show that there are many of them. More precisely, we will prove that every troupe is uniquely determined by its \emph{branch generators}, which play the role of ``indecomposable'' or ``prime'' elements. We also define \emph{insertion-additive} tree statistics, some natural examples of which are (essentially) the statistic that counts right edges and the statistic that counts vertices with $2$ children. 

We will prove that if ${\bf T}$ is a troupe, then a sequence of free cumulants that counts trees in ${\bf T}$ according to some insertion-additive tree statistics corresponds to a sequence of classical cumulants that counts decreasing versions of the trees in ${\bf T}$ according to the same statistics. Some very specific manifestations of this surprising phenomenon are as follows. Free cumulants given by Narayana polynomials correspond to classical cumulants given by Eulerian polynomials. Free cumulants given by aerated Catalan numbers correspond to classical cumulants given by tangent numbers. Free cumulants given by Motzkin polynomials, which are the $\gamma$-polynomials of associahedra, correspond to classical cumulants given by $\gamma$-polynomials of permutohedra. Free cumulants given by large Schr\"oder numbers correspond to classical cumulants that count cyclically ordered set partitions. 

Our proof requires three main ingredients: the Refined Tree Decomposition Lemma, the Refined Tree Fertility Formula, and the VHC Cumulant Formula. The Refined Tree Decomposition Lemma generalizes the Refined Decomposition Lemma that the author has used to answer several questions about West's stack-sorting map in \cite{DefantCounting, DefantEnumeration, DefantMonotonicity}. The proof given here is new and is more conceptual than the original proof; it also generalizes to the setting of troupes. From the Refined Tree Decomposition Lemma, we will derive the Refined Tree Fertility Formula. This is a generalization of the Refined Fertility Formula that the author has used to answer several other questions about West's stack-sorting map \cite{DefantCatalan, DefantClass, DefantFertility, DefantFertilityWilf, DefantEngenMiller, DefantPreimages}. Again, our proof is new, is far more conceptual than the original proof, and generalizes to troupes. Our new proof also explains how the combinatorial objects called \emph{valid hook configurations}, which appear in the formula, arise naturally. Special cases of the Refined Tree Fertility Formula also imply new results about the stack-sorting map. For example, we will obtain a formula for the number of alternating permutations in $s^{-1}(\pi)$ when $\pi$ is an arbitrary permutation of odd length and $s$ denotes the stack-sorting map. We will also obtain a formula for the number of permutations in $s^{-1}(\pi)$ whose descents are all peaks when $\pi$ is an arbitrary permutation. The VHC Cumulant Formula is a result that 
converts from free cumulants to the corresponding classical cumulants via a sum over valid hook configurations. 

\medskip

The combination of the Refined Tree Fertility Formula and the VHC Cumulant Formula provides a new method for analyzing the stack-sorting map, which we will illustrate with several applications. One application is a result that was originally proven in \cite{DefantEngenMiller}, which states that \emph{uniquely sorted} permutations (i.e., permutations with exactly one preimage under $s$) are enumerated by the absolute values of the classical cumulants of the standard semicircular law, known as Lassalle numbers. This is interesting because it was once an open problem to find a combinatorial interpretation of the Lassalle numbers, and uniquely sorted permutations provide arguably the most natural such interpretation.   

For another application, we consider the problem of computing the expected value $\mathbb E(D_n)$, where $D_n=\des(s(\sigma))+1$ and $\sigma$ is chosen uniformly at random from $S_{n-1}$. Here, $\des$ denotes the permutation statistic that counts descents. One can view $\des(s(\sigma))+1$ as a measure of how far $s(\sigma)$ is from the identity permutation $123\cdots (n-1)$. It is not at all clear how one could use standard methods to prove that the limit $\lim\limits_{n\to\infty}\dfrac{\mathbb E(D_n)}{n}$ even exists. Using free probability, we will not only show that this limit exists, but will see that it is equal to $3-e$. In fact, this will follow from the shockingly simple exact formula 
\[\mathbb E(D_n)=\left(3-\sum_{j=0}^n\frac{1}{j!}\right)n.\] 
Moreover, we will provide an algorithm for computing the generating functions of the moments of the random variables $D_n$. As a consequence, we will see that the variance of $D_n$ is asymptotically $(2+2e-e^2)n$. 

Using elementary methods, we will also prove that the probability that $1$ is a descent of $s(\sigma)$ is asymptotically $3-e$. The same does not appear to be true for the probability that $i$ is a descent in $s(\sigma)$ when $i\geq 2$ is fixed. Thus, there seems to be a mysterious connection between the first index and a random index when we examine the stack-sorting image of a random permutation. 

Understanding the stack-sorting image of a random permutation is equivalent to understanding the postorder reading $\mathcal P(\mathcal T)$ of a random decreasing binary plane tree $\mathcal T$. The methods that we use to understand the random variables $D_n$ generalize immediately, allowing us to study descents in postorder readings of random trees taken from other troupes. For example, we will show that if $n$ is even and $\mathcal T$ is chosen uniformly at random from the set of decreasing full binary plane trees with $n-1$ vertices and label set $\{1,\ldots,n-1\}$, then 
\[\mathbb E(\des(\mathcal P(\mathcal T))+1)=\left(1-\frac{E_n}{nE_{n-1}}\right)n\sim\left(1-\frac{2}{\pi}\right)n,\] where $E_n$ denotes the $n^\text{th}$ Euler number. This can be rephrased in terms of stack-sorting because $\mathbb E(\des(\mathcal P(\mathcal T))+1)$ is also the expected value of $\des(s(\sigma))+1$ when $\sigma$ is chosen uniformly at random from the set of alternating permutations in $S_{n-1}$. We will also show that if $\mathcal T$ is chosen uniformly at random from the set of decreasing Motzkin trees with $n-1$ vertices and label set $\{1,\ldots,n-1\}$ ($n$ could be even or odd), then 
\[\mathbb E(\des(\mathcal P(\mathcal T))+1)\sim\left(1-\dfrac{3\sqrt 3}{2\pi}\left(e^{\frac{\pi}{3\sqrt 3}}-1\right)\right)n.\] This result can also be rephrased in terms of stack-sorting because $\mathbb E(\des(\mathcal P(\mathcal T))+1)$ is also the expected value of $\des(s(\sigma))+1$ when $\sigma$ is chosen uniformly at random from the set of permutations in $S_{n-1}$ whose descents are all peaks. As a final example of these methods, we will show that if $\mathcal T$ is chosen uniformly at random from the set of decreasing Schr\"oder $2$-colored binary trees (defined in Section~\ref{SecTreesEtc}) with $n-1$ vertices and label set $\{1,\ldots,n-1\}$, then 
\[\mathbb E(\des(\mathcal P(\mathcal T))+1)\sim\left(1-\frac{1}{2\log 2}\right)n.\] 

We will also consider $|s(S_n)|$, the number of \emph{sorted} permutations in $S_n$. Bousquet-M\'elou \cite{Bousquet} found a recurrence for these numbers, but their asymptotic behavior is not known. Using Bousquet-M\'elou's recurrence and a strengthening of Fekete's lemma due to de Bruijn and Erd\H{o}s, we will prove that the limit $\displaystyle\lim_{n\to\infty}\left(\frac{|s(S_n)|}{n!}\right)^{1/n}$ exists and is greater than $0.68631$. Free probability theory will allow us to show that the number of valid hook configurations of permutations in $S_n$ is asymptotically $n!/c^{n+1}$, where $c\approx 1.32874$ is the smallest positive real root of $\displaystyle 1-z\,{}_1\hspace{-.03cm}F_2\left(\frac{1}{2};\frac{3}{2},2;-z^2\right)$ (where $\,{}_1\hspace{-.03cm}F_2$ denotes a generalized hypergeometric function). Every sorted permutation has a valid hook configuration, so this result will imply that $\displaystyle\lim_{n\to\infty}\left(\frac{|s(S_n)|}{n!}\right)^{1/n}\leq 1/c\approx 0.75260$. 

As a final application, we consider the \emph{degree of noninvertibility} of the stack-sorting map. Given a finite set $X$ and a function $f:X\to X$, Propp and the author \cite{DefantPropp} defined \[\deg(f:X\to X)=\frac{1}{|X|}\sum_{x\in X}|f^{-1}(x)|^2\] as a measure of how far the function $f$ is from being invertible. They showed that the limit $\displaystyle\lim_{n\to\infty}\deg(s:S_n\to S_n)^{1/n}$ exists and lies in the interval $[1.12462,4]$, and they conjectured that it actually lies in the interval $(1.68,1.73)$. Free probability will allow us to obtain a lower bound of $1.62924$. 

\medskip

Associated to every valid hook configuration $\mathcal H$ are two set partitions, denoted $\vert\mathcal H$ and $\underline{\mathcal H}$. The first partition is connected, while the second is noncrossing. The connected partitions $\vert\mathcal H$ play a fundamental role in the VHC Cumulant Formula and its proof. By considering the noncrossing partitions $\underline{\mathcal H}$, we will obtain two new combinatorial formulas that express (univariate) classical cumulants $c_n$ in terms of the corresponding free cumulants $\kappa_n$. The first formula states that 
\begin{equation}\label{Eq59}
-c_n=\sum_{\eta\in\NC(n)}|\mathcal L(K(\eta))|(-\kappa_\bullet)_{\eta},
\end{equation} where $\mathcal L(K(\eta))$ can be seen as the set of linear extensions of a certain poset associated to the Kreweras complement $K(\eta)$ of the noncrossing partition $\eta$. The second formula states that 
\begin{equation}\label{Eq60}
-c_n=\sum_{\mathcal H\in\VHC(\Av_{n-1}(231))}\mathscr T_{\mathcal H}(-\kappa_\bullet)_{\underline{\mathcal H}}
\end{equation} where $\VHC(\Av_{n-1}(231))$ is the set of valid hook configurations of $231$-avoiding permutations in $S_{n-1}$ and $\mathscr T_{\mathcal H}$ is the number of linear extensions of a rooted tree poset associated to $\mathcal H$, which can be computed using the hook length formula for rooted tree posets. Let us remark that there are other notions of cumulants in noncommutative probability theory; the task of finding combinatorial formulas that convert between different types of cumulants was undertaken in \cite{Arizmendi, Belinschi, Celestino, Ebrahimi, Lehner, Josuat}. 

\medskip

One of the central notions in the study of the stack-sorting map is that of a \emph{$t$-stack-sortable permutation}, which is a permutation $\pi$ such that $s^t(\pi)$ is increasing ($s^t$ denotes the $t$-fold iterate of $s$). The enumeration of $2$-stack-sortable permutations in particular has received a huge amount of attention \cite{BonaSimplicial, Bousquet98, Branden3, Cori, DefantCounting, Dulucq, Dulucq2, Egge, Fang, Goulden}. The longstanding open problem of finding a polynomial-time algorithm for enumerating $3$-stack-sortable permutations was only solved very recently in \cite{DefantCounting}. The Refined Tree Decomposition Lemma and the Refined Tree Fertility Formula allow one to straightforwardly generalize many of the results that the author has proven about the stack-sorting map to the more general context of troupes. To illustrate this, we will show how the Refined Tree Decomposition Lemma gives a general method for enumerating $2$-stack-sortable permutations belonging to sets of permutations that are associated with troupes. For two very concrete applications, we enumerate $2$-stack-sortable alternating permutations of odd length and $2$-stack-sortable permutations whose descents are all peaks. We will also prove that the generating function that counts $2$-stack-sortable permutations associated to a troupe is algebraic whenever the generating function counting the trees in the troupe is algebraic. This is a far-reaching generalization of the fact that the generating function counting $2$-stack-sortable permutations is algebraic. Furthermore, we will show that these methods provide recurrences that count $3$-stack-sortable permutations associated to troupes. 

\medskip

As a final result, we prove that the sequence enumerating the trees in a troupe is determined by the sequence enumerating the branch generators of the troupe. In many cases, the latter sequence is much simpler than the former. This yields a new transform on nonnegative integer sequences, which we call the \emph{troupe transform}. 

As evidenced by the articles \cite{Arizmendi, Belinschi, Celestino, Ebrahimi, Lehner, Josuat}, combinatorial formulas that convert from one type of cumulant sequence to another have become popular in recent years and have appealed to a wide audience outside of enumerative combinatorics. Therefore, we believe that our main results concerning cumulant conversion formulas in Corollary~\ref{Cor4}, Theorem~\ref{Thm5}, Corollary~\ref{Cor15}, and Theorem~\ref{Thm25} will be attractive to a broad audience. Moreover, as far as we are aware, this is the first time that such cumulant conversion formulas have found further applications making use of the specific combinatorial objects that are involved. This is because of the surprising fact that valid hook configurations feature prominently in both the VHC Cumulant Formula and the Refined Tree Fertility Formula. Therefore, we believe that enumerative combinatorialists will find our specific applications to postorder readings and the stack-sorting map to be interesting in their own right and that the very existence of such applications will be interesting to a wider group of researchers.  

\subsection{Outline}
In Section~\ref{SecTreesEtc}, we introduce insertion and decomposition, define and characterize troupes, and give necessary background on the stack-sorting map and valid hook configurations. Section~\ref{SecRefinedDecomposition} is devoted to the proof of the Refined Tree Decomposition Lemma. In Section~\ref{Sec:TreeFertility}, we prove the Refined Tree Fertility Formula and detail some applications by choosing specific troupes. Section~\ref{Sec:VHCCumulant} provides necessary background on the combinatorics of free probability theory and states the VHC Cumulant Formula. In Section~\ref{Sec:TroupsAndCumulants}, we prove that a sequence of free cumulants that counts trees in a troupe according to insertion-additive tree statistics corresponds to a sequence of classical cumulants that counts the decreasing versions of the same trees according to the same statistics. We then explain in more detail how this applies to some specific troupes. In Section~\ref{Sec:Applications}, we outline several applications of the Refined Tree Fertility Formula and the VHC Cumulant Formula to the study of the stack-sorting map and, more generally, postorder readings of decreasing colored binary plane trees. Section~\ref{Sec:OtherFormulas} is devoted to proving the new cumulant conversion formulas \eqref{Eq59} and \eqref{Eq60}. Section~\ref{Sec:2-stack} provides a method for enumerating $2$-stack-sortable permutations associated with troupes, explicitly enumerates $2$-stack-sortable alternating permutations of odd length and $2$-stack-sortable permutations whose descents are all peaks, proves the algebraicity of the generating functions that count $2$-stack-sortable permutations associated with troupes counted by algebraic generating functions, and gives a recurrence for counting $3$-stack-sortable permutations associated to troupes. In Section~\ref{Sec:Transform}, we prove that the sequence enumerating the trees in a troupe is determined by the sequence enumerating the branch generators of the troupe, and we use this theorem to define the troupe transform. In Section~\ref{Sec:Conclusion}, we accumulate numerous open problems and conjectures from throughout the article. 

\subsection{Notation and Terminology}
For easy reference, we record some of the notation and terminology that we will use throughout the article. 
\begin{itemize}
\item Let $[n]$ denote the set $\{1,\ldots,n\}$. \item Given elements $a_{i_1,\ldots,i_r}$ of a field $\mathbb K$, we can consider the generating function $A(x_1,\ldots,x_r)=\displaystyle\sum_{i_1,\ldots,i_r}a_{i_1,\ldots,i_r}x_1^{i_1}\cdots x_r^{i_r}$, which is a formal power series in the variables $x_1,\ldots,x_r$. We write $[x_1^{i_1}\cdots x_r^{i_r}]A(x_1,\ldots,x_r)$ for the coefficient $a_{i_1,\ldots,i_r}$ of $x_1^{i_1}\cdots x_r^{i_r}$ in this series. 
\item A \dfn{composition} of a positive integer $b$ into $a$ parts is an $a$-tuple of positive integers that sum to $b$. If $(u_n)_{n\geq 1}$ is a sequence of elements of a field $\mathbb K$ and ${\bf q}=(q_1,\ldots,q_a)$ is a composition, then we write $u_{\bf q}$ for the product $\prod_{t=1}^au_{q_t}$. 
\item A \dfn{set partition} of a finite set $X$ is a set of pairwise-disjoint nonempty subsets of $X$ whose union is $X$. If $(u_n)_{n\geq 1}$ is a sequence of elements of a field $\mathbb K$ and $\rho$ is a set partition, then we let $(u_\bullet)_\rho=\prod_{B\in\rho}u_{|B|}$. 
\item A \dfn{permutation} is an ordering of a finite set of positive integers, which we write in one-line notation. Let $S_n$ denote the set of permutations of $[n]$. A \dfn{descent} of a permutation $\pi=\pi_1\cdots\pi_n$ is an index $i\in[n-1]$ such that $\pi_i>\pi_{i+1}$. A \dfn{peak} of $\pi$ is an index $i\in\{2,\ldots,n-1\}$ such that $\pi_{i-1}<\pi_i>\pi_{i+1}$. Let $\des(\pi)$ and $\peak(\pi)$ denote the number of descents of $\pi$ and the number of peaks of $\pi$, respectively. We say $\pi$ is \dfn{alternating} if its descents are precisely the even elements of $[n-1]$. Let $\ALT$ be the set of alternating permutations. Let $\EDP$ denote the set of permutations in which every descent is a peak. 
\end{itemize}

\section{Trees, Permutations, and Valid Hook Configurations}\label{SecTreesEtc}

\subsection{Troupes}\label{Subsec:Troupes}
A \dfn{rooted plane tree} is a rooted tree in which the children of each vertex are linearly ordered from left to right. Such trees have been studied extensively in combinatorics and computer science \cite{Bender, Bona, BonaTrees, Flajolet, Gu, Hivert, Loday1, Loday2, Postnikov, Stanley}. We restrict our attention to \dfn{binary plane trees}; these are rooted plane trees in which each vertex has at most two children and every child is designated as either a left or a right child (but not both). Let $\mathsf{BPT}$ be the set of binary plane trees. The theory we will develop is quite general if we restrict our attention to the set $\mathsf{BPT}$, but we will obtain even more general results if we allow ourselves to color the vertices of trees. Throughout this article, we fix a finite set ${\bf C}$ of colors. It does not matter too much what the set ${\bf C}$ actually is, but we do want to require that it is finite and contains the colors black and white. A \dfn{colored binary plane tree} is a tree obtained from a binary plane tree by assigning the vertices colors from ${\bf C}$ (i.e., it is a binary plane tree along with a function from the set of vertices of the tree to ${\bf C}$). Let $\mathsf{CBPT}$ denote the set of colored binary plane trees. Given a set ${\bf T}\subseteq\mathsf{CBPT}$, we let ${\bf T}_n$ denote the set of all trees in ${\bf T}$ that have $n$ vertices. We make the convention that binary plane trees are just colored binary plane trees in which all of the vertices are black. Thus, $\mathsf{BPT}\subseteq\mathsf{CBPT}$. Our setup is able to model several different families of trees because we have the freedom to color the vertices in many different ways.

\begin{figure}[ht]
  \begin{center}{\includegraphics[height=1.3cm]{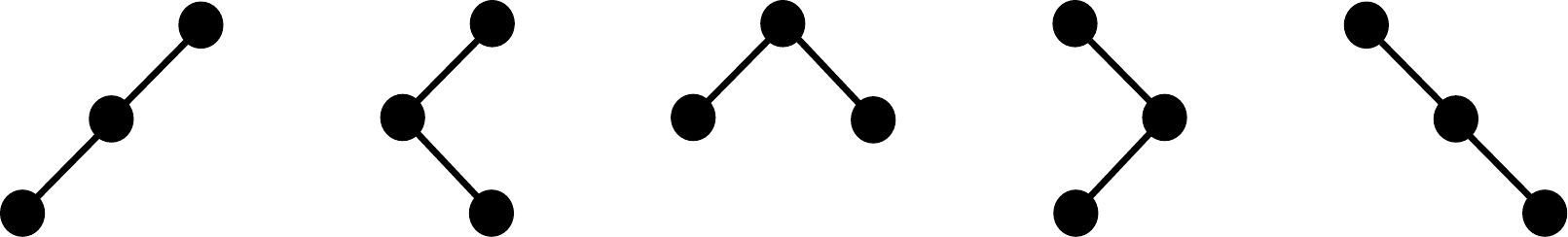}}
  \end{center}
  \caption{The $5$ binary plane trees with $3$ vertices.}\label{Fig2}
\end{figure}

We are going to describe two new operations defined on colored binary plane trees, which we call \emph{insertion} and \emph{decomposition}. These operations are very simple, but they will play a huge role in the remainder of the paper. 

To define insertion, suppose we are given two nonempty colored binary plane trees $T_1$ and $T_2$ along with a specific vertex $v$ in $T_1$. Replace $v$ with two vertices that are connected by a left edge. This produces a new tree $T_1^*$ with one more vertex than $T_1$. We call the lower endpoint of the new left edge $v$, identifying it with the original vertex $v$ and giving it the same color as the original $v$. We denote the upper endpoint of the new left edge by $v^*$, and we color $v^*$ black. For example, if \[T_1=\begin{array}{l}\includegraphics[height=1.3cm]{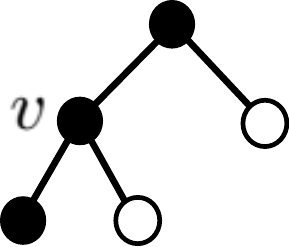}\end{array},\] where $v$ is as indicated, then \[T_1^*=\begin{array}{l}\includegraphics[height=1.9cm]{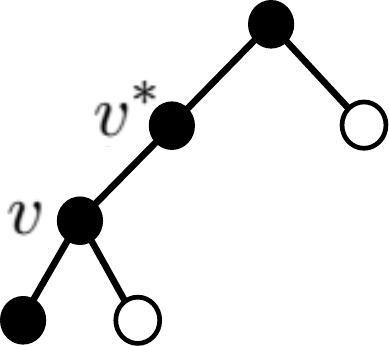}\end{array}.\] The \dfn{insertion} of $T_2$ into $T_1$ at $v$, denoted $\nabla_v(T_1,T_2)$, is the tree formed by attaching $T_2$ as the right subtree of $v^*$ in $T_1^*$. For example, if $T_1$ and $T_1^*$ are as above and \[T_2=\begin{array}{l}\includegraphics[height=1.3cm]{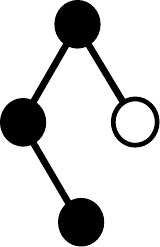}\end{array},\] then \[\nabla_v(T_1,T_2)=\begin{array}{l}\includegraphics[height=2.5cm]{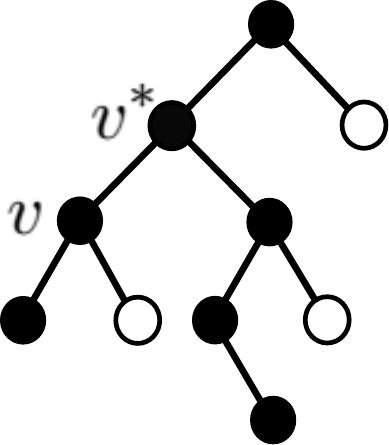}\end{array}.\] 

We can easily reverse the above procedure as follows. Let $T$ be a colored binary plane tree, and suppose $v^*$ is a black vertex in $T$ with $2$ children. Let $v$ be the left child of $v^*$ in $T$, and let $T_2$ be the right subtree of $v^*$ in $T$. Let $T_1^*$ be the tree obtained by deleting $T_2$ from $T$, and let $T_1$ be the tree obtained from $T_1^*$ by contracting the edge connecting $v$ and $v^*$ into a single vertex. We call this contracted vertex $v$, identifying it with the original $v$. We say the pair $(T_1,T_2)$ is the \dfn{decomposition} of $T$ at $v^*$ and write $\Delta_{v^*}(T)=(T_1,T_2)$.  

\begin{definition}\label{Def2}
We say a collection ${\bf T}$ of colored binary plane trees is 
\begin{itemize}
\item \dfn{insertion-closed} if for all nonempty trees $T_1,T_2\in{\bf T}$ and every vertex $v$ of $T_1$, the tree $\nabla_v(T_1,T_2)$ is in ${\bf T}$; 
\item \dfn{decomposition-closed} if for every $T\in{\bf T}$ and every black vertex $v^*$ of $T$ that has $2$ children, the pair $\Delta_{v^*}(T)$ is in ${\bf T}\times{\bf T}$; 
\item \dfn{black-peaked} if for every $T\in{\bf T}$, the vertices with $2$ children in $T$ are all black.
\end{itemize} A \dfn{troupe} is a set of colored binary plane trees that is insertion-closed, decomposition-closed, and black-peaked.\footnote{If we view insertion as analogous to a binary operation with decomposition as its inverse, then troupes are sets of trees that are analogous to groups.}
\end{definition}

Troupes are the sets to which our later theorems will apply. We now show that there are several troupes by giving a simple characterization of them. This characterization will not be needed in the remaining sections of the paper, so a reader primarily interested in cumulants and/or stack-sorting can safely skip to Example~\ref{ExamBinary}. Before proving the characterization, we need a little more terminology and a lemma. 

The \dfn{insertion closure} of a set ${\bf T}\subseteq\mathsf{CBPT}$, denoted $\InsCl({\bf T})$, is the smallest (under containment) insertion-closed subset of $\mathsf{CBPT}$ that contains ${\bf T}$. This is well-defined because the intersection of a collection of insertion-closed sets is insertion-closed. Note that $\InsCl({\bf T})$ is the set of trees obtained by starting with ${\bf T}$ and performing all possible sequences of insertions.    

\begin{lemma}\label{Lem3}
If a set ${\bf T}^{(0)}$ of colored binary plane trees is decomposition-closed and black-peaked, then its insertion closure $\InsCl({\bf T}^{(0)})$ is a troupe.  
\end{lemma}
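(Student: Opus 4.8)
The plan is to verify directly that $\InsCl({\bf T}^{(0)})$ satisfies all three defining properties of a troupe. Insertion-closedness is automatic, since $\InsCl({\bf T}^{(0)})$ is by definition insertion-closed. So the work lies in showing that $\InsCl({\bf T}^{(0)})$ is black-peaked and decomposition-closed, and the natural tool is induction on the number of insertions used to build a tree of $\InsCl({\bf T}^{(0)})$ out of ${\bf T}^{(0)}$.

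First I would handle black-peakedness. The base case is that the trees of ${\bf T}^{(0)}$ are black-peaked by hypothesis. For the inductive step, I would examine what the insertion operation $\nabla_v(T_1,T_2)$ does to the set of vertices with two children. Comparing $\nabla_v(T_1,T_2)$ with $T_1$ and $T_2$: every vertex of $T_2$ keeps its children, and every vertex of $T_1$ other than $v$ keeps its children; the vertex $v$ either had $0$ or $1$ children in $T_1$ (it could not have had $2$ if $T_1$ is black-peaked and $v$ is relevant, but in any case) and in $\nabla_v(T_1,T_2)$ its children are unchanged in number except that a new vertex $v^*$ is created, which is colored black and is the only genuinely new vertex. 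The vertex $v^*$ has two children ($v$ via a left edge and the root of $T_2$ via a right edge), but it is black by construction, so it does not violate black-peakedness. One must also check that $v$ itself does not acquire a second child in the process: in $T_1^*$, the vertex $v$ inherits exactly the children $v$ had in $T_1$, and insertion attaches $T_2$ to $v^*$, not to $v$, so $v$'s children are exactly those it had in $T_1$. Hence every vertex with two children in $\nabla_v(T_1,T_2)$ is either black already (inherited from $T_1$ or $T_2$, using the inductive hypothesis that these are black-peaked) or is $v^*$, which is black. This closes the induction and shows $\InsCl({\bf T}^{(0)})$ is black-peaked.

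Next I would handle decomposition-closedness, which I expect to be the main obstacle. Take $T \in \InsCl({\bf T}^{(0)})$ and a black vertex $w^*$ of $T$ with two children; I must show $\Delta_{w^*}(T) \in \InsCl({\bf T}^{(0)}) \times \InsCl({\bf T}^{(0)})$. Again I induct on the number of insertions needed to produce $T$. In the base case $T \in {\bf T}^{(0)}$, and since ${\bf T}^{(0)}$ is decomposition-closed, $\Delta_{w^*}(T) \in {\bf T}^{(0)} \times {\bf T}^{(0)} \subseteq \InsCl({\bf T}^{(0)}) \times \InsCl({\bf T}^{(0)})$. For the inductive step, write $T = \nabla_v(T_1, T_2)$ with $T_1, T_2 \in \InsCl({\bf T}^{(0)})$ produced by fewer insertions, and let $v^*$ be the black vertex created by this insertion. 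The crux is a case analysis on the location of $w^*$ relative to $v^*$ and the subtree $T_2$. The clean case is $w^* = v^*$: then $\Delta_{w^*}(T) = (T_1, T_2)$ by the fact that decomposition reverses insertion (established in the excerpt right before Definition~\ref{Def2}), and both coordinates lie in $\InsCl({\bf T}^{(0)})$. If $w^*$ lies strictly inside $T_2$, then decomposing $T$ at $w^*$ only affects $T_2$: one gets $\Delta_{w^*}(T) = (\nabla_v(T_1, T_2'), T_2'')$ where $(T_2', T_2'') = \Delta_{w^*}(T_2)$; by the inductive hypothesis $T_2', T_2'' \in \InsCl({\bf T}^{(0)})$, and then $\nabla_v(T_1, T_2') \in \InsCl({\bf T}^{(0)})$ by insertion-closedness. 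The remaining case is $w^*$ in the part of $T$ coming from $T_1$ (i.e., not inside $T_2$ and not equal to $v^*$); here $w^*$ corresponds to a vertex of $T_1^*$, hence to a vertex $\widetilde w$ of $T_1$ which still has two children and is black (one should check that contracting the edge $v v^*$ does not merge $w^*$ with anything, which holds as long as $w^* \neq v^*$, and does not change $w^*$'s children unless $w^*$ is an endpoint of a contracted edge, which again it is not). Decomposing $T_1$ at $\widetilde w$ gives $(R_1, R_2) = \Delta_{\widetilde w}(T_1) \in \InsCl({\bf T}^{(0)}) \times \InsCl({\bf T}^{(0)})$ by induction, and one then argues that $\Delta_{w^*}(T) = (\nabla_{v}(R_1, T_2), R_2)$ or $(\nabla_{v'}(R_1, T_2), R_2)$ for the appropriate vertex $v'$ of $R_1$ corresponding to $v$, provided $v$ survives into $R_1$ rather than into the detached piece $R_2$. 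If instead $v$ is carried into $R_2$, then $R_2$ already contains the insertion site, and $\Delta_{w^*}(T) = (R_1, \nabla_{v}(R_2, T_2))$, with both coordinates again in $\InsCl({\bf T}^{(0)})$ by insertion-closedness.

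The delicate bookkeeping — the main obstacle — is checking that these identities relating $\Delta_{w^*}(\nabla_v(T_1,T_2))$ to insertions and decompositions of $T_1$ and $T_2$ actually hold on the nose, i.e., that insertion and decomposition at ``disjoint'' locations commute in the expected way, and that one has correctly tracked which vertex of the output plays the role of the insertion/decomposition site. This is entirely a matter of unwinding the definitions of $\nabla$ and $\Delta$ (deleting/reattaching a right subtree, contracting/expanding a left edge), and the key structural observation making it work is that the new black vertex $v^*$ always sits at a left edge whose lower endpoint has no other right child in $T_1^*$, so the region ``to the right of $v^*$'' is exactly $T_2$ and is cleanly separated from everything inherited from $T_1$. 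Once these local commutation facts are pinned down, the induction goes through and establishes that $\InsCl({\bf T}^{(0)})$ is a troupe.
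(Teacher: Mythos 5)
Your proposal is correct and follows essentially the same route as the paper: both arguments reduce everything to showing that a single insertion preserves decomposition-closedness and black-peakedness, via a case analysis on where the decomposition vertex sits relative to the newly created vertex $v^*$ and the inserted subtree $T_2$. The paper organizes the induction by building $\InsCl({\bf T}^{(0)})$ in layers ${\bf T}^{(0)}\subseteq{\bf T}^{(1)}\subseteq\cdots$, each obtained from the previous by one round of insertions, while you induct on the number of insertions needed to produce $T$; these are the same induction in different clothing.

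One spot deserves a warning. In your third case you assert parenthetically that $w^*$ is not an endpoint of the contracted edge $vv^*$, but this fails when $w^*=v$, i.e., when the vertex being decomposed is the insertion site itself. This is precisely the sub-case the paper isolates as its Case 3, and it is the only one that is not a straightforward ``disjoint locations commute'' statement. Your conclusions do survive: $v$ has the same children in $T=\nabla_v(T_1,T_2)$ as in $T_1$ (insertion changes only its parent), so $\widetilde w=v$ still has two children and is black in $T_1$; and after $\Delta_v(T_1)=(R_1,R_2)$ the vertex of $R_1$ ``corresponding to $v$'' is the contracted vertex, namely the former left child $u$ of $v$, so the identity $\Delta_v(T)=(\nabla_u(R_1,T_2),R_2)$ holds. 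But as written your justification skips over this case rather than handling it, so you should separate it out and verify the identity there explicitly.
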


\begin{proof}
Suppose ${\bf T}^{(0)}\subseteq\mathsf{CBPT}$ is decomposition-closed and black-peaked. Let ${\bf T}^{(1)}$ be the union of ${\bf T}^{(0)}$ with the set of all trees that can be written as $\nabla_v(T_1,T_2)$ for some nonempty $T_1,T_2\in{\bf T}^{(0)}$ and some vertex $v$ of $T_1$. It is clear that ${\bf T}^{(1)}$ is black-peaked because the new vertices with $2$ children that are produced from insertion are always black. We will show that ${\bf T}^{(1)}$ is also decomposition-closed. Choose $T\in{\bf T}^{(1)}$, and let $u^*$ be a (necessarily black) vertex in $T$ with $2$ children. We need to prove that $\Delta_{u^*}(T)\in{\bf T}^{(1)}\times{\bf T}^{(1)}$. If $T\in{\bf T}^{(0)}$, then this follows from the assumption that ${\bf T}^{(0)}$ is decomposition-closed. Thus, we may assume $T\in{\bf T}^{(1)}\setminus{\bf T}^{(0)}$. This means that there exist nonempty trees $T_1,T_2\in{\bf T}^{(0)}$ and a vertex $v$ of $T_1$ such that $T=\nabla_v(T_1,T_2)$. Let $v^*$ be the parent of $v$ in $T$. Let $u$ be the left child of $u^*$ in $T$. We consider three cases. 

\noindent{\bf Case 1.} Assume $u^*=v^*$. In this case, $\Delta_{u^*}(T)=\Delta_{v^*}(\nabla_v(T_1,T_2))=(T_1,T_2)\in{\bf T}^{(1)}\times{\bf T}^{(1)}$, as desired. 

\noindent{\bf Case 2.} Assume $u^*\neq v^*$ and $u^*\neq v$. Since $u^*\neq v^*$, the vertex $u^*$ is in either $T_1$ or $T_2$. We will assume $u^*$ is a vertex in $T_1$, the proof in the other case is completely analogous. Because $u^*$ is black and has $2$ children in $T$, it must also be black and have $2$ children in $T_1$. This means that we can decompose $T_1$ at $u^*$ to form the pair $\Delta_{u^*}(T_1)=(T_3,T_4)$. The trees $T_3$ and $T_4$ are in ${\bf T}^{(0)}$ because ${\bf T}^{(0)}$ is decomposition-closed and contains $T_1$. Because $u^*\neq v$, the vertex $v$ is in either $T_3$ or $T_4$. We assume $v$ is in $T_3$; the case in which $v$ is in $T_4$ is similar. Note that $u\neq v$ because $u^*\neq v^*$. It follows immediately from the definition of insertion that $\nabla_v(\nabla_u(T_3,T_4),T_2)=\nabla_u(\nabla_v(T_3,T_2),T_4)$. Therefore, \[\Delta_{u^*}(T)=\Delta_{u^*}(\nabla_v(T_1,T_2))=\Delta_{u^*}(\nabla_v(\nabla_u(T_3,T_4),T_2)
)=\Delta_{u^*}
(\nabla_u(\nabla_v(T_3,T_2),T_4))
\] \[=(\nabla_v(T_3,T_2),T_4).\] 
The trees $T_2$ and $T_3$ are in ${\bf T}^{(0)}$, so $\nabla_v(T_3,T_2)\in{\bf T}^{(1)}$. Since $T_4\in{\bf T}^{(0)}\subseteq{\bf T}^{(1)}$, this completes the proof in this case. 

\noindent{\bf Case 3.} Assume $u^*=v$. In this case, $u^*$ must be in $T_1$. As in the previous case, $u^*$ is black and has $2$ children in $T_1$, so we can write $\Delta_{u^*}(T_1)=(T_3,T_4)$ for some $T_3,T_4\in{\bf T}^{(0)}$. One can readily check that $\Delta_{u^*}(T)=(\nabla_u(T_3,T_2),T_4)$ (see Figure~\ref{Fig12} for an example). The trees $T_2$ and $T_3$ are in ${\bf T}^{(0)}$, so $\nabla_u(T_3,T_2)\in{\bf T}^{(1)}$. Since $T_4\in{\bf T}^{(0)}\subseteq{\bf T}^{(1)}$, this completes the proof in this final case. 

We have shown that ${\bf T}^{(1)}$ is decomposition-closed and black-peaked. Now let ${\bf T}^{(2)}$ be the union of ${\bf T}^{(1)}$ with the set of all trees that can be written as $\nabla_v(T_1,T_2)$ for some nonempty $T_1,T_2\in{\bf T}^{(1)}$ and some vertex $v$ of $T_1$. By the exact same argument as above, ${\bf T}^{(2)}$ is decomposition-closed and black-peaked. Repeating this construction, we obtain an infinite chain ${\bf T}^{(0)}\subseteq {\bf T}^{(1)}\subseteq\cdots$, where ${\bf T}^{(i+1)}$ is the union of ${\bf T}^{(i)}$ with the set of all trees that can be written as $\nabla_v(T_1,T_2)$ for some nonempty $T_1,T_2\in{\bf T}^{(i)}$ and some vertex $v$ of $T_1$. It follows by induction on $i$ that ${\bf T}^{(i)}$ is decomposition-closed and black-peaked for every nonnegative integer $i$. Therefore, $\bigcup_{i\geq 0}{\bf T}^{(i)}$ is decomposition-closed and black-peaked. It is straightforward to see that $\bigcup_{i\geq 0}{\bf T}^{(i)}=\InsCl({\bf T}^{(0)})$, so $\InsCl({\bf T}^{(0)})$ is decomposition-closed and black-peaked. The set $\InsCl({\bf T}^{(0)})$ is insertion-closed by definition, so it is a troupe.  
\end{proof}

\begin{figure}[ht]
  \begin{center}{\includegraphics[height=5.5cm]{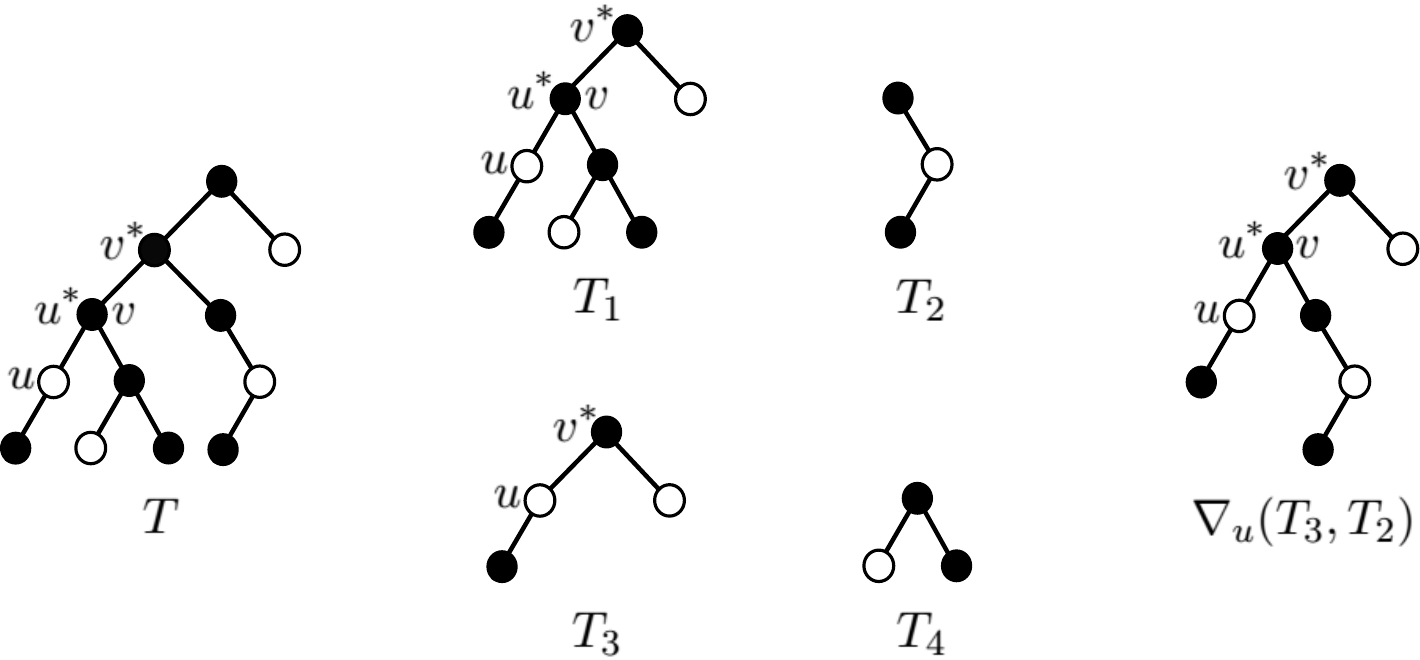}}
  \end{center}
  \caption{An illustration of trees appearing in Case 3 in the proof of Lemma~\ref{Lem3}.}\label{Fig12} 
\end{figure}

We say a colored binary plane tree $T$ is a \dfn{branch} if none of the vertices in $T$ have $2$ children. Let $\mathsf{Branch}$ denote the set of branches. 

\begin{theorem}\label{Thm19}
There is a bijective correspondence between the collection of all troupes and the collection of all sets of branches. Under this correspondence, a troupe ${\bf T}$ corresponds to ${\bf T}\cap\mathsf{Branch}$, and a set $B$ of branches corresponds to $\InsCl(B)$. 
\end{theorem}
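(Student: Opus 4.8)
The plan is to show that the two maps described in the statement are mutually inverse. Concretely, define $\Phi(\mathbf{T}) = \mathbf{T}\cap\mathsf{Branch}$ on troupes and $\Psi(B) = \InsCl(B)$ on sets of branches. By Lemma~\ref{Lem3}, $\Psi(B)$ is always a troupe, since any set of branches is trivially decomposition-closed (a branch has no vertex with $2$ children, so the decomposition-closure condition is vacuous) and trivially black-peaked (same reason). So $\Psi$ indeed lands in the collection of troupes, and $\Phi$ trivially lands in the collection of sets of branches. It remains to prove $\Phi\circ\Psi = \mathrm{id}$ and $\Psi\circ\Phi = \mathrm{id}$.

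For $\Phi(\Psi(B)) = B$, i.e. $\InsCl(B)\cap\mathsf{Branch} = B$: the inclusion $\supseteq$ is immediate since $B\subseteq\InsCl(B)$ and $B\subseteq\mathsf{Branch}$. For $\subseteq$, I would argue that insertion never creates a branch out of non-branches: whenever $T_1, T_2$ are nonempty and $T = \nabla_v(T_1,T_2)$, the vertex $v^*$ created by the insertion has two children in $T$, so $T$ is not a branch. Tracking the inductive construction $B = \mathbf{T}^{(0)}\subseteq\mathbf{T}^{(1)}\subseteq\cdots$ from the proof of Lemma~\ref{Lem3} (with $\mathbf{T}^{(0)} = B$), every tree in $\mathbf{T}^{(i+1)}\setminus\mathbf{T}^{(i)}$ is of the form $\nabla_v(T_1,T_2)$ with $T_1, T_2$ nonempty, hence has a vertex with two children, hence is not a branch. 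So the only branches in $\InsCl(B) = \bigcup_i\mathbf{T}^{(i)}$ are those already in $\mathbf{T}^{(0)} = B$.

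For $\Psi(\Phi(\mathbf{T})) = \mathbf{T}$, i.e. $\InsCl(\mathbf{T}\cap\mathsf{Branch}) = \mathbf{T}$ for every troupe $\mathbf{T}$: the inclusion $\subseteq$ holds because $\mathbf{T}\cap\mathsf{Branch}\subseteq\mathbf{T}$ and $\mathbf{T}$ is insertion-closed, so it contains the insertion closure of any of its subsets. The inclusion $\supseteq$ is the substantive direction and I expect it to be the main obstacle: I must show that every tree $T\in\mathbf{T}$ can be built up from branches in $\mathbf{T}$ by a sequence of insertions. I would induct on the number of vertices with $2$ children in $T$. If that number is zero, $T$ is itself a branch in $\mathbf{T}\cap\mathsf{Branch}$, hence in $\InsCl(\mathbf{T}\cap\mathsf{Branch})$. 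Otherwise, pick any vertex $v^*$ with $2$ children in $T$; since $\mathbf{T}$ is black-peaked, $v^*$ is black, so the decomposition $\Delta_{v^*}(T) = (T_1, T_2)$ is defined, and because $\mathbf{T}$ is decomposition-closed, $T_1, T_2\in\mathbf{T}$. Each of $T_1, T_2$ has strictly fewer vertices with $2$ children than $T$ does (deleting $T_2$ removes at least $v^*$ from the count in $T_1$ after the contraction, and $T_2$ being a proper subtree has at most the remaining ones), so by the inductive hypothesis both lie in $\InsCl(\mathbf{T}\cap\mathsf{Branch})$; since $T = \nabla_{v}(T_1, T_2)$ and the insertion closure is insertion-closed, $T\in\InsCl(\mathbf{T}\cap\mathsf{Branch})$.

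The one point requiring care — and the likely source of any real difficulty — is verifying that decomposition strictly decreases the count of vertices with $2$ children in each part, so that the induction is well-founded; this amounts to checking that the contraction of the edge $vv^*$ in forming $T_1$ from $T_1^*$ does not accidentally create a new vertex with two children, which follows from the definition because $v^*$ had exactly one child ($v$) in $T_1^*$ after $T_2$ is deleted, and $v$'s children in $T_1^*$ are precisely its children other than $v^*$, so the contracted vertex inherits exactly $v$'s original non-$v^*$ children together with nothing new. Once this bookkeeping is pinned down, assembling the two inclusions above yields that $\Phi$ and $\Psi$ are mutually inverse bijections, which is the claim.
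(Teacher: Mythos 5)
Your proposal is correct and follows essentially the same route as the paper: both directions are handled by the same key observations (sets of branches are vacuously decomposition-closed and black-peaked so Lemma~\ref{Lem3} applies, insertion always creates a vertex with two children so $\InsCl(B)\cap\mathsf{Branch}=B$, and the containment $\mathbf{T}\subseteq\InsCl(\mathbf{T}\cap\mathsf{Branch})$ is proved by decomposing at a vertex with two children and inducting). The only cosmetic difference is that you induct on the number of vertices with two children, where the paper inducts on the total number of vertices; both inductions are well-founded for the same reason, namely that decomposition strictly shrinks the relevant quantity in each factor.
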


\begin{proof}
Because branches do not have vertices with $2$ children, they cannot be decomposed. Therefore, every set of branches is vacuously decomposition-closed and black-peaked. Let ${\bf T}$ be a troupe. Since ${\bf T}\cap\mathsf{Branch}$ is decomposition-closed and black-peaked, it follows from Lemma~\ref{Lem3} that $\InsCl({\bf T}\cap\mathsf{Branch})$ is a troupe. Because ${\bf T}$ is insertion-closed and contains ${\bf T}\cap\mathsf{Branch}$, we must have $\InsCl({\bf T}\cap\mathsf{Branch})\subseteq{\bf T}$. We wish to prove the reverse containment. We will show that ${\bf T}_n\subseteq\InsCl({\bf T}\cap\mathsf{Branch})$ for every $n\geq 0$, where ${\bf T}_n$ is the set of trees in ${\bf T}$ with $n$ vertices. This is certainly true for $n\leq 2$ because every colored binary plane tree with at most $2$ vertices is a branch. Therefore, we may assume $n\geq 3$ and induct on $n$. Choose $T\in{\bf T}_n$. If $T$ is a branch, then $T\in\InsCl({\bf T}\cap\mathsf{Branch})$. Suppose $T$ is not a branch. This means there is a vertex $v^*$ of $T$ that has $2$ children. The vertex $v^*$ must be black because ${\bf T}$ is black-peaked. Let $(T_1,T_2)=\Delta_{v^*}(T)$, and let $v$ be the left child of $v^*$ in $T$. The trees $T_1$ and $T_2$ are in ${\bf T}$ because ${\bf T}$ is decomposition-closed. By induction on $n$, the trees $T_1$ and $T_2$ are in $\InsCl({\bf T}\cap\mathsf{Branch})$. Since this set is insertion-closed, it must contain the tree $\nabla_v(T_1,T_2)=T$. This completes the proof that ${\bf T}=\InsCl({\bf T}\cap\mathsf{Branch})$. 

Now let $B$ be a set of branches. It is clear that $B\subseteq\InsCl(B)\cap\mathsf{Branch}$. To prove the reverse containment, we use the fact that $\InsCl(B)$ is the set of trees obtained by starting with $B$ and performing all possible sequences of insertions. If a tree is in $\InsCl(B)\cap\mathsf{Branch}$, then it cannot be obtained from an insertion because it has no vertices with $2$ children. Therefore, every tree in $\InsCl(B)\cap\mathsf{Branch}$ must be in $B$. 
\end{proof}

In light of the preceding theorem, we define the \dfn{branch generators} of a troupe ${\bf T}$ to be the elements of ${\bf T}\cap\mathsf{Branch}$. One can think of the branch generators as the ``indecomposable'' elements of the troupe. Notice that Theorem~\ref{Thm19} implies that there are uncountably many troupes (there are even uncountably many troupes contained in $\mathsf{BPT}$). Our theorems in the rest of the paper will apply to all troupes, but our concrete examples will focus on the following four. 

\begin{example}[Binary Plane Trees]\label{ExamBinary}
The set $\mathsf{BPT}$ of all binary plane trees is certainly a troupe; its branch generators are the branches whose vertices are all black. It is well known that $|\mathsf{BPT}_n|=C_n$, where $C_n=\frac{1}{n+1}\binom{2n}{n}$ is the $n^\text{th}$ Catalan number. Therefore, 
\begin{equation}\label{Eq13}
\sum_{n\geq 0}|\mathsf{BPT}_n|z^n=\sum_{n\geq 0}C_nz^n=\frac{1-\sqrt{1-4z}}{2z}. \qedhere
\end{equation}
\end{example} 

\begin{example}[Full Binary Plane Trees]
We say a binary plane tree is \dfn{full} if every vertex has either $0$ or $2$ children. We also make the convention that the empty tree is not full. Let $\mathsf{FBPT}$ be the set of full binary plane trees, and let $\mathsf{FBPT}_n$ be the set of trees in $\mathsf{FBPT}$ with $n$ vertices. The trees in $\mathsf{FBPT}_7$ are shown in Figure~\ref{Fig3}. It is easy to see that $\mathsf{FBPT}$ is a troupe; its only branch generator is the tree consisting of a single black vertex. Every full binary plane tree has an odd number of vertices, and there is a natural bijection from $\mathsf{FBPT}_{2k+1}$ to $\mathsf{BPT}_k$ obtained by removing (also called \emph{pruning}) the leaves of the trees in $\mathsf{FBPT}_{2k+1}$. Therefore, $|\mathsf{FBPT}_{2k+1}|=C_k$. We have 
\begin{equation}\label{Eq14}
\sum_{n\geq 0}|\mathsf{FBPT}_n|z^n=\sum_{k\geq 0}|\mathsf{FBPT}_{2k+1}|z^{2k+1}=\sum_{k\geq 0}C_kz^{2k+1}=\frac{1-\sqrt{1-4z^2}}{2z}.
\end{equation} 

\begin{figure}[ht]
  \begin{center}{\includegraphics[height=1.9cm]{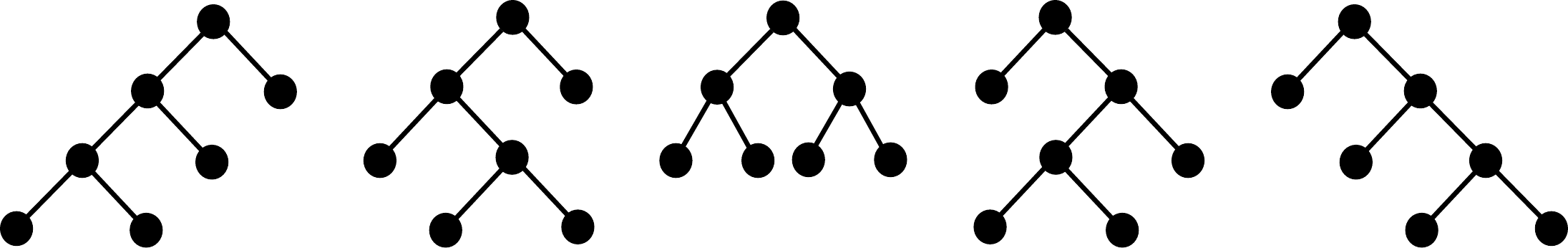}}
  \end{center}
  \caption{The $5$ full binary plane trees with $7$ vertices.}\label{Fig3} \qedhere
\end{figure}
\end{example} 

\begin{example}[Motzkin Trees]\label{ExamMotzkin2} 

A \dfn{Motzkin tree} (also called a \dfn{unary-binary tree}) is a nonempty binary plane tree in which every child that has no siblings is a left child. Let $\mathsf{Mot}$ be the set of Motzkin trees, and let $\mathsf{Mot}_n$ be the set of Motzkin trees with $n$ vertices. It is straightforward to check that $\mathsf{Mot}$ is a troupe; its branch generators are the nonempty branches that have only left edges and only black vertices.  

\begin{remark}\label{Rem8}
Motzkin trees are usually defined so that children without siblings are not designated as left or right children. Under this definition, Motzkin trees are not binary plane trees. However, there is an obvious bijection between Motzkin trees in the traditional sense and Motzkin trees as we have defined them above (just designate each child without siblings to be a left child). This allows us to apply our theorems to Motzkin trees as well.  
\end{remark}

The trees in $\mathsf{Mot}_4$ are depicted in Figure~\ref{Fig5}. It is well known that $|\mathsf{Mot}_n|=M_{n-1}$, where $M_n$ denotes the $n^\text{th}$ Motzkin number (with the convention $M_{-1}=0$). These numbers form the OEIS sequence A001006 \cite{OEIS}; they can be defined via their generating function $\displaystyle\sum_{n\geq 0}M_nz^n=\frac{1-z-\sqrt{1-2z-3z^2}}{2z^2}$. Hence, 
\begin{equation}\label{Eq15}
\sum_{n\geq 0}|\mathsf{Mot}_n|z^n=\sum_{n\geq 0}M_{n-1}z^n=\frac{1-z-\sqrt{1-2z-3z^2}}{2z}. 
\end{equation}

\begin{figure}[ht]
  \begin{center}{\includegraphics[height=1.9cm]{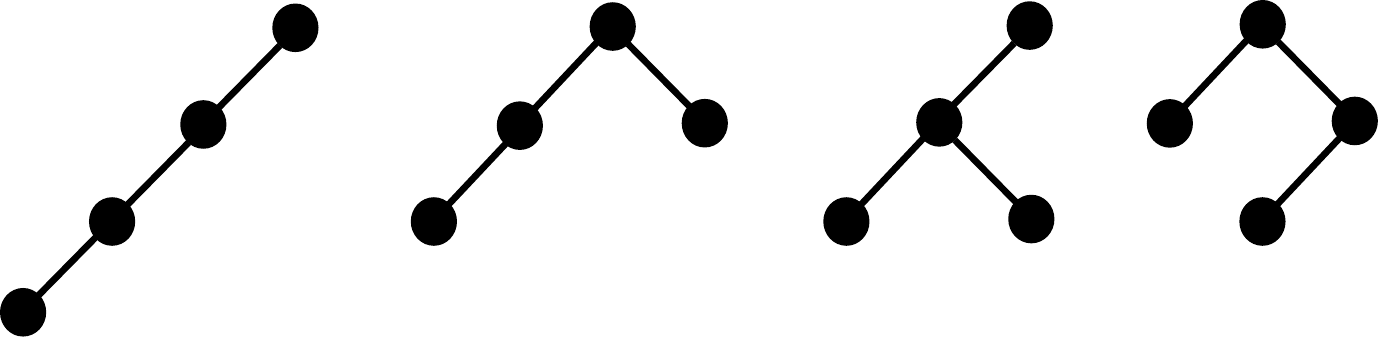}}
  \end{center}
  \caption{The $4$ Motzkin trees with $4$ vertices.}\label{Fig5} \qedhere
\end{figure}
\end{example} 

\begin{example}[Schr\"oder $2$-Colored Binary Trees]
A \dfn{$2$-colored binary tree} is a colored binary plane tree in which each vertex is colored either black or white. Of course, the number of such trees with $n$ vertices is $2^nC_n$. The focus of the article \cite{Gu} is the bijective enumeration of $2$-colored binary trees that satisfy certain constraints. For example, let $\mathsf{Sch}$ denote the set of $2$-colored binary trees in which no white vertex has a left child. Let $\mathsf{Sch}_n$ be the set of trees in $\mathsf{Sch}$ with $n$ vertices. The trees in $\mathsf{Sch}_2$ are shown in Figure~\ref{Fig6}. We call these trees \dfn{Schr\"oder $2$-colored binary trees} because (an equivalent reformulation of) the first part of Corollary 4.2 in \cite{Gu} states that $|\mathsf{Sch}_n|$ is the $n^\text{th}$ \dfn{large Schr\"oder number} $\mathscr S_n$. These numbers form OEIS sequence A006318 \cite{OEIS}. We have the generating function identity \[\sum_{n\geq 0}|\mathsf{Sch}_n|z^n=\sum_{n\geq 0}\mathscr S_nz^n=\dfrac{1-z-\sqrt{1-6z+z^2}}{2z}.\] The large Schr\"oder numbers also satisfy the identity $\mathscr S_n=\sum_{j=0}^n\binom{n+j}{n-j}C_j$. In fact, an equivalent reformulation of the second part of Corollary 4.2 in \cite{Gu} states that $\binom{n+j}{n-j}C_j$ is the number of trees in $\mathsf{Sch}_n$ with $j$ black vertices. 

\begin{figure}[ht]
  \begin{center}{\includegraphics[height=0.9cm]{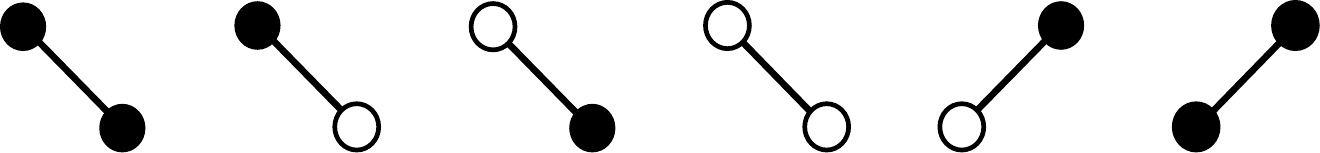}}
  \end{center}
  \caption{The $6$ Schr\"oder $2$-colored binary trees with $2$ vertices.}\label{Fig6} 
\end{figure}
The set $\mathsf{Sch}$ is a troupe; its set of branch generators is the set of branches that are $2$-colored binary trees in which no white vertices have left children. 
\end{example}

\subsection{Decreasing Colored Binary Plane Trees}

Let $X$ be a finite set of positive integers. A \dfn{labeled colored binary plane tree} on $X$ is a colored binary plane tree whose vertices are bijectively labeled with the elements of $X$. A \dfn{decreasing colored binary plane tree} is a labeled colored binary plane tree in which every nonroot vertex is given a label that is smaller than the label of its parent. A labeled colored binary plane tree with $n$ vertices is \dfn{standardized} if its set of labels is $[n]=\{1,\ldots,n\}$. 

\begin{definition}
The \dfn{skeleton} of a labeled colored binary plane tree $\mathcal T$ is the colored binary plane tree $\skel(\mathcal T)$ obtained by removing the labels from the vertices of $\mathcal T$. Given a set ${\bf T}$ of colored binary plane trees, let $\mathsf D{\bf T}$ denote the set of decreasing colored binary plane trees $\mathcal T$ such that $\skel(\mathcal T)\in{\bf T}$. Let $\mathsf{\overline D}{\bf T}$ be the set of standardized trees in $\mathsf{D}{\bf T}$. 
\end{definition}

The trees in $\mathsf{\overline{D}BPT}_3$ are depicted in Figure~\ref{Fig7}. Each colored binary plane tree represents a poset on its set of vertices in which $u<v$ if and only if $u$ is a descendant of $v$. From this point of view, a decreasing colored binary plane tree is a pair $(T,L)$, where $T$ is a colored binary plane tree and $L$ is a linear extension of the poset represented by $T$. 

\begin{figure}[ht]
  \begin{center}{\includegraphics[height=1.3cm]{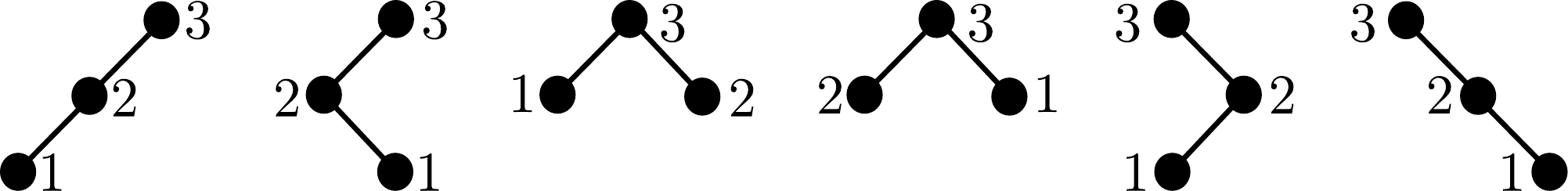}}
  \end{center}
  \caption{The $6$ standardized decreasing binary plane trees with $3$ vertices.}\label{Fig7}
\end{figure}

A \dfn{permutation} is an ordering of a finite set of positive integers; we write permutations as words in one-line notation. Let $S_n$ denote the set of permutations of $[n]$. Two natural tree traversals that produce a permutation of a set $X$ from a labeled colored binary plane tree on $X$ are the \dfn{in-order reading} $\mathcal I$ and the \dfn{postorder reading} $\mathcal P$. If $\mathcal T$ is the empty tree, then $\mathcal I(\mathcal T)$ and $\mathcal P(\mathcal T)$ are both just the empty permutation. Now suppose $\mathcal T$ is a nonempty labeled colored binary plane tree on $X$. Let $\mathcal T_L$ and $\mathcal T_R$ be the (possibly empty) left and right subtrees of the root of $\mathcal T$, respectively. Let $m\in X$ be the label of the root. The in-order reading and postorder reading are defined recursively by \[\mathcal I(\mathcal T)=\mathcal I(\mathcal T_L)\,m\,\mathcal I(\mathcal T_R)\quad\text{and}\quad\mathcal P(\mathcal T)=\mathcal P(\mathcal T_L)\,\mathcal P(\mathcal T_R)\,m.\]
For example, the in-order readings of the trees in Figure~\ref{Fig7} are, from left to right, $123$, $213$, $132$, $231$, $312$, $321$. The postorder readings of these trees are $123$, $123$, $123$, $213$, $123$, $123$. 

When dealing with in-order readings and postorder readings, we will focus exclusively on decreasing colored binary plane trees. It is well known that the in-order reading gives a bijection between decreasing binary plane trees on $X$ and permutations of $X$. Given a  permutation $\pi$, we let $\mathcal I^{-1}(\pi)$ denote the unique tree in $\mathsf{DBPT}$ whose in-order reading is $\pi$. On the other hand, it is clear from the trees in Figure~\ref{Fig7} that the postorder reading does not yield such a bijection. 

\subsection{Insertion-Additive Tree Statistics}
Our main results will allow us to work with enumerations of troupes that are refined according to tree statistics that interact nicely with insertion. 

\begin{definition}
A \dfn{tree statistic} is a function $f:\mathsf{CBPT}\to\mathbb C$. For every tree statistic $f$, we define a function $\ddot f:\mathsf D{\bf T}\to\mathbb C$ by \[\ddot f(\mathcal T)=f(\skel(\mathcal T)).\] We say a tree statistic $f$ is \dfn{insertion-additive} if for all nonempty colored binary plane trees $T_1$ and $T_2$ and all vertices $v$ in $T_1$, we have \[f(\nabla_v(T_1,T_2))=f(T_1)+f(T_2).\] 
\end{definition}

\begin{example}\label{Exam1}
The most obvious insertion-additive tree statistic is the statistic that maps each tree with $n$ vertices to the number $n+1$. We now describe some other natural statistics that are insertion-additive. One can show that if $\mathcal T$ is any decreasing colored binary plane tree, then the number of right edges in $\mathcal T$ is equal to the number of descents in the in-order reading $\mathcal I(\mathcal T)$. Furthermore, the number of vertices in $\mathcal T$ that have $2$ children is equal to the number of peaks of $\mathcal I(\mathcal T)$ (this explains the term ``black-peaked'' from Definition~\ref{Def2}). Therefore, it makes sense to define $\des(\mathcal T)$ and $\peak(\mathcal T)$ to be the number of right edges in $\mathcal T$ and the number of vertices with $2$ children in $\mathcal T$, respectively. Since $\des(\mathcal T)$ and $\peak(\mathcal T)$ only depend on the skeleton of $\mathcal T$, the maps $\des$ and $\peak$ descend to tree statistics on colored binary plane trees. For every colored binary plane tree $T$, we define $\des(T)$ and $\peak(T)$ to be the number of right edges in $T$ and the number of vertices with $2$ children in $T$, respectively. Thus, $\des(\mathcal T)=\des(\skel(\mathcal T))$ and $\peak(\mathcal T)=\peak(\skel(\mathcal T))$ for every decreasing colored binary plane tree $\mathcal T$. We make the convention that $\des(\varepsilon)=0$ and $\peak(\varepsilon)=-1$, where $\varepsilon$ is the empty tree. 

Suppose $T_1,T_2\in\mathsf{CBPT}$ are nonempty, and let $v$ be a vertex in $T_1$. The right edges in $\nabla_v(T_1,T_2)$ are the right edges in $T_1$, the right edges in $T_2$, and the new right edge connecting the new vertex $v^*$ to the root of $T_2$. Therefore, $\des(\nabla_v(T_1,T_2))=\des(T_1)+\des(T_2)+1$. This shows that the function $T\mapsto\des(T)+1$ is an insertion-additive tree statistic. The vertices with $2$ children in $\nabla_v(T_1,T_2)$ are the vertices with $2$ children in $T_1$, the vertices with $2$ children in $T_2$, and the new vertex $v^*$. Consequently, the function $T\mapsto\peak(T)+1$ is insertion-additive. Another natural insertion-additive tree statistic is the map $T\mapsto\black(T)+1$, where $\black(T)$ is the number of black vertices in $T$. Indeed, the black vertices in $\nabla_v(T_1,T_2)$ are the black vertices in $T_1$, the black vertices in $T_2$, and the new vertex $v^*$. If $c$ is any color other than black, then the statistic that maps $T$ to the number of vertices with color $c$ in $T$ is insertion-additive. 
\end{example}

\subsection{The Stack-Sorting Map}\label{Sec:Stack-Sorting}
In his book \emph{The Art of Computer Programming}, Knuth introduced a ``stack-sorting algorithm'' \cite{Knuth}; his analysis of this algorithm led to several advances in combinatorics, including the notion of a permutation pattern and the kernel method \cite{Banderier, Bona, Kitaev, Linton}. In his Ph.D. thesis, West defined a deterministic variant of Knuth's algorithm \cite{West}. This variant is a function that we denote by $s$ and call the \dfn{stack-sorting map}. Despite the fact that this function is so easy to define, it is remarkably difficult to analyze. Consequently, it has received a great amount of attention since its inception (see \cite{Bona, BonaSurvey, DefantCounting, DefantCatalan, DefantEngenMiller} and the references therein). 

To define $s$, assume we are given an input permutation $\pi=\pi_1\cdots\pi_n$. Throughout this procedure, if the next entry in the input permutation is smaller than the entry at the top of the stack or if the stack is empty, the next entry in the input permutation is placed at the top of the stack. Otherwise, the entry at the top of the stack is annexed to the end of the growing output permutation. This procedure stops when the output permutation has length $n$. We then define $s(\pi)$ to be this output permutation. Figure~\ref{Fig1} illustrates this procedure and shows that $s(4162)=1426$.  

\begin{figure}[ht]
\begin{center}
\includegraphics[width=1\linewidth]{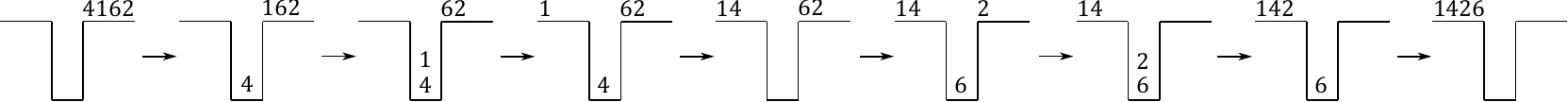}
\end{center}  
\caption{The stack-sorting map $s$ sends $4162$ to $1426$.}
\end{figure}\label{Fig1}

If $\pi$ is a permutation with largest entry $m$, then we can write $\pi=LmR$ for some permutations $L$ and $R$. A useful recursive description of the stack-sorting map states that 
\begin{equation}\label{Eq36}
s(\pi)=s(L)s(R)m.
\end{equation} For example, \[s(416352)=s(41)\,s(352)\,6=s(1)\,4\,s(3)\,s(2)\,56=143256.\]

There is another alternative definition of the stack-sorting map that makes use of in-order and postorder readings of binary plane trees. Namely, 
\begin{equation}\label{Eq11}
s=\mathcal P\circ \mathcal I^{-1}.
\end{equation} For example, we have \[246153\xrightarrow{\mathcal I^{-1}}\begin{array}{l}\includegraphics[height=1.3cm]{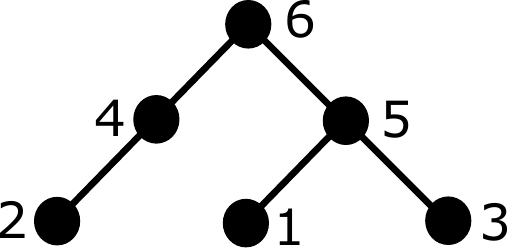}\end{array}\xrightarrow{\,\,\,\mathcal P\,\,\,}241356,\] and one can check that $s(246153)=241356$. It is this alternative definition that will allow us to apply free probability theory to answer some (otherwise very difficult) natural questions about $s$ in Section~\ref{Sec:Applications}. 

West \cite{West} defined the \dfn{fertility} of a permutation $\pi$ to be $|s^{-1}(\pi)|$. According to \eqref{Eq11}, 
\begin{equation}\label{Eq12}
|s^{-1}(\pi)|=|\mathcal P^{-1}(\pi)\cap\mathsf{DBPT}|.
\end{equation}
In other words, the fertility of $\pi$ is the number of decreasing binary plane trees with postorder $\pi$. \emph{A priori}, computing fertilities of permutations is a difficult task. Indeed, West devoted ten pages of his dissertation \cite{West} to the computation of the fertilities of permutations of the forms \[23\cdots k1(k+1)\cdots n,\quad 12\cdots(k-2)k(k-1)(k+1)\cdots n,\quad\text{and}\quad k12\cdots(k-1)(k+1)\cdots n.\] Bousquet-M\'elou \cite{Bousquet} defined a permutation to be \dfn{sorted} if its fertility is positive (i.e., it is in the image of $s$). She found an algorithm that determines whether or not a given permutation is sorted. She then asked for a general method for computing the fertility of any given permutation.

The current author has found two distinct yet related methods for computing fertilities of arbitrary permutations. The first method, which we call the Fertility Formula, was developed in \cite{DefantPostorder}. The Fertility Formula expresses $|s^{-1}(\pi)|$ as a sum of products of Catalan numbers, where the sum runs over combinatorial objects called \emph{valid hook configurations}. The second method is the Decomposition Lemma, a recursive formula that was first proven in \cite{DefantCounting}. We are going to give a new proof of the Decomposition Lemma; the new proof makes use of decreasing binary plane trees and, consequently, is more conceptual than the original purely permutation-based proof. 

The definition of a valid hook configuration appears complicated at first glance, and the original proof of the Fertility Formula in \cite{DefantPostorder} seems very ad hoc. Our new proof of the Decomposition Lemma yields a more general result that allows us to rederive the Fertility Formula. This new derivation is much cleaner than the original proof, and it explains why valid hook configurations are defined the way they are. This new proof of the Decomposition Lemma also generalizes to arbitrary troupes; hence, we will call this more general result the Tree Decomposition Lemma. From the Tree Decomposition Lemma, we will derive a new Tree Fertility Formula, which will be one of the two main tools allowing us to connect free and classical cumulants with families of colored binary plane trees and decreasing colored binary plane trees in Section~\ref{Sec:TroupsAndCumulants}. In fact, we can even refine the Tree Decomposition Lemma and Tree Fertility Formula by taking into account certain tree statistics; we call these more general results the Refined Tree Decomposition Lemma and the Refined Tree Fertility Formula.  

\section{The Refined Tree Decomposition Lemma} \label{SecRefinedDecomposition}

The \dfn{plot} of a permutation $\pi=\pi_1\cdots\pi_n$ is the diagram showing the points $(i,\pi_i)\in\mathbb R^2$ for all $1\leq i\leq n$. A \dfn{hook} of $\pi$ is a rotated L shape connecting two points $(i,\pi_i)$ and $(j,\pi_j)$ with $i<j$ and $\pi_i<\pi_j$, as in Figure~\ref{Fig4}. The point $(i,\pi_i)$ is the \dfn{southwest endpoint} of the hook, and $(j,\pi_j)$ is the \dfn{northeast endpoint} of the hook. Let $\SW_i(\pi)$ be the set of hooks of $\pi$ with southwest endpoint $(i,\pi_i)$. For example, Figure~\ref{Fig4} shows the plot of the permutation $\pi=426315789$. The hook shown in this figure is in $\SW_3(\pi)$ because its southwest endpoint is $(3,6)$. Its northeast endpoint is $(8,8)$. 

\begin{figure}[ht]
  \begin{center}{\includegraphics[width=0.22\textwidth]{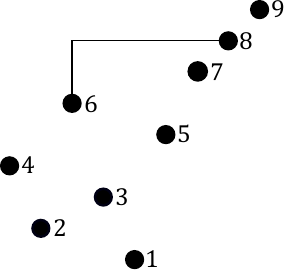}}
  \end{center}
  \caption{The plot of $426315789$ along with a single hook.}\label{Fig4}
\end{figure}

Suppose $\pi$ is not a monotone increasing permutation, and let $d_1<\cdots<d_k$ be its descents. The \dfn{rightmost ascending run} of the plot of $\pi$ is the sequence of points $(d_k+1,\pi_{d_k+1}),\ldots,(n,\pi_n)$. For example, the rightmost ascending run of $426315789$ is $(5,1), (6,5), (7,7), (8,8), (9,9)$. We say a descent $d$ of $\pi$ is \dfn{right-bound}\footnote{In the hypothesis of the Refined Decomposition Lemma in \cite{DefantCounting}, one chooses a ``tail-bound descent'' of a permutation. By contrast, our Refined Tree Decomposition Lemma will require the choice of a right-bound descent. Every tail-bound descent is right-bound, so this is one of the two ways in which the Refined Tree Decomposition Lemma generalizes the Refined Decomposition Lemma (the other is that it applies to troupes).} if every hook in $\SW_d(\pi)$ has its northeast endpoint in the rightmost ascending run of the plot of $\pi$. This is equivalent to saying that every entry in $\pi$ that is greater than $\pi_d$ and to the right of $\pi_d$ is also to the right of $\pi_{d_k}$. Of course, this means that $d_k$ is automatically a right-bound descent. The descents of $426315789$ are $1$, $3$, and $4$, but the only right-bound descents are $3$ and $4$. 

Let $H$ be a hook of $\pi$ with southwest endpoint $(i,\pi_i)$ and northeast endpoint $(j,\pi_j)$. The \dfn{$H$-unsheltered subpermutation} of $\pi$ is the permutation $\pi_U^H=\pi_1\cdots\pi_i\pi_{j+1}\cdots\pi_n$. Similarly, the \dfn{$H$-sheltered subpermutation} of $\pi$ is $\pi_S^H=\pi_{i+1}\cdots\pi_{j-1}$. For instance, if $\pi=426315789$ and $H$ is the hook shown in Figure~\ref{Fig4}, then $\pi_U^H=4269$ and $\pi_S^H=3157$. The terms ``sheltered'' and ``unsheltered'' come from the fact that, in applications, the plot of $\pi_S^H$ will lie entirely below the hook $H$. In particular, this will be the case if $i$ is a right-bound descent of $\pi$.  

\begin{theorem}[Refined Tree Decomposition Lemma]\label{Thm17}
Let ${\bf T}$ be a troupe. Let $f_1,\ldots,f_r$ be insertion-additive tree statistics, and let $x_1,\ldots,x_r$ be variables. If $d$ is a right-bound descent of a nonempty permutation $\pi$, then \[\sum_{\mathcal T\in \mathcal P^{-1}(\pi)\cap\mathsf D{\bf T}}x_1^{\ddot f_1(\mathcal T)}\cdots x_r^{\ddot f_r(\mathcal T)}\] \[=\sum_{H\in\SW_d(\pi)}\left(\sum_{\mathcal T_U\in \mathcal P^{-1}(\pi_U^H)\cap\mathsf D{\bf T}}x_1^{\ddot f_1(\mathcal T_U)}\cdots x_r^{\ddot f_r(\mathcal T_U)}\right)\left(\sum_{\mathcal T_S\in \mathcal P^{-1}(\pi_S^H)\cap\mathsf D{\bf T}}x_1^{\ddot f_1(\mathcal T_S)}\cdots x_r^{\ddot f_r(\mathcal T_S)}\right).\]
\end{theorem}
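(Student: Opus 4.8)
The plan is to establish, for the given right-bound descent $d$, an explicit bijection
\[
\Phi\colon\ \mathcal P^{-1}(\pi)\cap\mathsf D{\bf T}\ \longrightarrow\ \bigsqcup_{H\in\SW_d(\pi)}\bigl(\mathcal P^{-1}(\pi_U^H)\cap\mathsf D{\bf T}\bigr)\times\bigl(\mathcal P^{-1}(\pi_S^H)\cap\mathsf D{\bf T}\bigr)
\]
that is compatible with decomposition, meaning that if $\Phi(\mathcal T)=(H,\mathcal T_1,\mathcal T_2)$ then $\skel(\mathcal T)=\nabla_v\bigl(\skel(\mathcal T_1),\skel(\mathcal T_2)\bigr)$ for an appropriate vertex $v$ of $\mathcal T_1$. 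Granting such a $\Phi$, the theorem follows at once: insertion-additivity of each $f_i$ gives $\ddot f_i(\mathcal T)=\ddot f_i(\mathcal T_1)+\ddot f_i(\mathcal T_2)$, so the monomial $x_1^{\ddot f_1(\mathcal T)}\cdots x_r^{\ddot f_r(\mathcal T)}$ factors as the product of the corresponding monomials for $\mathcal T_1$ and $\mathcal T_2$; summing over $\Phi$ then turns the left-hand side into $\sum_{H\in\SW_d(\pi)}(\text{sum over }\mathcal T_1)(\text{sum over }\mathcal T_2)$, which is the right-hand side.

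To define $\Phi$, let $\mathcal T\in\mathcal P^{-1}(\pi)\cap\mathsf D{\bf T}$ and let $v$ be the vertex at position $d$ of the postorder reading $\mathcal P(\mathcal T)=\pi$, so $v$ is labeled $\pi_d$. The first point is that, because $d$ is a descent, the vertex immediately following $v$ in postorder cannot be an ancestor of $v$ (an ancestor carries a larger label, contradicting $\pi_d>\pi_{d+1}$); unwinding the recursion $\mathcal P(\mathcal T)=\mathcal P(\mathcal T_L)\,\mathcal P(\mathcal T_R)\,m$ then forces $v$ to be a \emph{left} child whose parent $p$ also has a right child. Thus $p$ is a vertex with two children, hence black since ${\bf T}$ is black-peaked, so $\Delta_p(\mathcal T)$ is defined. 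I would carry labels through decomposition by giving the contracted vertex the label $\pi_d$ of $v$ and letting $\mathcal T_2$ (the right subtree of $p$) keep its inherited labels; then set $\Phi(\mathcal T)=(H,\mathcal T_1,\mathcal T_2)$, where $\Delta_p(\mathcal T)=(\mathcal T_1,\mathcal T_2)$ and $H$ is the hook from $(d,\pi_d)$ to $(j,\pi_j)$ with $j$ the postorder position of $p$. Since the postorder reading of any subtree is a contiguous block ending at that subtree's root, a short computation gives $\mathcal P(\mathcal T_2)=\pi_{d+1}\cdots\pi_{j-1}=\pi_S^H$ and $\mathcal P(\mathcal T_1)=\pi_1\cdots\pi_d\,\pi_{j+1}\cdots\pi_n=\pi_U^H$; that $H\in\SW_d(\pi)$ holds because $p$ is the parent of $v$ in a decreasing tree, so $\pi_j>\pi_d$ and $j>d$ (in fact $j\ge d+2$ since $\pi_d>\pi_{d+1}$ rules out a hook ending at position $d+1$, so $\mathcal T_2$ is nonempty); that $\mathcal T_1,\mathcal T_2$ remain decreasing is routine label-chasing; and $\skel(\mathcal T_1),\skel(\mathcal T_2)\in{\bf T}$ because ${\bf T}$ is decomposition-closed.

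The inverse $\Psi$ reverses this: given $(H,\mathcal T_1,\mathcal T_2)$ with $H\in\SW_d(\pi)$ running from $(d,\pi_d)$ to $(j,\pi_j)$, take the vertex $v$ at postorder position $d$ in $\mathcal T_1$, form $\nabla_v(\mathcal T_1,\mathcal T_2)$, and label the new black vertex $v^*$ with $\pi_j$. The same contiguous-block bookkeeping shows the postorder of the result is $\pi$, and insertion-closedness of ${\bf T}$ handles the skeleton; that $\Phi$ and $\Psi$ are mutually inverse is the fact recorded in Section~\ref{Subsec:Troupes} that insertion and decomposition are inverse operations, together with the observation that in every tree in sight the vertex at postorder position $d$ (resp.\ $j$) is precisely the one labeled $\pi_d$ (resp.\ $\pi_j$), so the label bookkeeping is consistent. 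I expect the main obstacle to be the one genuinely substantive verification: that the tree produced by $\Psi$ is actually \emph{decreasing}. This requires (i) that the root of $\mathcal T_2$ — which carries the largest label in $\mathcal T_2$ — has label less than $\pi_j$, so it may legitimately hang below $v^*$, and (ii) that the parent of $v$ in $\mathcal T_1$, if it exists, has label greater than $\pi_j$, so $v^*$ may be inserted above $v$. This is exactly where the right-bound hypothesis is used. Since $d$ is right-bound and there is a hook from $(d,\pi_d)$ to $(j,\pi_j)$, the point $(j,\pi_j)$ lies in the rightmost ascending run of $\pi$, so $j>d_k$ where $d_k$ is the last descent. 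For (i): an entry $\pi_i$ with $d<i<j$ and $\pi_i>\pi_j$ would be larger than $\pi_d$ and to the right of $\pi_d$, hence would lie in the increasing rightmost ascending run, forcing $i>d_k$ and then $\pi_i<\pi_j$, a contradiction; so every entry strictly between positions $d$ and $j$, including the root label of $\mathcal T_2$, is smaller than $\pi_j$. For (ii): the proper ancestors of $v$ in $\mathcal T_1$ occupy postorder positions greater than $d$, which in $\pi_1\cdots\pi_d\,\pi_{j+1}\cdots\pi_n$ are exactly the entries $\pi_{j+1},\dots,\pi_n$, all of which exceed $\pi_j$ since they appear later in the increasing ascending run. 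This establishes the bijection $\Phi$, and hence the theorem.
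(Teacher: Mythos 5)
Your proposal is correct and follows essentially the same route as the paper's own proof: decompose $\mathcal T$ at the parent of the vertex labeled $\pi_d$ (which is forced to be a black vertex with two children), identify the resulting postorders with $\pi_U^H$ and $\pi_S^H$, invert via insertion, invoke the right-bound hypothesis exactly where you do — to check that the reassembled tree is decreasing — and finish with insertion-additivity to factor the monomials. The only cosmetic difference is that you phrase the descent argument and the ancestor-label comparisons in terms of postorder positions rather than the paper's "appears to the right of" language; the substance is identical.
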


\begin{proof}
Let $\pi=\pi_1\cdots\pi_n$. Let $X$ be the set of entries in $\pi$, and let $m=\max(X)$. If $\SW_d(\pi)$ is empty, then $\pi_n\neq m$. In this case, $\mathcal P^{-1}(\pi)\cap\mathsf{D}{\bf T}$ is empty, so both sides of the desired equation are $0$. Thus, we may assume $\SW_d(\pi)$ is nonempty. This implies that $\pi_d<m$. 

Suppose $\mathcal T\in \mathcal P^{-1}(\pi)\cap\mathsf{D}{\bf T}$, and let $v$ be the vertex of $\mathcal T$ with the label $\pi_d$. Because $\pi_d<m$, the vertex $v$ is not the root of $\mathcal T$. Thus, $v$ has a parent $v^*$. Let $\pi_j$ be the label of $v^*$. Note that $\pi_d<\pi_j$ because $\mathcal T$ is decreasing. Because $d$ is a descent of $\pi$, we know that $j\neq d+1$. It follows from the fact that $\pi=\mathcal P(\mathcal T)$ that $d+1<j$, that $v$ is a left child of $v^*$, and that $v^*$ has a nonempty right subtree $\mathcal T_S$. Furthermore, $(j,\pi_j)$ is the northeast endpoint of a hook $H\in \SW_d(\pi)$. Let $\mathcal T_U^*$ be the tree obtained from $\mathcal T$ by deleting $\mathcal T_S$. There is a left edge in $\mathcal T_U^*$ connecting $v$ to its parent $v^*$; contract this edge into a single vertex $v$ (with the same color as the original $v$), and give this vertex the label $\pi_d$. This produces a decreasing colored binary plane tree $\mathcal T_U$. 

The procedure we just described for producing the decreasing colored binary plane trees $\mathcal T_U$ and $\mathcal T_S$ is exactly the same as the procedure used to decompose $\skel(\mathcal T)$ at the vertex $v^*$ (note that $v^*$ is black because it is a vertex with $2$ children in the tree $\skel(\mathcal T)$, which is in the troupe ${\bf T}$). Hence, \[\Delta_{v^*}(\skel(\mathcal T))=(\skel(\mathcal T_U),\skel(\mathcal T_S)).\] The assumption that $\mathcal T\in\mathsf D{\bf T}$ tells us that $\skel(\mathcal T)\in{\bf T}$. Because the troupe ${\bf T}$ is decomposition-closed, the trees $\skel(\mathcal T_U)$ and $\skel(\mathcal T_S)$ are also in ${\bf T}$. Thus, $\mathcal T_U$ and $\mathcal T_S$ are in $\mathsf D{\bf T}$. Comparing the above decomposition procedure with the definition of the postorder reading, we find that $\mathcal P(\mathcal T_U)=\pi_U^H$ and $\mathcal P(\mathcal T_S)=\pi_S^H$. Consequently, $\mathcal T_U\in \mathcal P^{-1}(\pi_U^H)\cap\mathsf D{\bf T}$ and $\mathcal T_S\in \mathcal P^{-1}(\pi_S^H)\cap\mathsf D{\bf T}$.

This process is reversible. Suppose we are given the hook $H\in\SW_d(\pi)$ with northeast endpoint $(j,\pi_j)$ along with the trees $\mathcal T_U\in \mathcal P^{-1}(\pi_U^H)\cap\mathsf D{\bf T}$ and $\mathcal T_S\in \mathcal P^{-1}(\pi_S^H)\cap\mathsf D{\bf T}$. Let $v$ be the vertex in $\mathcal T_U$ with label $\pi_d$. The trees $\skel(\mathcal T_U)$ and $\skel(\mathcal T_S)$ are in ${\bf T}$. Because ${\bf T}$ is a troupe, $\nabla_v(\skel(\mathcal T_U),\skel(\mathcal T_S))\in{\bf T}$. Let $v^*$ be the parent of $v$ in $\nabla_v(\skel(\mathcal T_U),\skel(\mathcal T_S))$. Every vertex in $\nabla_v(\skel(\mathcal T_U),\skel(\mathcal T_S))$ other than $v^*$ is a vertex in either $\mathcal T_U$ or $\mathcal T_S$. Let us give $v^*$ the label $\pi_j$ and give each of the other vertices in $\nabla_v(\skel(\mathcal T_U),\skel(\mathcal T_S))$ the same label that it has in either $\mathcal T_U$ or $\mathcal T_S$. This produces a new labeled colored binary plane tree $\mathcal T$ satisfying $\skel(\mathcal T)=\nabla_v(\skel(\mathcal T_U),\skel(\mathcal T_S))$. Note that $\mathcal T_S$ is the right subtree of $v^*$ in $\mathcal T$. Let $\mathcal T_U^*$ be the tree obtained by deleting $\mathcal T_S$ from $\mathcal T$. Our goal is to show that $\mathcal T$ is a decreasing colored binary plane tree. In order to do this, it suffices to show that $\mathcal T_S$ and $\mathcal T_U^*$ are decreasing and that the label of the root of $\mathcal T_S$ is less than the label $\pi_j$ of $v^*$. 

We are given that $\mathcal T_U$ and $\mathcal T_S$ are decreasing colored binary plane trees; we want to check that $\mathcal T_U^*$ is also. The only thing we need to verify is that the label $\pi_j$ of $v^*$ is smaller than the label of the parent of $v^*$ in $\mathcal T_U^*$ if $v^*$ has a parent. Suppose $v^*$ does have a parent $u$ in $\mathcal T_U^*$, and let $a$ be its label. Since $u$ is the parent of $v$ in $\mathcal T_U$, we know that $a$ is greater than $\pi_d$ and appears to the right of $\pi_d$ in the permutation $\mathcal P(\mathcal T_U)=\pi_U^H$. Every entry that appears to the right of $\pi_d$ in $\pi_U^H$ must appear to the right of $\pi_j$ in $\pi$. It is at this point that we use the crucial hypothesis that $d$ is a right-bound descent. Indeed, this hypothesis implies that $\pi_j$ is in the rightmost ascending run of $\pi$, so it is less than every entry that appears to its right in $\pi$. In particular, this means that $\pi_j<a$, as desired. 

It now remains to check that the label of the root of $\mathcal T_S$ is less than $\pi_j$. This label is an entry in the permutation $\pi_S^H$, whose plot lies below $H$ in the plot of $\pi$ (because $d$ is right-bound). Consequently, this label is less than $\pi_j$. It follows from our construction that $\mathcal P(\mathcal T)=\pi$, so we have shown that $\mathcal T\in \mathcal P^{-1}(\pi)\cap\mathsf D{\bf T}$. 

The above argument shows that there is a bijection \[\varphi:\bigcup_{H\in\SW_d(\pi)}(\mathcal P^{-1}(\pi_U^H)\cap\mathsf{D}{\bf T})\times(\mathcal P^{-1}(\pi_S^H)\cap\mathsf{D}{\bf T})\to \mathcal P^{-1}(\pi)\cap\mathsf{D}{\bf T}\] such that \[\skel(\varphi(\mathcal T_U,\mathcal T_S))=\nabla_v(\skel(\mathcal T_U),\skel(\mathcal T_S)),\] where $v$ is the vertex with label $\pi_d$. For each $i\in[r]$, we have \[\ddot f_i(\varphi(\mathcal T_U,\mathcal T_S))=f_i(\skel(\varphi(\mathcal T_U,\mathcal T_S)))=f_i(\nabla_v(\skel(\mathcal T_U),\skel(\mathcal T_S)))=f_i(\skel(\mathcal T_U))+f_i(\skel(\mathcal T_S))\] \[=\ddot f_i(\mathcal T_U)+\ddot f_i(\mathcal T_S),\] where we have used the definition of $\ddot f_i$ and the assumption that $f_i$ is insertion-additive. This completes the proof.  
\end{proof}

\begin{example}
We illustrate the proof of Theorem~\ref{Thm17} in the specific case in which ${\bf T}$ is the troupe $\mathsf{BPT}$ and $\pi=426315789$ is the permutation shown in Figure~\ref{Fig4}. Let $d=3$ (so $\pi_d=6$). One element of $\mathcal P^{-1}(\pi)\cap\mathsf{DBPT}$ is the tree
\[\mathcal T=\begin{array}{l}\includegraphics[height=2.3cm]{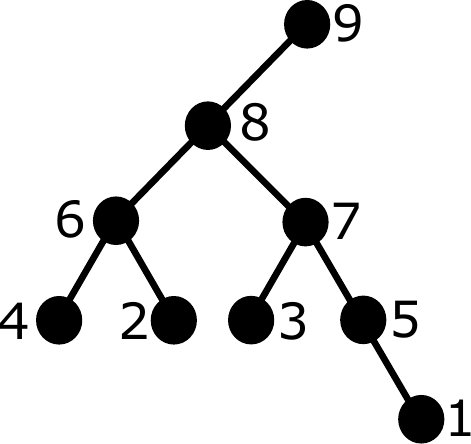}\end{array}.\] Here, $v$ is the vertex with label $6$. The parent $v^*$ of $v$ has label $\pi_j=8$, so $H$ is the hook shown in Figure~\ref{Fig4}. The corresponding unsheltered and sheltered subpermutations are $\pi_U^H=4269$ and $\pi_S^H=3157$. Now observe that \[\mathcal T_U=\begin{array}{l}\includegraphics[height=1.3cm]{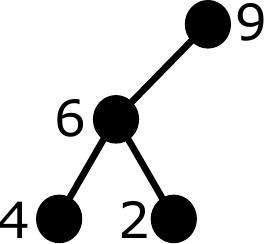}\end{array}\in\mathcal P^{-1}(\pi_U^H)\cap\mathsf{DBPT}\quad\text{and}\quad \mathcal T_S=\begin{array}{l}\includegraphics[height=1.35cm]{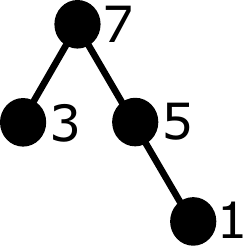}\end{array}\in\mathcal P^{-1}(\pi_S^H)\cap\mathsf{DBPT}. \qedhere\]   
\end{example}

\begin{corollary}[Tree Decomposition Lemma]\label{Cor1}
Let ${\bf T}$ be a troupe. If $d$ is a right-bound descent of a nonempty permutation $\pi$, then \[|\mathcal P^{-1}(\pi)\cap\mathsf D{\bf T}|=\sum_{H\in\SW_d(\pi)}|\mathcal P^{-1}(\pi_U^H)\cap\mathsf D{\bf T}|\cdot|\mathcal P^{-1}(\pi_S^H)\cap\mathsf D{\bf T}|.\]
\end{corollary}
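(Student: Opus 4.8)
The plan is to obtain the Tree Decomposition Lemma (Corollary~\ref{Cor1}) as an immediate specialization of the Refined Tree Decomposition Lemma (Theorem~\ref{Thm17}), since the unrefined statement is literally what one gets by ignoring the tree statistics. Concretely, I would apply Theorem~\ref{Thm17} with $r=1$ and $f_1$ taken to be the identically-zero tree statistic (equivalently, any fixed insertion-additive statistic whose contribution we will kill by substitution). One must first check that the zero function $f\colon\mathsf{CBPT}\to\mathbb C$, $f(T)=0$, is insertion-additive: for nonempty $T_1,T_2$ and a vertex $v$ of $T_1$ we trivially have $f(\nabla_v(T_1,T_2))=0=f(T_1)+f(T_2)$. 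Hence Theorem~\ref{Thm17} applies with this $f_1$ and a single variable $x_1$.

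With that choice, every exponent $\ddot f_1(\mathcal T)=f_1(\skel(\mathcal T))$ equals $0$, so each monomial $x_1^{\ddot f_1(\mathcal T)}$ is simply $1$. The left-hand side of the displayed identity in Theorem~\ref{Thm17} therefore collapses to $\sum_{\mathcal T\in\mathcal P^{-1}(\pi)\cap\mathsf D{\bf T}}1=|\mathcal P^{-1}(\pi)\cap\mathsf D{\bf T}|$, and similarly each inner sum on the right-hand side collapses to the cardinality $|\mathcal P^{-1}(\pi_U^H)\cap\mathsf D{\bf T}|$ or $|\mathcal P^{-1}(\pi_S^H)\cap\mathsf D{\bf T}|$. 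Since the outer sum over $H\in\SW_d(\pi)$ is unchanged, the identity of Theorem~\ref{Thm17} becomes exactly
\[
|\mathcal P^{-1}(\pi)\cap\mathsf D{\bf T}|=\sum_{H\in\SW_d(\pi)}|\mathcal P^{-1}(\pi_U^H)\cap\mathsf D{\bf T}|\cdot|\mathcal P^{-1}(\pi_S^H)\cap\mathsf D{\bf T}|,
\]
which is the claim. Alternatively, and perhaps cleanest to state, one can keep any insertion-additive $f_1$ (for instance $T\mapsto\des(T)+1$) and simply substitute $x_1=1$ in the polynomial identity of Theorem~\ref{Thm17}; specializing a variable to $1$ in an identity of formal power series is legitimate here because all the sums involved are finite (each permutation has finitely many trees in its postorder preimage), so no convergence issue arises.

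There is essentially no obstacle: the only thing to verify is the bookkeeping that the substitution (or the zero-statistic choice) turns the weighted sums into plain counts, and that requires only the observation that $\sum_j a_j x^j\big|_{x=1}=\sum_j a_j$ for the finitely supported coefficient sequences in play. If one prefers an entirely self-contained argument that does not even invoke Theorem~\ref{Thm17}, one can instead reread the proof of Theorem~\ref{Thm17} and note that it already constructs an explicit bijection
\[
\varphi\colon\bigcup_{H\in\SW_d(\pi)}\bigl(\mathcal P^{-1}(\pi_U^H)\cap\mathsf D{\bf T}\bigr)\times\bigl(\mathcal P^{-1}(\pi_S^H)\cap\mathsf D{\bf T}\bigr)\;\longrightarrow\;\mathcal P^{-1}(\pi)\cap\mathsf D{\bf T};
\]
taking cardinalities of both sides of this bijection, and using that the sets in the disjoint union are pairwise disjoint (they sit over distinct hooks $H$, which have distinct northeast endpoints), yields the corollary directly. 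I would present the one-line deduction from Theorem~\ref{Thm17} as the proof, since it is the shortest and the bijective content has already been established.
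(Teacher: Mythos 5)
Your proposal is correct and matches the paper exactly: the paper's proof of Corollary~\ref{Cor1} is simply ``Set $x_1=\cdots=x_r=1$ in Theorem~\ref{Thm17}.'' Your extra justifications (the zero statistic, the finiteness of the sums, the underlying bijection) are all valid but not needed beyond the one-line specialization.
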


\begin{proof}
Set $x_1=\cdots=x_r=1$ in Theorem~\ref{Thm17}. 
\end{proof}

The following corollary is a mild generalization  of the Refined Decomposition Lemma from \cite{DefantCounting} (the original formulation used ``tail-bound descents,'' which are less general than right-bound descents). Recall that $s$ denotes the stack-sorting map. 

\begin{corollary}[Refined Decomposition Lemma]\label{Cor2}
If $d$ is a right-bound descent of a nonempty permutation $\pi$, then \[\sum_{\sigma\in s^{-1}(\pi)}x_1^{\des(\sigma)+1}x_2^{\peak(\sigma)+1}\] \[=\sum_{H\in\SW_d(\pi)}\left(\sum_{\mu\in s^{-1}(\pi_U^H)}x_1^{\des(\mu)+1}x_2^{\peak(\mu)+1}\right)\left(\sum_{\lambda\in s^{-1}(\pi_S^H)}x_1^{\des(\lambda)+1}x_2^{\peak(\lambda)+1}\right).\]
\end{corollary}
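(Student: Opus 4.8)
The plan is to deduce Corollary~\ref{Cor2} directly from Theorem~\ref{Thm17} by specializing the troupe to ${\bf T} = \mathsf{BPT}$ and the insertion-additive tree statistics to the two examples worked out in Example~\ref{Exam1}, namely $f_1(T) = \des(T)+1$ and $f_2(T) = \peak(T)+1$. The first step is to recall the identities $s = \mathcal{P}\circ\mathcal{I}^{-1}$ and, consequently, $|s^{-1}(\pi)| = |\mathcal{P}^{-1}(\pi)\cap\mathsf{DBPT}|$ from \eqref{Eq11} and \eqref{Eq12}. More precisely, $\mathcal{I}^{-1}$ furnishes a bijection between permutations of a finite set $X$ and trees in $\mathsf{DBPT}$ on $X$, so $\sigma\mapsto\mathcal{I}^{-1}(\sigma)$ restricts to a bijection $s^{-1}(\pi)\to\mathcal{P}^{-1}(\pi)\cap\mathsf{DBPT}$.

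The second step is to check that this bijection carries the permutation statistics $\des$ and $\peak$ over to the corresponding tree statistics. This is exactly the content of the first part of Example~\ref{Exam1}: for any decreasing binary plane tree $\mathcal{T}$, the number of right edges of $\mathcal{T}$ equals $\des(\mathcal{I}(\mathcal{T}))$ and the number of vertices of $\mathcal{T}$ with two children equals $\peak(\mathcal{I}(\mathcal{T}))$. Applying this with $\mathcal{T} = \mathcal{I}^{-1}(\sigma)$ gives $\ddot{f_1}(\mathcal{I}^{-1}(\sigma)) = \des(\sigma)+1$ and $\ddot{f_2}(\mathcal{I}^{-1}(\sigma)) = \peak(\sigma)+1$, where $\ddot{f_i}(\mathcal{T}) = f_i(\skel(\mathcal{T}))$ and $\des$, $\peak$ of a tree depend only on its skeleton.

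The third step is to rewrite each of the three sums in Theorem~\ref{Thm17} (taken over $\mathcal{P}^{-1}(\pi)\cap\mathsf{DBPT}$, $\mathcal{P}^{-1}(\pi_U^H)\cap\mathsf{DBPT}$, and $\mathcal{P}^{-1}(\pi_S^H)\cap\mathsf{DBPT}$ respectively) as the corresponding sum over $s^{-1}$ of the relevant permutation, using the bijection from steps one and two to reindex. Since $\mathsf{BPT}$ is a troupe (Example~\ref{ExamBinary}) and $f_1$, $f_2$ are insertion-additive (Example~\ref{Exam1}), Theorem~\ref{Thm17} applies verbatim with $r=2$, and the substitution turns its identity into precisely the asserted formula. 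There is essentially no obstacle here: the only point requiring any care is the bookkeeping verification in step two that $\des$ and $\peak$ of a permutation match the right-edge count and the two-children-vertex count of the associated decreasing tree, and that fact is already supplied (and explained) in Example~\ref{Exam1}. The entire argument is therefore a one-paragraph specialization, and the proof can legitimately read ``Apply Theorem~\ref{Thm17} with ${\bf T} = \mathsf{BPT}$, $f_1(T) = \des(T)+1$, $f_2(T) = \peak(T)+1$, and use \eqref{Eq11} together with Example~\ref{Exam1}.''
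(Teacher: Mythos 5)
Your proposal is correct and matches the paper's own proof: both apply Theorem~\ref{Thm17} with ${\bf T}=\mathsf{BPT}$ and the statistics $f_1(T)=\des(T)+1$, $f_2(T)=\peak(T)+1$, then use the in-order bijection $\mathcal I:\mathcal P^{-1}(\tau)\cap\mathsf{DBPT}\to s^{-1}(\tau)$ from \eqref{Eq11} together with the statistic correspondences from Example~\ref{Exam1} to reindex all three sums. Nothing is missing.
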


\begin{proof}
We can apply Theorem~\ref{Thm17} with ${\bf T}=\mathsf{BPT}$. It follows from \eqref{Eq11} that for every permutation $\tau$, the in-order reading $\mathcal I:\mathcal P^{-1}(\tau)\cap\mathsf{DBPT}\to s^{-1}(\tau)$ is a bijection. Let $f_1$ and $f_2$ be the tree statistics given by $f_1(T)=\des(T)+1$ and $f_2(T)=\peak(T)+1$, as defined in Example~\ref{Exam1}. As mentioned in that example, we have $\ddot f_1(\mathcal T)=\des(\mathcal I(\mathcal T))+1$ and $\ddot f_2(\mathcal T)=\peak(\mathcal I(\mathcal T))+1$ for every $\mathcal T\in\mathsf{DBPT}$. The proof now follows from Theorem~\ref{Thm17}. 
\end{proof}

\begin{corollary}[Decomposition Lemma]\label{Cor3}
If $d$ is a right-bound descent of a nonempty permutation $\pi$, then \[|s^{-1}(\pi)|=\sum_{H\in\SW_d(\pi)}|s^{-1}(\pi_U^H)|\cdot|s^{-1}(\pi_S^H)|.\] 
\end{corollary}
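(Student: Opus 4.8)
The plan is to obtain the Decomposition Lemma (Corollary~\ref{Cor3}) as an immediate specialization of the Tree Decomposition Lemma (Corollary~\ref{Cor1}). Recall from \eqref{Eq11} and \eqref{Eq12} that $s=\mathcal P\circ\mathcal I^{-1}$, and that for any permutation $\tau$ the in-order reading $\mathcal I$ restricts to a bijection $\mathcal P^{-1}(\tau)\cap\mathsf{DBPT}\to s^{-1}(\tau)$; in particular $|s^{-1}(\tau)|=|\mathcal P^{-1}(\tau)\cap\mathsf{DBPT}|$. So the first (and essentially only) step is to apply Corollary~\ref{Cor1} with the troupe ${\bf T}=\mathsf{BPT}$, which is a troupe by Example~\ref{ExamBinary}.

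Concretely, I would argue as follows. Since $d$ is a right-bound descent of the nonempty permutation $\pi$, Corollary~\ref{Cor1} gives
\[|\mathcal P^{-1}(\pi)\cap\mathsf{DBPT}|=\sum_{H\in\SW_d(\pi)}|\mathcal P^{-1}(\pi_U^H)\cap\mathsf{DBPT}|\cdot|\mathcal P^{-1}(\pi_S^H)\cap\mathsf{DBPT}|.\]
Now rewrite each of the three counts using $|\mathcal P^{-1}(\tau)\cap\mathsf{DBPT}|=|s^{-1}(\tau)|$, valid for $\tau=\pi$, $\tau=\pi_U^H$, and $\tau=\pi_S^H$. This yields exactly
\[|s^{-1}(\pi)|=\sum_{H\in\SW_d(\pi)}|s^{-1}(\pi_U^H)|\cdot|s^{-1}(\pi_S^H)|,\]
as claimed. (One can equally well deduce this from Corollary~\ref{Cor2} by setting $x_1=x_2=1$, but going directly through Corollary~\ref{Cor1} is cleanest.)

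There is essentially no obstacle here: the entire content has already been done in the proof of Theorem~\ref{Thm17}, and the only subtlety — that the correspondence between decreasing binary plane trees with a given postorder reading and permutations with that stack-sorting image is a genuine bijection via $\mathcal I$ — is exactly \eqref{Eq11}--\eqref{Eq12}, recorded earlier. The only thing worth double-checking is that one is allowed to invoke $|s^{-1}(\tau)|=|\mathcal P^{-1}(\tau)\cap\mathsf{DBPT}|$ even when $\tau$ is a permutation of a set of positive integers that is not $[k]$ (as happens for $\pi_U^H$ and $\pi_S^H$); but this is immediate since in-order readings and the identity $s=\mathcal P\circ\mathcal I^{-1}$ make sense for permutations of arbitrary finite sets of positive integers, and passing to the standardization changes nothing.
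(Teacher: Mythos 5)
Your proof is correct and is essentially the paper's argument: the paper obtains Corollary~\ref{Cor3} by setting $x_1=x_2=1$ in Corollary~\ref{Cor2}, which in turn is Theorem~\ref{Thm17} specialized to ${\bf T}=\mathsf{BPT}$ together with the in-order bijection $\mathcal I:\mathcal P^{-1}(\tau)\cap\mathsf{DBPT}\to s^{-1}(\tau)$, exactly the two ingredients you use (you merely perform the two specializations in the opposite order). Your closing remark about permutations of arbitrary finite sets of positive integers is a sensible sanity check and matches the paper's conventions.
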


\begin{proof}
Set $x_1=x_2=1$ in Corollary~\ref{Cor2}. 
\end{proof}

\begin{remark}
One of the useful aspects of the Refined Tree Decomposition Lemma and its corollaries is that they give us the freedom to choose \emph{any} right-bound descent $d$ of the permutation $\pi$. In the applications of the Decomposition Lemma in \cite{DefantCounting, DefantEnumeration}, it is most convenient to choose a right-bound descent $d$ such that $(d,\pi_d)$ is the highest point in the plot of $\pi$ that does not lie in the ``tail'' of $\pi$. By contrast, we will find it useful in the next section to take $d$ to be the largest descent of $\pi$. 
\end{remark}

\section{The Refined Tree Fertility Formula}\label{Sec:TreeFertility}
Let us fix a troupe ${\bf T}$, and let ${\bf T}_n$ denote the set of trees in ${\bf T}$ with $n$ vertices. Let us also fix insertion-additive tree statistics $f_1,\ldots,f_r$ and variables $x_1,\ldots,x_r$. Let \[{\bf G}_n(x_1,\ldots,x_r)=\sum_{T\in{\bf T}_n}x_1^{f_1(T)}\cdots x_r^{f_r(T)}.\] 
In many applications, the polynomials ${\bf G}_n(x_1,\ldots,x_r)$ can be computed using standard combinatorial methods. The basic idea of this section is to iteratively apply the Refined Tree Decomposition Lemma in order to express \[\sum_{\mathcal T\in \mathcal P^{-1}(\pi)\cap\mathsf D{\bf T}}x_1^{\ddot f_1(\mathcal T)}\cdots x_r^{\ddot f_r(\mathcal T)}\] as a sum of products of polynomials ${\bf G}_{q_t}(x_1,\ldots,x_r)$, where the sum ranges over valid hook configurations. A different sum over valid hook configurations will appear in the VHC Cumulant Formula, allowing us to connect cumulants with trees. We begin by handling the case in which $\pi$ is an increasing permutation. 

\begin{lemma}\label{Lem2}
If $\pi$ is an increasing permutation of length $n$, then \[\sum_{\mathcal T\in \mathcal P^{-1}(\pi)\cap\mathsf D{\bf T}}x_1^{\ddot f_1(\mathcal T)}\cdots x_r^{\ddot f_r(\mathcal T)}={\bf G}_n(x_1,\ldots,x_r).\] 
\end{lemma}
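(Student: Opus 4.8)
The statement to prove is Lemma~\ref{Lem2}: when $\pi = 12\cdots n$ is the increasing permutation, the weighted count of decreasing trees $\mathcal T \in \mathcal P^{-1}(\pi) \cap \mathsf D{\bf T}$ equals ${\bf G}_n(x_1,\ldots,x_r)$, the weighted count of \emph{all} skeletons in ${\bf T}_n$. The natural approach is to exhibit a weight-preserving bijection between $\mathcal P^{-1}(12\cdots n) \cap \mathsf D{\bf T}$ and ${\bf T}_n$. First I would identify which labeled trees have postorder reading equal to the increasing permutation $12\cdots n$. I claim this forces the labeling to be completely rigid: there is exactly one decreasing labeling of any given skeleton $T \in {\bf T}_n$ whose postorder reading is $12\cdots n$. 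Indeed, the postorder reading lists the labels of a tree in a fixed order determined by the skeleton (left subtree, then right subtree, then root, recursively), so demanding that this list be $1, 2, \ldots, n$ pins down the label of every vertex once the skeleton is fixed. One then checks that this forced labeling is automatically decreasing: in postorder, every vertex appears after all of its descendants, so if the $i$-th vertex visited gets label $i$, then a parent (visited later) always gets a larger label than each of its children — which is exactly the decreasing condition.

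The key steps, in order, are: (1) Define the map $\mathcal T \mapsto \skel(\mathcal T)$ from $\mathcal P^{-1}(12\cdots n) \cap \mathsf D{\bf T}$ to ${\bf T}_n$; it lands in ${\bf T}_n$ by definition of $\mathsf D{\bf T}$. (2) Show this map is a bijection by constructing its inverse: given $T \in {\bf T}_n$, label its vertices $1, \ldots, n$ in the order they are visited by the postorder traversal, producing a labeled tree $\mathcal T$ with $\mathcal P(\mathcal T) = 12\cdots n$; argue $\mathcal T$ is decreasing as above, so $\mathcal T \in \mathsf D{\bf T}$. (3) Check that $\skel$ and this labeling procedure are mutually inverse — one direction is immediate since relabeling does not change the skeleton, and the other is the uniqueness observation that the postorder-increasing labeling of a fixed skeleton is unique. (4) Observe that the bijection is weight-preserving: $\ddot f_i(\mathcal T) = f_i(\skel(\mathcal T))$ by definition, so $x_1^{\ddot f_1(\mathcal T)} \cdots x_r^{\ddot f_r(\mathcal T)} = x_1^{f_1(\skel(\mathcal T))} \cdots x_r^{f_r(\skel(\mathcal T))}$, and summing over the two sides of the bijection gives the claimed identity.

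There is essentially no serious obstacle here; the lemma is a base case for the iterative application of the Refined Tree Decomposition Lemma, and its content is the elementary fact that the increasing permutation has a unique decreasing tree of each skeleton type over it. The only point requiring a small amount of care is step (2) — verifying that the postorder labeling of an arbitrary skeleton really is decreasing. This hinges on the defining recursion $\mathcal P(\mathcal T) = \mathcal P(\mathcal T_L)\,\mathcal P(\mathcal T_R)\,m$: a vertex $u$ that is a descendant of a vertex $w$ lies in a subtree that is fully traversed before $w$'s label is emitted, so $u$ receives a strictly smaller label than $w$. One could alternatively phrase the whole argument via the poset-linear-extension language already set up in the paper (a decreasing tree is a pair $(T, L)$ with $L$ a linear extension of the tree poset), noting that postorder is itself one particular linear extension of that poset, and that $\mathcal P(\mathcal T) = 12\cdots n$ forces $L$ to be precisely that postorder linear extension. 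Either way the argument is short, and I would keep the exposition at the level of "the forced labeling exists, is unique, and is decreasing," since the reader has already seen postorder readings worked out explicitly in Figure~\ref{Fig7}.
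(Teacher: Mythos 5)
Your proof is correct and matches the paper's own argument: both use the fact that $\skel$ is a weight-preserving injection into ${\bf T}_n$ whose inverse, for increasing $\pi$, is the unique postorder-consistent labeling, which is automatically decreasing because postorder visits every vertex after all of its descendants. The only cosmetic difference is that you write $\pi=12\cdots n$, whereas the paper allows $\pi$ to be an increasing permutation of an arbitrary set of positive integers; the argument is unchanged if you label the $i^{\text{th}}$ vertex visited in postorder with the $i^{\text{th}}$ entry of $\pi$.
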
 

\begin{proof}
If $\pi$ is any permutation (not necessarily increasing), then the map $\skel:\mathcal P^{-1}(\pi)\cap\mathsf D{\bf T}\to {\bf T}_n$ is an injection satisfying $\ddot f_i(\mathcal T)=f_i(\skel(\mathcal T))$ for all $i\in[r]$ and $\mathcal T\in\mathcal P^{-1}(\pi)\cap\mathsf D{\bf T}$. The lemma follows from the observation that this map is bijective when $\pi$ is increasing. Indeed, suppose $T\in{\bf T}_n$. There is a unique way to label the vertices of $T$ to obtain a labeled colored binary plane tree $\mathcal T$ with postorder reading $\pi$. Using the fact that $\pi$ is increasing, one can straightforwardly show that $\mathcal T$ is in fact decreasing. 
\end{proof}

\noindent {\bf Iterative Decomposition Procedure.} 
Suppose $\pi$ is a permutation with descents $d_1<\cdots< d_k$, where $k=\des(\pi)\geq 1$. The descent $d_k$ is necessarily right-bound. Therefore, the Tree Decomposition Lemma (Corollary~\ref{Cor1}) tells us that choosing a tree in $\mathcal P^{-1}(\pi)\cap\mathsf D{\bf T}$ is equivalent to choosing a hook $H_k\in\SW_{d_k}(\pi)$ and then choosing trees $\mathcal T_U^{(k)}\in \mathcal P^{-1}(\pi_U^{H_k})\cap\mathsf D{\bf T}$ and $\mathcal T_S^{(k)}\in \mathcal P^{-1}(\pi_S^{H_k})\cap\mathsf D{\bf T}$. For notational convenience, let $\pi=\pi^{(k)}$ and $\pi^{(k-1)}=\pi_U^{H_k}=(\pi^{(k)})_U^{H_k}$. Notice that if $k\geq 2$, then $d_{k-1}$ is the largest descent of $\pi^{(k-1)}$ (in particular, it is right-bound). This means that we can invoke the Tree Decomposition Lemma once again to see that choosing $\mathcal T_U^{(k)}$ is equivalent to choosing a hook $H_{k-1}\in \SW_{d_{k-1}}(\pi^{(k-1)})$ and then choosing trees $\mathcal T_U^{(k-1)}\in \mathcal P^{-1}((\pi^{(k-1)})_U^{H_{k-1}})\cap\mathsf D{\bf T}$ and $\mathcal T_S^{(k-1)}\in \mathcal P^{-1}((\pi^{(k-1)})_S^{H_{k-1}})\cap\mathsf D{\bf T}$. Let $\pi^{(k-2)}=(\pi^{(k-1)})_U^{H_{k-1}}$. We can repeat this process by always choosing $H_i\in\SW_{d_i}(\pi^{(i)})$ and setting $\pi^{(i-1)}=(\pi^{(i)})_U^{H_i}$. We do this either until it is impossible to choose $H_i$ because $\SW_{d_i}(\pi^{(i)})=\emptyset$ or until we obtain a permutation $\pi^{(0)}$. Note that if we do reach a point where $\SW_{d_i}(\pi^{(i)})=\emptyset$ for some $i\in [k]$, then $\mathcal P^{-1}(\pi^{(i)})\cap\mathsf D{\bf T}=\emptyset$. 
\hspace*{\fill}$\lozenge$

We now see that the number of ways to choose a tree in $\mathcal P^{-1}(\pi)\cap\mathsf D{\bf T}$ is equal to the number of ways to do the following two tasks: 
\begin{itemize}
\item[($\dagger$)]\label{ItemI} Perform the Iterative Decomposition Procedure until producing a permutation $\pi^{(0)}$. 
\item[($\dagger\dagger$)]\label{ItemII} Choose trees $\mathcal T_U^{(1)}\in \mathcal P^{-1}(\pi^{(0)})\cap\mathsf D{\bf T}$ and $\mathcal T_S^{(i)}\in \mathcal P^{-1}((\pi^{(i)})_S^{H_i})\cap\mathsf D{\bf T}$ for all $i\in [k]$. 
\end{itemize} 

Suppose we have already performed Task ($\dagger$). We can naturally view the chosen hooks $H_1,\ldots,H_k$ as hooks of the original permutation $\pi$. By construction, the permutations $\pi^{(0)}$ and $(\pi^{(i)})_S^{H_i}$, which we can see as subpermutations of $\pi$, are each increasing. Let $q_0$ denote the length of $\pi^{(0)}$. For each $i\in[k]$, let $q_i$ denote the length of $(\pi^{(i)})_S^{H_i}$. According to Lemma~\ref{Lem2}, the number of ways to perform Task ($\dagger\dagger$) is given by the product $\prod_{t=0}^k{\bf G}_{q_t}(1,\ldots,1)$. In fact, we can say more by taking into account the statistics $f_1,\ldots,f_r$. It follows from the Refined Tree Decomposition Lemma (more precisely, the bijection used in its proof) and Lemma~\ref{Lem2} that $\sum x_1^{\ddot f_1(\mathcal T)}\cdots x_r^{\ddot f_r(\mathcal T)}=\prod_{t=0}^k{\bf G}_{q_t}(x_1,\ldots,x_r)$, where the sum ranges over all trees in $\mathcal P^{-1}(\pi)\cap\mathsf{D}{\bf T}$ that can be formed by performing Task ($\dagger\dagger$) in various ways (we still assume that we have already performed Task ($\dagger$)).   

\begin{example}\label{Exam3}
Consider the permutation $\pi=2\,7\,3\,5\,9\,10\,11\,4\,8\,1\,6\,12\,13\,14\,15\,16$, whose plot is shown in Figure~\ref{Fig8}. We have $k=\des(\pi)=3$; the descents of $\pi$ are $d_1=2$, $d_2=7$, and $d_3=9$. Let us begin the Iterative Decomposition Procedure by setting $\pi^{(3)}=\pi$ and choosing $H_3$ to be the hook in $\SW_9(\pi^{(3)})$ whose northeast endpoint has height $13$. We obtain the subpermutations $(\pi^{(3)})_S^{H_3}=1\,6\,12$ and $\pi^{(2)}=(\pi^{(3)})_U^{H_3}=2\,7\,3\,5\,9\,10\,11\,4\,8\,14\,15\,16$. We next choose $H_2$ to be the hook in $\SW_7(\pi^{(2)})$ whose northeast endpoint has height $15$; we can naturally identify this hook with the hook of $\pi$ labeled $H_2$ in Figure~\ref{Fig8}. We obtain the subpermutations $(\pi^{(2)})_S^{H_2}=4\,8\,14$ and $\pi^{(1)}=(\pi^{(2)})_U^{H_2}=2\,7\,3\,5\,9\,10\,11\,16$. Finally, choose $H_1$ to be the hook in $\SW_2(\pi^{(1)})$ whose northeast endpoint has height $11$; as before we can see $H_1$ as a hook of $\pi$. We obtain the subpermutations $(\pi^{(1)})_S^{H_1}=3\,5\,9\,10$ and $\pi^{(0)}=(\pi^{(1)})_U^{H_1}=2\,7\,16$. 

\begin{figure}[ht]
\begin{center}
\includegraphics[width=.5\linewidth]{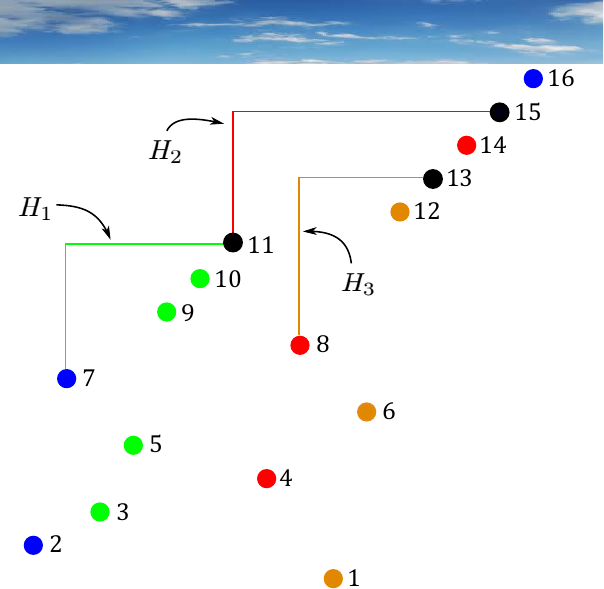}
\caption{Hooks that divide the plot of a permutation $\pi$ into smaller pieces.} 
\label{Fig8}
\end{center}  
\end{figure} 

In Figure~\ref{Fig8}, we have assigned different colors to the hooks $H_i$ and have colored the points of $(\pi^{(i)})_S^{H_i}$ with the same color as $H_i$. We have also drawn a sky above the entire diagram. For now, the sky is mostly decorative, but we will see later that it plays an important role. We have colored the points of $\pi^{(0)}$ blue to match the color of the sky. We have $q_0=3$, $q_1=4$, $q_2=3$, and $q_3=3$, so this particular choice of hooks accounts for a total of ${\bf G}_3(1,\ldots,1)^3{\bf G}_4(1,\ldots,1)$ of the trees in $\mathcal P^{-1}(\pi)\cap\mathsf D{\bf T}$. The sum of $x_1^{\ddot f_1(\mathcal T)}\cdots x_r^{\ddot f_r(\mathcal T)}$ over all such trees $\mathcal T$ is ${\bf G}_3(x_1,\ldots,x_r)^3{\bf G}_4(x_1,\ldots,x_r)$.   
\end{example} 

Suppose, as above, that we have a permutation $\pi$ with descents $d_1<\cdots<d_k$ and that we have performed Task ($\dagger$). For every $i\in[k]$, the southwest endpoint of the hook $H_i$ is the point $(d_i,\pi_{d_i})$. Observe that none of these hooks pass directly underneath any points in the plot of $\pi$. Furthermore, no two of these hooks intersect each other unless the southwest endpoint of one hook is the northeast endpoint of another. This leads us naturally to the definition of a valid hook configuration. 

\begin{definition}\label{Def5}
Let $\pi$ be a permutation with descents $d_1<\cdots <d_k$, where $k=\des(\pi)$. A \dfn{valid hook configuration} of $\pi$ is a tuple $\mathcal H=(H_1,\ldots,H_k)$ of hooks of $\pi$ that satisfy the following properties: 
\begin{enumerate}
\item \label{Item1} For each $i\in[k]$, the southwest endpoint of $H_i$ is $(d_i,\pi_{d_i})$. 
\item \label{Item2} No point in the plot of $\pi$ lies directly above a hook in $\mathcal H$. 
\item \label{Item3} No two hooks intersect or overlap each other unless the northeast endpoint of one is the southwest endpoint of the other. 
\end{enumerate}

Let $\VHC(\pi)$\label{sym:VHC} denote the set of valid hook configurations of $\pi$. More generally, for each set $S$ of permutations, let $\VHC(S)=\bigcup_{\pi\in S}\VHC(\pi)$. We make the convention that a valid hook configuration includes its underlying permutation as part of its identity. In other words, $\VHC(\pi)$ and $\VHC(\pi')$ are disjoint whenever $\pi\neq\pi'$. We also make the convention that if $\pi$ is monotonically increasing, then $\VHC(\pi)$ contains a single element: the empty valid hook configuration of $\pi$, which has no hooks. 
\end{definition} 

To build further intuition for this definition, consider Figure~\ref{Fig9}, which shows all six elements of $\VHC(3142567)$. 

\begin{figure}[ht]
\begin{center} 
\includegraphics[height=6.5cm]{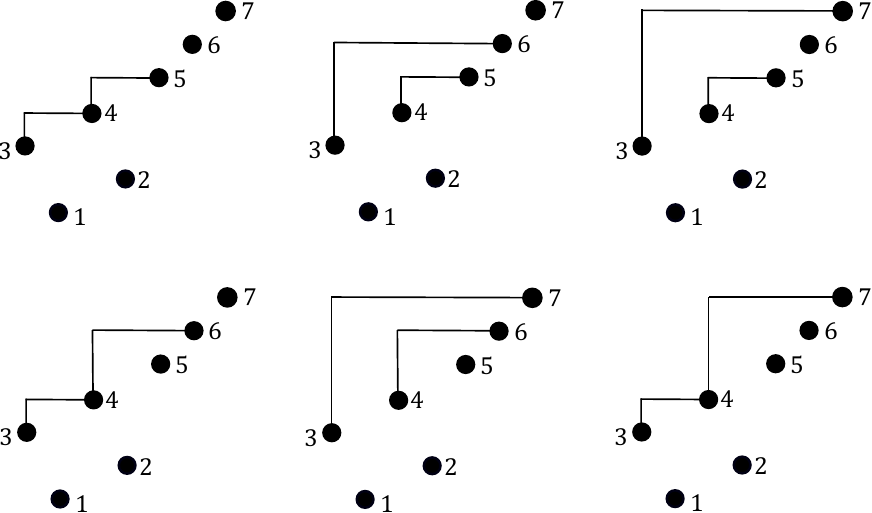}
\end{center}
\caption{The valid hook configurations of $3142567$.}\label{Fig9}
\end{figure}

We have observed that every configuration of hooks that arises by completing Task~($\dagger$) is a valid hook configuration. It is straightforward to check (for example, by induction), that every valid hook configuration of $\pi$ arises in this way. In other words, completing Task~($\dagger$) is equivalent to choosing an element of $\VHC(\pi)$. Once we have done this, the number of ways to complete Task~($\dagger\dagger$) is given by a product of numbers ${\bf G}_{q_t}(1,\ldots,1)$, as described above. We can obtain the specific numbers $q_t$ immediately from the given valid hook configuration as follows.  

Fix $\pi=\pi_1\cdots\pi_n$ with $\des(\pi)=k$. Each valid hook configuration $\mathcal H=(H_1,\ldots,H_k)\in\VHC(\pi)$ induces a coloring of the plot of $\pi$. To begin this coloring, draw a sky over the entire diagram and assign a color to the sky; in this article, we will always color the sky blue. Assign arbitrary distinct colors other than blue to the hooks $H_1,\ldots,H_k$. There are $k$ northeast endpoints of hooks, and these points remain uncolored. However, all of the other $n-k$ points will be colored. In order to decide how to color a point $(i,\pi_i)$ that is not a northeast endpoint, imagine that this point looks directly upward. If it sees a hook when looking upward, it receives the same color as the hook that it sees. If it does not see a hook, it must see the sky, so it receives the color blue. However, if $(i,\pi_i)$ is the southwest endpoint of a hook, then it must look around (on the left side of) the vertical part of that hook. 

Figure~\ref{Fig8} shows the coloring of the plot of a permutation induced by a valid hook configuration. Indeed, the points given the same color as the hook $H_i$ in this coloring correspond to the entries of the subpermutation $(\pi^{(i)})_S^{H_i}$ that appears in the Iterative Decomposition Procedure. Similarly, the points colored blue correspond to the entries of $\pi^{(0)}$. Keeping with the notation introduced earlier, we let $q_i$ denote the number of points given the same color as the hook $H_i$. Let $q_0$ be the number of blue points. Let ${\bf q}^{\mathcal H}$ denote the tuple $(q_0,\ldots,q_k)$. Observe that the point $(d_i+1,\pi_{d_i+1})$ is necessarily the same color as $H_i$, while $(1,\pi_1)$ is necessarily blue. This means that ${\bf q}^{\mathcal H}$ is a composition of $n-k$ into $k+1$ parts. We will find it convenient to write \[{\bf G}_{{\bf q}^{\mathcal H}}(x_1,\ldots,x_r)=\prod_{t=0}^k{\bf G}_{q_t}(x_1,\ldots,x_r).\]  

\begin{remark}\label{Rem1}
Let $\Comp_{k+1}(n-k)$ denote the set of compositions of $n-k$ into $k+1$ parts. Fix a permutation $\pi$ with $\des(\pi)=k$. Suppose $\mathcal H=(H_1,\ldots,H_k)\in\VHC(\pi)$ is such that ${\bf q}^{\mathcal H}=(q_0,\ldots,q_k)$. The hook $H_k$ is completely determined by the number $q_k$ and the permutation $\pi$. It then follows by induction on $\ell$ that the hooks $H_{k-\ell}$ for $1\leq \ell\leq k-1$ are also determined by the entries in $(q_0,\ldots,q_k)$. Thus, the map $\VHC(\pi)\to\Comp_{k+1}(n-k)$ given by $\mathcal H\mapsto{\bf q}^{\mathcal H}$ is injective. 
\end{remark} 

The previous discussion yields a formula for the number of trees in $\mathsf D{\bf T}$ with a prescribed postorder reading, counted according to the statistics $\ddot f_1,\ldots,\ddot f_r$. 

\begin{theorem}[Refined Tree Fertility Formula]\label{Thm22}
Let ${\bf T}$ be a troupe, and let $f_1,\ldots,f_r$ be insertion-additive tree statistics. For every permutation $\pi$, we have \[\sum_{\mathcal T\in\mathcal P^{-1}(\pi)\cap\mathsf D{\bf T}}x_1^{\ddot f_1(\mathcal T)}\cdots x_r^{\ddot f_r(\mathcal T)}=\sum_{\mathcal H\in\VHC(\pi)}{\bf G}_{{\bf q}^{\mathcal H}}(x_1,\ldots,x_r),\] where $\displaystyle {\bf G}_{{\bf q}^{\mathcal H}}(x_1,\ldots,x_r)=\prod_{t=0}^k{\bf G}_{q_t}(x_1,\ldots,x_r)=\prod_{t=0}^k\sum_{T\in{\bf T}_{q_t}}x_1^{f_1(T)}\cdots x_r^{f_r(T)}$ when ${\bf q}^{\mathcal H}=(q_0,\ldots,q_k)$.    
\end{theorem}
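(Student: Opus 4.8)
The plan is to prove the Refined Tree Fertility Formula by induction on $\des(\pi)$, using the Refined Tree Decomposition Lemma (Theorem~\ref{Thm17}) together with Lemma~\ref{Lem2} as the base case, and then checking that the valid hook configurations of $\pi$ decompose compatibly with the recursion. First I would treat the base case: if $\des(\pi)=0$ then $\pi$ is increasing, $\VHC(\pi)$ is by convention the single empty configuration $\mathcal H$ with ${\bf q}^{\mathcal H}=(n)$ (a composition of $n$ into one part), so the right-hand side is just ${\bf G}_n(x_1,\ldots,x_r)$, which equals the left-hand side by Lemma~\ref{Lem2}. (If $\pi$ is increasing but $\pi_n$ is not the maximum of its entry set the formula is vacuous since both sides are $0$; more honestly, for an increasing permutation $\pi_n$ is automatically the maximum, so this degenerate worry does not arise.)

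For the inductive step, suppose $\des(\pi)=k\geq 1$ with descents $d_1<\cdots<d_k$, and take $d=d_k$, which is right-bound. Applying Theorem~\ref{Thm17} with this choice of $d$ gives
\[
\sum_{\mathcal T\in\mathcal P^{-1}(\pi)\cap\mathsf D{\bf T}}x_1^{\ddot f_1(\mathcal T)}\cdots x_r^{\ddot f_r(\mathcal T)}
=\sum_{H\in\SW_{d_k}(\pi)}\left(\sum_{\mathcal T_U\in\mathcal P^{-1}(\pi_U^H)\cap\mathsf D{\bf T}}x_1^{\ddot f_1(\mathcal T_U)}\cdots\right)\left(\sum_{\mathcal T_S\in\mathcal P^{-1}(\pi_S^H)\cap\mathsf D{\bf T}}x_1^{\ddot f_1(\mathcal T_S)}\cdots\right).
\]
For each hook $H\in\SW_{d_k}(\pi)$, the sheltered subpermutation $\pi_S^H$ is increasing (since $d_k$ is the largest descent, everything between the southwest and northeast endpoints of $H$ lies below $H$ and to the right of $\pi_{d_k}$, hence is an increasing run — I would spell this out carefully), so by Lemma~\ref{Lem2} the second factor is ${\bf G}_{|\pi_S^H|}(x_1,\ldots,x_r)$. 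The unsheltered subpermutation $\pi_U^H$ has descents $d_1<\cdots<d_{k-1}$ (exactly the descents of $\pi$ other than $d_k$, now renumbered as positions in $\pi_U^H$), so $\des(\pi_U^H)=k-1$ and the inductive hypothesis applies: the first factor equals $\sum_{\mathcal H'\in\VHC(\pi_U^H)}{\bf G}_{{\bf q}^{\mathcal H'}}(x_1,\ldots,x_r)$.

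It remains to match the resulting double sum — over $H\in\SW_{d_k}(\pi)$ and over $\mathcal H'\in\VHC(\pi_U^H)$ — with the single sum over $\mathcal H\in\VHC(\pi)$. The key bijection is: a valid hook configuration $\mathcal H=(H_1,\ldots,H_k)$ of $\pi$ corresponds to the pair consisting of $H_k\in\SW_{d_k}(\pi)$ and the configuration $(H_1,\ldots,H_{k-1})$ of $\pi_U^H$, which is valid precisely because conditions \eqref{Item2} and \eqref{Item3} for $\mathcal H$ restrict to the corresponding conditions for $(H_1,\ldots,H_{k-1})$ on the smaller plot (and conversely, re-inserting $H_k$ over $\pi_U^{H_k}$ preserves validity — here one uses that $d_k$ is right-bound so that $H_k$ sits over the rightmost ascending run and no point of $\pi$ lies above it). Under this bijection, the blue-point count $q_0$ and the hook-color counts $q_1,\ldots,q_{k-1}$ of $\mathcal H$ agree with those of $\mathcal H'$ (the coloring of $\pi_U^H$ is the restriction of the coloring of $\pi$, since $H_k$ lies entirely to the right and below everything relevant), while the last entry $q_k$ of ${\bf q}^{\mathcal H}$ equals $|\pi_S^{H_k}|$; therefore ${\bf G}_{{\bf q}^{\mathcal H}}={\bf G}_{{\bf q}^{\mathcal H'}}\cdot{\bf G}_{|\pi_S^{H_k}|}$, which is exactly the product appearing above. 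Summing over everything finishes the induction.

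The main obstacle I anticipate is the bookkeeping in this last paragraph: one must verify both directions of the bijection $\VHC(\pi)\leftrightarrow\bigsqcup_{H\in\SW_{d_k}(\pi)}\{H\}\times\VHC(\pi_U^H)$, in particular that the no-point-above condition \eqref{Item2} and the non-overlap condition \eqref{Item3} for $\mathcal H$ are \emph{equivalent} to (not merely implied by) the corresponding conditions for the restricted configuration together with the validity of $H_k$ alone — and that the induced colorings are compatible so that the composition ${\bf q}^{\mathcal H}$ really concatenates as claimed. Much of this is already implicit in the ``Iterative Decomposition Procedure'' discussion and Remark~\ref{Rem1}, so one option is to invoke that discussion directly rather than re-proving it; I would aim to phrase the argument so that the formula falls out of the already-established equivalence between completing Task~($\dagger$) and choosing an element of $\VHC(\pi)$, with Theorem~\ref{Thm17} supplying the refinement by the statistics $f_i$ via its bijection $\varphi$ (which multiplies the generating functions because each $f_i$ is insertion-additive).
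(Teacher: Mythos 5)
Your proposal is correct and is essentially the paper's own argument: the paper proves the formula by iteratively applying the Refined Tree Decomposition Lemma at the largest remaining descent (its ``Iterative Decomposition Procedure''), invoking Lemma~\ref{Lem2} for the increasing pieces, and identifying the resulting sequences of hook choices with valid hook configurations --- which is precisely your induction on $\des(\pi)$ unrolled. The bookkeeping you flag (that every element of $\VHC(\pi)$ arises from the procedure, and that the compositions ${\bf q}^{\mathcal H}$ concatenate correctly) is treated at the same level of detail in the paper, which likewise leaves the converse direction as ``straightforward to check (for example, by induction).''
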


We now give some concrete examples to illustrate the Refined Tree Fertility Formula. Recall that if $(u_n)_{n\geq 1}$ is a sequence of elements of a field and ${\bf q}=(q_0,\ldots,q_k)$ is a composition, then we let $u_{\bf q}=\prod_{t=0}^ku_{q_t}$. 

\begin{example}[Binary Plane Trees]\label{ExamBinary1}
Let ${\bf T}=\mathsf{BPT}$, and let $f_1(T)=\des(T)+1$ and $f_2(T)=\peak(T)+1$. We know by Example~\ref{Exam1} that $f_1$ and $f_2$ are insertion-additive. Therefore, we may apply the Refined Tree Fertility Formula to find that 
\begin{equation}\label{Eq17}
\sum_{\mathcal T\in\mathcal P^{-1}(\pi)\cap\mathsf {DBPT}}x_1^{\des(\mathcal T)+1}x_2^{\peak(\mathcal T)+1}=\sum_{\mathcal H\in\VHC(\pi)}{\bf G}_{{\bf q}^{\mathcal H}}(x_1,x_2)
\end{equation} for every permutation $\pi$. In order to perform calculations with this formula, we would like to know how to explicitly compute the polynomials \[{\bf G}_n(x_1,x_2)=\sum_{T\in\mathsf{BPT}_n}x_1^{\des(T)+1}x_2^{\peak(T)+1}.\] Recall that we made the conventions $\des(\varepsilon)=0$ and $\peak(\varepsilon)=-1$, where $\varepsilon$ is the empty tree. In other words, ${\bf G}_0(x_1,x_2)=x_1$. Let ${\bf G}^{(x_1,x_2)}(z)=\sum_{n\geq 0}{\bf G}_n(x_1,x_2)z^n$. Each binary plane tree must be empty, a single root vertex, a root vertex with a nonempty left subtree and an empty right subtree, or a root vertex with a nonempty right subtree and a left subtree that may or may not be empty.  This observation translates into the equation \[{\bf G}^{(x_1,x_2)}(z)=x_1 + x_1x_2z + z({\bf G}^{(x_1,x_2)}(z)-x_1)+z({\bf G}^{(x_1,x_2)}(z)-x_1){\bf G}^{(x_1,x_2)}(z),\] which implies that 
\begin{equation}\label{Eq16}
{\bf G}^{(x_1,x_2)}(z)=\frac{1-z+x_1z-\sqrt{(1-z+x_1z)^2-4x_1z(1-z+x_2z)}}{2z}.
\end{equation} Although we will not need it, we remark that one can derive from \eqref{Eq16} the formula \[[x_1^ix_2^jz^n]{\bf G}^{(x_1,x_2)}(z)=\frac{1}{n+1-j}\binom{n-1}{n-j}\binom{n+1-j}{j}\binom{n+1-2j}{i-j}\] for the coefficient of $x_1^ix_2^jz^n$ in ${\bf G}^{(x_1,x_2)}(z)$.  

We now consider the \dfn{Narayana numbers} $N(n,i)=\frac{1}{n}\binom{n}{i}\binom{n}{i-1}$. These numbers constitute one of the most common refinements of the sequence of Catalan numbers. In particular, $N(n,i)$ is the number of binary plane trees with $n$ vertices and $i-1$ right edges. Therefore, we have \[{\bf G}_n(x_1,1)=\sum_{T\in\mathsf{BPT}_n}x_1^{\des(T)+1}=N_n(x_1),\] where $N_n(x)$ is the \dfn{Narayana polynomial} defined by $\displaystyle N_n(x)=\sum_{i=1}^nN(n,i)x^i$. Narayana polynomials are also the $h$-polynomials of associahedra \cite{Postnikov}. Specializing $x_2=1$ in \eqref{Eq17} yields 
\begin{equation}\label{Eq18}
\sum_{\mathcal T\in\mathcal P^{-1}(\pi)\cap\mathsf {DBPT}}x_1^{\des(\mathcal T)+1}=\sum_{\mathcal H\in\VHC(\pi)}N_{{\bf q}^{\mathcal H}}(x_1)
\end{equation} for every permutation $\pi$. Specializing further, we have $N_n(1)=C_n$, so 
\begin{equation}\label{Eq19}
|\mathcal P^{-1}(\pi)\cap\mathsf{DBPT}|=\sum_{\mathcal H\in\VHC(\pi)}C_{{\bf q}^{\mathcal H}}.
\end{equation} 

These results concerning binary plane trees translate immediately into the language of stack-sorting. It follows from \eqref{Eq11} that for every permutation $\tau$, the in-order reading $\mathcal I:\mathcal P^{-1}(\tau)\cap\mathsf{DBPT}\to s^{-1}(\tau)$ is a bijection. We have $\des(\mathcal T)=\des(\mathcal I(\mathcal T))$ and $\peak(\mathcal T)=\peak(\mathcal I(\mathcal T))$ for every $\mathcal T\in\mathsf{DBPT}$. Therefore, \eqref{Eq17}, \eqref{Eq18}, and \eqref{Eq19} are equivalent to the equations 
\begin{equation}\label{Eq20}
\sum_{\sigma\in s^{-1}(\pi)}x_1^{\des(\sigma)+1}x_2^{\peak(\sigma)+1}=\sum_{\mathcal H\in\VHC(\pi)}{\bf G}_{{\bf q}^{\mathcal H}}(x_1,x_2),
\end{equation}
\begin{equation}\label{Eq21}
\sum_{\sigma\in s^{-1}(\pi)}x_1^{\des(\sigma)+1}=\sum_{\mathcal H\in\VHC(\pi)}N_{{\bf q}^{\mathcal H}}(x_1),
\end{equation}
and 
\begin{equation}\label{Eq22}
|s^{-1}(\pi)|=\sum_{\mathcal H\in\VHC(\pi)}C_{{\bf q}^{\mathcal H}}.
\end{equation}

We call equations \eqref{Eq20} and \eqref{Eq22} the Refined Fertility Formula and the Fertility Formula, respectively. The current author has used these formulas to generalize many known results and to prove several new results concerning the stack-sorting map \cite{DefantCatalan, DefantClass, DefantFertility, DefantFertilityWilf, DefantEngenMiller, DefantPreimages}. 
\end{example}

The results in the following three examples are all new. This includes the specific cases in which we set $x_1=\cdots=x_r=1$, which provide formulas for the number of trees of each specified type that have a prescribed postorder reading. 

\begin{example}[Full Binary Plane Trees]\label{ExamFull1}
Let ${\bf T}$ be the troupe $\mathsf{FBPT}$. Every tree in $\mathsf{FBPT}$ has an odd number of vertices, and it is straightforward to check that $\des(T)=\peak(T)=k$ for every $T\in\mathsf{FBPT}_{2k+1}$. Therefore, counting trees in $\mathsf{FBPT}$ according to the statistics $\des$ and $\peak$ is not interesting (it suffices to count according the number of vertices). We have ${\bf G}_n=|\mathsf{FBPT}_n|=C_{(n-1)/2}$, where $C_{(n-1)/2}=0$ when $n$ is even. The sequence $(C_{(n-1)/2})_{n\geq 1}$ is the sequence of \dfn{aerated Catalan numbers} (OEIS sequence A126120 \cite{OEIS}). By the Tree Fertility Formula, we have 
\begin{equation}\label{Eq25}
|\mathcal P^{-1}(\pi)\cap\mathsf {DFBPT}|=\sum_{\mathcal H\in\VHC(\pi)}C_{({\bf q}^{\mathcal H}-1)/2}
\end{equation} for every permutation $\pi$. 

As in the previous example, we can reformulate this result in terms of the stack-sorting map. We say a permutation $\pi=\pi_1\cdots\pi_n$ is \dfn{alternating} if its set of descents is precisely the set of even elements of $[n-1]$. Let $\ALT$ be the set of alternating permutations. Alternating permutations have been studied extensively (see Stanley's survey \cite{StanleyAlternating} and the references therein); the number of alternating permutations in $S_n$ is the \dfn{Euler number} $E_n$. These numbers can be defined via the generating function equation $\displaystyle\sum_{n\geq 0}E_n\frac{z^n}{n!}=\sec(z)+\tan(z)$. The numbers $E_{n}$ with $n$ even are called \dfn{secant numbers}, and the numbers $E_n$ with $n$ odd are called \dfn{tangent numbers}. When $n$ is odd, the in-order reading gives a bijection from $\mathsf{DFBPT}_n$ to the set of alternating permutations of length $n$. Therefore, we can use \eqref{Eq11} and \eqref{Eq25} to obtain the following new theorem concerning the stack-sorting map. 

\begin{theorem}\label{Thm6}
For every permutation $\pi$ of odd length, the number of alternating permutations in $s^{-1}(\pi)$ is \[\sum_{\mathcal H\in\VHC(\pi)}C_{({\bf q}^{\mathcal H}-1)/2}.\]
\end{theorem}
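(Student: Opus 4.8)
\emph{Proof proposal.} The plan is to deduce the statement directly from equation~\eqref{Eq25}, which already expresses $|\mathcal P^{-1}(\pi)\cap\mathsf{DFBPT}|$ as the desired sum $\sum_{\mathcal H\in\VHC(\pi)}C_{({\bf q}^{\mathcal H}-1)/2}$. All that remains is to identify the alternating permutations in $s^{-1}(\pi)$ with the decreasing full binary plane trees having postorder reading $\pi$.

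To carry this out, I would first recall from \eqref{Eq11} that $s=\mathcal P\circ\mathcal I^{-1}$, so the in-order reading $\mathcal I$ restricts to a bijection $\mathcal P^{-1}(\pi)\cap\mathsf{DBPT}\to s^{-1}(\pi)$ (the same bijection used in the proof of Corollary~\ref{Cor2}). Under this bijection a tree $\mathcal T$ maps to an alternating permutation precisely when $\mathcal T\in\mathsf{DFBPT}$; this is the content of the sentence preceding the statement, which says that for odd $n$ the in-order reading is a bijection from $\mathsf{DFBPT}_n$ onto the alternating permutations of length $n$. If a self-contained argument is preferred, I would prove this characterization as follows: by induction on the number of vertices (using $\mathcal I(\mathcal T)=\mathcal I(\mathcal T_L)\,m\,\mathcal I(\mathcal T_R)$ and the fact that full trees have odd size) one shows that in a full binary plane tree the leaves occupy the odd positions of the in-order reading and the internal vertices occupy the even positions; since the root of a decreasing tree carries the largest label, every even position is then the top of a descent and is preceded by an ascent, so $\mathcal I(\mathcal T)$ has descent set exactly $\{2,4,\ldots,n-1\}$, i.e.\ is alternating. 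Conversely, if $\mathcal T\in\mathsf{DBPT}$ has odd size and $\mathcal I(\mathcal T)$ is alternating, then comparing the number of right edges (descents of $\mathcal I(\mathcal T)$), left edges, and vertices with two children (peaks of $\mathcal I(\mathcal T)$), as in Example~\ref{Exam1}, forces every non-leaf vertex to have two children, so $\mathcal T$ is full.

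Since $\pi$ has odd length, every permutation in $s^{-1}(\pi)$ has odd length, so the characterization applies uniformly and $\mathcal I$ restricts to a bijection between $\mathcal P^{-1}(\pi)\cap\mathsf{DFBPT}$ and the set of alternating permutations in $s^{-1}(\pi)$. Hence \[\#\{\sigma\in s^{-1}(\pi):\sigma\in\ALT\}=|\mathcal P^{-1}(\pi)\cap\mathsf{DFBPT}|=\sum_{\mathcal H\in\VHC(\pi)}C_{({\bf q}^{\mathcal H}-1)/2},\] the last equality being \eqref{Eq25}. There is no serious obstacle here: the only step requiring genuine (if short) work is the tree-theoretic characterization of alternating permutations as in-order readings of decreasing full binary plane trees, and everything else is assembling results already in hand (the Refined Tree Fertility Formula specialized to $\mathsf{FBPT}$, the identity $s=\mathcal P\circ\mathcal I^{-1}$, and the bijectivity of $\mathcal I$ on decreasing binary plane trees).
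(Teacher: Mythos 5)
Your proposal is correct and follows exactly the paper's route: equation \eqref{Eq25} (the Tree Fertility Formula specialized to $\mathsf{FBPT}$) combined with $s=\mathcal P\circ\mathcal I^{-1}$ and the fact that for odd $n$ the in-order reading bijects $\mathsf{DFBPT}_n$ with alternating permutations of length $n$. The only difference is that you supply a proof of that last bijection, which the paper simply cites as known; your argument for it (parity of positions for the forward direction, edge/descent/peak counting for the converse) is sound.
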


\begin{problem}\label{Prob1} 
Find an analogue of Theorem~\ref{Thm6} for permutations of even length. \qedhere
\end{problem}   
\end{example} 

\begin{example}[Motzkin Trees]\label{ExamMotzkin1} 
Let ${\bf T}$ be the troupe $\mathsf{Mot}$, and let $f_1$ be the insertion-additive tree statistic given by $f_1(T)=\des(T)+1$. We could also consider the statistic given by $T\mapsto \peak(T)+1$, but that would be redundant because $\des(T)=\peak(T)$ for every Motzkin tree $T$. Consider the \dfn{Motzkin polynomials} $\displaystyle M_{n-1}(x)=\sum_{i=1}^n\frac{(n-1)!}{(n-2i+1)!\,i!\,(i-1)!}x^i$, whose coefficients form the OEIS sequence A055151 \cite{OEIS}. These polynomials refine the sequence of Motzkin numbers in the same way that Narayana polynomials refine the sequence of Catalan numbers. Indeed, the coefficient of $x^i$ in $M_{n-1}(x)$ is the number of Motzkin trees with $n$ vertices and $i-1$ right edges. It is also known \cite{Postnikov} that Motzkin polynomials are the $\gamma$-polynomials of associahedra. Let \[{\bf G}_n(x_1)=\sum_{T\in\mathsf{Mot}_n}x_1^{\des(T)+1}=M_{n-1}(x_1),\] 
and let ${\bf G}^{(x_1)}(z)=\sum_{n\geq 0}{\bf G}_n(x_1)z^n$. Each Motzkin tree must be a single root vertex, a root vertex with a nonempty left subtree and an empty right subtree, or a root vertex with two nonempty subtrees. This yields the equation \[{\bf G}^{(x_1)}(z)=x_1z + z{\bf G}^{(x_1)}(z)+z{\bf G}^{(x_1)}(z)^2,\] which implies that 
\begin{equation}\label{Eq23}
{\bf G}^{(x_1)}(z)=\frac{1-z-\sqrt{1-2z+z^2-4x_1z^2}}{2z}.
\end{equation} By the Refined Tree Fertility Formula, we have 
\begin{equation}\label{Eq24}
\sum_{\mathcal T\in\mathcal P^{-1}(\pi)\cap\mathsf {DMot}}x_1^{\des(\mathcal T)+1}=\sum_{\mathcal H\in\VHC(\pi)}{\bf G}_{{\bf q}^{\mathcal H}}(x_1)=\sum_{\mathcal H\in\VHC(\pi)}M_{{\bf q}^{\mathcal H}-1}(x_1)
\end{equation} for every permutation $\pi$. 
Since $M_{n-1}(1)=M_{n-1}=|\mathsf{Mot}_n|$, we can specialize $x_1=1$ to obtain the formula 
\begin{equation}\label{Eq26}
|\mathcal P^{-1}(\pi)\cap\mathsf{DMot}|=\sum_{\mathcal H\in\VHC(\pi)}M_{{\bf q}^{\mathcal H}-1}.
\end{equation}  

We can translate these results into the language of stack-sorting. Let \[\EDP=\{\pi:\des(\pi)=\peak(\pi)\}\] denote the set of permutations in which every descent is a peak. Alternatively, $\EDP$ is the set of permutations that have no double descents (i.e., consecutive descents) and in which $1$ is not a descent. The standardized permutations in $\EDP$ are counted by the OEIS sequence A080635 \cite{OEIS}, which has the exponential generating function \[\sum_{n\geq 0}|\EDP\cap S_n|\frac{z^n}{n!}=\frac{1}{2}+\frac{\sqrt 3}{2}\tan\left(\frac{\sqrt 3}{2}z+\frac{\pi}{6}\right).\] The in-order reading gives a bijection $\mathcal I:\mathsf{DMot}\to\EDP$. Therefore, we can use \eqref{Eq11} and \eqref{Eq24} to obtain the following new theorem concerning the stack-sorting map. 

\begin{theorem}\label{Thm16}
For every permutation $\pi$, we have \[\sum_{\sigma\in s^{-1}(\pi)\cap\EDP}x_1^{\des(\sigma)+1}=\sum_{\mathcal H\in\VHC(\pi)}M_{{\bf q^{\mathcal H}}-1}(x_1).\] In particular, the number of permutations in $s^{-1}(\pi)$ whose descents are all peaks is \[\sum_{\mathcal H\in\VHC(\pi)}M_{{\bf q}^{\mathcal H}-1}. \qedhere\]
\end{theorem}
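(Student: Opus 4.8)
The plan is to deduce Theorem~\ref{Thm16} directly from the specialization \eqref{Eq24} of the Refined Tree Fertility Formula by transporting everything along the in-order reading bijection. Recall from \eqref{Eq11} that $s=\mathcal P\circ\mathcal I^{-1}$, so for every permutation $\tau$ the map $\mathcal I$ restricts to a bijection from $\mathcal P^{-1}(\tau)\cap\mathsf{DBPT}$ onto $s^{-1}(\tau)$. First I would observe that this bijection in fact carries $\mathcal P^{-1}(\pi)\cap\mathsf{DMot}$ onto $s^{-1}(\pi)\cap\EDP$. Indeed, a decreasing binary plane tree $\mathcal T$ lies in $\mathsf{DMot}$ exactly when $\skel(\mathcal T)$ is a Motzkin tree, and, as recorded in Example~\ref{ExamMotzkin1}, the in-order reading gives a bijection $\mathcal I:\mathsf{DMot}\to\EDP$. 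Since $\mathcal I$ is injective on all of $\mathsf{DBPT}$, these two facts together show that, for $\mathcal T\in\mathcal P^{-1}(\pi)\cap\mathsf{DBPT}$, we have $\mathcal T\in\mathsf{DMot}$ if and only if $\mathcal I(\mathcal T)\in\EDP$; hence $\mathcal I$ restricts to the claimed bijection.

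Next I would check that the relevant statistic is preserved. By the discussion in Example~\ref{Exam1}, for any decreasing colored binary plane tree $\mathcal T$ the number of right edges of $\mathcal T$ equals $\des(\mathcal I(\mathcal T))$; since $\des(\mathcal T)$ was \emph{defined} to be the number of right edges of $\mathcal T$, this gives $\des(\mathcal T)=\des(\mathcal I(\mathcal T))$, and therefore $\ddot f_1(\mathcal T)=\des(\mathcal T)+1=\des(\mathcal I(\mathcal T))+1$ for the insertion-additive statistic $f_1(T)=\des(T)+1$. Combining the bijection of the previous paragraph with this identity yields
\[
\sum_{\sigma\in s^{-1}(\pi)\cap\EDP}x_1^{\des(\sigma)+1}
=\sum_{\mathcal T\in\mathcal P^{-1}(\pi)\cap\mathsf{DMot}}x_1^{\des(\mathcal I(\mathcal T))+1}
=\sum_{\mathcal T\in\mathcal P^{-1}(\pi)\cap\mathsf{DMot}}x_1^{\ddot f_1(\mathcal T)},
\]
and the last sum equals $\sum_{\mathcal H\in\VHC(\pi)}M_{{\bf q}^{\mathcal H}-1}(x_1)$ by \eqref{Eq24} (equivalently, by the Refined Tree Fertility Formula applied to the troupe $\mathsf{Mot}$ with the statistic $f_1$). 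This proves the first equation. The ``in particular'' statement then follows by setting $x_1=1$ and using $M_{n-1}(1)=M_{n-1}$, which turns $M_{{\bf q}^{\mathcal H}-1}(x_1)$ into $M_{{\bf q}^{\mathcal H}-1}$, exactly as in the passage from \eqref{Eq24} to \eqref{Eq26}.

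Every ingredient here has already been established in the excerpt: the Refined Tree Fertility Formula, the fact that $\mathsf{Mot}$ is a troupe, the insertion-additivity of $T\mapsto\des(T)+1$, the identity $\des(\mathcal T)=\des(\mathcal I(\mathcal T))$, and the bijection $\mathcal I:\mathsf{DMot}\to\EDP$. Consequently there is no genuinely hard step; the only point requiring a moment's care is confirming that the in-order reading bijection $\mathcal P^{-1}(\pi)\cap\mathsf{DBPT}\to s^{-1}(\pi)$ restricts to a bijection between the Motzkin sub-collection and $s^{-1}(\pi)\cap\EDP$, and that is immediate from the characterization $\mathcal T\in\mathsf{DMot}\Leftrightarrow\skel(\mathcal T)\in\mathsf{Mot}$ together with $\mathcal I(\mathsf{DMot})=\EDP$ and the injectivity of $\mathcal I$ on $\mathsf{DBPT}$.
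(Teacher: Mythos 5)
Your proposal is correct and follows essentially the same route as the paper: Theorem~\ref{Thm16} is obtained there exactly by specializing the Refined Tree Fertility Formula to the troupe $\mathsf{Mot}$ (equation~\eqref{Eq24}) and transporting the result along the in-order bijection $\mathcal I:\mathsf{DMot}\to\EDP$ using $s=\mathcal P\circ\mathcal I^{-1}$ and $\des(\mathcal T)=\des(\mathcal I(\mathcal T))$. Your extra care in checking that $\mathcal I$ restricts to a bijection $\mathcal P^{-1}(\pi)\cap\mathsf{DMot}\to s^{-1}(\pi)\cap\EDP$ is a point the paper leaves implicit, but it is the same argument.
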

\end{example} 

\begin{example}[Schr\"oder $2$-Colored Binary Trees]\label{ExamSchroder1}
Let ${\bf T}$ be the troupe $\mathsf{Sch}$, and let $f_1(T)=\des(T)+1$, $f_2(T)=\peak(T)+1$, and $f_3(T)=\black(T)+1$. We saw in Example~\ref{Exam1} that $f_1,f_2,f_3$ are insertion-additive. Let ${\bf G}^{(x_1,x_2,x_3)}(z)=\sum_{n\geq 0}{\bf G}_n(x_1,x_2,x_3)z^n$, where \[{\bf G}_n(x_1,x_2,x_3)=\sum_{T\in\mathsf{Sch}_n}x_1^{\des(T)+1}x_2^{\peak(T)+1}x_3^{\black(T)+1}.\] Although we will not explicitly need this formula, one can show that \[{\bf G}^{(x_1,x_2,x_3)}(z)=\frac{1-x_1z-x_3z+x_1x_3z-\sqrt{Q^{(x_1,x_2,x_3)}(z)}}{2z},\] where \[Q^{(x_1,x_2,x_3)}(z)=(1-x_1z-x_3z+x_1x_3z)^2-4z(x_1x_3-x_1^2x_3z+x_1x_2x_3z-x_1x_3^2z+x_1x_2x_3^2z).\] The Refined Tree Fertility Formula tells us that 
\[
\sum_{\mathcal T\in\mathcal P^{-1}(\pi)\cap\mathsf {DSch}}x_1^{\des(\mathcal T)+1}x_2^{\peak(\mathcal T)+1}x_3^{\black(\mathcal T)+1}=\sum_{\mathcal H\in\VHC(\pi)}{\bf G}_{{\bf q}^{\mathcal H}}(x_1,x_2,x_3)
\] for every permutation $\pi$.

Let us specialize to the case in which $x_2=x_3=1$. Note that ${\bf G}_n(x_1,1,1)$ counts Schr\"oder $2$-colored binary trees with $n$ vertices according to their number of right edges. Every Schr\"oder $2$-colored binary tree with $n$ vertices and $j$ right edges can be constructed by choosing a binary plane tree with $n$ vertices and $j$ right edges, coloring black the $n-1-j$ vertices that have left children (since white vertices cannot have left children), and then coloring each of the remaining $j+1$ vertices either black or white. There are $2^{j+1}N(n,j+1)$ ways to make these choices, so \[{\bf G}_n(x_1,1,1)=\sum_{j=0}^{n-1}2^{j+1}N(n,j+1)x_1^{j+1}=N_{n}(2x_1),\] where $N(n,j+1)$ and $N_n(2x)$ denote Narayana numbers and Narayana polynomials. Thus, \[
\sum_{\mathcal T\in\mathcal P^{-1}(\pi)\cap\mathsf {DSch}}x_1^{\des(\mathcal T)+1}=\sum_{\mathcal H\in\VHC(\pi)}N_{{\bf q}^{\mathcal H}}(2x_1).
\]

Another interesting specialization comes from setting $x_1=x_2=1$. It follows from Corollary 4.2 in \cite{Gu} that $\binom{n+j}{n-j}C_j$ is the number of trees in $\mathsf{Sch}_n$ with $j$ black vertices. Therefore, \[\sum_{\mathcal T\in\mathcal P^{-1}(\pi)\cap\mathsf {DSch}}x_3^{\black(\mathcal T)+1}=\sum_{\mathcal H\in\VHC(\pi)}{\bf G}_{{\bf q}^{\mathcal H}}(1,1,x_3),\] where $\displaystyle{\bf G}_{{\bf q}^{\mathcal H}}(1,1,x_3)=\prod_{t=0}^k\sum_{j=0}^{q_t}\binom{q_t+j}{q_t-j}C_jx_3^{j+1}$ when ${\bf q}^{\mathcal H}=(q_0,\ldots,q_k)$.

Finally, ${\bf G}_n(1,1,1)$ is the $n^\text{th}$ large Schr\"oder number $\mathscr S_n$, so 
\begin{equation}\label{Eq50}
|\mathcal P^{-1}(\pi)\cap\mathsf {DSch}|=\sum_{\mathcal H\in\VHC(\pi)}\mathscr S_{{\bf q}^{\mathcal H}}. \qedhere
\end{equation} 
\end{example}

In summary, this section shows that if one knows the set of valid hook configurations of a permutation $\pi$, then one can count the trees in $\mathcal P^{-1}(\pi)\cap\mathsf D{\bf T}$ for a large variety of sets ${\bf T}$. One can even count these trees according to some natural statistics. 

\section{Free Probability Theory and the VHC Cumulant Formula}\label{Sec:VHCCumulant}

\subsection{Background}
Let $\mathbb K$ be a field. Let $\Pi(X)$ denote the collection of all set partitions of a totally ordered finite set $X$. We let $\Pi(n)=\Pi([n])$. Given a sequence $(u_n)_{n\geq 1}$ of elements of $\mathbb K$ and a set partition $\rho$, we let \[(u_\bullet)_\rho=\prod_{B\in\rho}u_{|B|}.\] We say two distinct blocks $B,B'$ of a set partition $\rho\in\Pi(X)$ \dfn{form a crossing} if there exist $i,j\in B$ and $i',j'\in B'$ such that either $i<i'<j<j'$ or $i>i'>j>j'$. A partition is \dfn{noncrossing} if no two of its blocks form a crossing. Let $\NC(X)$ be the set of noncrossing partitions in $\Pi(X)$, and let $\NC(n)=\NC([n])$. The sets $\Pi(n)$ and $\NC(n)$ are both lattices under the reverse refinement ordering \cite[Lecture 9]{Nica}. 

A \dfn{noncommutative probability space} over $\mathbb K$ is a pair $(\mathcal A,\varphi)$, where $\mathcal A$ is a unital associative algebra and $\varphi:\mathcal A\to\mathbb K$ is a unital linear functional (meaning $\varphi(1_{\mathcal A})=1_{\mathbb K}$). Given $a_1,\ldots,a_n\in\mathcal A$ and $B=\{b_1<\cdots<b_r\}\subseteq [n]$, let $a_B=(a_{b_1},\ldots,a_{b_r})$. One of the goals of noncommutative probability theory is to understand the \dfn{joint moments} \[m_n(a_1,\ldots,a_n):=\varphi(a_1\cdots a_n).\] The \dfn{classical cumulants} are the elements $c_n(a_1,\ldots,a_n)$ of $\mathbb K$ that satisfy the formula 
\begin{equation}\label{Eq1}
m_n(a_1,\ldots,a_n)=\sum_{\rho\in\Pi(n)}c_\rho(a_1,\ldots,a_n),
\end{equation} where $\displaystyle c_\rho(a_1,\ldots,a_n)=\prod_{B\in\rho}c_{|B|}(a_B)$. This formula immediately implies that the joint moments are determined by the classical cumulants. On the other hand, one can apply M\"obius inversion to \eqref{Eq1} in order to deduce that the classical cumulants are determined by the joint moments \cite[Lecture~11]{Nica}. 

The \dfn{free cumulants}, originally introduced by Speicher in \cite{Speicher}, are the elements $\kappa_n(a_1,\ldots,a_n)$ of $\mathbb K$ that satisfy the formula 
\begin{equation}\label{Eq2}
m_n(a_1,\ldots,a_n)=\sum_{\eta\in\NC(n)}\kappa_\eta(a_1,\ldots,a_n),
\end{equation} where $\displaystyle \kappa_\eta(a_1,\ldots,a_n)=\prod_{B\in\eta}\kappa_{|B|}(a_B)$. This shows that the free cumulants determine the moments. One can use M\"obius inversion, this time on the noncrossing partition lattice, to rearrange \eqref{Eq2}, expressing the free cumulants in terms of the moments \cite[Lecture~11]{Nica}. 

The preceding paragraphs describe moments, classical cumulants, and free cumulants that are \dfn{multivariate} in the sense that they involve several (possibly) distinct elements $a_1,a_2,\ldots$ of $\mathcal A$. In many applications, it will suffice to consider the \dfn{univariate} case in which the elements $a_1,a_2,\ldots$ are all equal. In this case, we drop the notation expressing the dependence on $a_1,a_2,\ldots$ and simply write $m_n$, $c_n$, and $\kappa_n$. In fact, we will rarely need to refer to the noncommutative probability space $(\mathcal A,\varphi)$. For the sake of notational convenience and clarity of exposition, we will phrase all of the results concerning cumulants in the univariate setting; we will then explicitly point out which results generalize straightforwardly to the multivariate setting. Because we will only perform formal combinatorial and algebraic manipulations, the sequences $(m_n)_{n\geq 1}$, $(c_n)_{n\geq 1}$, $(\kappa_n)_{n\geq 1}$ can be \emph{any} sequences of elements of $\mathbb K$, so long as they satisfy the defining equations 
\begin{equation}\label{Eq3}
m_n=\sum_{\rho\in\Pi(n)}(c_\bullet)_\rho\qquad\text{and}\qquad m_n=\sum_{\eta\in\NC(n)}(\kappa_\bullet)_\eta\qquad\text{for all }n\geq 1.
\end{equation}

As mentioned above, each one of the sequences $(m_n)_{n\geq 1}$, $(c_n)_{n\geq 1}$, $(\kappa_n)_{n\geq 1}$ determines the other two. For example, if we are given a sequence of classical cumulants $(c_n)_{n\geq 1}$, then the corresponding free cumulants $\kappa_n$ are given by \[\kappa_n=\sum_{\eta\in\NC(n)}\mu^{\NC}(\eta,\widehat 1_n)(m_\bullet)_\eta=\sum_{\eta\in\NC(n)}\mu^{\NC}(\eta,\widehat 1_n)\prod_{B\in\eta}\sum_{\rho\in\Pi(B)}(c_\bullet)_\rho,\] where $\mu^{\NC}$ and $\widehat 1_n$ denote the M\"obius function of $\NC(n)$ and the maximal element of $\NC(n)$, respectively. This last expression is somewhat unsatisfying because it does not give a clear combinatorial picture of what is happening. The following result due to Lehner gives a much simpler combinatorial explanation of how to convert from classical to free cumulants. The \dfn{crossing graph} $G(\rho)$ of a set partition $\rho\in\Pi(X)$ is the graph whose vertices are the blocks of $\rho$ in which two blocks are adjacent if and only if they form a crossing. In particular, a partition is noncrossing if and only if its crossing graph has no edges. We say a set partition is \dfn{connected} if its crossing graph is connected. Let $\Pi^{\con}(X)$ denote the set of connected set partitions in $\Pi(X)$, and let $\Pi^{\con}(n)=\Pi^{\con}([n])$. 

\begin{theorem}[\!\!\cite{Lehner}]\label{Thm1}
If $(c_n)_{n\geq 1}$ is a sequence of classical cumulants, then the corresponding free cumulants are given by \[\kappa_n=\sum_{\rho\in\Pi^{\con}(n)}(c_\bullet)_\rho.\] 
\end{theorem}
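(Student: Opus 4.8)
The plan is to use the uniqueness built into the defining relations \eqref{Eq3}: since $m_n=\sum_{\eta\in\NC(n)}(\kappa_\bullet)_\eta$ expresses $m_n$ as $\kappa_n$ plus a polynomial in $\kappa_1,\dots,\kappa_{n-1}$, the sequence $(\kappa_n)_{n\geq1}$ is the \emph{only} one satisfying these relations for a given $(m_n)_{n\geq1}$. So it suffices to set $\widetilde\kappa_n:=\sum_{\rho\in\Pi^{\con}(n)}(c_\bullet)_\rho$ and check that $\sum_{\eta\in\NC(n)}(\widetilde\kappa_\bullet)_\eta=m_n$ for every $n$. Since $m_n=\sum_{\rho\in\Pi(n)}(c_\bullet)_\rho$ by \eqref{Eq3}, this reduces to the combinatorial identity
\[\sum_{\rho\in\Pi(n)}(c_\bullet)_\rho=\sum_{\eta\in\NC(n)}\ \prod_{B\in\eta}\Big(\sum_{\sigma\in\Pi^{\con}(B)}(c_\bullet)_\sigma\Big).\]
Expanding the right side, each term is indexed by a pair $\big(\eta,(\sigma_B)_{B\in\eta}\big)$ with $\eta\in\NC(n)$ and each $\sigma_B$ a connected partition of the block $B$, and the term equals $(c_\bullet)_{\rho}$, where $\rho$ is the partition of $[n]$ whose blocks are all the blocks of all the $\sigma_B$. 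Thus the identity follows once we produce a bijection between $\Pi(n)$ and the set of such pairs under which a partition $\rho$ is matched with a pair whose block-union is $\rho$.

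The bijection I have in mind sends $\rho\in\Pi(n)$ to the pair $\big(\eta,(\sigma_B)\big)$ in which the blocks of $\eta$ are the unions of the vertex sets of the connected components of the crossing graph $G(\rho)$, and $\sigma_B$ is the restriction of $\rho$ to each such block $B$; the inverse re-unions all the blocks. Connectedness of each $\sigma_B$ is automatic, because the crossing graph of the restriction of $\rho$ to a union of components of $G(\rho)$ is exactly the induced subgraph of $G(\rho)$ on those vertices. That the two maps are inverse to each other requires two things. First, when $\eta$ is noncrossing and each $\sigma_B$ is connected, the components of the crossing graph of the common refinement $\bigcup_B\sigma_B$ are precisely the blocks of the individual $\sigma_B$ regrouped by $B$: this is easy, since noncrossingness of $\eta$ forbids a subblock of $B$ from crossing a subblock of a different block $B'$, so components cannot straddle the blocks of $\eta$, while connectedness of each $\sigma_B$ keeps its blocks together. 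Second---and this is the crux---one needs that the partition $\eta$ produced from $\rho$ is genuinely \emph{noncrossing}.

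I expect this last point to be the main obstacle, and I would isolate it as a standalone lemma: for any $\rho\in\Pi(n)$, the partition whose blocks are the unions of the connected components of $G(\rho)$ is noncrossing. (It is useful to note first that $\NC(n)$ is closed under the meet operation of $\Pi(n)$---if two blocks of $\eta_1\wedge\eta_2$ crossed, the four witnessing points would lie in one block of $\eta_1$ and in one block of $\eta_2$, hence in one block of $\eta_1\wedge\eta_2$---so there is a smallest noncrossing partition $\overline\rho$ coarser than $\rho$; since every noncrossing partition coarser than $\rho$ must place any two crossing blocks of $\rho$ in a common block, the component partition is $\leq\overline\rho$, and the lemma amounts to showing this is an equality.) To prove the lemma directly, I would assume it fails, choose a crossing quadruple $p<q<r<s$ with $\{p,r\}$ in one component $D$ and $\{q,s\}$ in another component $D'$, taken lexicographically minimal in $(r-q,\,q-p,\,s-r)$, deduce that the open intervals $(q,r)$, $(p,q)$, $(r,s)$ contain no elements of the relevant components, and then run the combinatorial analogue of a Jordan-curve argument: the chain of pairwise-crossing blocks joining $p$ to $r$ inside $D$, together with the ground segment from $p$ to $r$, separates $q$ from $s$, so the chain of pairwise-crossing blocks joining $q$ to $s$ inside $D'$ must contain a block that crosses a block of the first chain---contradicting that $D$ and $D'$ are distinct components of $G(\rho)$. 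With this lemma in hand, the two facts above complete the bijection, and the theorem follows.
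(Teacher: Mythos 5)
The paper offers no proof of this statement at all---it is imported verbatim from Lehner's article \cite{Lehner}---so there is nothing internal to compare against; what you have written is essentially the standard proof of Lehner's formula. Your reduction is sound: since $m_n=\sum_{\eta\in\NC(n)}(\kappa_\bullet)_\eta$ determines $(\kappa_n)_{n\geq 1}$ recursively from $(m_n)_{n\geq 1}$, it suffices to check that the candidate $\widetilde\kappa_n=\sum_{\rho\in\Pi^{\con}(n)}(c_\bullet)_\rho$ reproduces $\sum_{\rho\in\Pi(n)}(c_\bullet)_\rho$, and your bijection $\rho\mapsto(\overline\rho,(\rho|_B)_{B\in\overline\rho})$, where $\overline\rho$ merges the blocks lying in a common connected component of $G(\rho)$, does exactly this; the two verifications that the maps are mutually inverse (noncrossingness of $\eta$ prevents a component from straddling two blocks of $\eta$, connectedness of each $\sigma_B$ keeps its blocks together) are correct. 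The one place the write-up falls short of a complete proof is the lemma that $\overline\rho$ is noncrossing: you rightly isolate it as the crux, and the minimal-counterexample setup is fine, but the assertion that the chain of crossing blocks from $p$ to $r$ ``separates'' $q$ from $s$ is precisely the combinatorial content that needs proving, so as written the Jordan-curve step is a plan rather than an argument. A clean way to close it: first show that if $B_1,B_2$ cross and a block $B_3$ crosses neither, then $B_3$ does not cross $B_1\cup B_2$ (in the circular picture, the gap of $B_1$ containing $B_3$ meets the gap of $B_2$ containing $B_3$ in a single arc, for otherwise $B_1$ would sit inside one gap of $B_2$ and they would not cross); iterating along a spanning tree of a component $D$ of $G(\rho)$ shows that any block crossing no block of $D$ lies in a single gap of $\bigcup D$, and since a second component $D'$ is crossing-connected, all of $D'$ lies in one such gap, so $D$ and $D'$ do not cross. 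With that lemma supplied, your proof is complete.
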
 

More recently, Josuat-Verg\`es found a simple combinatorial formula that inverts Theorem \ref{Thm1}. Let $T_G(x,y)$ be the \emph{Tutte polynomial} of a finite graph $G$. We refer the reader to \cite{Bollobas} and the references therein for more information about this important graph invariant and its generalizations. 

\begin{theorem}[\!\!\cite{Josuat}]\label{Thm2}
If $(\kappa_n)_{n\geq 1}$ is a sequence of free cumulants, then the corresponding classical cumulants are given by \[-c_n=\sum_{\rho\in\Pi^{\con}(n)}T_{G(\rho)}(1,0)(-\kappa_\bullet)_\rho.\] 
\end{theorem}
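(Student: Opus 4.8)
The plan is to recognize the asserted identity as the statement that a single explicit transform inverts Lehner's transform from Theorem~\ref{Thm1}, and to verify this by one substitution. Write $F$ for the map $(c_n)_{n\ge 1}\mapsto(\kappa_n)_{n\ge 1}$ of Theorem~\ref{Thm1}, so $\kappa_n=\sum_{\rho\in\Pi^{\con}(n)}(c_\bullet)_\rho$. Since every $\rho\in\Pi^{\con}(n)$ other than $\widehat 1_n$ has at least two blocks, each of size $<n$, we get $\kappa_n=c_n+(\text{polynomial in }c_1,\dots,c_{n-1})$, so $F$ is invertible. It therefore suffices to set $G(\kappa)_n:=\sum_{\rho\in\Pi^{\con}(n)}\mu_{L(G(\rho))}(\widehat 0,\widehat 1)\,(\kappa_\bullet)_\rho$, where $L(H)$ denotes the lattice of flats of the cycle matroid of a graph $H$, and to prove $F\circ G=\mathrm{id}$. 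One first records the standard fact that for a connected graph $H$ on vertex set $V$ one has $T_H(1,0)=(-1)^{|V|-1}\mu_{L(H)}(\widehat 0,\widehat 1)$; this follows from the corank–nullity expansion of the Tutte polynomial together with the identification of $\mu_{L(H)}(\widehat 0,\widehat 1)$ with the value at $t=0$ of the reduced chromatic polynomial, i.e. with $(-1)^{|V|+1}T_H(1,0)$ via $P_H(t)=(-1)^{|V|}(-t)^{c(H)}T_H(1-t,0)$. Granting this, $(-1)^{|\rho|+1}T_{G(\rho)}(1,0)=\mu_{L(G(\rho))}(\widehat 0,\widehat 1)$, so $G(\kappa)_n$ is literally the right-hand side of the claimed formula, and the theorem becomes $G=F^{-1}$.

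To prove $F(G(\kappa))=\kappa$, substitute the definition of $G$ into that of $F$ and expand. A term is indexed by $\rho\in\Pi^{\con}(n)$ together with, for each block $B$ of $\rho$, a connected partition $\sigma_B\in\Pi^{\con}(B)$; such data is the same as a pair $(\pi,\rho)$ with $\pi\in\Pi(n)$, $\pi\le\rho$, and $\pi|_B$ connected for every block $B$ of $\rho$ (call such $\rho$ \emph{admissible for $\pi$}), and its contribution is $(\kappa_\bullet)_\pi\prod_{B\in\rho}\mu_{L(G(\pi|_B))}(\widehat 0,\widehat 1)$. Hence $F(G(\kappa))_n=\sum_{\pi\in\Pi(n)}(\kappa_\bullet)_\pi\,c(\pi)$, where $c(\pi)$ is the sum of $\prod_{B\in\rho}\mu_{L(G(\pi|_B))}(\widehat 0,\widehat 1)$ over all $\rho\in\Pi^{\con}(n)$ admissible for $\pi$, and the entire statement reduces to showing $c(\pi)=[\pi=\widehat 1_n]$.

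The evaluation of $c(\pi)$ is the combinatorial core. The structural input I would use is the lemma behind Theorem~\ref{Thm1}: the set partition whose blocks are the unions of the vertex sets of the connected components of $G(\pi)$ (the coarsest noncrossing partition above $\pi$) is itself noncrossing. From this: (i) if $\pi$ is disconnected then no admissible $\rho$ exists, since every admissible $\rho$ refines that noncrossing component partition and two blocks lying in distinct noncrossing regions cannot cross, forcing $G(\rho)$ disconnected; hence $c(\pi)=0=[\pi=\widehat 1_n]$. (ii) If $\pi$ is connected, then $\rho\mapsto(\text{the induced partition of the block set of }\pi)$ is a bijection from the admissible $\rho$ onto the flats of $G(\pi)$: a part is connected in $G(\pi)$ exactly when the corresponding $\pi|_B$ is connected, and when $\pi$ is connected such a coarsening automatically has connected crossing graph (a path in the quotient $G(\pi)/\rho$ produces crossings among the blocks of $\rho$). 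Using the interval isomorphism $[\widehat 0,P]_{L(G(\pi))}\cong\prod_{P_i\in P}L\!\left(G(\pi)[P_i]\right)$, the product of Möbius values over the blocks of $\rho$ equals $\mu_{L(G(\pi))}(\widehat 0,P)$, so $c(\pi)=\sum_{P\in L(G(\pi))}\mu_{L(G(\pi))}(\widehat 0,P)$, which equals $1$ when the bottom and top of $L(G(\pi))$ coincide and $0$ otherwise; the former occurs precisely when $G(\pi)$ is edgeless, i.e. when the connected partition $\pi$ is a single block. In all cases $c(\pi)=[\pi=\widehat 1_n]$, so $F(G(\kappa))_n=\kappa_n$ and the theorem follows; because nothing in the argument used that the $a_i$ coincide, the same proof yields the multivariate version. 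The main obstacle is part~(ii) together with fact~(i): controlling how crossings behave under simultaneous coarsening and restriction, and matching the poset of admissible coarsenings with the bond lattice, is the delicate part, whereas the Tutte-to-Möbius translation and the interval isomorphism are routine.
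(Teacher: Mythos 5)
Your proof is correct. There is nothing in the paper to compare it against: Theorem~\ref{Thm2} is quoted from \cite{Josuat} without proof, so what you have written is a genuinely independent argument rather than a variant of the paper's. Your route---verifying that the stated transform is the compositional inverse of Lehner's map from Theorem~\ref{Thm1}---checks out in all its parts: the sign translation $T_{G(\rho)}(1,0)=(-1)^{|\rho|-1}\mu_{L(G(\rho))}(\widehat 0,\widehat 1)$ is the standard chromatic-polynomial identity; the reduction to $c(\pi)=[\pi=\widehat 1_n]$ is a correct reindexing of the composite sum; the disconnected case does follow from the fact that the component partition of the crossing graph is noncrossing (which is indeed the structural lemma underlying Theorem~\ref{Thm1}); and in the connected case the matching of admissible coarsenings with the bond lattice of $G(\pi)$ is legitimate, the only delicate point being that $G(\rho)$ is automatically connected, which your observation handles because a crossing between two blocks of $\pi$ lying in distinct blocks of $\rho$ forces those blocks of $\rho$ to cross, so paths in $G(\pi)$ project to walks in $G(\rho)$. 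The conclusion $\sum_{P\in L(G(\pi))}\mu(\widehat 0,P)=[\widehat 0=\widehat 1]$ then finishes it. For comparison: Josuat-Verg\`es' original proof proceeds via heaps of pieces and a right-inverse construction, and the present paper reaches the equivalent VHC Cumulant Formula (Corollary~\ref{Cor4}) by combining the cited Theorem~\ref{Thm2} with the bijection $\Phi$ of Theorem~\ref{Thm18} and Greene--Zaslavsky's acyclic-orientation interpretation of $T_G(1,0)$. Your argument buys independence from both: granting Lehner's formula, it yields Theorem~\ref{Thm2} (hence Corollary~\ref{Cor4}) using only M\"obius inversion in the bond lattice, and, as you note, it transfers verbatim to the multivariate setting since no step identifies distinct arguments $a_i$.
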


The obvious generalizations of Lehner's theorem and Josuat-Verg\`es' theorem to the multivariate setting hold as well. 

The reason why it is useful to have combinatorial formulas for converting between cumulants, especially in our applications to the stack-sorting map in Section~\ref{Sec:Applications}, is that they correspond to transformations of generating functions. Whenever we have a sequence $(u_n)_{n\geq 1}$ of elements of $\mathbb K$, we can consider the ordinary generating function $\displaystyle F(z)=\sum_{n\geq 1}u_nz^n$; we then let $\displaystyle \widehat F(z)=\sum_{n\geq 1}u_n\dfrac{z^n}{n!}$ denote the corresponding exponential generating function. It turns out that 
\begin{equation}\label{Eq6}
\widehat F(z)=\mathcal L^{-1}\{F(1/t)/t\}(z),
\end{equation} where $\mathcal L^{-1}$ denotes the inverse Laplace transform. Indeed, this follows from the linearity of the inverse Laplace transform and the fact that $\mathcal L^{-1}\{t^{-n-1}\}(z)=\dfrac{z^n}{n!}$. If $u_1\neq 0$, the series $F(z)$ has a unique formal compositional inverse, which is another power series that we denote by $F^{\langle -1\rangle}(z)$; it satisfies $F(F^{\langle -1\rangle}(z))=F^{\langle -1\rangle}(F(z))=z$. If $u_1=0$, there can be multiple compositional inverses of $F(z)$. When this arises in applications, we can determine the correct series by analyzing the initial terms of the candidate compositional inverses. 

Let $(m_n)_{n\geq 1}$ be a moment sequence, and let $(c_n)_{n\geq 1}$ and $(\kappa_n)_{n\geq 1}$ be the corresponding sequences of classical and free cumulants. The \dfn{moment series} $M(z)$ is simply the ordinary generating function \[M(z)=\sum_{n\geq 1}m_nz^n.\] Then $\displaystyle \widehat M(z)=\sum_{n\geq 1}m_n\dfrac{z^n}{n!}$. Because the classical cumulants $c_n$ satisfy the formula on the left in \eqref{Eq3}, it follows from the Exponential Formula \cite[Chapter 5]{Stanley2} that 
\begin{equation}\label{Eq4}
\sum_{n\geq 1}c_n\frac{z^n}{n!}=\log(1+\widehat M(z)).
\end{equation}
The $R$-transform $R(z)$ of the moment series $M(z)$, which was originally defined by Voiculescu \cite{Voiculescu, Voiculescu2} in his foundational work on free probability, is the ordinary generating function of the free cumulants: 
\[R(z)=\sum_{n\geq 1}\kappa_n z^n.\] The $R$-transform and the moment series are related by the equation  
\begin{equation}\label{Eq5}
\frac{R^{\langle -1\rangle}(z)}{1+z}=M^{\langle -1\rangle}(z),
\end{equation}
where we must choose the appropriate branches of the compositional inverses when $m_1$ and $\kappa_1$ are $0$ \cite[Lecture 16]{Nica}. On the level of generating functions, we can convert from free to classical cumulants (and vice versa) by combining \eqref{Eq6}, \eqref{Eq4}, and \eqref{Eq5}.

\subsection{Cumulants and Valid Hook Configurations}\label{Subsec:CumulantsVHCs} 

We now state and prove the central theorem connecting free and classical cumulants with valid hook configurations. This theorem will represent the second half of the bridge connecting the free probability world with the rooted plane tree (and stack-sorting) world (the first half of this bridge is the Refined Tree Fertility Formula). In fact, much of the heavy lifting needed to prove this theorem was done in \cite{DefantEngenMiller}; we just need to recall the results from that paper. 

Our first order of business is to slightly modify the colorings of valid hook configurations that we introduced in Section~\ref{Sec:TreeFertility}. Recall that, originally, we did not color the northeast endpoints of hooks. Here, it will be convenient to color these points as well. We simply color the northeast endpoint of a hook $H$ the same color as $H$. The top right panel in Figure~\ref{Fig10} shows an example. 

\begin{figure}[ht]
  \begin{center}{\includegraphics[height=8.3cm]{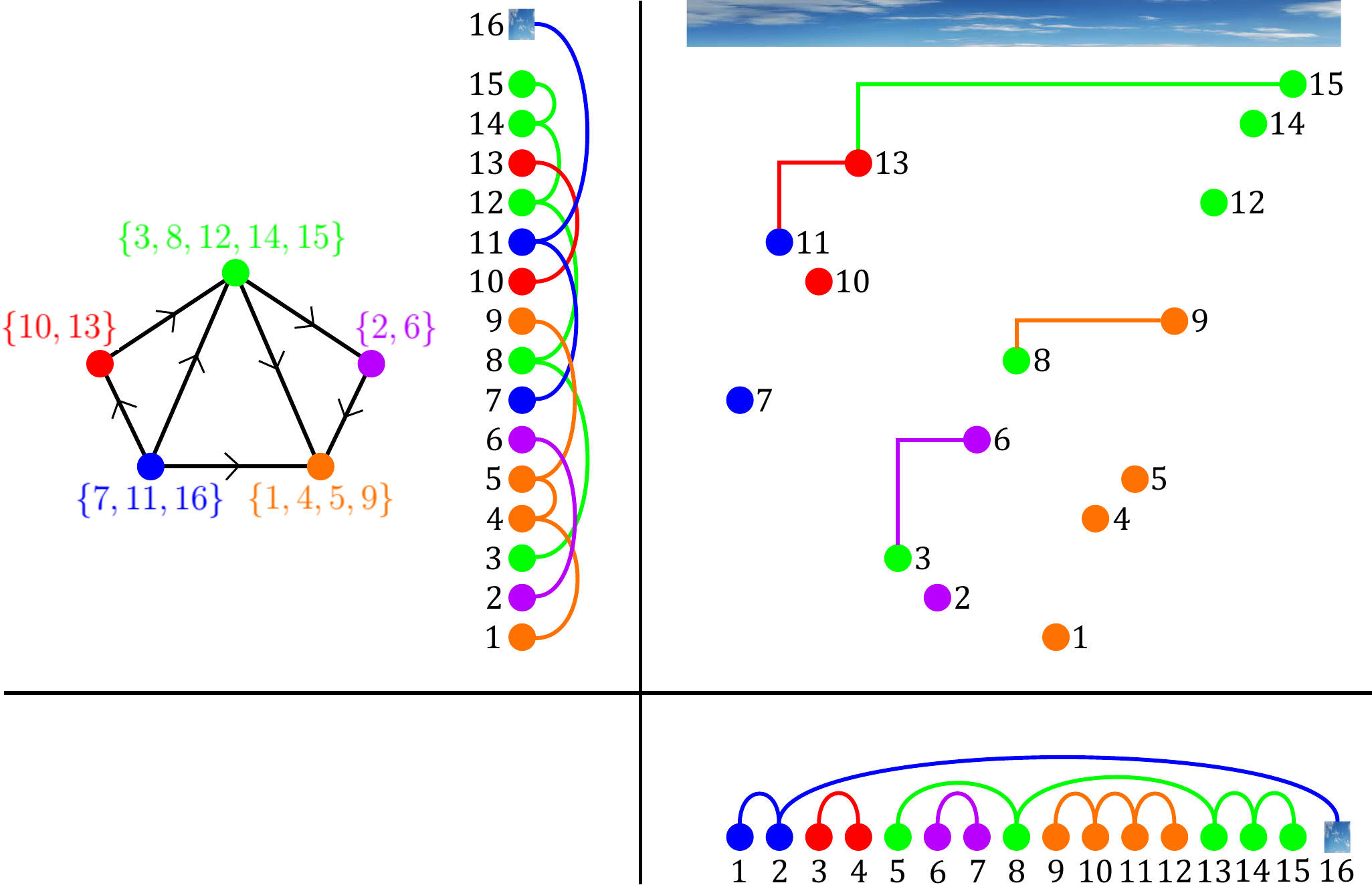}}
  \end{center}
  \caption{The top right panel shows the modified diagram of a valid hook configuration $\mathcal H\in\VHC(\pi)$, where $\pi=7\,11\,10\,13\,3\,2\,6\,8\,1\,4\,5\,9\,12\,14\,15$. The top left panel shows the connected set partition $\vert\mathcal H$ and acyclic orientation of the crossing graph $G(\vert\mathcal H)$ that correspond to $\mathcal H$ under the bijection $\Phi$. The bottom right panel shows the noncrossing partition $\underline{\mathcal H}$}\label{Fig10}
\end{figure}

Let $\pi\in S_{n-1}$ be a permutation, and let $\mathcal H\in\VHC(\pi)$ be a valid hook configuration of $\pi$. Now imagine projecting the colored points and the sky in the modified coloring of $\mathcal H$ onto a vertical wall on the left side of the diagram. This produces a set partition $\vert\mathcal H\in\Pi(n)$. More precisely, we define $\vert\mathcal H$ by saying that two elements $a,a'\in[n]$ are in the same block of $\vert\mathcal H$ if and only if the points with heights $a$ and $a'$ in the modified diagram of $\mathcal H$ have the same color, where we think of the sky as a blue point with height $n$. We color each block $B\in\vert\mathcal H$ the same color as the points whose heights are in $B$. For example, if $\mathcal H$ is as shown in the top right panel in Figure~\ref{Fig10}, then \[\vert\mathcal H=\{{\color{MyOrange}\{1,4,5,9\}},{\color{MyIndigo}\{2,6\}},{\color{MyGreen}\{3,8,12,14,15\}},{\color{MyBlue}\{7,11,16\}},{\color{MyRed}\{10,13\}}\}.\]

\begin{remark}\label{Rem2}
Given a valid hook configuration $\mathcal H$ with $k$ hooks, we can consider the composition ${\bf q}^{\mathcal H}=(q_0,\ldots,q_k)$ defined in Section~\ref{Sec:TreeFertility}. The sizes of the blocks of $\vert\mathcal H$ are $q_0+1,\ldots,q_k+1$. 
\end{remark}

There is another natural partition $\underline{\mathcal H}\in\Pi(n)$ associated to $\mathcal H$, which we get by projecting the points in the modified diagram of $\mathcal H$ downward onto a floor. More precisely, we first declare that two elements $i,i'\in[n-1]$ are in the same block of $\underline{\mathcal H}$ if and only if $(i,\pi_i)$ and $(i',\pi_{i'})$ have the same color in the modified coloring of $\mathcal H$. We then add the number $n$ to the block containing the numbers $i$ such that $(i,\pi_i)$ is blue. We color each block $B\in \underline{\mathcal H}$ the same color as the points whose positions (i.e., $x$-coordinates) are in $B$. For example, if $\mathcal H$ is as shown in the top right panel in Figure~\ref{Fig10}, then \[\underline{\mathcal H}=\{{\color{MyBlue}\{1,2,16\}},{\color{MyRed}\{3,4\}},{\color{MyGreen}\{5,8,13,14,15\}},{\color{MyIndigo}\{6,7\}},{\color{MyOrange}\{9,10,11,12\}}\}.\]

It follows easily from the definition of a valid hook configuration (Definition~\ref{Def5}) that the partition $\underline{\mathcal H}$ is noncrossing. On the other hand, it follows from Theorem~\ref{Thm18} below that $\vert\mathcal H$ is a connected set partition. For each block $B\in\vert\mathcal H$, let $\wideparen B$ be the block in $\underline{\mathcal H}$ with the same color as $B$. The map $B\mapsto\wideparen B$ is clearly a bijection from $\vert\mathcal H$ to $\underline{\mathcal H}$ that preserves sizes of blocks. 

An \dfn{acyclic orientation} of a graph $G$ is an assignment of a direction to each of the edges of $G$ so that there are no directed cycles in the resulting directed graph. A \dfn{source} of a directed graph is a vertex with in-degree $0$. When we speak of a source of an acyclic orientation, we mean a source in the corresponding directed graph. Let $\widetilde \Pi^{\con}(n)$ denote the set of pairs $(\rho,\alpha)$ such that $\rho\in\Pi^{\con}(n)$ and $\alpha$ is an acyclic orientation of the crossing graph $G(\rho)$ whose unique source is the block of $\rho$ containing the number $n$. Greene and Zaslavsky \cite{Greene} proved that if $v$ is a vertex in a finite simple graph $G$, then the number of acyclic orientations of $G$ in which $v$ is the unique source is the value $T_G(1,0)$ of the Tutte polynomial of $G$. It follows that \[T_{G(\rho)}(1,0)=|\{\alpha:(\rho,\alpha)\in\widetilde\Pi^{\con}(n)\}|\] for each $\rho\in\Pi^{\con}(n)$.

We can now state the main bijection from \cite{DefantEngenMiller}. Let $\VHC(S_{n-1})$ denote the set of all valid hook configurations of permutations in $S_{n-1}$. Choose $\mathcal H\in\VHC(S_{n-1})$, and let $\vert\mathcal H$ and $\underline{\mathcal H}$ be its associated connected set partition and its associated noncrossing partition, respectively. Suppose $B$ and $B'$ are two blocks of $\vert\mathcal H$ that are adjacent in $G(\vert\mathcal H)$ (i.e., they form a crossing). Let $\wideparen B$ and $\wideparen B'$ be the corresponding blocks in $\underline{\mathcal H}$. If $\min\wideparen B<\min\wideparen B'$, orient the edge connecting $B$ and $B'$ in $G(\rho)$ from $B$ to $B'$. If $\min\wideparen B'<\min\wideparen B$, orient the edge connecting $B$ and $B'$ in $G(\rho)$ from $B'$ to $B$. After orienting all of the edges of $G(\vert\mathcal H)$ in this way, we obtain an acyclic orientation $\alpha_{\mathcal H}$ of $G(\vert\mathcal H)$. Let \[\Phi(\mathcal H)=(\vert\mathcal H,\alpha_{\mathcal H}).\] For example, if $\mathcal H$ is the valid hook configuration whose modified coloring is shown in the top right panel of Figure~\ref{Fig10}, then $\vert\mathcal H$ and $\alpha_{\mathcal H}$ are shown in the top left panel of the same figure. 

\begin{theorem}[\!\!\cite{DefantEngenMiller}]\label{Thm18}
If $\mathcal H\in\VHC(S_{n-1})$, then $\Phi(\mathcal H)\in\widetilde\Pi^{\con}(n)$. Furthermore, the map \[\Phi:\VHC(S_{n-1})\to\widetilde\Pi^{\con}(n)\] is a bijection. 
\end{theorem}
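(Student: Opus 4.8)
The plan is to check that $\Phi$ is well defined and then to produce an explicit two-sided inverse. Fix $\mathcal H\in\VHC(S_{n-1})$; well-definedness amounts to three claims: (a) $\vert\mathcal H$ is connected, (b) $\alpha_{\mathcal H}$ is acyclic, and (c) the unique source of $\alpha_{\mathcal H}$ is the block of $\vert\mathcal H$ containing $n$. Of these, (b) and the easy half of (c) are immediate. The orientation $\alpha_{\mathcal H}$ directs each edge of $G(\vert\mathcal H)$ from the endpoint $B$ with smaller $\min\wideparen B$ to the one with larger $\min\wideparen B$, and since the blocks $\wideparen B$ partition $[n]$ these minima are pairwise distinct, so $\alpha_{\mathcal H}$ is induced by a linear order on the vertex set and has no directed cycle. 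Moreover $(1,\pi_1)$ is always blue and the sky has height $n$, so the blue block $B_0$ of $\vert\mathcal H$ contains $n$ while the corresponding block $\wideparen{B_0}$ of $\underline{\mathcal H}$ contains $1$; hence $\min\wideparen{B_0}=1$ is globally smallest and $B_0$ is a source.

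The content is in (a) and in the \emph{uniqueness} of the source. For both I would organize the $k+1$ color classes into a rooted tree $\Gamma$: the sky is the root, and each hook is joined to the hook, or to the sky, that directly encloses it (iterating ``pass to the enclosing object'' strictly raises the height of the relevant horizontal segment, so everything is connected up to the sky, and $\Gamma$ has $k+1$ vertices and $k$ edges). A geometric argument---using that $(d_i+1,\pi_{d_i+1})$ and the northeast endpoint of $H_i$ both carry $H_i$'s color---shows each edge of $\Gamma$ is genuinely a crossing of $\vert\mathcal H$, so $\Gamma$ is a spanning tree of $G(\vert\mathcal H)$; in particular $G(\vert\mathcal H)$ is connected, giving (a). Rooting $\Gamma$ at $B_0$, one then checks that $\alpha_{\mathcal H}$ orients every $\Gamma$-edge from parent to child: for the class $C$ of a hook $H_i$ one has $\min\wideparen C=d_i+1$, while its parent is either the sky (with $\min\wideparen{B_0}=1<d_i+1$) or the class of a hook $H_\ell$ enclosing $H_i$, forcing $d_\ell<d_i$ and hence $\min\wideparen{C_\ell}=d_\ell+1<\min\wideparen C$. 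So every non-root block has an incoming $\Gamma$-edge, hence is not a source, and $B_0$ is the unique source; thus $\Phi(\mathcal H)\in\widetilde\Pi^{\con}(n)$.

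For bijectivity I would build $\Psi:\widetilde\Pi^{\con}(n)\to\VHC(S_{n-1})$ by reversing the Iterative Decomposition Procedure of Section~\ref{Sec:TreeFertility}. Given $(\rho,\alpha)$, the block sizes determine (via Remark~\ref{Rem2}) the lengths of the increasing runs to be assembled, and $\alpha$ dictates the order: process the blocks along a linear extension of $\alpha$ (which begins at the forced source), at each step appending the current block's run and placing a hook at the location prescribed by the crossing structure of $\rho$. An induction shows the result is a valid hook configuration of some $\pi\in S_{n-1}$, that $\vert\Psi(\rho,\alpha)=\rho$ and $\alpha_{\Psi(\rho,\alpha)}=\alpha$, and that $\Psi\circ\Phi=\mathrm{id}$; the last point rests on Remark~\ref{Rem1}, since a valid hook configuration is determined by its underlying permutation together with the composition ${\bf q}^{\mathcal H}$, and that composition is visible in the block sizes of $\rho$.

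I expect the combinatorial heart---pinning down $\Gamma$ precisely and verifying that its edges are crossings of $\vert\mathcal H$, and that assembling runs in source order always yields a bona fide valid hook configuration---to be the main obstacle; this is exactly the technical work done in \cite{DefantEngenMiller}, which I would either reproduce or cite. Everything else (acyclicity, identifying $B_0$ as the unique source, and matching $\Psi$ with $\Phi$) is routine once that structural picture is in place.
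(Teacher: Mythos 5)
The paper does not actually prove Theorem~\ref{Thm18}: it is imported wholesale from \cite{DefantEngenMiller} with no argument given (the surrounding text says only that ``the heavy lifting... was done in \cite{DefantEngenMiller}''), so there is no in-paper proof to measure you against. Judged on its own terms, your outline is sound, and its first half is essentially complete: acyclicity of $\alpha_{\mathcal H}$ via the linear order on blocks by $\min\wideparen B$, the identification of the blue block $B_0$ (containing $n$, with $\min\wideparen{B_0}=1$) as a source, and the enclosure tree $\Gamma$ all check out. In particular, the crossing claim for a $\Gamma$-edge is correct because if $H_\ell$ (or the sky) directly encloses $H_i$, then the heights $\pi_{d_i+1}<\pi_{d_i}<\pi_{j_i}<\pi_{j_\ell}$ (or $<n$) alternate between the two color classes, and $\min\wideparen{C_\ell}=d_\ell+1<d_i+1=\min\wideparen{C_i}$ orients every tree edge parent-to-child, forcing $B_0$ to be the \emph{unique} source. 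The one place your proposal does not rise above the paper's own treatment is surjectivity/injectivity: the construction of $\Psi$ by ``processing blocks along a linear extension of $\alpha$ and placing hooks at the location prescribed by the crossing structure'' is the genuinely delicate step --- one must show the hook endpoints demanded by $\rho$ and $\alpha$ actually exist in the permutation being assembled and that the result satisfies conditions (2) and (3) of Definition~\ref{Def5} --- and you, like the paper, ultimately defer this to \cite{DefantEngenMiller}. That is an honest and acceptable resolution here, but be aware that it is the content of the theorem rather than a routine verification.
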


The following corollary now follows immediately from Josuat-Verg\`es' formula (Theorem~\ref{Thm2}) and the preceding theorem. 

\begin{corollary}[VHC Cumulant Formula]\label{Cor4}
If $(\kappa_n)_{n\geq 1}$ is a sequence of free cumulants, then the corresponding classical cumulants are given by \[-c_n=\sum_{\mathcal H\in\VHC(S_{n-1})}(-\kappa_\bullet)_{\vert\mathcal H}.\] 
\end{corollary}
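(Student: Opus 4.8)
The plan is to chain together the two ingredients just recalled: Josuat-Vergès' formula (Theorem~\ref{Thm2}), which expresses $-c_n$ as a sum over connected set partitions weighted by a Tutte-polynomial evaluation, and the bijection $\Phi$ of Theorem~\ref{Thm18}, which identifies the valid hook configurations in $\VHC(S_{n-1})$ with connected set partitions of $[n]$ carrying a distinguished acyclic orientation. No new combinatorics is needed here; the work is in assembling these pieces correctly.

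First I would start from
\[-c_n=\sum_{\rho\in\Pi^{\con}(n)}T_{G(\rho)}(1,0)(-\kappa_\bullet)_\rho\]
and rewrite the coefficient $T_{G(\rho)}(1,0)$ by the Greene--Zaslavsky theorem in the form recorded above: for $\rho\in\Pi^{\con}(n)$, the value $T_{G(\rho)}(1,0)$ is the number of acyclic orientations of the crossing graph $G(\rho)$ whose unique source is the block containing $n$, i.e.
\[T_{G(\rho)}(1,0)=\left|\{\alpha:(\rho,\alpha)\in\widetilde\Pi^{\con}(n)\}\right|.\]
Substituting this and folding the multiplicity into the index of summation converts the weighted sum over $\Pi^{\con}(n)$ into an unweighted sum over $\widetilde\Pi^{\con}(n)$,
\[-c_n=\sum_{(\rho,\alpha)\in\widetilde\Pi^{\con}(n)}(-\kappa_\bullet)_\rho,\]
where I use that the summand $(-\kappa_\bullet)_\rho=\prod_{B\in\rho}(-\kappa_{|B|})$ does not involve $\alpha$ at all (it depends only on the multiset of block sizes of $\rho$).

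Finally I would push this sum through the bijection $\Phi\colon\VHC(S_{n-1})\to\widetilde\Pi^{\con}(n)$ of Theorem~\ref{Thm18}. Since $\Phi(\mathcal H)=(\vert\mathcal H,\alpha_{\mathcal H})$ and $(-\kappa_\bullet)_\rho$ is an invariant of the underlying partition, reindexing the sum by $\mathcal H$ yields
\[-c_n=\sum_{\mathcal H\in\VHC(S_{n-1})}(-\kappa_\bullet)_{\vert\mathcal H},\]
which is the assertion. If one wants the multivariate refinement, the identical argument applies starting from the multivariate version of Josuat-Vergès' formula.

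I do not expect a genuine obstacle inside this argument: all the difficulty has been front-loaded into Theorem~\ref{Thm18} (constructing $\Phi$, proving it is a bijection, and in particular verifying that $\vert\mathcal H$ is connected and that $\alpha_{\mathcal H}$ has the block containing $n$ as its unique source) and into the cited theorems of Josuat-Vergès and Greene--Zaslavsky. The one point meriting a moment's care is the bookkeeping in the passage from $\Pi^{\con}(n)$ to $\widetilde\Pi^{\con}(n)$ --- making sure the Tutte evaluation is exactly the count appearing in the definition of $\widetilde\Pi^{\con}(n)$, with the source at the block containing $n$ matching the convention used in defining $\Phi$ --- but this compatibility is precisely what Theorems~\ref{Thm2} and \ref{Thm18} have been set up to provide.
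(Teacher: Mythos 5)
Your proof is correct and is exactly the paper's argument: the paper derives Corollary~\ref{Cor4} immediately from Josuat-Verg\`es' formula (Theorem~\ref{Thm2}) together with the bijection $\Phi$ of Theorem~\ref{Thm18}, using the Greene--Zaslavsky identity $T_{G(\rho)}(1,0)=|\{\alpha:(\rho,\alpha)\in\widetilde\Pi^{\con}(n)\}|$ to absorb the Tutte-polynomial weight into the index of summation, just as you do.
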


Because Josuat-Verg\`es' formula extends to the multivariate setting, so does Corollary~\ref{Cor4}. More precisely, this means that if $(\mathcal A,\varphi)$ is a noncommutative probability space and $a_1,\ldots,a_n\in\mathcal A$, then 
\begin{equation}\label{Eq7}
-c_n(a_1,\ldots,a_n)=\sum_{\mathcal H\in\VHC(S_{n-1})}\prod_{B\in\vert\mathcal H}(-\kappa_{|B|}(a_B)).
\end{equation} In Section~\ref{Sec:OtherFormulas}, we will use Corollary~\ref{Cor4} and the combinatorics of valid hook configurations to give new formulas that convert from free to classical cumulants; these new formulas will \emph{not} extend to the multivariate setting.   

\begin{example}\label{Exam2}
Figure~\ref{Fig11} shows the modified colorings of the two valid hook configurations in $\VHC(S_3)$. The associated connected set partitions are $\{{\color{MyBlue}\{1,2,3,4\}}\}$ and $\{{\color{MyRed}\{1,3\}},{\color{MyBlue}\{2,4\}}\}$. If $(\mathcal A,\varphi)$ is a noncommutative probability space and $a_1,a_2,a_3,a_4\in\mathcal A$, then it follows from \eqref{Eq7} that 
\[-c_4(a_1,a_2,a_3,a_4)=-\kappa_4(a_1,a_2,a_3,a_4)+(-\kappa_2(a_1,a_3))(-\kappa_2(a_2,a_4))\] \[=-\kappa_4(a_1,a_2,a_3,a_4)+\kappa_2(a_1,a_3)\kappa_2(a_2,a_4).\] Specializing to the univariate setting, this says that \[-c_4=-\kappa_4+\kappa_2^2.\] 

\begin{figure}[ht]
  \begin{center}{\includegraphics[height=1.4cm]{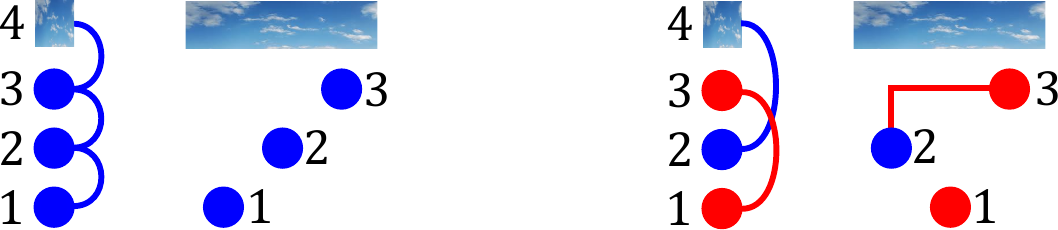}}
  \end{center}
  \caption{The modified colorings of the valid hook configurations of permutations in $S_3$ and their associated connected set partitions.}\label{Fig11}\qedhere
\end{figure}
\end{example}

\section{Troupes and Cumulants}\label{Sec:TroupsAndCumulants}
We can now state and prove one of our main results. We will then illustrate this theorem with several examples. In what follows, we work over the field $\mathbb K=\mathbb C(x_1,\ldots,x_r)$. Given a set ${\bf T}$ of colored binary plane trees, let $\overline{\mathsf D}{\bf T}$ denote the set of standardized trees in $\mathsf D{\bf T}$.

\begin{theorem}\label{Thm5}
Let ${\bf T}$ be a troupe. Let $f_1,\ldots,f_r$ be insertion-additive tree statistics, and let $x_1,\ldots,x_r$ be variables. If $(\kappa_n)_{n\geq 1}$ is the sequence of free cumulants defined by \[\kappa_n=-\sum_{T\in{\bf T}_{n-1}}x_1^{f_1(T)}\cdots x_r^{f_r(T)},\] then the corresponding sequence $(c_n)_{n\geq 1}$ of classical cumulants is given by \[c_n=-\sum_{\mathcal T\in\overline{\mathsf D}{\bf T}_{n-1}}x_1^{\ddot f_1(\mathcal T)}\cdots x_r^{\ddot f_r(\mathcal T)}.\] 
\end{theorem}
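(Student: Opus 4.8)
The plan is to combine the two ``halves of the bridge'' already assembled in the excerpt: the Refined Tree Fertility Formula (Theorem~\ref{Thm22}) and the VHC Cumulant Formula (Corollary~\ref{Cor4}). Write $\kappa_n=-\sum_{T\in{\bf T}_{n-1}}x_1^{f_1(T)}\cdots x_r^{f_r(T)}=-{\bf G}_{n-1}(x_1,\ldots,x_r)$, so that $-\kappa_n={\bf G}_{n-1}$. The VHC Cumulant Formula gives
\[
-c_n=\sum_{\mathcal H\in\VHC(S_{n-1})}(-\kappa_\bullet)_{\vert\mathcal H}
=\sum_{\mathcal H\in\VHC(S_{n-1})}\prod_{B\in\vert\mathcal H}{\bf G}_{|B|-1}(x_1,\ldots,x_r).
\]
Now I would invoke Remark~\ref{Rem2}: if $\mathcal H$ has $k$ hooks and ${\bf q}^{\mathcal H}=(q_0,\ldots,q_k)$, then the block sizes of $\vert\mathcal H$ are exactly $q_0+1,\ldots,q_k+1$. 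Hence $\prod_{B\in\vert\mathcal H}{\bf G}_{|B|-1}=\prod_{t=0}^k{\bf G}_{q_t}={\bf G}_{{\bf q}^{\mathcal H}}(x_1,\ldots,x_r)$. This turns the right-hand side into $\sum_{\mathcal H\in\VHC(S_{n-1})}{\bf G}_{{\bf q}^{\mathcal H}}(x_1,\ldots,x_r)$.

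The second step is to recognize this last sum via the Refined Tree Fertility Formula. That formula states, for any fixed permutation $\pi$, that $\sum_{\mathcal H\in\VHC(\pi)}{\bf G}_{{\bf q}^{\mathcal H}}(x_1,\ldots,x_r)=\sum_{\mathcal T\in\mathcal P^{-1}(\pi)\cap\mathsf D{\bf T}}x_1^{\ddot f_1(\mathcal T)}\cdots x_r^{\ddot f_r(\mathcal T)}$. Since $\VHC(S_{n-1})=\bigsqcup_{\pi\in S_{n-1}}\VHC(\pi)$ (disjoint by the convention in Definition~\ref{Def5}), summing over all $\pi\in S_{n-1}$ gives
\[
\sum_{\mathcal H\in\VHC(S_{n-1})}{\bf G}_{{\bf q}^{\mathcal H}}(x_1,\ldots,x_r)
=\sum_{\pi\in S_{n-1}}\ \sum_{\mathcal T\in\mathcal P^{-1}(\pi)\cap\mathsf D{\bf T}}x_1^{\ddot f_1(\mathcal T)}\cdots x_r^{\ddot f_r(\mathcal T)}.
\]
The inner double sum ranges over all pairs $(\pi,\mathcal T)$ with $\mathcal T\in\mathsf D{\bf T}$, $\mathcal T$ labeled by $[n-1]$, and $\mathcal P(\mathcal T)=\pi$; but $\pi$ is \emph{determined} by $\mathcal T$, so this is simply a sum over all standardized trees $\mathcal T\in\overline{\mathsf D}{\bf T}_{n-1}$. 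This yields $-c_n=\sum_{\mathcal T\in\overline{\mathsf D}{\bf T}_{n-1}}x_1^{\ddot f_1(\mathcal T)}\cdots x_r^{\ddot f_r(\mathcal T)}$, which is the claim.

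The argument is essentially a bookkeeping exercise once the three ingredients are in place, so there is no single hard obstacle; the point requiring the most care is the indexing/standardization check in the last step — verifying that as $\pi$ ranges over $S_{n-1}$ and $\mathcal T$ over $\mathcal P^{-1}(\pi)\cap\mathsf D{\bf T}$ with label set $[n-1]$, we enumerate each standardized tree in $\overline{\mathsf D}{\bf T}_{n-1}$ exactly once, and that the statistic weights match (which is immediate from $\ddot f_i(\mathcal T)=f_i(\skel(\mathcal T))$). One should also note at the outset that the Refined Tree Fertility Formula and the VHC Cumulant Formula are both formal identities in $\mathbb K=\mathbb C(x_1,\ldots,x_r)$, so combining them is legitimate; and one should remark that, just as the VHC Cumulant Formula does, Theorem~\ref{Thm5} extends to the multivariate setting verbatim. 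A closing sentence can point out that taking $x_1=\cdots=x_r=1$ recovers the statement that $-c_n=|\overline{\mathsf D}{\bf T}_{n-1}|$ while $-\kappa_n=|{\bf T}_{n-1}|$, which is the promised generalization of the Catalan/factorial phenomenon from the introduction.
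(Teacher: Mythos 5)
Your proposal is correct and follows essentially the same route as the paper's proof: identify $-\kappa_{m+1}$ with ${\bf G}_m(x_1,\ldots,x_r)$, use Remark~\ref{Rem2} to rewrite $(-\kappa_\bullet)_{\vert\mathcal H}$ as ${\bf G}_{{\bf q}^{\mathcal H}}(x_1,\ldots,x_r)$, and then chain the VHC Cumulant Formula (Corollary~\ref{Cor4}) with the Refined Tree Fertility Formula (Theorem~\ref{Thm22}), summing over $\pi\in S_{n-1}$. The one thing I would drop is your closing assertion that the theorem extends ``verbatim'' to the multivariate setting: the free cumulants in the hypothesis depend only on $n$, so it is not clear what the multivariate statement would even be, and the paper makes no such claim.
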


\begin{proof}
Preserving the notation from the Refined Tree Fertility Formula (Theorem~\ref{Thm22}), we let ${\bf G}_m(x_1,\ldots,x_r)=-\kappa_{m+1}$. If $\mathcal H$ is a valid hook configuration with ${\bf q}^{\mathcal H}=(q_0,\ldots,q_k)$, then the blocks of $\vert\mathcal H$ have sizes $q_0+1,\ldots,q_k+1$. Thus, \[{\bf G}_{{\bf q}^{\mathcal H}}(x_1,\ldots,x_r)=\prod_{t=0}^k{\bf G}_{q_t}(x_1,\ldots,x_r)=\prod_{t=0}^k(-\kappa_{q_t+1})=(-\kappa_\bullet)_{\vert\mathcal H}.\] Combining the Refined Tree Fertility Formula (Theorem~\ref{Thm22}) with the VHC Cumulant Formula (Corollary~\ref{Cor4}), we find that \[-c_n=\sum_{\mathcal H\in\VHC(S_{n-1})}(-\kappa_\bullet)_{\vert\mathcal H}=\sum_{\pi\in S_{n-1}}\sum_{\mathcal H\in\VHC(\pi)}{\bf G}_{\bf q^{\mathcal H}}(x_1,\ldots,x_r)\] \[=\sum_{\pi\in S_{n-1}}\sum_{\mathcal T\in\mathcal P^{-1}(\pi)\cap\mathsf D{\bf T}}x_1^{\ddot f_1(\mathcal T)}\cdots x_r^{\ddot f_r(\mathcal T)}=\sum_{\mathcal T\in\overline{\mathsf D}{\bf T}_{n-1}}x_1^{\ddot f_1(\mathcal T)}\cdots x_r^{\ddot f_r(\mathcal T)}.\qedhere\] 
\end{proof}

\begin{remark}
Despite the very simple-looking relationship between the free and classical cumulants in Theorem~\ref{Thm5}, we do not see any way to prove the result without the use of the Refined Tree Fertility Formula (which, in turn, relies on the Refined Tree Decomposition Lemma) and the VHC Cumulant Formula. It is likely that these tools are truly necessary for the proof because the hypothesis that ${\bf T}$ is a troupe is needed. Indeed, suppose we were to take ${\bf T}$ to be the set of all binary plane trees that are branches (meaning no vertices have $2$ children). Set $r=0$. In this case, the free cumulants $\kappa_n$ in Theorem~\ref{Thm5} satisfy $\kappa_1=-|{\bf T}_0|=-1$ and $\kappa_n=-|{\bf T}_{n-1}|=-2^{n-2}$ for all $n\geq 2$. If Theorem~\ref{Thm5} applied in this setting, it would tell us that $c_4=-|\overline{\mathsf D}{\bf T}_3|=-4$. However, we can use the computation in Example~\ref{Exam2} to see that $-c_4=-\kappa_4+\kappa_2^2=4+(-1)^2=5$.  
\end{remark} 

\begin{example}[Binary Plane Trees]\label{ExamBinary2}
Let ${\bf T}=\mathsf{BPT}$, and let $f_1(T)=\des(T)+1$ and $f_2(T)=\peak(T)+1$. Let \[{\bf G}_n(x_1,x_2)=\sum_{T\in\mathsf{BPT}_n}x_1^{\des(T)+1}x_2^{\peak(T)+1}\] be as in Example~\ref{ExamBinary1}. The generating function for these polynomials is given explicitly in \eqref{Eq16}. Theorem~\ref{Thm5} tells us that if we define free cumulants by $\kappa_n=-{\bf G}_{n-1}(x_1,x_2)$, then the corresponding classical cumulants are \[c_n=-\sum_{\mathcal T\in\mathsf{\overline{D}BPT}_{n-1}}x_1^{\des(\mathcal T)+1}x_2^{\peak(\mathcal T)+1}.\] 

Using the in-order reading $\mathcal I$, we can rephrase this result purely in terms of permutations. We say a permutation $\pi=\pi_1\cdots\pi_n$ is \dfn{$231$-avoiding} if there do not exist indices $i_1<i_2<i_3$ such that $\pi_{i_3}<\pi_{i_1}<\pi_{i_2}$. Given a binary plane tree $T$ with $n$ vertices, there is a unique decreasing binary plane tree $\ddot T$ with skeleton $T$ and postorder reading $123\cdots n$. It is well known that the map $T\mapsto \mathcal I(\ddot T)$ is a bijection from $\mathsf{BPT}_n$ to the set $\Av_n(231)$ of $231$-avoiding permutations in $S_n$. Furthermore, $\des(T)=\des(\mathcal I(\ddot T))$ and $\peak(T)=\peak(\mathcal I(\ddot T))$. On the other hand, the map $\mathcal I:\mathsf{\overline{D}BPT}_n\to S_n$ is a bijection satisfying $\des(\mathcal I(\mathcal T))=\des(\mathcal T)$ and $\peak(\mathcal I(\mathcal T))=\peak(\mathcal T)$. It follows that the above free and classical cumulants are \[\kappa_n=-\sum_{\pi\in\Av_{n-1}(231)}x_1^{\des(\pi)+1}x_2^{\peak(\pi)+1}\quad\text{and}\quad c_n=-\sum_{\pi\in S_{n-1}}x_1^{\des(\pi)+1}x_2^{\peak(\pi)+1}.\] 

Let us now specialize by setting $x_2=1$. In this case, the free cumulants are given by Narayana polynomials: 
\begin{equation}\label{Eq8}
\kappa_n=-{\bf G}_{n-1}(x_1,1)=-N_{n-1}(x_1).
\end{equation} Using the above expression for the classical cumulants in terms of permutations, we find that 
\begin{equation}\label{Eq9}
c_n=-\sum_{\pi\in S_{n-1}}x_1^{\des(\pi)+1}=-x_1A_{n-1}(x_1),
\end{equation} where $A_{n-1}(x_1)$ is an \dfn{Eulerian polynomial} (see OEIS sequence A008292). The Eulerian polynomials are the $h$-polynomials of permutohedra \cite{Postnikov}. Thus, we have shown, in a combinatorial fashion, that the above sequence of free cumulants given by Narayana polynomials, which are the $h$-polynomials of associahedra, corresponds to a sequence of classical cumulants given by Eulerian polynomials, which are the $h$-polynomials of permutohedra. 

If we specialize to the case in which $x_1=1$, then the free cumulants count $231$-avoiding standardized permutations according to their number of peaks (see OEIS sequence A091894) and the classical cumulants count arbitrary standardized permutations according to their number of peaks (see OEIS sequence A008303). 

Finally, we can specialize to the case $x_1=x_2=1$. Here, the free and classical cumulants are \[\kappa_n=-C_{n-1}\quad\text{and}\quad c_n=-(n-1)!.\] The corresponding moment sequence $(m_n)_{n\geq 1}$ is simply $-1,0,0,0,\ldots$. The fact that the free cumulants $-C_{n-1}$ correspond to the classical cumulants $-(n-1)!$ is well known; it follows from the fact that the numbers $(-1)^{n-1}C_{n-1}$ and $(-1)^{n-1}(n-1)!$ are the M\"obius invariants of noncrossing partition lattices and partition lattices, respectively. What is nontrivial is our combinatorial explanation of this correspondence, which relies on valid hook configurations and binary plane trees.  
\end{example}

\begin{example}[Full Binary Plane Trees]
Let ${\bf T}=\mathsf{FBPT}$. Theorem~\ref{Thm5} tells us that if we define free cumulants by $\kappa_n=-|\mathsf{FBPT}_{n-1}|=-C_{(n-2)/2}$ (where $C_{(n-2)/2}=0$ when $n$ is odd), then the corresponding classical cumulants are $c_n=-|\mathsf{\overline{D}FBPT}_{n-1}|$. Note that $c_n=0$ when $n$ is odd. Suppose $n$ is even. As mentioned in Example~\ref{ExamFull1}, the in-order reading gives a bijection from $\mathsf{\overline{D}FBPT}_{n-1}$ to the set $S_{n-1}\cap\ALT$ of alternating permutations in $S_{n-1}$. Thus, $c_n=-E_{n-1}$, where $E_{n-1}$ denotes an Euler number (also called a tangent number since $n-1$ is odd). 
\end{example}

\begin{example}[Motzkin Trees]
Let ${\bf T}=\mathsf{Mot}$, and let $f_1(T)=\des(T)+1$. Let \[{\bf G}_n(x_1)=\sum_{T\in\mathsf{Mot}_n}x_1^{\des(T)+1}=M_{n-1}(x_1)\] be the Motzkin polynomials. The generating function for these polynomials and an explicit formula for their coefficients are given in Example~\ref{ExamMotzkin1}. Theorem~\ref{Thm5} tells us that if we define free cumulants by $\kappa_n=-M_{n-2}(x_1)$, then the corresponding classical cumulants are \[c_n=-\sum_{\mathcal T\in\mathsf{\overline{D}Mot}_{n-1}}x_1^{\des(\mathcal T)+1}.\] Referring back to the bijections $\mathsf{DMot}\to\EDP$ and $\mathsf{BPT}_{n-1}\to\Av_{n-1}(231)$ mentioned in Example~\ref{ExamMotzkin1} and Example~\ref{ExamBinary2}, respectively, we find that we can write \[\kappa_n=-\sum_{\pi\in\Av_{n-1}(231)\cap\EDP}x_1^{\des(\pi)+1}\quad\text{and}\quad c_n=-\sum_{\pi\in S_{n-1}\cap\EDP}x_1^{\des(\pi)+1}.\] The coefficients of the polynomials $-c_n$ form the $\gamma$-vectors of permutohedra \cite{Shapiro, Postnikov}; they appear in the OEIS sequence A101280 \cite{OEIS}. Thus, we have shown, in a combinatorial fashion, that free cumulants given by Motzkin polynomials, which are the $\gamma$-polynomials of associahedra, correspond to classical cumulants given by the $\gamma$-polynomials of permutohdera. 

If we specialize to the case $x_1=1$ and define the sequence of free cumulants by $\kappa_n=-M_{n-2}$, then the corresponding classical cumulants are $c_n=-|\mathsf{\overline{D}Mot}_{n-1}|=-|S_{n-1}\cap\EDP|$. The numbers $-c_n$ form the OEIS sequence A080635 \cite{OEIS}. 
\end{example}

\begin{example}[Schr\"oder $2$-Colored Binary Trees]
Let ${\bf T}=\mathsf{Sch}$, and let $f_1(T)=\des(T)+1$, $f_2(T)=\peak(T)+1$, and $f_3(T)=\black(T)+1$. Let \[{\bf G}_n(x_1,x_2,x_3)=\sum_{T\in\mathsf{Sch}_n}x_1^{\des(T)+1}x_2^{\peak(T)+1}x_3^{\black(T)+1}\] be as in Example~\ref{ExamSchroder1}. Theorem~\ref{Thm5} tells us that the free cumulants $\kappa_n=-{\bf G}_{n-1}(x_1,x_2,x_3)$ correspond to the classical cumulants \[c_n=-\sum_{\mathcal T\in\mathsf{\overline{D}Sch}_{n-1}}x_1^{\des(\mathcal T)+1}x_2^{\peak(\mathcal T)+1}x_3^{\black(T)+1}.\] 

Let us consider the specialization $x_2=x_3=1$. In this case, $-\kappa_n$ counts Schr\"oder $2$-colored binary trees with $n-1$ vertices according to their number of right edges. As mentioned in Example~\ref{ExamSchroder1}, \[\kappa_n=-{\bf G}_{n-1}(x_1,1,1)=-N_{n-1}(2x_1),\] where $N_{n-1}(x)$ is a Narayana polynomial. The corresponding classical cumulants are given by \[c_n=-\sum_{\mathcal T\in\mathsf{\overline{D}Sch}_{n-1}}x_1^{\des(\mathcal T)+1}.\] Notice that we can appeal directly to \eqref{Eq8} and \eqref{Eq9} (replacing $x_1$ with $2x_1$) to see that we also have $c_n=-2x_1A_{n-1}(2x_1)$, where $A_{n-1}(x)$ is an Eulerian polynomial. This yields the following enumerative corollary, which appears to be new.  

\begin{corollary}\label{Cor13}
For every $n\geq 1$, we have \[\sum_{T\in \mathsf{\overline{D}Sch_{n-1}}}x^{\des(T)+1}=2xA_{n-1}(2x).\]
\end{corollary}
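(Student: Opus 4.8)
The plan is to deduce the identity by combining Theorem~\ref{Thm5} for the troupe $\mathsf{Sch}$ with the free-to-classical cumulant computation already carried out for $\mathsf{BPT}$ in Example~\ref{ExamBinary2}, exploiting the fact that these cumulant transforms are insensitive to the substitution $x\mapsto 2x$.

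First I would set up the left-hand side via Theorem~\ref{Thm5}. Working over $\mathbb K=\mathbb C(x)$ and taking ${\bf T}=\mathsf{Sch}$ with the single insertion-additive statistic $f_1(T)=\des(T)+1$, Theorem~\ref{Thm5} says that the free cumulants $\kappa_n=-\sum_{T\in\mathsf{Sch}_{n-1}}x^{\des(T)+1}={\bf G}_{n-1}(x,1,1)$ (up to sign) correspond to the classical cumulants $c_n=-\sum_{T\in\mathsf{\overline{D}Sch}_{n-1}}x^{\des(T)+1}$. By the counting argument in Example~\ref{ExamSchroder1} we have ${\bf G}_{n-1}(x,1,1)=N_{n-1}(2x)$, so $\kappa_n=-N_{n-1}(2x)$. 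It therefore suffices to show that the classical cumulant sequence attached to $\kappa_n=-N_{n-1}(2x)$ is $c_n=-2x\,A_{n-1}(2x)$.

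Second, I would invoke \eqref{Eq8} and \eqref{Eq9}: in an indeterminate $y$, the free cumulant sequence $-N_{n-1}(y)$ corresponds to the classical cumulant sequence $-y\,A_{n-1}(y)$. The key point is that the defining relations \eqref{Eq3} between a moment sequence and its classical and free cumulant sequences are polynomial identities with integer coefficients in the entries of these sequences, hence are preserved under any ring homomorphism of the coefficient field; equivalently, $x\mapsto 2x$ is an automorphism of $\mathbb C(x)$ carrying one solution triple $(m_n,c_n,\kappa_n)$ of \eqref{Eq3} to another. Applying the substitution $y=2x$ thus shows that the free cumulant sequence $-N_{n-1}(2x)$ corresponds to the classical cumulant sequence $-2x\,A_{n-1}(2x)$.

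Finally, since a sequence of free cumulants determines the corresponding classical cumulants uniquely (via \eqref{Eq3} and M\"obius inversion on $\NC(n)$ and $\Pi(n)$), and both computations start from $\kappa_n=-N_{n-1}(2x)$, the two classical cumulant sequences must coincide: $-\sum_{T\in\mathsf{\overline{D}Sch}_{n-1}}x^{\des(T)+1}=-2x\,A_{n-1}(2x)$, which is the assertion after cancelling the sign. I do not expect a genuine obstacle here; the only points requiring a little care are keeping track of which specialization of the variables is in force and explicitly noting (what is essentially a triviality) that the cumulant conversions commute with the rescaling $x\mapsto 2x$.
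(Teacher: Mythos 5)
Your proposal is correct and follows essentially the same route as the paper: apply Theorem~\ref{Thm5} to the troupe $\mathsf{Sch}$ to identify the free cumulants as $-N_{n-1}(2x)$ and the classical cumulants as the left-hand side, then compare with the $\mathsf{BPT}$ computation in \eqref{Eq8} and \eqref{Eq9} under the substitution $x\mapsto 2x$ and use the fact that a free cumulant sequence determines its classical cumulant sequence. Your explicit remark that the cumulant relations are preserved under the ring endomorphism $x\mapsto 2x$ just makes precise the step the paper performs by ``replacing $x_1$ with $2x_1$.''
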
 

We can also consider the specialization $x_1=x_2=1$. By \cite[Corollary 4.2]{Gu}, we have \[\kappa_n=-\sum_{T\in\mathsf{Sch}_{n-1}}x_3^{\black(T)+1}=-\sum_{j=0}^n\binom{n+j}{n-j}C_jx_3^{j+1},\] from which one can compute the $R$-transform \[R(z)=\sum_{n\geq 1}\kappa_nz^n=
-\frac{1-z-\sqrt{(1-z)^2-4x_3z}}{2}.\] Invoking \eqref{Eq5}, one can show that the corresponding moments are given by $m_n=-x_3$ for all $n\geq 1$. According to \eqref{Eq4}, we have \[\sum_{n\geq 1}c_n\frac{z^n}{n!}=\log(1-x_3(e^{z}-1)).\] On the other hand, \[c_n=-\sum_{\mathcal T\in\mathsf{\overline{D}Sch}_{n-1}}x_3^{\black(\mathcal T)+1}.\] This proves the following enumerative corollary. 

\begin{corollary}\label{Cor12}
We have \[\sum_{n\geq 1}\sum_{T\in \mathsf{\overline{D}Sch}_{n-1}}x_3^{\black(T)+1}\frac{z^n}{n!}=-\log(1-x_3(e^z-1)).\] In particular, \[\sum_{n\geq 1}|\mathsf{\overline{D}Sch}_{n-1}|\frac{z^n}{n!}=-\log(2-e^z).\]
\end{corollary}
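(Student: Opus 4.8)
The plan is to apply Theorem~\ref{Thm5} to the troupe $\mathsf{Sch}$ with the single insertion-additive tree statistic $f(T)=\black(T)+1$, and then to run the computation through the cumulant/generating-function dictionary recorded in \eqref{Eq4}--\eqref{Eq5}. Working over $\mathbb K=\mathbb C(x_3)$, Theorem~\ref{Thm5} identifies the free cumulants $\kappa_n=-\sum_{T\in\mathsf{Sch}_{n-1}}x_3^{\black(T)+1}$ with the classical cumulants $c_n=-\sum_{\mathcal T\in\mathsf{\overline{D}Sch}_{n-1}}x_3^{\black(\mathcal T)+1}$, so the corollary becomes a purely generating-functional assertion about this sequence of free cumulants. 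The steps are: (i) find a closed form for the $R$-transform $R(z)=\sum_{n\geq1}\kappa_n z^n$; (ii) use \eqref{Eq5} to determine the moment series; (iii) use \eqref{Eq4} to obtain the exponential generating function of the $c_n$; and (iv) specialize $x_3=1$.

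For (i), I would feed in the enumerative fact, quoted earlier from \cite{Gu}, that the number of trees $T\in\mathsf{Sch}_m$ with $\black(T)=j$ is $\binom{m+j}{m-j}C_j$, so that $-\kappa_{m+1}=x_3\sum_{j=0}^m\binom{m+j}{m-j}C_j x_3^j$. Summing over $m$ and swapping the order of summation, the inner sum $\sum_{m\geq j}\binom{m+j}{m-j}z^m$ equals $z^j/(1-z)^{2j+1}$, after which the Catalan generating function $\sum_{j\geq0}C_j w^j=\frac{1-\sqrt{1-4w}}{2w}$, applied with $w=x_3 z/(1-z)^2$, collapses the double sum and gives
\[R(z)=-\frac{1-z-\sqrt{(1-z)^2-4x_3 z}}{2}.\]
I expect this to be the main obstacle, although it is a bounded one: the only nontrivial moves are the negative-binomial identity, the reindexing between $\mathsf{Sch}_{n-1}$ and the summation range, and the choice of the branch of the square root normalized by $R(0)=0$.

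For (ii), rather than invert $R$ abstractly, I would propose the candidate moments $m_n=-x_3$ for all $n\geq1$ --- equivalently $M(z)=-x_3 z/(1-z)$ --- and verify the defining relation \eqref{Eq5}. A short computation gives $M^{\langle -1\rangle}(w)=\frac{w}{w-x_3}$, while squaring the equation defining $R$ (that is, writing $\sqrt{(1-z)^2-4x_3z}=1-z+2w$ with $w=R(z)$ and squaring) yields $R^{\langle -1\rangle}(w)=\frac{w(1+w)}{w-x_3}$; hence $R^{\langle -1\rangle}(w)/(1+w)=M^{\langle -1\rangle}(w)$, as required. Since $\kappa_1=m_1=-x_3\neq0$ in $\mathbb K$, the compositional inverses are unique, so this verification determines the moments.

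For (iii), from $m_n=-x_3$ we get $\widehat M(z)=-x_3(e^z-1)$, and \eqref{Eq4} gives $\sum_{n\geq1}c_n\frac{z^n}{n!}=\log(1+\widehat M(z))=\log(1-x_3(e^z-1))$. Substituting the expression $c_n=-\sum_{\mathcal T\in\mathsf{\overline{D}Sch}_{n-1}}x_3^{\black(\mathcal T)+1}$ and negating yields the first identity of the corollary. For (iv), both sides are formal power series in $z$ whose coefficients are polynomials in $x_3$, so we may set $x_3=1$; the left side then counts $\mathsf{\overline{D}Sch}_{n-1}$ and the right side becomes $-\log(1-(e^z-1))=-\log(2-e^z)$, which is the second identity.
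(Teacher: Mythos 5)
Your proposal is correct and follows essentially the same route as the paper: apply Theorem~\ref{Thm5} to $\mathsf{Sch}$ with the statistic $\black(T)+1$, compute the $R$-transform $R(z)=-\frac{1-z-\sqrt{(1-z)^2-4x_3z}}{2}$ from the $\binom{m+j}{m-j}C_j$ refinement, deduce $m_n=-x_3$ via \eqref{Eq5}, and finish with \eqref{Eq4}. The paper leaves the $R$-transform computation and the moment identification as "one can show"; your negative-binomial summation and your guess-and-verify check of \eqref{Eq5} (legitimate since $\kappa_1=-x_3\neq 0$ makes the compositional inverse unique) simply fill in those details.
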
 

The series $-\log(1-x_3(e^z-1))$ is the exponential generating function of the triangle of numbers appearing as OEIS sequence A028246 \cite{OEIS}. These numbers have numerous known properties, including close connections with Bernoulli numbers, but none of them seem to have much to do with trees. Hence, Corollary~\ref{Cor12} appears to give a new combinatorial interpretation of these numbers. Namely, it tells us that the rows in the triangle count (standardized) decreasing Schr\"oder $2$-colored binary trees according to the number of black vertices. Setting $x_3=1$, we obtain the second part of Corollary~\ref{Cor12}, which tells us that (standardized) decreasing Schr\"oder $2$-colored binary trees are counted by the numbers appearing in OEIS sequence A000629. These numbers count many objects, including cyclically ordered set partitions, but this combinatorial interpretation in terms of decreasing Schr\"oder $2$-colored binary trees seems to be new. 
\end{example}

\section{Further Applications}\label{Sec:Applications} 
We now apply the machinery developed in the preceding sections to answer several other natural questions about valid hook configurations, rooted plane trees, and the stack-sorting map. The basic idea is to start by choosing an appropriate sequence of free cumulants $\kappa_n$ from an appropriate field $\mathbb K$. For our purposes, it will suffice to take $\mathbb K=\mathbb C(x)$. We then combine the VHC Cumulant Formula (Corollary~\ref{Cor4}) with the Refined Tree Fertility Formula (Theorem~\ref{Thm22}) or one of its corollaries to obtain combinatorial information about the corresponding classical cumulants $c_n$. Then, by combining \eqref{Eq6}, \eqref{Eq4}, and \eqref{Eq5}, we can gain information about the exponential generating function $\sum_{n\geq 1}c_n\dfrac{z^n}{n!}$. 

When computing asymptotic formulas for the sequences arising in the this section, it will be useful to keep in mind the following lemma, which is a standard result in singularity analysis (see \cite[Chapter IV]{Flajolet}). 
\begin{lemma}\label{Lem1}
Let $f:\mathbb C\to\mathbb C$ be a meromorphic function with a simple pole at the complex number $c$. Suppose that $f$ is holomorphic at every complex number $z$ such that $|z|\leq |c|$ and $z\neq c$. Letting $[z^n]f(z)$ denote the coefficient of $z^n$ in the power series expansion of $f(z)$ about the origin, we have $\displaystyle [z^n]f(z)\sim- c^{-n-1}\Res_{z=c}f(z)$. 
\end{lemma}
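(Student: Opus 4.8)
The plan is to prove Lemma~\ref{Lem1} by extracting the principal part of $f$ at the pole $c$ and applying a comparison-of-coefficients argument. First I would write the Laurent expansion of $f$ near $c$. Since $c$ is a simple pole, we have
\[
f(z)=\frac{A}{z-c}+g(z),
\]
where $A=\Res_{z=c}f(z)$ and $g$ is holomorphic in a neighborhood of $c$. By hypothesis, $f$ is holomorphic on the closed disk $\{|z|\le|c|\}$ except at $c$, so the function $g(z)=f(z)-A/(z-c)$ is in fact holomorphic on the closed disk $\{|z|\le|c|\}$: it is holomorphic away from $c$ there (being the difference of two such functions), and the apparent singularity at $c$ is removable by construction. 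Hence, by a standard compactness argument, $g$ is holomorphic on some slightly larger open disk $\{|z|<|c|+\varepsilon\}$ for some $\varepsilon>0$.

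Next I would compute the two pieces of $[z^n]f(z)$ separately. For the principal part, the geometric series expansion gives
\[
\frac{A}{z-c}=-\frac{A}{c}\cdot\frac{1}{1-z/c}=-\frac{A}{c}\sum_{n\ge0}\left(\frac{z}{c}\right)^n,
\]
so $[z^n]\dfrac{A}{z-c}=-A\,c^{-n-1}=-c^{-n-1}\Res_{z=c}f(z)$, which is exactly the claimed leading term. For the holomorphic remainder $g$, since $g$ is holomorphic on $\{|z|<|c|+\varepsilon\}$, Cauchy's estimates (or the root test applied to its Taylor coefficients) give $[z^n]g(z)=O\big((|c|+\varepsilon/2)^{-n}\big)$, which is exponentially smaller than $c^{-n-1}$. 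Therefore
\[
[z^n]f(z)=-c^{-n-1}\Res_{z=c}f(z)+O\big((|c|+\varepsilon/2)^{-n}\big)\sim-c^{-n-1}\Res_{z=c}f(z),
\]
provided the leading term does not vanish; and it does not vanish because $A=\Res_{z=c}f(z)\neq0$ (a genuine simple pole has nonzero residue). This completes the argument.

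There is no serious obstacle here — this is a textbook fact in singularity analysis, and the reference to \cite[Chapter IV]{Flajolet} already signals that the authors intend to cite it rather than reprove it. The only point requiring a modicum of care is justifying that the remainder $g$ extends holomorphically slightly beyond the disk of radius $|c|$, so that its coefficients decay at a strictly faster exponential rate; this follows from the hypothesis that $f$ has no other singularities on the closed disk $\{|z|\le|c|\}$ together with compactness of that disk. Given that the paper explicitly frames this as "a standard result in singularity analysis," the cleanest presentation is simply to cite Flajolet--Sedgewick; if a self-contained proof is desired, the three-line argument above (Laurent split, geometric series for the principal part, Cauchy estimate for the remainder) suffices.
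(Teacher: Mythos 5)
Your proof is correct. The paper itself gives no proof of this lemma --- it is stated as ``a standard result in singularity analysis'' with a citation to Flajolet--Sedgewick --- and your argument (split off the principal part $A/(z-c)$, expand it as a geometric series to get the exact term $-c^{-n-1}\Res_{z=c}f(z)$, and use compactness plus Cauchy's estimates to show the holomorphic remainder has exponentially smaller coefficients) is precisely the standard proof that the citation points to, with the one delicate point (extending the remainder holomorphically to a strictly larger disk) handled correctly.
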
 

Let $\#\rho$ denote the number of blocks in a set partition $\rho$. Let $\hook(\mathcal H)$ denote the number of hooks in a valid hook configuration $\mathcal H$. Throughout our applications, it will be useful to keep in mind the fact that if $\mathcal H$ is a valid hook configuration of a permutation $\pi$, then \[\#\vert\mathcal H=\hook(\mathcal H)+1=\des(\pi)+1.\]

\subsection{Counting Valid Hook Configurations}
Our first application in this section will be the only one that does not make use of the Refined Tree Fertility Formula or one of its corollaries. We are going to compute a generating function that counts valid hook configurations according to their number of hooks. The following theorem is implicit in \cite{DefantEngenMiller}.  

\begin{theorem}\label{Thm3}
We have \[\sum_{n\geq 1}\sum_{\mathcal H\in\VHC(S_{n-1})}x^{\hook(\mathcal H)+1}\frac{z^n}{n!}=-\log\left(1-x\int \frac{e^{(1-x)z}J_1(2z\sqrt{x})}{z\sqrt{x}}\,dz\right),\] where $J_1$ is a Bessel function of the first kind and the indefinite integral is taken so that it approaches $0$ as $z\to 0$.   
\end{theorem}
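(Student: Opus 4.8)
The plan is to apply the VHC Cumulant Formula (Corollary~\ref{Cor4}) to an artfully chosen sequence of free cumulants, and then translate the resulting identity for classical cumulants into the language of exponential generating functions using \eqref{Eq6}, \eqref{Eq4}, and \eqref{Eq5}. Specifically, I work over $\mathbb K=\mathbb C(x)$ and set $\kappa_n=-x$ for all $n\geq 1$ (a ``constant'' $R$-transform). Then for any valid hook configuration $\mathcal H$ we have $(-\kappa_\bullet)_{\vert\mathcal H}=x^{\#\vert\mathcal H}=x^{\hook(\mathcal H)+1}$, using the fact recorded just before Section~\ref{Sec:ApplicationsCountingVHCs} that $\#\vert\mathcal H=\hook(\mathcal H)+1$. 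Hence Corollary~\ref{Cor4} gives $-c_n=\sum_{\mathcal H\in\VHC(S_{n-1})}x^{\hook(\mathcal H)+1}$, so that the left-hand side of the claimed identity is exactly $\sum_{n\geq 1}(-c_n)\frac{z^n}{n!}=-\sum_{n\geq1}c_n\frac{z^n}{n!}$.

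Next I would compute the exponential generating function $\sum_{n\geq1}c_n\frac{z^n}{n!}$ from the free cumulants via the standard chain of transforms. The $R$-transform is $R(z)=\sum_{n\geq1}\kappa_nz^n=-x\,z/(1-z)$. Using \eqref{Eq5}, $M^{\langle-1\rangle}(z)=R^{\langle-1\rangle}(z)/(1+z)$: I first invert $R$ by solving $w=-xz/(1-z)$ for $z$, giving $R^{\langle-1\rangle}(w)=w/(w-x)$ (choosing the branch vanishing at $w=0$), hence $M^{\langle-1\rangle}(z)=\frac{z}{(z-x)(1+z)}$. Composing, $M(z)$ is the solution with $M(0)=0$ of $\frac{M}{(M-x)(1+M)}=z$. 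Then by \eqref{Eq6}, $\widehat M(z)=\mathcal L^{-1}\{M(1/t)/t\}(z)$, and finally by \eqref{Eq4}, $\sum_{n\geq1}c_n\frac{z^n}{n!}=\log(1+\widehat M(z))$. The target formula strongly suggests that $1+\widehat M(z)=1-x\int\frac{e^{(1-x)z}J_1(2z\sqrt x)}{z\sqrt x}\,dz$, so the computation reduces to identifying this inverse Laplace transform explicitly.

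The main obstacle — and the step deserving real care — is the explicit evaluation of the inverse Laplace transform $\mathcal L^{-1}\{M(1/t)/t\}$. Here $M(1/t)/t$ is an algebraic function of $t$ determined by a quadratic: writing $y=M(1/t)$, the relation $yt=(y-x)(1+y)$ becomes $y^2+(1-x-t)y-x=0$, so $M(1/t)=\frac{(t+x-1)-\sqrt{(t+x-1)^2+4x}}{2}$ (again fixing the branch by the requirement $M(1/t)/t\to0$ as $t\to\infty$). One then recognizes the power series coefficients of $M(1/t)/t$ in $1/t$ — which involve Catalan-type sums shifted by the $(1-x)$ and $x$ terms — as producing, after the termwise inverse Laplace transform $\mathcal L^{-1}\{t^{-n-1}\}(z)=z^n/n!$, the series expansion of $x\int\frac{e^{(1-x)z}J_1(2z\sqrt x)}{z\sqrt x}\,dz$; here the Bessel function appears because $\sum_{k\geq0}\frac{(-1)^k(z\sqrt x)^{2k}}{k!(k+1)!}=\frac{J_1(2z\sqrt x)}{z\sqrt x}$, the Catalan-number weights get absorbed into the quadratic's square root, and the factor $e^{(1-x)z}$ comes from the linear shift $t+x-1$. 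Assembling these pieces, $1+\widehat M(z)$ equals the claimed expression, and taking $\log$ finishes the proof. I would present the Bessel-function identification cleanly by matching Taylor coefficients rather than by manipulating integral representations, since the former is routine bookkeeping once the algebraic form of $M(1/t)$ is in hand.

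One should double-check the branch choices at each inversion (they are pinned down by the normalizations $M(0)=0$, $R^{\langle-1\rangle}(0)=0$, and the stipulation that the indefinite integral vanishes at $z=0$), and verify the first few coefficients against a direct count of small valid hook configurations — e.g.\ $c_1=-1$, $c_2=-1$, and the $n=4$ value should be consistent with Example~\ref{Exam2}'s computation specialized via $\kappa_n=-x$, namely $-c_4=-\kappa_4+\kappa_2^2=x+x^2$ — as a sanity check that the transform chain has been applied correctly.
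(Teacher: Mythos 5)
Your proposal is correct, and its combinatorial half --- taking $\kappa_n=-x$ for all $n$, noting $(-\kappa_\bullet)_{\vert\mathcal H}=x^{\#\vert\mathcal H}=x^{\hook(\mathcal H)+1}$, and applying Corollary~\ref{Cor4} to identify the left-hand side with $-\sum_{n\geq 1}c_n\frac{z^n}{n!}$ --- is exactly what the paper does. Where you genuinely diverge is in computing $\widehat M(z)$. The paper never touches the $R$-transform in this proof: it reads the moments directly off the defining relation $m_n=\sum_{\eta\in\NC(n)}(-x)^{\#\eta}=\sum_{r=1}^nN(n,r)(-x)^r$, using the classical fact that the Narayana number $N(n,r)$ counts noncrossing partitions of $[n]$ with $r$ blocks, and then quotes the known exponential generating function of the Narayana triangle (the Bessel integral) from the OEIS. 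You instead run the full transform chain: invert $R(z)=-xz/(1-z)$, pass through \eqref{Eq5} to the quadratic $y^2+(1-x-t)y-x=0$ for $y=M(1/t)$, and evaluate $\mathcal L^{-1}\{M(1/t)/t\}$. That last step, which you only sketch, does go through cleanly: setting $u=t+x-1$ gives $M(1/t)=\sum_{k\geq 1}C_{k-1}(-x)^ku^{1-2k}$, and since $C_{k-1}/(2k-2)!=1/\bigl(k!\,(k-1)!\bigr)$, the shift rule for the inverse Laplace transform (together with the extra $1/t$ becoming an integration) yields $\widehat M(z)=-x\int_0^ze^{(1-x)w}J_1(2w\sqrt x)/(w\sqrt x)\,dw$, which is precisely the Bessel series you identify. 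So your route is more self-contained --- it derives the Narayana generating function rather than citing it --- at the cost of the branch bookkeeping, while the paper's is shorter because it exploits the Narayana/noncrossing-partition correspondence. Two small slips to fix: in your third paragraph the integral should carry the factor $-x$, not $x$ (consistent with your own target $1+\widehat M(z)=1-x\int\cdots$ and with $m_1=\kappa_1=-x$); and in the sanity check one has $c_1=c_2=-x$ rather than $-1$ (you have evidently specialized to $x=1$ there).
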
 

\begin{proof}
Define a sequence $(\kappa_n)_{n\geq 1}$ of free cumulants by letting $\kappa_n=-x$ for all $n\geq 1$. Let $(m_n)_{n\geq 1}$ and $(c_n)_{n\geq 1}$ be the corresponding sequences of moments and classical cumulants, which satisfy \eqref{Eq3}. It is well known that the Narayana number $N(n,r)$ is equal to the number of partitions in $\NC(n)$ with $r$ blocks \cite[Lecture 9]{Nica}. It follows from \eqref{Eq3} that \[m_n=\sum_{\eta\in\NC(n)}(-x)^{\#\eta}=\sum_{r=1}^n N(n,r)(-x)^r,\] so the moment series is $\displaystyle M(z)=\sum_{n\geq 1}\sum_{r=1}^n N(n,r)(-x)^rz^n$. It is known \cite{OEIS} that \[\sum_{n\geq 1}\sum_{r=1}^nN(n,r)y^r\frac{z^n}{n!}=y\int \frac{e^{(1+y)z}J_1(2z\sqrt{-y})}{z\sqrt{-y}}\,dz,\] so \[\widehat M(z)=\sum_{n\geq 1}\sum_{r=1}^n N(n,r)(-x)^r\frac{z^n}{n!}=-x\int \frac{e^{(1-x)z}J_1(2z\sqrt{x})}{z\sqrt{x}}\,dz.\] Now, equation \eqref{Eq4} tells us that \[\sum_{n\geq 1}c_n\frac{z^n}{n!}=\log(1+\widehat M(z))=\log\left(1-x\int \frac{e^{(1-x)z}J_1(2z\sqrt{x})}{z\sqrt{x}}\,dz\right).\]
To finish the proof, we invoke Corollary~\ref{Cor4}, which tells us that \[-c_n=\sum_{\mathcal H\in\VHC(S_{n-1})}(-\kappa_\bullet)_{\vert\mathcal H}=\sum_{\mathcal H\in\VHC(S_{n-1})}x^{\#\vert\mathcal H}=\sum_{\mathcal H\in\VHC(S_{n-1})}x^{\hook(\mathcal H)+1}. \qedhere\] 
\end{proof}

\begin{corollary}\label{Cor10}
We have \[\sum_{n\geq 1}|\VHC(S_{n-1})|\frac{z^n}{n!}=-\log\left(1-z\,{}_1\hspace{-.03cm}F_2\left(\frac{1}{2};\frac{3}{2},2;-z^2\right)\right),\] where ${}_1\hspace{-.03cm}F_2$ denotes a generalized hypergeometric function. 
\end{corollary}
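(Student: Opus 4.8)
The plan is to specialize Theorem~\ref{Thm3} to the case $x=1$ and then simplify the resulting integral into a hypergeometric function. Setting $x=1$ on the left-hand side of Theorem~\ref{Thm3} turns $x^{\hook(\mathcal H)+1}$ into $1$, so the left side becomes $\sum_{n\geq 1}|\VHC(S_{n-1})|\dfrac{z^n}{n!}$, which is exactly what we want. On the right-hand side, setting $x=1$ kills the exponential factor $e^{(1-x)z}$, so we are left with $-\log\!\left(1-\int \dfrac{J_1(2z)}{z}\,dz\right)$, where the indefinite integral is normalized to vanish as $z\to 0$. The remaining task is purely a Bessel-function identity: to show that $\int \dfrac{J_1(2z)}{z}\,dz = z\,{}_1\hspace{-.03cm}F_2\!\left(\tfrac12;\tfrac32,2;-z^2\right)$ (with the $z\to 0$ normalization).

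First I would recall the power series $J_1(2z)=\sum_{k\geq 0}\dfrac{(-1)^k}{k!\,(k+1)!}z^{2k+1}$, so that $\dfrac{J_1(2z)}{z}=\sum_{k\geq 0}\dfrac{(-1)^k}{k!\,(k+1)!}z^{2k}$. Integrating term by term and choosing the constant so the result vanishes at $z=0$ gives $\int \dfrac{J_1(2z)}{z}\,dz=\sum_{k\geq 0}\dfrac{(-1)^k}{k!\,(k+1)!}\cdot\dfrac{z^{2k+1}}{2k+1}$. Then I would match this against the definition of the generalized hypergeometric series: ${}_1\hspace{-.03cm}F_2\!\left(\tfrac12;\tfrac32,2;-z^2\right)=\sum_{k\geq 0}\dfrac{(1/2)_k}{(3/2)_k\,(2)_k}\cdot\dfrac{(-z^2)^k}{k!}$, where $(a)_k$ is the Pochhammer symbol. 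Multiplying by $z$ gives $\sum_{k\geq 0}\dfrac{(1/2)_k}{(3/2)_k\,(2)_k\,k!}(-1)^k z^{2k+1}$. The verification reduces to the elementary identity $\dfrac{(1/2)_k}{(3/2)_k\,(2)_k\,k!}=\dfrac{1}{k!\,(k+1)!\,(2k+1)}$, which follows from $(1/2)_k/(3/2)_k = \dfrac{1/2}{k+1/2}=\dfrac{1}{2k+1}$ and $(2)_k = (k+1)!$ and $(2)_k\cdot k! $ versus $(k+1)!$... more carefully: $(2)_k=(k+1)!/1!=(k+1)!$, so $(2)_k\,k! = (k+1)!\,k!$, and combined with the $\tfrac{1}{2k+1}$ from the first ratio we get exactly $\dfrac{1}{k!\,(k+1)!\,(2k+1)}$. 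The two series agree coefficient by coefficient, completing the identity.

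Combining, the right-hand side of Theorem~\ref{Thm3} at $x=1$ equals $-\log\!\left(1-z\,{}_1\hspace{-.03cm}F_2\!\left(\tfrac12;\tfrac32,2;-z^2\right)\right)$, which is the claimed formula. The argument is essentially a one-line substitution followed by a routine special-function bookkeeping check, so there is no genuine obstacle here; the only point requiring mild care is confirming that the normalization of the indefinite integral (approaching $0$ as $z\to 0$) is consistent with the hypergeometric expression, which it is since both sides have vanishing constant term and their power series match term by term. I would present the Pochhammer identity explicitly to make the verification self-contained, but would not belabor it.
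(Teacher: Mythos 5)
Your proposal is correct and follows the same route as the paper: specialize Theorem~\ref{Thm3} at $x=1$ and invoke the identity $\int \frac{J_1(2z)}{z}\,dz=z\,{}_1\hspace{-.03cm}F_2\left(\frac{1}{2};\frac{3}{2},2;-z^2\right)$, which the paper simply cites and you verify by a correct term-by-term Pochhammer computation.
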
 

\begin{proof}
Put $x=1$ in Theorem~\ref{Thm3}, and use the fact that \[\int \frac{J_1(2z)}{z}\,dz=z\,{}_1\hspace{-.03cm}F_2\left(\frac{1}{2};\frac{3}{2},2;-z^2\right). \qedhere\]
\end{proof} 

We can derive from Corollary~\ref{Cor10} the following asymptotic formula for $|\VHC(S_n)|$. 

\begin{corollary}\label{Cor14}
As $n\to\infty$, we have $|\VHC(S_n)|\sim n!/c^{n+1}$, where $c\approx 1.32874$ is the smallest positive real root of $\displaystyle 1-z\,{}_1\hspace{-.03cm}F_2\left(\frac{1}{2};\frac{3}{2},2;-z^2\right)$.  
\end{corollary}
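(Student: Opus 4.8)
The plan is to deduce the estimate from Corollary~\ref{Cor10} by a standard singularity analysis, arranged so that Lemma~\ref{Lem1} applies directly. Write $g(z)=z\,{}_1F_2\!\left(\tfrac12;\tfrac32,2;-z^2\right)$, so that Corollary~\ref{Cor10} reads $\widehat V(z):=\sum_{n\ge 1}|\VHC(S_{n-1})|\tfrac{z^n}{n!}=-\log(1-g(z))$. Two observations drive everything. First, $g$ is \emph{entire}: indeed $g(z)=\int_0^z\frac{J_1(2t)}{t}\,dt=\sum_{k\ge 0}\frac{(-1)^k}{(2k+1)\,k!\,(k+1)!}z^{2k+1}$, an everywhere-convergent power series, so $1-g$ is entire and $\widehat V$ is holomorphic away from the zeros of $1-g$. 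Second, although $\widehat V$ itself has logarithmic (hence non-meromorphic) singularities, its derivative does not: $\widehat V'(z)=\dfrac{g'(z)}{1-g(z)}$ with $g'(z)=\dfrac{J_1(2z)}{z}$, and this is a ratio of entire functions, hence meromorphic on $\mathbb C$ with poles exactly at the zeros of $1-g$. Moreover $[z^n]\widehat V'(z)=|\VHC(S_n)|/n!$, so it suffices to locate the dominant pole of $\widehat V'$ and compute its residue.

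I would next pin down the dominant singularity. The function $g'(z)=J_1(2z)/z=1-\tfrac{z^2}{2}+\cdots$ is real-analytic and strictly positive on the interval $[0,j_{1,1}/2)$, where $j_{1,1}\approx 3.8317$ is the first positive zero of $J_1$; thus $g$ is strictly increasing on $[0,1.9158)$ with $g(0)=0$, and since a short computation shows $g$ exceeds $1$ within this interval, $1-g$ has a unique real zero $c\in(0,1.9158)$ — necessarily the smallest positive real root — which is simple because $g'(c)=J_1(2c)/c>0$; numerically $c\approx 1.32874$. The remaining and genuinely nontrivial point, which I expect to be the main obstacle to a fully self-contained proof, is that $c$ is the \emph{only} zero of $1-g$ in the closed disc $\{|z|\le c\}$. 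Since the Taylor coefficients of $g$ alternate in sign, the crude bound $|g(z)|\le g(|z|)$ is unavailable, so one instead verifies this by the argument principle, checking that $\tfrac{1}{2\pi i}\oint_{|z|=c+\varepsilon}\tfrac{g'(z)}{1-g(z)}\,dz=1$ for all sufficiently small $\varepsilon>0$ — a finite numerical computation. Granting this, $\widehat V'$ is holomorphic on $\{|z|\le c\}\setminus\{c\}$ and has a simple pole at $c$.

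Finally I would invoke Lemma~\ref{Lem1} with $f=\widehat V'$. From $1-g(z)=-g'(c)(z-c)+O((z-c)^2)$ one gets $\Res_{z=c}\widehat V'(z)=\lim_{z\to c}(z-c)\,\dfrac{g'(z)}{1-g(z)}=\dfrac{g'(c)}{-g'(c)}=-1$, so Lemma~\ref{Lem1} yields $[z^n]\widehat V'(z)\sim -c^{-n-1}\,\Res_{z=c}\widehat V'(z)=c^{-n-1}$. Since $[z^n]\widehat V'(z)=|\VHC(S_n)|/n!$, this is exactly $|\VHC(S_n)|\sim n!/c^{n+1}$. Apart from the dominant-singularity verification in the previous paragraph, the argument is pure bookkeeping together with Lemma~\ref{Lem1}.
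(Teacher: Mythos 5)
Your proposal is correct and follows essentially the same route as the paper: differentiate $-\log(1-g(z))$ from Corollary~\ref{Cor10} to obtain a meromorphic function (the paper's $-Q'(z)/Q(z)$ with $Q=1-g$), isolate the dominant simple pole at $c$ via a numerically verified argument-principle computation, and apply Lemma~\ref{Lem1}; the paper likewise relegates the zero-counting step to a numerical integral (over the fixed circle $|z|=1.4$ rather than $|z|=c+\varepsilon$) justified in a footnote. The residue bookkeeping matches, since your $\Res_{z=c}\widehat V'(z)=-1$ is exactly the paper's $\Res_{z=c}(Q'/Q)=1$ after the sign flip.
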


\begin{proof}
Let $\displaystyle Q(z)=1-z\,{}_1\hspace{-.03cm}F_2\left(\frac{1}{2};\frac{3}{2},2;-z^2\right)$, and let $a=\displaystyle\frac{1}{2\pi i}\int_{|z|=1.4}\frac{Q'(z)}{Q(z)}\,dz$. The function $Q(z)$ is entire, so it follows from the argument principle from complex analysis that $a$ is the number of zeros of $Q(z)$ with absolute value less than $1.4$, counted with multiplicity. One can show\footnote{To prove this rigorously, one can first expand $Q'(z)$ and $Q(z)$ in series in order to estimate $\dfrac{Q'(z)}{Q(z)}$ with a sufficiently small explicit error that holds uniformly for all $z$ with $|z|=1.4$. One can then numerically estimate the integral to show that $|a-1|<1$. Since $a$ must be an integer, it must be $1$. We omit the details of this computation.} that $a=1$. We can compute that the unique root of $Q(z)$ with absolute value less than $1.4$ is $c\approx 1.32874$. This means that $Q'(z)/Q(z)$ is a meromorphic function whose only pole in the disc $\{z\in\mathbb C:|z|<1.4\}$ is a simple pole at $z=c$. It follows from Corollary~\ref{Cor10} and Lemma~\ref{Lem1} that \[|\VHC(S_n)|=n!\,[z^n]\left(\frac{\partial}{\partial z}\sum_{m\geq 1}|\VHC(S_{m-1})|\frac{z^m}{m!}\right)=n!\,[z^n]\left(\frac{\partial}{\partial z}(-\log(Q(z)))\right)\] \[=-n!\,[z^n]\left(\frac{Q'(z)}{Q(z)}\right)\sim n!c^{-n-1}\Res_{z=c}\left(\frac{Q'(z)}{Q(z)}\right)=n!c^{-n-1}. \qedhere\]
\end{proof}

Using Corollary~\ref{Cor10}, one can compute that \[(|\VHC(S_{n-1})|)_{n\geq 1}=1, 1, 1, 2, 6, 22, 99, 520, 3126, 21164, 159226, 1318000, 11902268, 116444668, \ldots.\] Upon inspection of the initial values of this sequence, we arrive at the following conjecture. 

\begin{conjecture}\label{Conj1}
If $n\geq 3$, then $|\VHC(S_{n-1})|$ is odd if and only if $n+1$ is a power of $2$. 
\end{conjecture}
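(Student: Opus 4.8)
The plan is to determine the parity of $v_n:=|\VHC(S_{n-1})|$ exactly by exploiting its cumulant description. Specializing the proof of Theorem~\ref{Thm3} (equivalently, Corollary~\ref{Cor10}) to $x=1$, take $\kappa_n=-1$ for every $n\ge 1$; then Corollary~\ref{Cor4} gives $v_n=-c_n$, where $(c_n)_{n\ge 1}$ is the associated sequence of classical cumulants. The $R$-transform here is $R(z)=\sum_{n\ge 1}\kappa_nz^n=-z/(1-z)$, so \eqref{Eq5} gives $M^{\langle -1\rangle}(z)=R^{\langle -1\rangle}(z)/(1+z)=z/(z^2-1)$, hence $M(z)=\frac{1-\sqrt{1+4z^2}}{2z}$. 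Reading off coefficients, $m_0=1$, $m_{2k}=0$ for $k\ge 1$, and $m_{2k+1}=(-1)^{k+1}C_k$. Invoking the classical fact (provable via Kummer's theorem, since $\nu_2(C_k)=s_2(k)-\nu_2(k+1)$, where $\nu_2$ denotes the $2$-adic valuation and $s_2$ the binary digit sum) that $C_k$ is odd if and only if $k=2^t-1$, we conclude that for $\ell\ge 1$ the integer $m_\ell$ is odd if and only if $\ell=2^s-1$ for some $s\ge 1$.

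Next I would turn this into a mod-$2$ recurrence for $v_n$. Differentiating the identity $\exp\!\bigl(\sum_{n\ge 1}c_n\frac{z^n}{n!}\bigr)=1+\sum_{n\ge 1}m_n\frac{z^n}{n!}$ yields $m_{n+1}=\sum_{j=0}^{n}\binom{n}{j}c_{j+1}m_{n-j}$ (with $m_0=1$); solving for $c_{n+1}=-v_{n+1}$ gives
\[
v_N=-m_N-\sum_{j=0}^{N-2}\binom{N-1}{j}\,v_{j+1}\,m_{N-1-j}\qquad(N\ge 1).
\]
A term in the sum is odd exactly when $\binom{N-1}{j}$ and $m_{N-1-j}$ are both odd. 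By the previous paragraph the latter forces $N-1-j=2^t-1$, i.e.\ $j=N-2^t$ with $2\le 2^t\le N$; and then, by Lucas' theorem, $\binom{N-1}{2^t-1}$ is odd iff the lowest $t$ binary digits of $N-1$ are all $1$, i.e.\ iff $2^t\mid N$. Therefore
\[
v_N\equiv \bigl[\,N=2^s-1\ \text{for some }s\ge 1\,\bigr]+\sum_{t=1}^{\nu_2(N)}v_{N+1-2^t}\pmod 2 .
\]

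It then remains to prove, by strong induction on $N\ge 1$, that $v_N\equiv P(N)\pmod 2$, where $P(N)=1$ if $N=2$ or $N=2^s-1$ for some $s\ge 1$, and $P(N)=0$ otherwise. Conjecture~\ref{Conj1} follows immediately, since for $n\ge 3$ one has $P(n)=1$ iff $n=2^s-1$ for some $s\ge 2$ iff $n+1$ is a power of $2$. The base case is $v_1=1=P(1)$. For the step, substitute the inductive hypothesis into the displayed congruence. If $N$ is odd then $\nu_2(N)=0$, the sum vanishes, and $v_N\equiv[N=2^s-1]=P(N)$ (as $N\ne 2$). If $N$ is even then $N\ne 2^s-1$, so it suffices to show $\bigoplus_{t=1}^{\nu_2(N)}P(N+1-2^t)=[N=2]$. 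Writing $N=2^aM$ with $M$ odd and $a=\nu_2(N)\ge 1$, and noting $N+1-2^t$ is odd for all $t\ge 1$, one has $P(N+1-2^t)=1$ iff $N+2-2^t$ is a power of $2$; a short valuation computation (treating $a=1$ and $a\ge 2$ separately, using $\nu_2(N+2)=1$ when $a\ge 2$) shows the set of such $t$ is $\{1\}$ when $N=2$, is $\{1,a\}$ when $N=2^a$ with $a\ge 2$, and is empty otherwise — so its size is odd exactly when $N=2$. This closes the induction.

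The main obstacle I anticipate is precisely this last parity count: carrying out the $2$-adic bookkeeping carefully enough to be certain that, among the indices $t\in\{1,\dots,\nu_2(N)\}$, the number for which $N+2-2^t$ is a power of $2$ is even for every even $N\ge 4$ and odd only at the genuine exception $N=2$ (this is exactly what forces the hypothesis $n\ge 3$ and explains why $v_2$ does not fit the pattern). The other delicate point is getting the mod-$2$ reduction of the recurrence right, since it requires simultaneously controlling the parity of $\binom{N-1}{j}$ (Lucas' theorem) and of $m_{N-1-j}$ (the Catalan-parity fact). Everything coming from free probability is already packaged inside Corollary~\ref{Cor4} and the initial moment computation, so no further input from that theory is needed.
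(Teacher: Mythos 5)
The statement you are proving appears in the paper only as Conjecture~\ref{Conj1}; the paper offers no proof of it (it is reiterated as open in Section~\ref{Sec:Conclusion}), so there is nothing of the author's to compare your argument against. Having checked it step by step, I believe your argument is correct and complete, and hence settles the conjecture. The setup is right: with $\kappa_n=-1$ the VHC Cumulant Formula gives $v_n=-c_n$, and the moments are $m_n=\sum_{r}N(n,r)(-1)^r$, which indeed vanish for even $n$ and equal $(-1)^{k+1}C_k$ for $n=2k+1$ (your series $\frac{1-\sqrt{1+4z^2}}{2z}$ checks out against \eqref{Eq5}), so $m_\ell$ is odd precisely for $\ell=2^s-1$. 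The recurrence $m_{n+1}=\sum_j\binom{n}{j}c_{j+1}m_{n-j}$ is the standard moment--cumulant recursion, and your mod-$2$ reduction is correct: the only indices that can contribute are $j=N-2^t$, and by Lucas $\binom{N-1}{2^t-1}$ is odd exactly when $2^t\mid N$, yielding $v_N\equiv[N=2^s-1]+\sum_{t=1}^{\nu_2(N)}v_{N+1-2^t}\pmod 2$. I verified this congruence against the fourteen initial values of $|\VHC(S_{n-1})|$ listed after Corollary~\ref{Cor14}, and I verified the final $2$-adic count: writing $N=2^aM$ with $M$ odd, the set of $t\in\{1,\dots,a\}$ for which $N+2-2^t$ is a power of $2$ is $\{1\}$ when $N=2$, is $\{1,a\}$ when $N=2^a$ with $a\ge 2$, and is empty when $M\ge 3$, so the induction closes with $N=2$ as the unique exception, exactly accounting for the hypothesis $n\ge 3$. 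One sentence is loosely worded: a term $\binom{N-1}{j}v_{j+1}m_{N-1-j}$ is odd only if all three factors are odd, not merely the binomial coefficient and the moment; since you retain the factor $v_{N+1-2^t}$ in the resulting congruence rather than discarding it, this is harmless, but you should rephrase it. When writing this up, spell out the $2$-adic case analysis of the last step in full, as that is the crux. Note finally that your proof is a parity computation on the cumulant recursion rather than the combinatorial proof the paper asks for in Section~\ref{Sec:Conclusion}; the same method should also apply to the analogous parity statement for Lassalle's sequence mentioned there, since that sequence arises from the free cumulants $\kappa_2=-1$, $\kappa_n=0$ ($n\neq 2$) in the same framework.
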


The enumeration of valid hook configurations avoiding certain patterns has been initiated in \cite{DefantMotzkin} and extended in \cite{Maya}. 

\subsection{Uniquely Sorted Permutations}\label{Subsec:UniquelySorted}
In this section, we will see that the Fertility Formula and the VHC Cumulant Formula allow us to understand \dfn{uniquely sorted} permutations, which are permutations that have exactly one preimage under the stack-sorting map $s$. We make the convention that the empty permutation is not uniquely sorted. The results in this section are taken from \cite{DefantEngenMiller}. Our goal is simply to illustrate the use of the Fertility Formula and the VHC Cumulant Formula, so we will sketch the main ideas and refer the reader to \cite{DefantEngenMiller} for a more detailed treatment. 

Recall that a permutation is called \emph{sorted} if it is in the image of the stack-sorting map $s$. It is known \cite[Chapter 8, Exercise 18]{Bona} that if $\pi=\pi_1\cdots\pi_n$ is nonempty and sorted, then $\des(\pi)\leq\dfrac{n-1}{2}$. It is natural to ask what can be said about the permutations that achieve this upper bound. The following theorem from \cite{DefantEngenMiller} answers this question. 

\begin{theorem}[\!\!\cite{DefantEngenMiller}]\label{Thm4}
A permutation $\pi=\pi_1\cdots\pi_n$ is uniquely sorted if and only if it is sorted and has exactly $\dfrac{n-1}{2}$ descents. In particular, every uniquely sorted permutation has odd length. 
\end{theorem}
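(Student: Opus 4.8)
The plan is to use the Fertility Formula \eqref{Eq22} together with the VHC Cumulant Formula (Corollary~\ref{Cor4}) to identify uniquely sorted permutations with valid hook configurations in which every part $q_t$ of the composition ${\bf q}^{\mathcal H}$ equals $0$, and then to show such configurations can only exist for permutations of odd length with the extremal number of descents. First I would recall that a nonempty sorted permutation $\pi=\pi_1\cdots\pi_n$ has $|s^{-1}(\pi)|\geq 1$, and by \eqref{Eq22} we have $|s^{-1}(\pi)|=\sum_{\mathcal H\in\VHC(\pi)}C_{{\bf q}^{\mathcal H}}$. Since every Catalan number is a positive integer and $C_0=C_1=1$, each term in this sum is at least $1$; hence $|s^{-1}(\pi)|=1$ forces $\VHC(\pi)$ to contain exactly one element $\mathcal H$, and moreover $C_{{\bf q}^{\mathcal H}}=\prod_{t=0}^k C_{q_t}=1$. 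The only way a product of Catalan numbers equals $1$ is for every factor to be $C_0=1$ or $C_1=1$, i.e.\ $q_t\in\{0,1\}$ for all $t$.

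Next I would eliminate the possibility $q_t=1$. Here I would use the fact (recorded just before Section~\ref{Subsec:UniquelySorted}) that ${\bf q}^{\mathcal H}=(q_0,\ldots,q_k)$ is a composition of $n-k$ into $k+1$ parts, where $k=\des(\pi)$. So $\sum_{t=0}^k q_t = n-k$, i.e.\ $n = 2k + \sum_{t=0}^k(q_t - 1) + 1$... more precisely $n - k = \sum q_t$ gives $n = k + \sum_{t=0}^k q_t$. If all $q_t=0$ then $n=k$, which is impossible for $k=\des(\pi)\leq n-1$; so that cannot be right — let me recompute: a composition of $n-k$ into $k+1$ \emph{positive} parts would need $n-k\geq k+1$, but ${\bf q}^{\mathcal H}$ need not have positive parts. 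The correct reading from Section~\ref{Sec:TreeFertility} is that ${\bf q}^{\mathcal H}$ is a composition of $n-k$ into $k+1$ parts in the sense there that the $q_t$ are nonnegative and sum to $n-1-k$ (since the original permutation there has length $n$ but in our application $\pi\in S_n$ with $\des(\pi)=k$, the blocks of $\vert\mathcal H$ have sizes $q_0+1,\ldots,q_k+1$ summing to $n+1$, so $\sum_{t=0}^k q_t = n - k$). With all $q_t\in\{0,1\}$, write $j$ for the number of indices $t$ with $q_t=1$; then $n-k=j$ and $k+1\geq j$, so $n = k+j \leq 2k+1$, giving $\des(\pi)=k\geq (n-1)/2$. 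Combined with the known bound $\des(\pi)\leq (n-1)/2$ for nonempty sorted permutations, this forces $\des(\pi)=(n-1)/2$, so $n$ is odd and every $q_t$ equals either $0$ (with $k+1$ of them) or... in fact $j = n-k = (n+1)/2 = k+1$, so \emph{all} $q_t=1$.

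For the converse, I would show that if $\pi$ is sorted and $\des(\pi)=(n-1)/2=:k$, then $|s^{-1}(\pi)|=1$. Since $\pi$ is sorted, $\VHC(\pi)\neq\emptyset$; pick any $\mathcal H\in\VHC(\pi)$. Its composition ${\bf q}^{\mathcal H}$ has $k+1$ nonnegative parts summing to $n-k=k+1$, so by pigeonhole either all parts equal $1$, or some part is $\geq 2$ and some part is $0$. I would argue the latter is impossible: a valid hook configuration always colors the point $(d_i+1,\pi_{d_i+1})$ the same color as $H_i$ and colors $(1,\pi_1)$ blue, so each of the $k+1$ color classes is nonempty, forcing every $q_t\geq 1$ and hence every $q_t=1$. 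Then $|s^{-1}(\pi)|=\sum_{\mathcal H\in\VHC(\pi)} 1 = |\VHC(\pi)|$, and by Remark~\ref{Rem1} the map $\mathcal H\mapsto{\bf q}^{\mathcal H}$ is injective, with image contained in the single composition $(1,1,\ldots,1)$; hence $|\VHC(\pi)|=1$ and $\pi$ is uniquely sorted. The main obstacle I anticipate is pinning down precisely the statement that each color class of a valid hook configuration is nonempty (equivalently $q_t\geq 1$ for all $t$) — this should follow directly from Definition~\ref{Def5} and the coloring rules described in Section~\ref{Sec:TreeFertility}, but it needs to be stated carefully, as it is the key structural input that upgrades "product of Catalan numbers is $1$" to "all parts are exactly $1$" and rules out the degenerate $q_t=0$ cases on the converse side.
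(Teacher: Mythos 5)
Your proof is correct and follows essentially the same route as the paper's: the Fertility Formula forces a unique valid hook configuration with $C_{{\bf q}^{\mathcal H}}=1$, and the fact that ${\bf q}^{\mathcal H}$ is a composition of $n-k$ into $k+1$ parts, combined with the injectivity of $\mathcal H\mapsto{\bf q}^{\mathcal H}$ from Remark~\ref{Rem1}, does the rest. The structural fact you flag as the main obstacle --- that every $q_t\geq 1$ --- is stated explicitly in Section~\ref{Sec:TreeFertility} (the point $(d_i+1,\pi_{d_i+1})$ necessarily receives the color of $H_i$ and $(1,\pi_1)$ is blue, and the paper's compositions have positive parts by definition), and once you invoke it in the forward direction as well, $q_t=1$ for all $t$ follows immediately, making your appeal to the bound $\des(\pi)\leq\frac{n-1}{2}$ unnecessary.
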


\begin{proof}
Let $k=\des(\pi)$. Suppose $\pi$ is uniquely sorted. Certainly $\pi$ is sorted, so we must show that $n=2k+1$. It follows from the Fertility Formula \eqref{Eq22} that $\pi$ has a unique valid hook configuration $\mathcal H$ and that ${\bf q}^{\mathcal H}=(1,\ldots,1)$ consists of only $1$'s. When we defined ${\bf q}^{\mathcal H}$ in Section~\ref{Sec:TreeFertility}, we remarked that it is a composition of $n-k$ into $k+1$ parts. It follows that $n=2k+1$. 

Conversely, assume $\pi$ is sorted and $n=2k+1$. Because $\pi$ is sorted, it follows from the Fertility Formula that $\pi$ has a valid hook configuration $\mathcal H$. The only composition of $n-k$ into $k+1$ parts is the tuple $(1,\ldots,1)$ with $k+1$ $1$'s. Using Remark~\ref{Rem1}, we find that $\mathcal H$ is the unique valid hook configuration of $\pi$ and that ${\bf q}^{\mathcal H}=(1,\ldots,1)$. It now follows from \eqref{Eq22} that $|s^{-1}(\pi)|=1$, so $\pi$ is uniquely sorted. 
\end{proof}

Let $\mathcal U_{n}$ denote the set of uniquely sorted permutations in $S_n$, and let $\VHC(\mathcal U_n)=\bigcup_{\pi\in\mathcal U_n}\VHC(\pi)$. We have seen that each uniquely sorted permutation has a unique valid hook configuration, so we obtain a bijection $\mathcal U_n\to\VHC(\mathcal U_n)$ by sending each uniquely sorted permutation to its valid hook configuration. A \dfn{matching} is a set partition in which each block has size $2$. Referring to Remark~\ref{Rem2} and the above proof of Theorem~\ref{Thm4}, we see that $\VHC(\mathcal U_n)$ is precisely the set of valid hook configurations in $\VHC(S_n)$ such that $\vert\mathcal H$ is a matching. Therefore, if we define a sequence $(\kappa_n)_{n\geq 1}$ of free cumulants by $\kappa_2=-1$ and $\kappa_n=0$ for all $n\neq 2$, then we can use the VHC Cumulant Formula (Corollary~\ref{Cor4}) to see that \[-c_n=\sum_{\mathcal H\in\VHC(S_{n-1})}(-\kappa_\bullet)_{\vert\mathcal H}=\sum_{\mathcal H\in\VHC(\mathcal U_{n-1})}1=|\VHC(\mathcal U_{n-1})|=|\mathcal U_{n-1}|.\]
 
Let $\NC^{(2)}(n)$ denote the set of noncrossing matchings of $\{1,\ldots,n\}$. It is clear that $|\NC^{(2)}(n)|=0$ when $n$ is odd, and it is known \cite[Lecture 8]{Nica} that $|\NC^{(2)}(2k)|=C_k$ for all integers $k\geq 1$. Referring to \eqref{Eq3} again, we find that $m_n=0$ when $n$ is odd and that \[m_{2k}=\sum_{\eta\in\NC(2k)}(\kappa_\bullet)_\eta=\sum_{\eta\in\NC^{(2)}(2k)}(-1)^k=(-1)^kC_k\quad\text{for all }k\geq 1.\] This means that \[\widehat M(z)=\sum_{n\geq 1}m_n\frac{z^n}{n!}=\sum_{k\geq 1}(-1)^kC_k\frac{z^{2k}}{(2k)!}=\frac{J_1(2z)}{z}-1,\] where $J_1$ is a Bessel function of the first kind. Finally, we can use \eqref{Eq4} to see that \[\sum_{n\geq 1}|\mathcal U_{n-1}|\frac{z^n}{n!}=-\sum_{n\geq 1}c_n\frac{z^n}{n!}=-\log\left(\frac{J_1(2z)}{z}\right).\]
It is known \cite{OEIS} that $\displaystyle-\log\left(\dfrac{J_1(2z)}{z}\right)=\sum_{k\geq 1}\mathscr{A}_k\dfrac{z^{2k}}{(2k)!}$, where $(\mathscr{A}_k)_{k\geq 1}$ is known as \emph{Lassalle's sequence}. This fascinating sequence first emerged in \cite{Lassalle}, where Lassalle proved that its terms are positive and increasing, settling a conjecture of Zeilberger. The first few terms of Lassalle's sequence are \[
	1, 1, 5, 56, 1092, 32670, 1387815, 79389310, 5882844968, 548129834616, 62720089624920.
\]
Thus, we have the following theorem. 
\begin{theorem}[\!\!\cite{DefantEngenMiller}]\label{Thm7}
For every $k\geq 0$, we have $|\mathcal U_{2k+1}|=\mathscr{A}_{k+1}$. 
\end{theorem}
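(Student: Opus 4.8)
The plan is to read off Theorem~\ref{Thm7} from the chain of identities already assembled just above its statement, so the proof is essentially a bookkeeping argument combining three ingredients: (i) the characterization of uniquely sorted permutations via valid hook configurations whose associated connected partition is a matching (Theorem~\ref{Thm4} together with Remark~\ref{Rem2} and the proof of Theorem~\ref{Thm4}); (ii) the VHC Cumulant Formula (Corollary~\ref{Cor4}) applied to the free cumulant sequence $\kappa_2=-1$, $\kappa_n=0$ for $n\neq 2$; and (iii) the Exponential Formula relation \eqref{Eq4} between the classical cumulant EGF and the moment EGF. I would first recall that, by the discussion preceding the theorem, $|\mathcal U_{n-1}|=-c_n$ for this choice of free cumulants, because the only valid hook configurations contributing to $-c_n=\sum_{\mathcal H\in\VHC(S_{n-1})}(-\kappa_\bullet)_{\vert\mathcal H}$ are those for which every block of $\vert\mathcal H$ has size $2$, and each such contributes exactly $1$.

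Next I would compute the moment sequence. Using \eqref{Eq3} on the noncrossing partition side, only noncrossing matchings $\eta\in\NC^{(2)}(n)$ survive, so $m_n=0$ for $n$ odd and $m_{2k}=\sum_{\eta\in\NC^{(2)}(2k)}(-1)^k=(-1)^k|\NC^{(2)}(2k)|=(-1)^kC_k$, invoking the standard fact \cite[Lecture 8]{Nica} that $|\NC^{(2)}(2k)|=C_k$. Hence the moment EGF is
\[
\widehat M(z)=\sum_{k\geq 1}(-1)^kC_k\frac{z^{2k}}{(2k)!}=\frac{J_1(2z)}{z}-1,
\]
where the last equality is the classical Bessel series $J_1(2z)/z=\sum_{k\geq 0}(-1)^kC_k z^{2k}/(2k)!$ (note $C_0=1$ supplies the constant term $1$). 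Plugging into \eqref{Eq4} gives
\[
\sum_{n\geq 1}c_n\frac{z^n}{n!}=\log\!\left(1+\widehat M(z)\right)=\log\!\left(\frac{J_1(2z)}{z}\right),
\]
so $\sum_{n\geq 1}|\mathcal U_{n-1}|z^n/n!=-\log\!\left(J_1(2z)/z\right)$.

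Finally I would invoke the known generating-function identity $-\log\!\left(J_1(2z)/z\right)=\sum_{k\geq 1}\mathscr{A}_k z^{2k}/(2k)!$ defining Lassalle's sequence $(\mathscr{A}_k)_{k\geq 1}$, as recorded in \cite{OEIS} and originating in \cite{Lassalle}. Comparing coefficients of $z^{2k+2}/(2k+2)!$ on both sides yields $|\mathcal U_{2k+1}|=\mathscr{A}_{k+1}$ for every $k\geq 0$, and the fact that $\widehat M$ is even forces $|\mathcal U_{2k}|=-c_{2k+1}=0$, consistent with Theorem~\ref{Thm4}. The main obstacle, such as it is, is not conceptual but a matter of being careful with index shifts (the map $n\mapsto n-1$ between uniquely sorted permutations of length $n-1$ and valid hook configurations in $\VHC(S_{n-1})$) and with the constant terms in the Bessel expansion; the genuinely substantive inputs — Theorem~\ref{Thm4}, Corollary~\ref{Cor4}, and the combinatorics of $\NC^{(2)}(2k)$ — are all already in hand.
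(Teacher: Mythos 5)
Your proposal is correct and follows essentially the same route as the paper: characterize $\VHC(\mathcal U_{n-1})$ as the valid hook configurations with $\vert\mathcal H$ a matching, apply the VHC Cumulant Formula with $\kappa_2=-1$ and $\kappa_n=0$ otherwise, compute $\widehat M(z)=J_1(2z)/z-1$ via noncrossing matchings, and extract coefficients from $-\log(J_1(2z)/z)=\sum_{k\geq 1}\mathscr A_k z^{2k}/(2k)!$. All index shifts and constant terms are handled correctly.
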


We refer the reader to \cite{Josuat, Lassalle, Tevlin} for more information about Lassalle's sequence and to \cite{DefantEngenMiller} for more information about uniquely sorted permutations. The investigation of pattern-avoiding uniquely sorted permutations was initiated in \cite{DefantCatalan} and extended in \cite{Hanna}. 

\subsection{Descents in Sorted Permutations}\label{Subsec:DescentsSorted}
Several papers have investigated the relationship between the permutation statistic $\des$, which counts the descents of a permutation, and the stack-sorting map $s$. For example, it is known \cite[Chapter 8, Exercise 18]{Bona} that $0\leq\des(s(\sigma))\leq\dfrac{n-1}{2}$ for every $\sigma\in S_n$. Of course, the lower bound of $0$ is tight since $s(\sigma)$ could be the identity permutation $123\cdots n$. Knuth's \cite{Knuth} characterization and enumeration of the permutations $\sigma\in S_n$ such that $\des(s(\sigma))=0$ was the first result about stack-sorting; it also initiated the study of permutation patterns and the kernel method \cite{Banderier, Bona, Kitaev, Linton}. As mentioned in Section~\ref{Subsec:UniquelySorted}, the upper bound is tight (for $n$ odd) and is attained when $s(\sigma)$ is uniquely sorted. It is natural to ask for the expected value of $\des(s(\sigma))$ when $\sigma\in S_n$ is chosen uniformly at random. To simplify some of the formulas, we will actually consider the problem of computing the expected value $\mathbb E(D_n)$, where $D_n=\des(s(\sigma))+1$ and $\sigma$ is chosen uniformly at random from $S_{n-1}$. 

In what follows, recall the notation from \eqref{Eq6}, \eqref{Eq4}, and \eqref{Eq5}. Let us define 
\begin{equation}\label{Eq27}
F_x(z)=\frac{1}{2}\left(-x-x^2z+x\sqrt{1-4z+2xz+x^2z^2}\right).
\end{equation} 
We choose the branch of the square root that evaluates to $1$ when $z\to 0$. 
We view $F_x(z)$ as a power series in the variable $z$ with coefficients in $\mathbb C(x)$. In particular, copying \eqref{Eq6}, we have $\widehat F_x(z)=\mathcal L^{-1}\{F_x(1/t)/t\}(z)$, where the inverse Laplace transform is taken with respect to the variable $t$. 

\begin{theorem}\label{Thm8}
We have \[\sum_{n\geq 1}\left(\sum_{\sigma\in S_{n-1}}x^{\des(s(\sigma))+1}\right)\frac{z^n}{n!}=-\log(1+\widehat F_x(z)).\]
\end{theorem}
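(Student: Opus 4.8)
The plan is to apply Theorem~\ref{Thm5} with the troupe ${\bf T}=\mathsf{BPT}$ and the single insertion-additive tree statistic $f_1(T)=\des(T)+1$, exactly as set up in Example~\ref{ExamBinary2}. With this choice, Theorem~\ref{Thm5} says that the sequence of free cumulants $\kappa_n=-{\bf G}_{n-1}(x,1)=-N_{n-1}(x)$ (Narayana polynomials) corresponds to the classical cumulants $c_n=-\sum_{\pi\in S_{n-1}}x^{\des(\pi)+1}=-xA_{n-1}(x)$. But for the present theorem we want the \emph{stack-sorting image} statistic, i.e.\ $\sum_{\sigma\in S_{n-1}}x^{\des(s(\sigma))+1}$, not $\sum_{\pi\in S_{n-1}}x^{\des(\pi)+1}$. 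The bridge is the Refined Fertility Formula: summing equation~\eqref{Eq21} over all $\pi\in S_{n-1}$, and using that $s:S_{n-1}\to S_{n-1}$ has $|s^{-1}(\pi)|$ preimages recorded by $\VHC(\pi)$, gives
\[\sum_{\sigma\in S_{n-1}}x^{\des(\sigma)+1}=\sum_{\pi\in S_{n-1}}\sum_{\mathcal H\in\VHC(\pi)}N_{{\bf q}^{\mathcal H}}(x)=\sum_{\mathcal H\in\VHC(S_{n-1})}N_{{\bf q}^{\mathcal H}}(x).\]
Hmm — that is not quite what is wanted either; what I actually want on the left is $\sum_{\sigma}x^{\des(s(\sigma))+1}$. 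The cleaner route is: by the bijection $\mathcal I:\mathsf{DBPT}\to S_{n-1}$ together with $s=\mathcal P\circ\mathcal I^{-1}$, choosing a uniformly random $\sigma\in S_{n-1}$ is the same as choosing a uniformly random $\mathcal T\in\overline{\mathsf D}\mathsf{BPT}_{n-1}$, and $\des(s(\sigma))=\des(\mathcal P(\mathcal T))=\des(\mathcal T)=\des(\skel(\mathcal T))$. Hence $\sum_{\sigma\in S_{n-1}}x^{\des(s(\sigma))+1}=\sum_{\mathcal T\in\overline{\mathsf D}\mathsf{BPT}_{n-1}}x^{\ddot f_1(\mathcal T)}=-c_n$, where $c_n$ is precisely the classical cumulant from Theorem~\ref{Thm5}. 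So the quantity we must identify is $-c_n$ for the free cumulant sequence $\kappa_n=-N_{n-1}(x)$.

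The remaining work is purely generating-functional. First I would compute the $R$-transform $R(z)=\sum_{n\geq1}\kappa_n z^n=-\sum_{n\geq 1}N_{n-1}(x)z^n=-z\sum_{m\geq 0}N_m(x)z^m$. The ordinary generating function $\sum_{m\geq0}N_m(x)z^m$ of Narayana polynomials is the well-known series $\frac{1-z(1+x)-\sqrt{(1-z(1+x))^2-4xz^2}}{2xz^2}$ (equivalently one can read it off from \eqref{Eq16} at $x_2=1$, noting ${\bf G}^{(x,1)}(z)$ is the generating function of ${\bf G}_n(x,1)=N_n(x)$ with the convention ${\bf G}_0=x$, so one must divide by $x$ and shift appropriately). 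Multiplying by $-z$ gives $R(z)$ in closed form. Next, using \eqref{Eq5}, $M^{\langle-1\rangle}(z)=R^{\langle-1\rangle}(z)/(1+z)$; inverting $R$ and substituting, I expect the moment series $M(z)$ to come out in a clean algebraic form, and then I claim $M(z)=F_x(z)$ as defined in \eqref{Eq27} — this is the computational heart of the argument and the step I expect to be the main obstacle, since it requires carefully tracking the correct branch of the square root (the hint ``evaluates to $1$ when $z\to 0$'' in the statement is exactly the branch bookkeeping that must be verified) and correctly handling the compositional-inverse manipulation in \eqref{Eq5} when $\kappa_1=-x\neq 0$ and $m_1$ is nonzero, so the inverses are unambiguous but still fiddly. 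Concretely I would verify $M(z)=F_x(z)$ either by checking that $F_x(z)$ satisfies the functional equation that \eqref{Eq5} forces on $M$, or by extracting the first several coefficients $m_1,m_2,\ldots$ from $\kappa_n=-N_{n-1}(x)$ via \eqref{Eq3} and matching them against the Taylor expansion of $F_x(z)$, then confirming the algebraic identity in general.

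Finally, once $M(z)=F_x(z)$ is established, I would pass to exponential generating functions: $\widehat M(z)=\mathcal L^{-1}\{M(1/t)/t\}(z)=\mathcal L^{-1}\{F_x(1/t)/t\}(z)=\widehat F_x(z)$ by the definition given just before the theorem statement (this uses only \eqref{Eq6} and the linearity of the inverse Laplace transform). Then \eqref{Eq4}, the Exponential Formula applied to the classical-cumulant relation $m_n=\sum_{\rho\in\Pi(n)}(c_\bullet)_\rho$, gives
\[\sum_{n\geq1}c_n\frac{z^n}{n!}=\log(1+\widehat M(z))=\log(1+\widehat F_x(z)).\]
Combining this with $-c_n=\sum_{\sigma\in S_{n-1}}x^{\des(s(\sigma))+1}$ from the first paragraph yields
\[\sum_{n\geq1}\left(\sum_{\sigma\in S_{n-1}}x^{\des(s(\sigma))+1}\right)\frac{z^n}{n!}=-\sum_{n\geq1}c_n\frac{z^n}{n!}=-\log(1+\widehat F_x(z)),\]
which is the claimed identity. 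The only genuinely delicate point throughout is the branch/inversion computation establishing $M(z)=F_x(z)$; everything else is a direct assembly of Theorem~\ref{Thm5}, the Refined Fertility Formula, and the standard generating-function dictionary of Section~\ref{Sec:VHCCumulant}.
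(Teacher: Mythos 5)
There is a genuine error at the very first step of your argument. You claim that for $\mathcal T=\mathcal I^{-1}(\sigma)$ one has $\des(s(\sigma))=\des(\mathcal P(\mathcal T))=\des(\mathcal T)$, but the tree statistic $\des(\mathcal T)$ (the number of right edges) equals the number of descents of the \emph{in-order} reading $\mathcal I(\mathcal T)=\sigma$, not of the postorder reading $\mathcal P(\mathcal T)=s(\sigma)$ (see Example~\ref{Exam1}). So your identity $\sum_{\sigma\in S_{n-1}}x^{\des(s(\sigma))+1}=\sum_{\mathcal T\in\mathsf{\overline{D}BPT}_{n-1}}x^{\ddot f_1(\mathcal T)}$ actually computes the Eulerian polynomial $\sum_{\sigma\in S_{n-1}}x^{\des(\sigma)+1}$, which is a different quantity: already for $n=3$ the left side is $2x$ (both elements of $S_2$ sort to $12$) while the right side is $x+x^2$. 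Consequently the free cumulants you feed into the machinery, $\kappa_n=-N_{n-1}(x)$, are the wrong ones; Theorem~\ref{Thm5} with that input returns $-c_n=\sum_{\pi\in S_{n-1}}x^{\des(\pi)+1}$ (exactly as Example~\ref{ExamBinary2} records), and the ensuing $R$-transform computation would not produce $F_x(z)$.

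The statistic $\des(s(\sigma))$ lives on the \emph{output} permutation, so the sum must be organized as $\sum_{\pi\in S_{n-1}}x^{\des(\pi)+1}|s^{-1}(\pi)|$. The paper's proof takes $\kappa_n=-xC_{n-1}$ and applies the VHC Cumulant Formula directly (not Theorem~\ref{Thm5}): for $\mathcal H\in\VHC(\pi)$ one has $(-\kappa_\bullet)_{\vert\mathcal H}=x^{\#\vert\mathcal H}(C_{\bullet-1})_{\vert\mathcal H}$, where $\#\vert\mathcal H=\des(\pi)+1$ records the descents of the underlying (sorted) permutation and, by the Fertility Formula \eqref{Eq22} together with Remark~\ref{Rem2}, $\sum_{\mathcal H\in\VHC(\pi)}(C_{\bullet-1})_{\vert\mathcal H}=|s^{-1}(\pi)|$; this yields $-c_n=\sum_{\sigma\in S_{n-1}}x^{\des(s(\sigma))+1}$. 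Your first, abandoned attempt via the Fertility Formula was closer to the mark than the ``cleaner route'' you settled on. With the corrected cumulants one gets $R(z)=-x\frac{1-\sqrt{1-4z}}{2}$, $R^{\langle-1\rangle}(z)=\frac{-xz-z^2}{x^2}$, and then $M(z)=F_x(z)$ via \eqref{Eq5}; the remainder of your generating-function assembly through \eqref{Eq6} and \eqref{Eq4} is then exactly the paper's.
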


\begin{proof}
Define a sequence $(\kappa_n)_{n\geq 1}$ of free cumulants by $\kappa_n=-xC_{n-1}$. Let $(m_n)_{n\geq 1}$ and $(c_n)_{n\geq 1}$ be the corresponding sequences of moments and classical cumulants, respectively. According to the VHC Cumulant Formula (Corollary~\ref{Cor4}), we have \[-c_n=\sum_{\mathcal H\in\VHC(S_{n-1})}(-\kappa_\bullet)_{\vert\mathcal H}=\sum_{\mathcal H\in\VHC(S_{n-1})}x^{\#\vert\mathcal H}(C_{\bullet-1})_{\vert\mathcal H}=\sum_{\pi\in S_{n-1}}x^{\des(\pi)+1}\sum_{\mathcal H\in\VHC(\pi)}(C_{\bullet-1})_{\vert\mathcal H}.\] We can now use the Fertility Formula \eqref{Eq22}, along with Remark~\ref{Rem2}, to see that \[\sum_{\mathcal H\in\VHC(\pi)}(C_{\bullet-1})_{\vert\mathcal H}=\sum_{\mathcal H\in\VHC(\pi)}C_{\bf q^{\mathcal H}}=|s^{-1}(\pi)|\] for every $\pi\in S_{n-1}$. Consequently, 
\[
-c_n=\sum_{\pi\in S_{n-1}}x^{\des(\pi)+1}|s^{-1}(\pi)|=\sum_{\sigma\in S_{n-1}}x^{\des(s(\sigma))+1}.
\] 
According to \eqref{Eq4}, we have \[\sum_{n\geq 1}\left(-\sum_{\sigma\in S_{n-1}}x^{\des(s(\sigma))+1}\right)\frac{z^n}{n!}=\log(1+\widehat M(z)),\] where $\displaystyle M(z)=\sum_{n\geq 1}m_nz^n$ is the moment series. Thus, it suffices to show that $M(z)=F_x(z)$. 

The $R$-transform is given by \[R(z)=\sum_{n\geq 1}\kappa_nz^n=-x\sum_{n\geq 1}C_{n-1}z^n=-x\frac{1-\sqrt{1-4z}}{2}.\] It is straightforward to check that $R^{\langle -1\rangle}(z)=\dfrac{-xz-z^2}{x^2}$, so it follows from \eqref{Eq5} that $M^{\langle -1\rangle}(z)=\dfrac{-xz-z^2}{x^2(1+z)}$. From this, one can show that $M(z)=F_x(z)$, as desired. 
\end{proof}

Recall that the \dfn{variance} of a random variable $Y$ is $\Var(Y)=\mathbb E((Y-\mathbb E(Y))^2)=\mathbb E(Y^2)-\mathbb E(Y)^2$. The $m^\text{th}$ \dfn{moment} of a random variable $Y$ is defined to be $\mathbb E(Y^m)$. The identity in the previous theorem encodes all of the information about the random variables $D_n=\des(s(\sigma))+1$. Indeed, we will describe an algorithm for computing, for each fixed $m$, a generating function that encodes the $m^\text{th}$ moments of the variables $D_n$. We begin by illustrating how this algorithm allows us to compute the means and variances of these variables. Let us stress that there is no apparent way to use standard methods to show that the limit $\lim\limits_{n\to\infty}\dfrac{\mathbb E(D_n)}{n}$ even exists. This makes the incredible simplicity of the exact formula for $\mathbb E(D_n)$ in the next theorem all the more surprising. 

\begin{theorem}\label{Thm9}
For every $n\geq 1$, we have \[\mathbb E(D_n)=\left(3-\sum_{j=0}^n\frac{1}{j!}\right)n\sim(3-e)n.\]
\end{theorem}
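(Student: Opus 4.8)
The strategy is to extract $\mathbb E(D_n)$ directly from Theorem~\ref{Thm8}. Write $P_n(x)=\sum_{\sigma\in S_{n-1}}x^{\des(s(\sigma))+1}$, so that $\mathbb E(D_n)=P_n'(1)/P_n(1)=P_n'(1)/(n-1)!$. Setting $x=1$ in Theorem~\ref{Thm8} and using $F_1(z)$, the right-hand side becomes $-\log(1+\widehat F_1(z))$, and a short computation should show $F_1(z)=\tfrac12(-1-z+\sqrt{1-2z+z^2})=\tfrac12(-1-z+(1-z))=-z$ (choosing the branch that is $1$ at $z=0$), hence $\widehat F_1(z)=-z$ and $-\log(1+\widehat F_1(z))=-\log(1-z)=\sum_{n\geq 1}\frac{z^n}{n}=\sum_{n\geq 1}(n-1)!\frac{z^n}{n!}$. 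This recovers the (already known) fact $P_n(1)=(n-1)!$ and, more importantly, pins down the $x=1$ specialization cleanly, which is what we need for the mean.

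\textbf{Key steps.} First I would differentiate the identity of Theorem~\ref{Thm8} with respect to $x$ and then set $x=1$. On the left, $\frac{\partial}{\partial x}\sum_{n\geq 1}P_n(x)\frac{z^n}{n!}\big|_{x=1}=\sum_{n\geq 1}P_n'(1)\frac{z^n}{n!}=\sum_{n\geq 1}\mathbb E(D_n)(n-1)!\frac{z^n}{n!}=\sum_{n\geq 1}\mathbb E(D_n)\frac{z^n}{n}$. On the right, $\frac{\partial}{\partial x}\big(-\log(1+\widehat F_x(z))\big)\big|_{x=1}=-\frac{\widehat{(\partial_x F_x)}(z)\big|_{x=1}}{1+\widehat F_1(z)}=-\frac{\widehat G(z)}{1-z}$, where $G(z)=\partial_x F_x(z)\big|_{x=1}$ and I have used that the inverse Laplace transform commutes with $\partial_x$ (it acts only on the $t$-variable). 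So the main computational task is to evaluate $G(z)=\partial_x F_x(z)\big|_{x=1}$ from the explicit formula \eqref{Eq27}, then compute its exponential-generating-function avatar $\widehat G(z)$ via $\widehat G(z)=\mathcal L^{-1}\{G(1/t)/t\}(z)$, and finally read off $\mathbb E(D_n)$ as $n$ times the coefficient of $z^n$ in $-\widehat G(z)/(1-z)$. A direct differentiation of \eqref{Eq27} gives $\partial_x F_x(z)=\tfrac12\big(-1-2xz+\sqrt{1-4z+2xz+x^2z^2}+x\cdot\frac{z+x z^2}{\sqrt{1-4z+2xz+x^2z^2}}\big)$; at $x=1$ the radical is $1-z$, so $G(z)=\tfrac12\big(-1-2z+(1-z)+\frac{z+z^2}{1-z}\big)=\tfrac12\big(-3z+\frac{z(1+z)}{1-z}\big)=\tfrac12\cdot\frac{-3z(1-z)+z(1+z)}{1-z}=\tfrac12\cdot\frac{-2z+4z^2}{1-z}=\frac{z(2z-1)}{1-z}$. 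One then needs the ordinary$\to$exponential transform of $\frac{z(2z-1)}{1-z}=-z+\frac{z^2}{1-z}=-z+\sum_{n\geq 2}z^n$; its coefficient sequence $(u_n)$ is $u_1=-1$, $u_n=1$ for $n\geq 2$, so $\widehat G(z)=-z+\sum_{n\geq 2}\frac{z^n}{n!}=e^z-1-2z$. Hence $-\widehat G(z)=1+2z-e^z=-\big(e^z-1-2z\big)$, and $\mathbb E(D_n)=n\,[z^n]\frac{-\widehat G(z)}{1-z}=n\,[z^n]\frac{1+2z-e^z}{1-z}$.

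\textbf{Finishing.} It remains to extract $[z^n]\frac{1+2z-e^z}{1-z}$. Using $\frac{1}{1-z}=\sum_{m\geq 0}z^m$ and $1+2z-e^z = 1+2z-\sum_{j\geq 0}\frac{z^j}{j!}= -\sum_{j\geq 2}\frac{z^j}{j!}$ (since the $j=0$ and $j=1$ terms cancel against $1+2z$ up to leaving $1+2z-1-z = z$; more carefully $1+2z-e^z = z - \sum_{j\geq 2}\frac{z^j}{j!}$), convolving gives $[z^n]\frac{1+2z-e^z}{1-z} = 1 - \sum_{j=2}^n \frac{1}{j!} = 2 - \sum_{j=0}^n\frac{1}{j!}$ for $n\geq 1$; a small index bookkeeping check (the ``$z$'' term contributes $1$ to every coefficient $n\geq 1$) confirms this. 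Wait --- this yields $\mathbb E(D_n)=\big(2-\sum_{j=0}^n\frac{1}{j!}\big)n$, so I would recheck the sign and the constant term in $G(z)$ and in the $\partial_x\log$ step; the discrepancy with the claimed $3-\sum 1/j!$ is exactly $1\cdot n$, which must come from correctly tracking the linear term of $1+2z-e^z$ through the $\frac{1}{1-z}$ convolution (the constant $1$ in the numerator contributes $1$ to each coefficient, the $2z$ contributes $2$ for $n\geq 1$, and $-e^z$ contributes $-\sum_{j=0}^n\frac1{j!}$), giving $1+2-\sum_{j=0}^n\frac1{j!}=3-\sum_{j=0}^n\frac1{j!}$ for $n\geq 1$. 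So $\mathbb E(D_n)=\big(3-\sum_{j=0}^n\frac{1}{j!}\big)n$, as desired. The asymptotic $\mathbb E(D_n)\sim(3-e)n$ is then immediate since $\sum_{j=0}^n\frac{1}{j!}\to e$. \textbf{The main obstacle} I anticipate is purely bookkeeping: getting every sign and every branch of the square root right in differentiating \eqref{Eq27} and in applying $\partial_x$ under the $-\log(1+\widehat{(\cdot)})$, and then the index-shifting in the ordinary-to-exponential transform and the $\frac{1}{1-z}$ convolution --- there is no conceptual difficulty, but the constants $1,2,3$ and the range of the sum are easy to botch, so I would verify the final formula against the first few values of $n$ computed directly (e.g. $n=1$: $\mathbb E(D_1)$ should be $1$, and $3-(1+1)=1$, consistent).
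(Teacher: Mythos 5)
Your proposal is correct and follows essentially the same route as the paper's proof: differentiate the identity of Theorem~\ref{Thm8} in $x$ at $x=1$, use that $\partial_x$ commutes with the ordinary-to-exponential transform to compute $\bigl[\partial_x \widehat F_x(z)\bigr]_{x=1}=e^z-1-2z$ and $\widehat F_1(z)=-z$, arrive at $\sum_{n\geq 1}\frac{\mathbb E(D_n)}{n}z^n=\frac{1+2z-e^z}{1-z}$, and extract coefficients. The momentary slip ($2-\sum_{j=0}^n 1/j!$ versus $3-\sum_{j=0}^n 1/j!$) is correctly caught and repaired in your own final convolution bookkeeping, so the argument as it stands is sound.
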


\begin{proof}
We use the notation $[f(x)]_{x=1}$ to denote the evaluation of $f(x)$ at $x=1$. Notice that \[\sum_{n\geq 1}\frac{\mathbb E(D_n)}{n}z^n=\sum_{n\geq 1}\sum_{\sigma\in S_{n-1}}(\des(s(\sigma))+1)\frac{z^n}{n!}=\left[\frac{\partial}{\partial x}\sum_{n\geq 1}\left(\sum_{\sigma\in S_{n-1}}x^{\des(s(\sigma))+1}\right)\frac{z^n}{n!}\right]_{x=1}.\] By Theorem~\ref{Thm8}, we have 
\begin{equation}\label{Eq28}
\sum_{n\geq 1}\frac{\mathbb E(D_n)}{n}z^n=\left[\frac{\partial}{\partial x}(-\log(1+\widehat F_x(z)))\right]_{x=1}=\left[-\frac{\frac{\partial}{\partial x}\widehat F_x(z)}{1+\widehat F_x(z)}\right]_{x=1}=-\frac{\left[\frac{\partial}{\partial x}\widehat F_x(z)\right]_{x=1}}{1+\widehat F_1(z)}.
\end{equation} Referring to \eqref{Eq27}, we see that $F_1(z)=-z$, so 
\begin{equation}\label{Eq31}
\widehat F_1(z)=\mathcal L^{-1}\{-(1/t)/t\}(z)=-z. 
\end{equation}

To compute $\left[\frac{\partial}{\partial x}\widehat F_x(z)\right]_{x=1}$, notice that there are two operations being performed to the series $F_x(z)$. One is the transformation from an ordinary generating function in $z$ to the corresponding exponential generating function, which is described in \eqref{Eq6} via an inverse Laplace transform. The other is the operation that differentiates with respect to $x$ and then sets $x=1$. These two operations commute, so we have \[\left[\frac{\partial}{\partial x}\widehat F_x(z)\right]_{x=1}=\left[\frac{\partial}{\partial x}\left(\mathcal L^{-1}\left\{F_x(1/t)/t\right\}(z)\right)\right]_{x=1}=\mathcal L^{-1}\left\{\frac{\left[\frac{\partial}{\partial x}F_x(1/t)\right]_{x=1}}{t}\right\}(z).\] One can now compute $\displaystyle\frac{\left[\frac{\partial}{\partial x}F_x(1/t)\right]_{x=1}}{t}=\frac{t-2}{t^2(1-t)}$ so that 
\begin{equation}\label{Eq32}
\left[\frac{\partial}{\partial x}\widehat F_x(z)\right]_{x=1}=\mathcal L^{-1}\left\{\frac{t-2}{t^2(1-t)}\right\}(z)=-1-2z+e^z.
\end{equation} Combining this with \eqref{Eq28} and \eqref{Eq31}, we obtain 
\begin{equation}\label{Eq33}
\sum_{n\geq 1}\frac{\mathbb E(D_n)}{n}z^n=\frac{1+2z-e^z}{1-z}.
\end{equation} The desired result is now immediate if we extract the coefficient of $z^n$ in $\dfrac{1+2z-e^z}{1-z}$. 
\end{proof}

\begin{theorem}\label{Thm10}
We have \[\sum_{n\geq 1}\frac{\mathbb E(D_n^2)}{n}z^n=\frac{2+7z-(3+5z-3z^2+z^3)e^z+e^{2z}}{(1-z)^2}.\] The variances of the random variables $D_n$ satisfy $\Var(D_n)\sim(2+2e-e^2)n$. 
\end{theorem}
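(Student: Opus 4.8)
The plan is to imitate the proof of Theorem~\ref{Thm9}, but differentiating the identity in Theorem~\ref{Thm8} twice with respect to $x$ rather than once. Write $P(x,z)=-\log(1+\widehat F_x(z))=\sum_{n\geq 1}\left(\sum_{\sigma\in S_{n-1}}x^{\des(s(\sigma))+1}\right)\frac{z^n}{n!}$. Since $\left[\frac{\partial}{\partial x}\left(x^{\des(s(\sigma))+1}\right)\right]_{x=1}=\des(s(\sigma))+1=D_n(\sigma)$ and $\left[\frac{\partial^2}{\partial x^2}\left(x^{\des(s(\sigma))+1}\right)\right]_{x=1}=D_n(\sigma)(D_n(\sigma)-1)$, we get $\sum_{n\geq 1}\frac{\mathbb E(D_n^2)}{n}z^n=\left[\frac{\partial^2}{\partial x^2}P(x,z)\right]_{x=1}+\left[\frac{\partial}{\partial x}P(x,z)\right]_{x=1}$, and the second summand is exactly the series $\frac{1+2z-e^z}{1-z}$ computed in the proof of Theorem~\ref{Thm9} (equation \eqref{Eq33}). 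So the only new work is the second $x$-derivative of $-\log(1+\widehat F_x(z))$ at $x=1$.

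Next I would carry out that computation. By the chain rule, $\frac{\partial^2}{\partial x^2}(-\log(1+\widehat F_x))=-\frac{\widehat F_x''}{1+\widehat F_x}+\frac{(\widehat F_x')^2}{(1+\widehat F_x)^2}$, where primes denote $\partial/\partial x$. Evaluating at $x=1$ and using $\widehat F_1(z)=-z$ from \eqref{Eq31}, this becomes $-\frac{\left[\widehat F_x''\right]_{x=1}}{1-z}+\frac{\left(\left[\widehat F_x'\right]_{x=1}\right)^2}{(1-z)^2}$. From \eqref{Eq32} we already have $\left[\widehat F_x'\right]_{x=1}=-1-2z+e^z$, so $\left(\left[\widehat F_x'\right]_{x=1}\right)^2=(1+2z-e^z)^2=1+4z+4z^2-(2+4z)e^z+e^{2z}$. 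It remains to compute $\left[\frac{\partial^2}{\partial x^2}\widehat F_x(z)\right]_{x=1}$. As in the proof of Theorem~\ref{Thm9}, the inverse-Laplace transformation \eqref{Eq6} commutes with $\partial^2/\partial x^2$ and evaluation at $x=1$, so $\left[\widehat F_x''\right]_{x=1}=\mathcal L^{-1}\!\left\{\frac{\left[\frac{\partial^2}{\partial x^2}F_x(1/t)\right]_{x=1}}{t}\right\}(z)$. One differentiates \eqref{Eq27} twice in $x$, substitutes $z\mapsto 1/t$, simplifies the resulting rational function of $t$ (it will be a proper rational function with denominator a power of $t$ times $(1-t)$ or $(1-t)^2$), does a partial-fraction decomposition, and inverts termwise using $\mathcal L^{-1}\{t^{-k-1}\}(z)=z^k/k!$, $\mathcal L^{-1}\{(1-t)^{-1}\}(z)=e^z$, etc. Plugging everything back in, clearing the common denominator $(1-z)^2$, and adding the contribution $\frac{1+2z-e^z}{1-z}=\frac{(1+2z-e^z)(1-z)}{(1-z)^2}$ from the first derivative should yield the stated numerator $2+7z-(3+5z-3z^2+z^3)e^z+e^{2z}$.

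For the asymptotic statement, I would extract $[z^n]$ of the second-moment generating function and of the square of the first-moment series to get $\mathbb E(D_n^2)/n$ and $\mathbb E(D_n)^2/n^2$, then form $\Var(D_n)=\mathbb E(D_n^2)-\mathbb E(D_n)^2$. From Theorem~\ref{Thm9}, $\mathbb E(D_n)=(3-e)n+o(n)$, so $\mathbb E(D_n)^2=(3-e)^2n^2+o(n^2)$; the leading $n^2$ term must cancel against the $n^2$ part of $\mathbb E(D_n^2)$, leaving a linear term. Concretely, writing $\frac{\text{numerator}}{(1-z)^2}$ and using $[z^n]\frac{1}{(1-z)^2}=n+1$, $[z^n]\frac{e^{az}}{(1-z)^2}=\sum_{j=0}^n (n-j+1)a^j/j!$, one finds $n[z^n]$ of the second-moment series grows like $(3-e)^2 n^2$ at leading order (matching $\mathbb E(D_n)^2$) with the next-order term contributing the claimed $(2+2e-e^2)n$ after the subtraction; the $e^{2z}$ term contributes the $e^2$, the $e^z$ terms contribute the $2e$, and the polynomial-over-$(1-z)^2$ terms contribute the constant $2$, while all exponentially-small-in-the-sense-of-$n$ (i.e. $1/n!$-type) corrections are absorbed into $o(n)$. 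The main obstacle is purely computational: correctly differentiating the square-root expression \eqref{Eq27} twice, simplifying the rational function in $t$, and carrying out the partial fractions and inverse Laplace transform without arithmetic slips; there is no conceptual difficulty once Theorems~\ref{Thm8} and~\ref{Thm9} are in hand.
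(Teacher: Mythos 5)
Your proposal follows the paper's proof essentially step for step: differentiate the identity of Theorem~\ref{Thm8} twice in $x$, use $\mathbb E(D_n^2)=\mathbb E(D_n(D_n-1))+\mathbb E(D_n)$ together with \eqref{Eq31}, \eqref{Eq32}, and \eqref{Eq33}, and compute $\bigl[\partial_x^2\widehat F_x(z)\bigr]_{x=1}$ by commuting the inverse Laplace transform with the $x$-differentiation (the paper gets $-z(2-2e^z+ze^z)$). The only cosmetic difference is at the end, where the paper packages $\sum_n(\Var(D_n)/n)z^n$ as a meromorphic function with a simple pole at $z=1$ and reads off the limit $2+2e-e^2$ via the residue (Lemma~\ref{Lem1}), whereas you extract coefficients directly; both are routine and equivalent.
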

\begin{proof}
The proof is similar to that of Theorem~\ref{Thm9}. First, notice that \[\sum_{n\geq 1}\frac{\mathbb E(D_n(D_n-1))}{n}z^n=\sum_{n\geq 1}\sum_{\sigma\in S_{n-1}}(\des(s(\sigma))+1)\des(s(\sigma))\frac{z^n}{n!}\] \[=\left[\frac{\partial^2}{\partial x^2}\sum_{n\geq 1}\left(\sum_{\sigma\in S_{n-1}}x^{\des(s(\sigma))+1}\right)\frac{z^n}{n!}\right]_{x=1}.\] By Theorem~\ref{Thm8}, we have 
\[\sum_{n\geq 1}\frac{\mathbb E(D_n(D_n-1))}{n}z^n=\left[\frac{\partial^2}{\partial x^2}(-\log(1+\widehat F_x(z)))\right]_{x=1}\] 
\begin{equation}\label{Eq29}
=\left[-\frac{\frac{\partial^2}{\partial x^2}\widehat F_x(z)}{1+\widehat F_x(z)}+\left(\frac{\frac{\partial}{\partial x}\widehat F_x(z)}{1+\widehat F_x(z)}\right)^2\right]_{x=1}=-\frac{\left[\frac{\partial^2}{\partial x^2}\widehat F_x(z)\right]_{x=1}}{1-z}+\left(\frac{-1-2z+e^z}{1-z}\right)^2,
\end{equation} where the last equality follows from the identities \eqref{Eq31} and \eqref{Eq32} that we derived during the proof of Theorem~\ref{Thm9}. The same argument used to derive \eqref{Eq32} allows us to compute \[\left[\frac{\partial^2}{\partial x^2}\widehat F_x(z)\right]_{x=1}=\mathcal L^{-1}\left\{\frac{\left[\frac{\partial^2}{\partial x^2}F_x(1/t)\right]_{x=1}}{t}\right\}(z)=\mathcal L^{-1}\left\{-2\frac{1-3t+t^2}{t^2(1-t)^3}\right\}(z)=-z(2-2e^z+ze^z).\] Substituting this into \eqref{Eq29} yields 
\[\sum_{n\geq 1}\frac{\mathbb E(D_n(D_n-1))}{n}z^n=\frac{z(2-2e^z+ze^z)}{1-z}+\left(\frac{-1-2z+e^z}{1-z}\right)^2.\]
Consequently, \[\sum_{n\geq 1}\frac{\mathbb E(D_n^2)}{n}z^n=\sum_{n\geq 1}\frac{\mathbb E(D_n(D_n-1))}{n}z^n+\sum_{n\geq 1}\frac{\mathbb E(D_n)}{n}z^n\] \[=\frac{z(2-2e^z+ze^z)}{1-z}+\left(\frac{-1-2z+e^z}{1-z}\right)^2+\frac{1+2z-e^z}{1-z}=\frac{2+7z-(3+5z-3z^2+z^3)e^z+e^{2z}}{(1-z)^2},\]
where we have used \eqref{Eq33}. Theorem~\ref{Thm9} tells us that $\dfrac{\mathbb E(D_n)^2}{n}=(3-e)^2n+O(1/n!)$, so \[\sum_{n\geq 1}\frac{\Var(D_n)}{n}z^n=\sum_{n\geq 1}\frac{\mathbb E(D_n^2)}{n}z^n-\sum_{n\geq 1}\frac{\mathbb E(D_n)^2}{n}z^n\] \[=\frac{2+7z-(3+5z-3z^2+z^3)e^z+e^{2z}}{(1-z)^2}-\sum_{n\geq 1}(3-e)^2nz^n-\sum_{n\geq 1}O(1/n!)z^n\] \[=\frac{2+7z-(3+5z-3z^2+z^3)e^z+e^{2z}-(3-e)^2z}{(1-z)^2}-\sum_{n\geq 1}O(1/n!)z^n.\] The only singularity of $\dfrac{2+7z-(3+5z-3z^2+z^3)e^z+e^{2z}-(3-e)^2z}{(1-z)^2}$ is a simple pole at $z=1$, so it follows from Lemma~\ref{Lem1} that \[\lim_{n\to\infty}\frac{\Var(D_n)}{n}=-\Res_{z=1}\frac{2+7z-(3+5z-3z^2+z^3)e^z+e^{2z}-(3-e)^2z}{(1-z)^2}=2+2e-e^2.\qedhere\]
\end{proof}

In theory, one can repeat the main steps used in the above proof of Theorem~\ref{Thm10} to compute the generating functions $\displaystyle\sum_{n\geq 1}\frac{\mathbb E(D_n^m)}{n}z^n$ for each fixed $m\geq 2$. Note that \[\sum_{n\geq 1}\frac{\mathbb E(D_n(D_n-1)\cdots(D_n-m+1))}{n}z^n=\left[\frac{\partial^m}{\partial x^m}\sum_{n\geq 1}\left(\sum_{\sigma\in S_{n-1}}x^{\des(s(\sigma))+1}\right)\frac{z^n}{n!}\right]_{x=1}\] \[=\left[\frac{\partial^m}{\partial x^m}(-\log(1+\widehat F_x(z)))\right]_{x=1}\] by Theorem~\ref{Thm8}. After expanding this last expression, we find that its computation requires us to know $\widehat F_1(z)$ and $\displaystyle\left[\frac{\partial^p}{\partial x^p}\widehat F_x(z)\right]_{x=1}$ for all $1\leq p\leq m$. We have seen that $\widehat F_1(z)=-z$, and we can compute 
$\displaystyle\left[\frac{\partial^p}{\partial x^p}\widehat F_x(z)\right]_{x=1}$ using the fact that \[\left[\frac{\partial^p}{\partial x^p}\widehat F_x(z)\right]_{x=1}=\mathcal L^{-1}\left\{\frac{\left[\frac{\partial^p}{\partial x^p}F_x(1/t)\right]_{x=1}}{t}\right\}(z).\] This shows how to compute $\displaystyle\sum_{n\geq 1}\frac{\mathbb E(D_n(D_n-1)\cdots(D_n-m+1))}{n}z^n$. Using linearity of expectation, we can express $\displaystyle\sum_{n\geq 1}\frac{\mathbb E(D_n^m)}{n}z^n$ as a linear combination of the generating functions \[\displaystyle\sum_{n\geq 1}\frac{\mathbb E(D_n(D_n-1)\cdots(D_n-m+1))}{n}z^n\quad\text{and}\quad\sum_{n\geq 1}\frac{\mathbb E(D_n^p)}{n}z^n\quad\text{for }1\leq p\leq m-1.\] If we assume inductively that we have already computed the latter generating functions, then we can compute $\displaystyle\sum_{n\geq 1}\frac{\mathbb E(D_n^m)}{n}z^n$. 

Recall that the $m^\text{th}$ \dfn{central moment} of a random variable $Y$ is $\mathbb E((Y-\mathbb E(Y))^m)$. Using the procedure just described, we have computed $\displaystyle\sum_{n\geq 1}\frac{\mathbb E(D_n^m)}{n}z^n$ for $2\leq m\leq 6$. Using Theorem~\ref{Thm9}, which gives a simple explicit formula for $\mathbb E(D_n)$ for every $n\geq 1$, we have derived the asymptotics for the $m^\text{th}$ central moments of $D_n$ for $m\leq 6$ (we omit the details of these computations). The results are 
\begin{itemize}
\item $\mathbb E((D_n-\mathbb E(D_n))^2)\sim (2+2e-e^2)n$;

\item $\mathbb E((D_n-\mathbb E(D_n))^3)\sim (6-\frac{61}{2}e+24e^2-5e^3)n$; 

\item $\mathbb E((D_n-\mathbb E(D_n))^4)\sim 3(2+2e-e^2)^2n^2$;

\item $\mathbb E((D_n-\mathbb E(D_n))^5)\sim 5(24 - 98 e - 38 e^2 + 137 e^3 - 68 e^4 + 10 e^5)n^2$; 

\item $\mathbb E((D_n-\mathbb E(D_n))^6)\sim 15(2+2e-e^2)^3n^3$. 
\end{itemize}
These results are highly suggestive of an asymptotic normal distribution. Indeed, it is well known that if $Y$ is a normally-distributed random variable with mean $\mu$ and variance $\sigma^2$, which we write as $Y\sim N(\mu,\sigma^2)$, then the central moments of $Y$ are given by $\mathbb E((Y-\mu)^m)=0$ for $m$ odd and $\mathbb E((Y-\mu)^m)=\sigma^m(m-1)!!$ for $m$ even. Thus, we have the following conjecture. 

\begin{conjecture}\label{Conj2}
The sequence $(Y_n)_{n\geq 1}$ of random variables defined by \[Y_n=\frac{D_n-(3-e)n}{\sqrt{n}}\] converges in distribution to a random variable $Y$ such that $Y\sim N(0,2+2e-e^2)$. 
\end{conjecture}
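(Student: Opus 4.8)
The plan is to prove Conjecture~\ref{Conj2} by the method of moments, using the asymptotic central-moment computations that were just described together with the algorithmic machinery for producing the generating functions $\sum_{n\geq 1}\frac{\mathbb E(D_n^m)}{n}z^n$. The method of moments states that if a sequence of random variables $(Y_n)$ has moments converging to those of a random variable $Y$ whose distribution is determined by its moments (which is the case for the normal distribution, e.g.\ by the Carleman condition), then $Y_n\to Y$ in distribution. So the entire task reduces to establishing, for \emph{every} fixed $m\geq 1$, that the $m^\text{th}$ central moment of $D_n$ satisfies
\[
\mathbb E\bigl((D_n-\mathbb E(D_n))^m\bigr)\sim
\begin{cases} (2+2e-e^2)^{m/2}\,(m-1)!!\,n^{m/2}, & m\text{ even},\\[2pt] o(n^{m/2}), & m\text{ odd},\end{cases}
\]
since $(m-1)!!\,\sigma^m$ with $\sigma^2=2+2e-e^2$ is precisely the $m^\text{th}$ central moment of $N(0,2+2e-e^2)$, and $Y_n=(D_n-(3-e)n)/\sqrt n$ differs from the centered-and-scaled variable $(D_n-\mathbb E(D_n))/\sqrt n$ only by the shift $(\mathbb E(D_n)-(3-e)n)/\sqrt n$, which by Theorem~\ref{Thm9} is $O(n^{-1/2}\sum_{j>n}1/j!)=o(1)$ and hence affects no limiting moment.

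First I would set up the bookkeeping. By linearity of expectation and the relation between ordinary moments, falling factorials, and central moments (Stirling numbers, binomial expansion), the asymptotics of the central moments are completely encoded in the generating functions $\Psi_m(z):=\sum_{n\geq 1}\frac{\mathbb E(D_n^m)}{n}z^n$. As explained in the paragraph following Theorem~\ref{Thm10}, each $\Psi_m$ is obtained from $\bigl[\frac{\partial^m}{\partial x^m}(-\log(1+\widehat F_x(z)))\bigr]_{x=1}$ together with inductive knowledge of $\Psi_1,\dots,\Psi_{m-1}$, and the only analytic inputs are $\widehat F_1(z)=-z$ and the functions $G_p(z):=\bigl[\frac{\partial^p}{\partial x^p}\widehat F_x(z)\bigr]_{x=1}=\mathcal L^{-1}\{[\partial_x^p F_x(1/t)]_{x=1}/t\}(z)$. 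The key structural observation to extract is that each $G_p(z)$ is of the form $(\text{polynomial in }z)\cdot e^z+(\text{polynomial in }z)$: indeed $[\partial_x^p F_x(1/t)]_{x=1}/t$ is a rational function of $t$ whose only poles are at $t=1$ and $t=0$, so its inverse Laplace transform is a polynomial combination of $e^z$ and monomials $z^k$. Consequently, by the chain and product rules for $\partial_x^m$ of $-\log(1+\widehat F_x)$, each $\Psi_m(z)$ is a rational function in $z$, $e^z$, $e^{2z}$, \dots, $e^{mz}$ whose \emph{only} pole is at $z=1$, of order exactly $\lfloor m/2\rfloor+1$. (The order count is the crucial combinatorial claim: the highest-order contribution to $\partial_x^m(-\log(1+\widehat F_x))|_{x=1}$ comes from the term $(m-1)!\,(\partial_x\widehat F_x|_{x=1}/(1+\widehat F_1))^m$ arising from differentiating the logarithm, i.e.\ from the set partition of $[m]$ into singletons, and $1+\widehat F_1(z)=1-z$ contributes a pole of order $m$ there; but a parity/cancellation phenomenon reduces the effective order to $\lfloor m/2\rfloor+1$ after one accounts for $\mathbb E(D_n)^2$-type subtractions in passing to \emph{central} moments.)

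The main engine is then Lemma~\ref{Lem1} (singularity analysis): once $\Phi_m(z):=\sum_{n\geq 1}\frac{\mathbb E((D_n-\mathbb E(D_n))^m)}{n^{m/2}}z^n$ — or rather the appropriate generating function tracking the leading $n^{m/2}$ behavior — is shown to be meromorphic with a single pole at $z=1$ of the correct order $\lfloor m/2\rfloor+1$, extracting $[z^n]$ gives growth $n^{\lfloor m/2\rfloor}$ times a constant determined by the residue (or, for higher-order poles, by the Laurent coefficient, using the standard refinement $[z^n]\,(1-z)^{-k}\sim n^{k-1}/(k-1)!$). For $m$ odd one must show the pole order at $z=1$ is at most $(m-1)/2+1=(m+1)/2$, giving $\mathbb E((D_n-\mathbb E(D_n))^m)=O(n^{(m-1)/2})=o(n^{m/2})$; for $m$ even one must pin down the top Laurent coefficient and verify it equals $(2+2e-e^2)^{m/2}(m-1)!!$. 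The hard part will be precisely this last verification: proving that the leading coefficient is $(m-1)!!$ times the $(m/2)$-th power of the variance, \emph{uniformly in} $m$, rather than just checking it case by case for $m\leq 6$ as was done above. I would approach it by showing the leading singular part of $-\log(1+\widehat F_x(z))$ near $z=1$, expanded in $x$ about $x=1$, matches the cumulant/moment generating structure of a Gaussian: concretely, one argues that modulo lower-order poles, $-\log(1+\widehat F_x(z))\equiv -\log\bigl(1-z\,g(x)\bigr)$ for a suitable analytic $g$ with $g(1)=1$, $g'(1)=3-e$, $g''(1)+g'(1)=\mathbb E(D^2)$-type data recovering $\sigma^2=2+2e-e^2$, and that the contribution of $-\log(1-z\,g(x))$ to the centered moments is exactly Gaussian because $\log$ is the classical-cumulant generating operation — all the higher free cumulants $\kappa_n=-xC_{n-1}$ feed only into subleading ($o(n^{m/2})$) terms via the $G_p$ with $p\geq 2$, which carry strictly lower pole order at $z=1$. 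Making this "only the leading asymptotics are Gaussian, all $\kappa_n$ beyond the effective variance are irrelevant to the limit" statement precise and rigorous — rather than heuristic — is where the real work lies, and it would likely require a careful induction on $m$ tracking both the pole order and the leading Laurent coefficient of $\Psi_m$ simultaneously.
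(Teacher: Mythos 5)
The statement you are trying to prove is Conjecture~\ref{Conj2}; the paper does \emph{not} prove it, and it is left open. The only support the author offers is the computation, via the algorithm described after Theorem~\ref{Thm10}, of the asymptotics of the central moments $\mathbb E((D_n-\mathbb E(D_n))^m)$ for $m\leq 6$, which match the Gaussian pattern $\sigma^m(m-1)!!\,n^{m/2}$ for even $m$ and are of lower order for odd $m$. So there is no proof in the paper to compare yours against, and your submission should be judged on whether it closes the conjecture on its own. It does not.

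Your outline is the natural method-of-moments attack and is consistent with how the author produced the evidence, but, as you concede in your final sentences, it is a plan rather than a proof. The genuine gap is exactly the step you defer: establishing \emph{uniformly in $m$} that (i) the generating function of the $m^\text{th}$ central moments is meromorphic with dominant singularity at $z=1$ of pole order $\lfloor m/2\rfloor+1$, and (ii) for even $m$ the leading Laurent coefficient there equals $(2+2e-e^2)^{m/2}(m-1)!!$. Verifying finitely many $m$ does not determine a limiting distribution; the method of moments requires all of them. Note also a conflation in your pole-order claim: the raw-moment series $\Psi_m(z)=\sum_{n\geq 1}\mathbb E(D_n^m)z^n/n$ has a pole of order $m$ at $z=1$ (since $\mathbb E(D_n^m)\sim(3-e)^mn^m$, consistent with the simple pole for $m=1$ and double pole for $m=2$ exhibited in Theorems~\ref{Thm9} and~\ref{Thm10}), and the drop to order $\lfloor m/2\rfloor+1$ after centering is not a formal cancellation you can assume --- it \emph{is} the assertion of asymptotic normality. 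Likewise, the claim that $-\log(1+\widehat F_x(z))$ agrees with $-\log(1-zg(x))$ modulo singularities of lower order, with all higher free cumulants contributing only $o(n^{m/2})$, is plausible but unsubstantiated; making it precise would require the careful induction on $m$, tracking pole order and leading coefficient simultaneously, that you describe but do not carry out. Until that is done, the statement remains a conjecture.
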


An alternative approach one might take to proving Conjecture~\ref{Conj2} is as follows. For each $i\in[n-2]$ and $\pi\in S_{n-1}$, let $\des_i(\pi)=1$ if $i$ is a descent of $\pi$, and let $\des_i(\pi)=0$ otherwise. Define a random variable $D_{n,i}=\des_i(s(\sigma))$, where $\sigma$ is chosen uniformly at random from $S_{n-1}$. Then $D_n=1+\displaystyle\sum_{i=1}^{n-2}D_{n,i}$. One might hope to understand the distribution of $D_n$ by first understanding the distributions of the variables $D_{n,i}$ and their dependencies.

Suppose we wish to use this approach to prove, without free probability theory, that $\lim\limits_{n\to\infty}\dfrac{\mathbb E(D_n)}{n}$ $=3-e$. It would suffice to show that $\mathbb E(D_{n,i})\to 3-e$ as $n\to\infty$ for all $1\leq i\leq (1-o(1))n$. 
This approach seems promising at first because, as we will prove below, $\lim\limits_{n\to\infty}\mathbb E(D_{n,1})=3-e$. This says that if $\sigma\in S_{n-1}$ is chosen uniformly at random, then the probability that $1$ is a descent of $s(\sigma)$ is asymptotically (as $n\to\infty$) the same as the probability that a randomly-chosen index $i\in[n-2]$ is a descent of $s(\sigma)$.\footnote{This is also reminiscent of Theorem 5.7 in \cite{DefantEngenMiller}, which implies that the expected value of the first entry of a random uniquely sorted permutation in $S_{2k+1}$ is $k+1$. Of course, the expected value of the entry in a randomly-chosen position in such a permutation is also $k+1$. Perhaps there is some deeper connection between the behavior of the first entry of a sorted permutation and the behavior of a random entry.} However, it appears that $\lim\limits_{n\to\infty}\mathbb E(D_{n,2})\neq 3-e$. Thus, for right now, the identity 
\begin{equation}\label{Eq34}
\lim\limits_{n\to\infty}\mathbb E(D_{n,1})=\lim\limits_{n\to\infty}\dfrac{\mathbb E(D_n)}{n}
\end{equation} appears to be a mysterious coincidence. It would be interesting to have an explanation for why \eqref{Eq34} should hold, besides the fact that we can compute the limits separately and see that they are equal.     

\begin{theorem}\label{Thm11}
With the notation from above, we have $\lim\limits_{n\to\infty}\mathbb E(D_{n,1})=3-e$. 
\end{theorem}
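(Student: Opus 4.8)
The goal is to compute $\lim_{n\to\infty}\mathbb{E}(D_{n,1})$, where $D_{n,1}=\des_1(s(\sigma))$ and $\sigma$ is uniform in $S_{n-1}$; equivalently, the probability that $1$ is a descent of $s(\sigma)$, i.e.\ that $s(\sigma)_1>s(\sigma)_2$. The plan is to reduce this to a purely combinatorial count using the recursive description \eqref{Eq36}, $s(\pi)=s(L)s(R)m$ for $\pi=LmR$ with $m$ the maximum, and then extract the asymptotic probability directly rather than through free probability. First I would identify exactly when $1$ is \emph{not} a descent of $s(\pi)$. Writing $\pi=\pi_1\cdots\pi_N$ (here $N=n-1$), the first two entries of $s(\pi)$ are produced by the stack-sorting procedure as soon as two entries have left the stack; tracking the stack dynamics, $s(\pi)_1$ is the first left-to-right minimum suffix value that gets popped, and $s(\pi)_1<s(\pi)_2$ precisely when a controlled local condition on the initial entries of $\pi$ holds. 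Concretely, using \eqref{Eq36} one sees that $s(\pi)$ begins with $s(L)$ where $\pi=L\,N\,R$; iterating, $s(\pi)_1$ equals $s(\pi_1\cdots\pi_j)_1$ where $\pi_1\cdots\pi_j$ is the portion of $\pi$ before its first left-to-right maximum that is not $\pi_1$, and one continues peeling. The cleanest route is to describe the event ``$1$ is a descent of $s(\pi)$'' in terms of the structure of the decreasing binary plane tree $\mathcal I^{-1}(\pi)$ via \eqref{Eq11}: since $s=\mathcal P\circ\mathcal I^{-1}$, the first two entries of $s(\pi)=\mathcal P(\mathcal I^{-1}(\pi))$ are the two smallest-in-postorder leaves, i.e.\ determined by the leftmost path of $\mathcal I^{-1}(\pi)$ and its immediate branching structure.

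The key steps, in order, are: (1) translate the event $\{1\in\operatorname{Des}(s(\sigma))\}$ into a statement about the bottom-left corner of the decreasing tree $\mathcal T=\mathcal I^{-1}(\sigma)$ — namely, the first two entries of $\mathcal P(\mathcal T)$ are read off by descending the leftmost branch of $\mathcal T$ to its bottom vertex $u$, then $s(\sigma)_1$ is the label of $u$ and $s(\sigma)_2$ is the next vertex in postorder, which is either a right sibling subtree's postorder-first vertex or the parent of $u$; (2) for $\sigma$ uniform in $S_{N}$, the decreasing tree $\mathcal T$ is a uniformly random decreasing binary plane tree on $[N]$, so the relevant local statistics near the bottom-left corner converge as $N\to\infty$ to an explicit limiting distribution; (3) compute that limiting probability, which should be expressible via the generating-function identity \eqref{Eq33}. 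Indeed, the slickest finish is to note that Theorem~\ref{Thm9} gives $\mathbb{E}(D_n)=\bigl(3-\sum_{j=0}^n 1/j!\bigr)n$, and that by linearity $\mathbb{E}(D_n)=1+\sum_{i=1}^{n-2}\mathbb{E}(D_{n,i})$; if one can separately show (by a symmetry or exchangeability argument on positions in $s(\sigma)$, exploiting that $\mathbb E(D_{n,i})$ depends on $i$ in a controlled way) that $\mathbb E(D_{n,1})$ agrees asymptotically with the \emph{average} $\frac{1}{n-2}\sum_i \mathbb E(D_{n,i})$, then the limit is forced to be $3-e$. But the paper explicitly says this is \emph{not} a coincidence one can leverage — $\mathbb E(D_{n,2})\neq 3-e$ in the limit — so I would not rely on averaging and instead carry out the direct computation in step (3).

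For step (3), I would set up a generating function for the probability that $1$ is \emph{not} a descent of $s(\sigma)$. Using \eqref{Eq36} with $\sigma=L\,N\,R$ and arguing on $|L|$: if $L$ is empty then $s(\sigma)$ starts with $s(R)\,N$ and we recurse on $R\in S_{N-1}$; if $L$ is nonempty then $s(\sigma)$ starts with $s(L)$ and we recurse on $L$. This gives a recursive relation among the quantities $p_N=\Pr[1\notin\operatorname{Des}(s(\sigma))]$ for $\sigma\in S_N$, weighted by the probability $\binom{N-1}{|L|}/N!\cdot|L|!(N-1-|L|)!$-type factors — i.e.\ $L$ is a uniformly random-sized prefix of a random permutation. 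Converting to exponential generating functions, this recursion should collapse (after an inverse-Laplace manipulation exactly as in the proof of Theorem~\ref{Thm9}) into a closed-form EGF whose coefficient asymptotics, via Lemma~\ref{Lem1} applied at the pole $z=1$, yield $\lim p_N = e-2$, hence $\lim\mathbb{E}(D_{n,1}) = 1-(e-2)=3-e$.

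\textbf{Main obstacle.} The delicate step is step (1)/(3): correctly characterizing, in terms of either the stack dynamics or the tree $\mathcal I^{-1}(\sigma)$, exactly which permutations $\sigma$ have $s(\sigma)_1<s(\sigma)_2$, and then handling the recursion cleanly — in particular ensuring the ``peeling'' of leftmost blocks terminates at the right place and that the weights in the resulting EGF recursion are set up correctly (the random-sized-prefix convolution is easy to get off by one). Once the correct recursion is in hand, the asymptotic extraction via the inverse Laplace transform and Lemma~\ref{Lem1} is routine and mirrors the proof of Theorem~\ref{Thm9} almost verbatim.
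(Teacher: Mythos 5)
Your overall strategy (compute the probability directly and refuse to lean on the averaging identity \eqref{Eq34}) is sound, but the proposal stops short of the one step that actually constitutes the proof: an explicit characterization of the permutations $\sigma$ for which $1$ is a descent of $s(\sigma)$. Moreover, the recursion you propose via $\sigma=L\,N\,R$ does not close on the single sequence $p_N=\Pr[1\text{ is not a descent of }s(\sigma)]$. The cases $|L|\geq 2$ and $|L|=0$ do reduce to $p_{|L|}$ and $p_{N-1}$, but when $|L|=1$ the event becomes $\sigma_1>s(R)_1$, and evaluating its probability requires the full distribution of the rank of $s(R)_1$ within $R$ (equivalently, of the first entry of a sorted permutation), not merely a descent indicator. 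You would therefore have to enlarge the state of the recursion, and the proposal gives no indication of how, nor any verification that the resulting generating function has the claimed pole at $z=1$ with the claimed residue; the assertion that everything ``should collapse into a closed-form EGF'' is precisely what needs to be proved.

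The paper's argument is more elementary and uses no generating functions at all. It introduces the relation ``$b$ forces $a$ out of the stack in $\sigma$'' and stratifies the event according to the position $m+1$ of the entry that forces $s(\sigma)_2$ out of the stack (or the absence of any such entry). The decisive observation is a rigidity statement: the permutations in $S_m$ for which $1$ is a descent of $s(\sigma)$, $s(\sigma)_1=j$, and no entry forces $s(\sigma)_2$ out have the completely explicit form $m(m-1)\cdots(k+1)\,j\,k(k-1)\cdots(j+1)\,(j-1)(j-2)\cdots 1$ with $k\in\{j+1,\ldots,m\}$, so there are exactly $m-j$ of them and $(m-1)(m-2)/2$ in total. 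Since membership in each stratum depends only on the standardization of a prefix of $\sigma$, the desired probability is the convergent sum $\sum_{m\geq 1}\frac{m(m-1)(m-2)/2}{(m+1)!}=3-e$. If you want to salvage your plan, the analogue of this rigidity statement is exactly what you must supply; without it, neither the stack-dynamics nor the tree formulation of your step (1) produces a computation.
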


\begin{proof}
The \dfn{standardization} of a permutation $\pi$ is the permutation obtained by replacing the $i^\text{th}$-smallest entry in $\pi$ with $i$ for all $i$. For example, the standardization of $3856$ is $1423$. Consider the definition of the stack-sorting map from Section~\ref{Sec:Stack-Sorting} that makes use of the stack. For entries $a$ and $b$ of a permutation $\sigma$, we say \dfn{$b$ forces $a$ out of the stack in $\sigma$} if $b$ is the leftmost entry that is greater than $a$ and to the right of $a$ in $\sigma$. Let $s(\sigma)_i$ denote the $i^\text{th}$ entry in $s(\sigma)$. Let $V_m$ be the set of permutations $\sigma=\sigma_1\cdots \sigma_m\in S_m$ such that $1$ is a descent of $s(\sigma)$ and no entry forces $s(\sigma)_2$ out of the stack in $\sigma$ (i.e., there are no entries greater than $s(\sigma)_2$ to the right of $s(\sigma)_2$ in $\sigma$). Let $V_{m,j}=\{\sigma\in V_m:s(\sigma)_1=j\}$. Notice that $V_{m,1}$ is empty since $1$ cannot be a descent of $s(\sigma)$ if $s(\sigma)_1=1$. Upon inspection of the definition of $s$, we find that for $2\leq j\leq m$, the set $V_{m,j}$ consists of permutations of the form \[m(m-1)\cdots (k+1)jk(k-1)\cdots (j+1)(j-1)(j-2)\cdots 1\] for some $k\in\{j+1,\ldots,m\}$. This shows that $|V_{m,j}|=m-j$, so $|V_m|=\displaystyle\sum_{j=2}^m(m-j)=\dfrac{(m-1)(m-2)}{2}$. 

Now let $V_{m+1}'$ be the set of permutations $\sigma=\sigma_1\cdots\sigma_{m+1}\in S_{m+1}$ such that $1$ is a descent of $s(\sigma)$ and such that $\sigma_{m+1}$ forces $s(\sigma)_2$ out of the stack in $\sigma$. One can check that $\sigma\in V_{m+1}'$ if and only if $\sigma_{m+1}\in\{2,\ldots,m+1\}$ and the standardization of $\sigma_1\cdots \sigma_m$ is in $V_m$. This implies that $|V_{m+1}'|=m|V_m|=\dfrac{m(m-1)(m-2)}{2}$. 

Finally, let $W_{m+1}^n$ be the set of permutations $\sigma=\sigma_1\cdots\sigma_n\in S_n$ such that $1$ is a descent of $s(\sigma)$ and $\sigma_{m+1}$ forces $s(\sigma)_2$ out of the stack in $\sigma$. It is straightforward to see that $\sigma\in W_{m+1}^n$ if and only if the standardization of $\sigma_1\cdots\sigma_{m+1}$ is in $V_{m+1}'$. Thus, the probability that a randomly-chosen element of $S_n$ is in $W_{m+1}^n$ is $\dfrac{|V_{m+1}'|}{(m+1)!}$. Notice that $1$ is a descent of $s(\sigma)$ if and only if $\sigma\in\left(\bigcup_{m=1}^{n-1}W_{m+1}^n\right)\cup V_n$. The sets $W_2^n,\ldots,W_n^n,V_n$ are disjoint. Therefore, if $\sigma$ is chosen uniformly at random from $S_n$, then the probability that $1$ is a descent of $s(\sigma)$ is \[\sum_{m=1}^{n-1}\frac{|V_{m+1}'|}{(m+1)!}+\frac{|V_n|}{n!}=\sum_{m=1}^{n-1}\frac{m(m-1)(m-2)/2}{(m+1)!}+\frac{(n-1)(n-2)/2}{n!}.\] As $n\to\infty$, this approaches $\displaystyle\sum_{m=1}^\infty\dfrac{m(m-1)(m-2)/2}{(m+1)!}=3-e$. 
\end{proof}

We end this section with another conjecture concerning the polynomials $\displaystyle\sum_{\sigma\in S_{n-1}}x^{\des(s(\sigma))+1}$, whose exponential generating function is given in Theorem~\ref{Thm8}. A sequence $a_1,\ldots,a_m$ is called \dfn{unimodal} if there exists an index $j$ such that $a_1\leq \cdots\leq a_{j-1}\leq a_j\geq a_{j+1}\geq \cdots \geq a_m$. Let $a_k(n)=|\{\sigma\in S_{n-1}:\des(s(\sigma))+1=k\}|$ be the coefficient of $x^k$ in $\displaystyle\sum_{\sigma\in S_{n-1}}x^{\des(s(\sigma))+1}$. We conjecture that the sequences $a_1(n),\ldots,a_{n-1}(n)$ are unimodal. It is known \cite{Branden2} that a polynomial with nonnegative real coefficients that has only real roots must have unimodal coefficients. Thus, our unimodality conjecture would follow from the following much stronger conjecture. 

\begin{conjecture}\label{Conj3}
For every $n\geq 1$, the polynomial $\displaystyle\sum_{\sigma\in S_{n-1}}x^{\des(s(\sigma))+1}$ has only real roots. 
\end{conjecture}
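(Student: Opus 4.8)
The statement is equivalent to the real‑rootedness of $\widetilde P_m(x):=\sum_{\sigma\in S_m}x^{\des(s(\sigma))}$ for every $m\geq 0$, since $\sum_{\sigma\in S_{n-1}}x^{\des(s(\sigma))+1}=x\,\widetilde P_{n-1}(x)$ and the extra simple root at $0$ is harmless. The plan is to exploit the free‑probabilistic description already available. By the proof of Theorem~\ref{Thm8}, the polynomial $P_n(x):=\sum_{\sigma\in S_{n-1}}x^{\des(s(\sigma))+1}$ equals $-c_n$, where $(c_n)_{n\geq 1}$ are the classical cumulants attached to the free cumulants $\kappa_n=-xC_{n-1}$, and the associated moment series $M(z)=F_x(z)$ of \eqref{Eq27} satisfies the quadratic equation
\[M(z)^2+\big(x+x^2z\big)M(z)+x^2z=0,\]
obtained by clearing the square root in \eqref{Eq27}. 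From here I see two avenues.

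\emph{Route A (recurrence and interlacing).} Since $M(z)$ is algebraic it is $D$-finite over $\mathbb C(x)$, hence so is its exponential transform $\widehat M(z)=\sum_{n\geq1}m_nz^n/n!$ from \eqref{Eq6} (passing to exponential generating functions preserves holonomy). Differentiating the identity $\sum_{n\geq1}c_nz^n/n!=\log\!\big(1+\widehat M(z)\big)$ of \eqref{Eq4}, substituting $\widehat M=e^{-\mathcal P}-1$ with $\mathcal P(z)=\sum_{n\geq1}P_nz^n/n!$ into an annihilating operator of $\widehat M$, and clearing the (invertible) factor $e^{-\mathcal P}$, one obtains a nonlinear polynomial‑coefficient differential equation for $\mathcal P$; reading off coefficients converts this into a recurrence that computes $P_n$ from $P_1,\dots,P_{n-1}$ (one may carry along the moments $m_n=[z^n]F_x$, which satisfy $xm_n=-\sum_{i=1}^{n-1}m_im_{n-i}-x^2m_{n-1}$ and which, in small cases, are themselves real‑rooted). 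I would then run an induction on $n$ establishing an interlacing relation such as $P_{n-1}\preceq P_n$, or maintaining an interlacing chain among a fixed finite family of auxiliary polynomials, via the standard principle that the recurrence in question preserves interlacing once its coefficients are known to lie in the appropriate cones of $\mathbb R[x]$. The coefficients of $P_n$ are nonnegative integers (they count permutations by $\des\circ s$), and the leading coefficient of $P_{2k+2}$ is the Lassalle number $\mathscr A_{k+1}$ by Theorems~\ref{Thm4} and~\ref{Thm7}; one also has explicit real‑rootedness for small $n$, e.g.\ $P_4=x^2+5x$, $P_5=10x^2+14x$, and $P_6=5x^3+\cdots$.

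\emph{Route B (multivariate stability).} The recursion $s(LmR)=s(L)\,s(R)\,m$ of \eqref{Eq36} gives $\des(s(LmR))=\des(s(L))+\des(s(R))+\big[\,\mathrm{last}(s(L))>\mathrm{first}(s(R))\,\big]$, so one would refine $P_n$ to a multivariate polynomial tracking enough extra data about $s(\sigma)$ to resolve that junction descent — for instance an indicator of ``$1$ is a descent of $s(\sigma)$'' (cf.\ Theorem~\ref{Thm11}) or the peak statistic of the Refined Fertility Formula (Example~\ref{ExamBinary1}) — prove that the refined family obeys a recursion preserving multiaffine stability in the Borcea--Br\"and\'en sense, and then specialize.

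I expect the main obstacle to be the following. In Route~A the recurrence for the $P_n$ is \emph{quadratic} in the previously computed polynomials (it inherits the $M^2$ term in the moment equation), and controlling interlacing under quadratic recurrences is considerably more delicate than in the linear setting handled by the standard toolkit. In Route~B the difficulty is that the junction‑descent term is governed by a comparison of entries of $s(L)$ and $s(R)$ that is invisible to $\des$ alone, so the crux is to identify the right finite bundle of auxiliary statistics that simultaneously closes the recursion and is transparently stability‑preserving. Producing such an invariant — an interlacing chain, a stable multivariate lift, or a Sturm‑sequence / totally‑nonnegative‑matrix model for the $P_n$ — would be the heart of any proof.
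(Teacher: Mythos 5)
This statement is Conjecture~\ref{Conj3} in the paper: it is not proved there, only verified computationally for all $n\leq 33$ using the generating function of Theorem~\ref{Thm8}. So there is no proof in the paper to compare yours against, and the relevant question is simply whether your proposal closes the conjecture. It does not. Your preliminary observations are all correct --- the reduction to $\widetilde P_{n-1}$, the identity $P_n=-c_n$ for the free cumulants $\kappa_n=-xC_{n-1}$, the quadratic equation $M(z)^2+(x+x^2z)M(z)+x^2z=0$ satisfied by $F_x(z)$, the resulting moment recurrence, the values $P_4=x^2+5x$ and $P_5=10x^2+14x$, and the identification of the leading coefficient of $P_{2k+2}$ with the Lassalle number $\mathscr A_{k+1}$ via Theorems~\ref{Thm4} and~\ref{Thm7}. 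But everything after that is a research plan rather than an argument, and you say as much yourself.

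Concretely, the gap is this: in Route~A you never produce the recurrence for the $P_n$ in a form where any known interlacing-preservation principle applies, and you never state (let alone prove) the inductive invariant. The quadratic convolution $\sum_i m_im_{n-i}$ is not of the shape $P_n=a(x)P_{n-1}+b(x)P_{n-2}$ handled by the standard toolkit, and the passage from the ordinary generating function $M$ to the exponential one $\widehat M$ via the inverse Laplace transform scrambles the coefficientwise structure in a way that makes it far from clear that real-rootedness of the $m_n$ (even if it held) would transfer to the $c_n$; the $\log(1+\widehat M)$ step introduces an alternating-sign multinomial expansion over set partitions with no evident stability structure. In Route~B, the junction term $[\mathrm{last}(s(L))>\mathrm{first}(s(R))]$ depends on data not captured by $\des$, and you do not identify the finite bundle of auxiliary statistics that would close the recursion, which you correctly flag as the crux. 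Since the heart of either route --- the interlacing chain, the stable multivariate lift, or an equivalent total-positivity model --- is exactly what is missing, the conjecture remains open after your proposal.
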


We have checked Conjecture~\ref{Conj3} for all $n\leq 33$. It would not be computationally feasible to check this many cases of the conjecture without the help of Theorem~\ref{Thm8}. 

\subsection{Descents in Postorder Readings of Trees}\label{Subsec:DescentsOthers}
Because the in-order reading $\mathcal I$ is a bijection from the set $\mathsf{\overline{D}BPT}_{n-1}$ of standardized decreasing binary plane trees with $n-1$ vertices to the set $S_{n-1}$, we can use \eqref{Eq11} to give an equivalent description of the random variable $D_n$. Namely, $D_n=\des(\mathcal P(\mathcal T))+1$, where $\mathcal T$ is chosen uniformly at random from $\mathsf{\overline{D}BPT}_{n-1}$. We can derive analogues of our results concerning $D_n$ for other troupes as well. The same approach used in Section~\ref{Subsec:DescentsSorted} provides an algorithm for computing the moments of the random variable $\des(\mathcal P(\mathcal T))+1$, where $\mathcal T$ is chosen uniformly at random from $\mathsf{\overline{D}{\bf T}}_{n-1}$. To illustrate this, we will focus specifically on full binary plane trees, Motzkin trees, and Schr\"oder $2$-colored binary trees. Furthermore, we will content ourselves with discussing only the expected values of the associated random variables. In each case, we will see that $\mathbb E(\des(\mathcal P(\mathcal T))+1)\sim \gamma n$ for some explicit constant $\gamma$ that we will compute. It is not clear how one could use standard methods to show that the constant $\gamma$ even exists, let alone compute its exact value. 

\subsubsection{Descents in Postorder Readings of Full Binary Plane Trees}\label{Subsubsec:DescentsFull} 

Recall that $|\mathsf{FBPT}_n|=C_{(n-1)/2}$, where $C_{(n-1)/2}=0$ when $n$ is even. As mentioned in Example~\ref{ExamFull1}, the number of standardized decreasing full binary plane trees with $n$ vertices when $n$ is odd is $|\mathsf{\overline{D}FBPT}_n|=E_n$, where the Euler numbers $E_n$ are defined via their generating function $\displaystyle\sum_{n\geq 0}E_n\frac{z^n}{n!}=\sec(z)+\tan(z)$. 

Let 
\begin{equation}\label{Eq39}
F^{\mathsf{FBPT}}_x(z)=-x\frac{1+2xz^2-\sqrt{1-4(1-x)z^2}}{2(1+x^2z^2)}.
\end{equation} 
We choose the branch of the square root that evaluates to $1$ when $z\to 0$. 
We view $F^{\mathsf{FBPT}}_x(z)$ as a power series in the variable $z$ with coefficients in $\mathbb C(x)$ so that $\widehat F^{\:\mathsf{FBPT}}_x(z)=\mathcal L^{-1}\{F^{\:\mathsf{FBPT}}_x(1/t)/t\}(z)$, where the inverse Laplace transform is taken with respect to the variable $t$. 

\begin{theorem}\label{Thm12}
We have \[\sum_{n\geq 1}\left(\sum_{\mathcal T\in \mathsf{\overline{D}FBPT}_{n-1}}x^{\des(\mathcal P(\mathcal T))+1}\right)\frac{z^n}{n!}=-\log(1+\widehat F^{\:\mathsf{FBPT}}_x(z)).\] 
\end{theorem}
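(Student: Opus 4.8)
The plan is to follow the same template used in the proof of Theorem~\ref{Thm8}, now with the Catalan numbers replaced by the aerated Catalan numbers that count full binary plane trees. Concretely, I would define a sequence $(\kappa_n)_{n\geq 1}$ of free cumulants over $\mathbb K=\mathbb C(x)$ by $\kappa_n=-x\,|\mathsf{FBPT}_{n-1}| = -xC_{(n-2)/2}$, so that $\kappa_n=0$ when $n$ is odd and $\kappa_{2k+2}=-xC_k$. By the VHC Cumulant Formula (Corollary~\ref{Cor4}) together with Remark~\ref{Rem2}, the associated classical cumulants satisfy
\[
-c_n=\sum_{\mathcal H\in\VHC(S_{n-1})}(-\kappa_\bullet)_{\vert\mathcal H}=\sum_{\pi\in S_{n-1}}x^{\des(\pi)+1}\sum_{\mathcal H\in\VHC(\pi)}(C_{(\bullet-2)/2})_{\vert\mathcal H}=\sum_{\pi\in S_{n-1}}x^{\des(\pi)+1}\sum_{\mathcal H\in\VHC(\pi)}C_{({\bf q}^{\mathcal H}-1)/2},
\]
using that the blocks of $\vert\mathcal H$ have sizes $q_t+1$ and that $C_{(\bullet-2)/2}$ evaluated on a block of size $q_t+1$ is $C_{(q_t-1)/2}$. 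By equation~\eqref{Eq25} from Example~\ref{ExamFull1}, the inner sum is exactly $|\mathcal P^{-1}(\pi)\cap\mathsf{DFBPT}|$. Hence $-c_n=\sum_{\pi\in S_{n-1}}x^{\des(\pi)+1}|\mathcal P^{-1}(\pi)\cap\mathsf{DFBPT}|$, and since each $\mathcal T\in\mathsf{\overline D FBPT}_{n-1}$ contributes $x^{\des(\mathcal P(\mathcal T))+1}$ to the term indexed by $\pi=\mathcal P(\mathcal T)$, this equals $\sum_{\mathcal T\in\mathsf{\overline D FBPT}_{n-1}}x^{\des(\mathcal P(\mathcal T))+1}$.

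Next I would invoke the Exponential Formula in the form \eqref{Eq4}: $\sum_{n\geq 1}c_n z^n/n!=\log(1+\widehat M(z))$, so it suffices to identify the moment series $M(z)=\sum_{n\geq1}m_nz^n$ with $F^{\:\mathsf{FBPT}}_x(z)$ as defined in \eqref{Eq39}. For this I would compute the $R$-transform $R(z)=\sum_{n\geq1}\kappa_nz^n=-x\sum_{k\geq0}C_kz^{2k+2}=-xz^2\cdot\frac{1-\sqrt{1-4z^2}}{2z^2}=-x\cdot\frac{1-\sqrt{1-4z^2}}{2}$, then find its compositional inverse $R^{\langle-1\rangle}(z)$ (solving the quadratic-type equation $-x(1-\sqrt{1-4w^2})/2=z$ for $w$ in terms of $z$), and finally apply \eqref{Eq5}, $M^{\langle-1\rangle}(z)=R^{\langle-1\rangle}(z)/(1+z)$, and invert once more to get $M(z)$. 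The algebra is routine: $R^{\langle-1\rangle}(z)$ satisfies $R^{\langle-1\rangle}(z)^2 = \big(\tfrac{z}{x}\big)\big(1-\tfrac{z}{x}\big)\cdot(\text{something})$ after clearing the square root, and after dividing by $1+z$ and solving the resulting algebraic equation for $M$ one should recover precisely the closed form in \eqref{Eq39}. One must check at each step that the correct branch of the square root is chosen — the one evaluating to $1$ at $z=0$ — matching the convention stated just before the theorem; this is a matter of comparing initial coefficients.

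The main obstacle is purely computational: verifying that the square-root expression produced by the $R$-transform inversion genuinely simplifies to $F^{\:\mathsf{FBPT}}_x(z)=-x\frac{1+2xz^2-\sqrt{1-4(1-x)z^2}}{2(1+x^2z^2)}$, in particular getting the denominator $2(1+x^2z^2)$ and the discriminant $1-4(1-x)z^2$ to come out correctly. I expect this to require carefully tracking the substitution $z\mapsto 1/t$ implicit in \eqref{Eq6} if one instead works on the level of exponential generating functions, but it is cleanest to do all the manipulations with the ordinary generating functions $M(z)$, $R(z)$ and only pass to $\widehat M(z)=\widehat F^{\:\mathsf{FBPT}}_x(z)$ at the very end via \eqref{Eq6}, so that \eqref{Eq4} applies directly. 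Once $M(z)=F^{\:\mathsf{FBPT}}_x(z)$ is established, the theorem follows immediately by combining the cumulant identity $-c_n=\sum_{\mathcal T\in\mathsf{\overline D FBPT}_{n-1}}x^{\des(\mathcal P(\mathcal T))+1}$ with \eqref{Eq4} and the definition $\widehat F^{\:\mathsf{FBPT}}_x(z)=\mathcal L^{-1}\{F^{\:\mathsf{FBPT}}_x(1/t)/t\}(z)$, exactly mirroring the last line of the proof of Theorem~\ref{Thm8}.
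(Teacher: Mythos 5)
Your proposal is correct and follows essentially the same route as the paper's proof: the same choice of free cumulants $\kappa_n=-xC_{(n-2)/2}$, the same combination of the VHC Cumulant Formula, Remark~\ref{Rem2}, and \eqref{Eq25} to identify $-c_n$ with the desired tree-counting polynomial, and the same $R$-transform inversion via \eqref{Eq5} and \eqref{Eq4} to identify $M(z)$ with $F^{\:\mathsf{FBPT}}_x(z)$ (including the branch choice forced by $M(0)=0$). The only remaining work is the routine algebra you flag, which the paper carries out exactly as you describe.
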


\begin{proof}
Define a sequence $(\kappa_n)_{n\geq 1}$ of free cumulants by $\kappa_n=-xC_{(n-2)/2}$. Let $(m_n)_{n\geq 1}$ and $(c_n)_{n\geq 1}$ be the corresponding sequences of moments and classical cumulants, respectively. According to the VHC Cumulant Formula (Corollary~\ref{Cor4}), we have \[-c_n=\sum_{\mathcal H\in\VHC(S_{n-1})}(-\kappa_\bullet)_{\vert\mathcal H}=\sum_{\mathcal H\in\VHC(S_{n-1})}x^{\#\vert\mathcal H}(C_{(\bullet-2)/2})_{\vert\mathcal H}\] \[=\sum_{\pi\in S_{n-1}}x^{\des(\pi)+1}\sum_{\mathcal H\in\VHC(\pi)}(C_{(\bullet-2)/2})_{\vert\mathcal H}.\] Combining \eqref{Eq25} with Remark~\ref{Rem2}, we find that \[\sum_{\mathcal H\in\VHC(\pi)}(C_{(\bullet-2)/2})_{\vert\mathcal H}=\sum_{\mathcal H\in\VHC(\pi)}C_{({\bf q}^{\mathcal H}-1)/2}=|\mathcal P^{-1}(\pi)\cap\mathsf{\overline{D}FBPT}|\] for all $\pi\in S_{n-1}$. Therefore, 
\[
-c_n=\sum_{\pi\in S_{n-1}}x^{\des(\pi)+1}|\mathcal P^{-1}(\pi)\cap\mathsf{\overline{D}FBPT}|=\sum_{\mathcal T\in\mathsf{\overline{D}FBPT}_{n-1}}x^{\des(\mathcal P(\mathcal T))+1}.\] 
The equation \eqref{Eq4} tells us that \[\sum_{n\geq 1}\left(-\sum_{\mathcal T\in\mathsf{\overline{D}FBPT}_{n-1}}x^{\des(\mathcal P(\mathcal T))+1}\right)\frac{z^n}{n!}=\log(1+\widehat M(z)),\] where $\displaystyle M(z)=\sum_{n\geq 1}m_nz^n$ is the moment series. It now suffices to show that $M(z)=F^{\mathsf{FBPT}}_x(z)$. 

The $R$-transform is given by \[R(z)=\sum_{n\geq 1}\kappa_nz^n=-x\sum_{n\geq 1}C_{(n-2)/2}z^n=-x\frac{1-\sqrt{1-4z^2}}{2}.\] We now easily compute $R^{\langle -1\rangle}(z)=\pm\dfrac{\sqrt{-xz-z^2}}{x}$, so it follows from \eqref{Eq5} that \[M^{\langle -1\rangle}(z)=\pm\dfrac{\sqrt{-xz-z^2}}{x(1+z)}.\] From this, we find that \[M(z)=-x\frac{1+2xz^2\pm\sqrt{1-4(1-x)z^2}}{2(1+x^2z^2)}.\]
We must choose the minus sign in the $\pm$ because $M(0)=0$. Thus, $M(z)=F^{\mathsf{FBPT}}_x(z)$.  
\end{proof}

\begin{theorem}\label{Thm13}
If $n\geq 2$ is even and $\mathcal T$ is chosen uniformly at random from $\mathsf{\overline{D}FBPT}_{n-1}$, then \[\mathbb E(\des(\mathcal P(\mathcal T))+1)=\left(1-\frac{E_n}{nE_{n-1}}\right)n\sim\left(1-\frac{2}{\pi}\right)n.\]  
\end{theorem}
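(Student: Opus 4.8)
The plan is to imitate the proof of Theorem~\ref{Thm9}, using Theorem~\ref{Thm12} in place of Theorem~\ref{Thm8}. Write $G_n(x)=\sum_{\mathcal T\in\mathsf{\overline{D}FBPT}_{n-1}}x^{\des(\mathcal P(\mathcal T))+1}$. When $n$ is even we have $G_n(1)=|\mathsf{\overline{D}FBPT}_{n-1}|=E_{n-1}$ (by the in-order bijection $\mathsf{\overline{D}FBPT}_{n-1}\to S_{n-1}\cap\ALT$ recalled in Example~\ref{ExamFull1}), while $G_n\equiv 0$ when $n$ is odd. Moreover $G_n'(1)=\sum_{\mathcal T}(\des(\mathcal P(\mathcal T))+1)=E_{n-1}\cdot\mathbb E(\des(\mathcal P(\mathcal T))+1)$ for $n$ even, so it suffices to compute $G_n'(1)$ and then divide by $E_{n-1}$. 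First I would differentiate the identity of Theorem~\ref{Thm12} with respect to $x$ and set $x=1$ to obtain
\[\sum_{n\geq 1}\frac{G_n'(1)}{n!}z^n=-\frac{\left[\frac{\partial}{\partial x}\widehat F^{\:\mathsf{FBPT}}_x(z)\right]_{x=1}}{1+\widehat F^{\:\mathsf{FBPT}}_1(z)}.\]

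Next I would evaluate the two pieces. Setting $x=1$ in \eqref{Eq39} collapses the radical and gives $F^{\mathsf{FBPT}}_1(z)=-z^2/(1+z^2)$, so $F^{\mathsf{FBPT}}_1(1/t)/t=-1/\big(t(t^2+1)\big)$, and taking the inverse Laplace transform (using $\mathcal L^{-1}\{1/t\}=1$ and $\mathcal L^{-1}\{t/(t^2+1)\}=\cos z$) yields $\widehat F^{\:\mathsf{FBPT}}_1(z)=\cos z-1$; hence the denominator above is just $\cos z$. For the numerator I would use, exactly as in the proof of Theorem~\ref{Thm9}, that differentiation in $x$ followed by setting $x=1$ commutes with the inverse Laplace transform, so that $\left[\frac{\partial}{\partial x}\widehat F^{\:\mathsf{FBPT}}_x(z)\right]_{x=1}=\mathcal L^{-1}\left\{t^{-1}\left[\frac{\partial}{\partial x}F^{\mathsf{FBPT}}_x(1/t)\right]_{x=1}\right\}(z)$. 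Differentiating \eqref{Eq39} directly (the $x$-derivative of the radical vanishes at $x=1$, which keeps the computation short) gives $\left[\frac{\partial}{\partial x}F^{\mathsf{FBPT}}_x(w)\right]_{x=1}=w^2(w^2-1)/(1+w^2)^2$, so with $w=1/t$ the argument of the inverse Laplace transform becomes $(1-t^2)/\big(t(t^2+1)^2\big)$. A partial-fraction decomposition together with the standard transforms $\mathcal L^{-1}\{t/(t^2+1)\}=\cos z$ and $\mathcal L^{-1}\{t/(t^2+1)^2\}=\tfrac12 z\sin z$ then yields $\left[\frac{\partial}{\partial x}\widehat F^{\:\mathsf{FBPT}}_x(z)\right]_{x=1}=1-\cos z-z\sin z$.

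Combining these, $\sum_{n\geq 1}\tfrac{G_n'(1)}{n!}z^n=-\dfrac{1-\cos z-z\sin z}{\cos z}=1-\sec z+z\tan z$. The remaining step is to recognize this closed form in terms of Euler numbers. Starting from $\sum_{n\geq 0}E_n z^n/n!=\sec z+\tan z$ and extracting even parts — the even part of $z(\sec z+\tan z)$ is $z\tan z$, and the even part of $\sec z+\tan z$ is $\sec z$ — one checks that $\sum_{n\text{ even}}\tfrac{nE_{n-1}-E_n}{n!}z^n=1-\sec z+z\tan z$. Since this series is even, comparing coefficients gives $G_n'(1)=nE_{n-1}-E_n$ for $n$ even (and both sides vanish for $n$ odd, consistent with $\mathsf{\overline{D}FBPT}_{n-1}=\emptyset$). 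Dividing by $G_n(1)=E_{n-1}$ gives $\mathbb E(\des(\mathcal P(\mathcal T))+1)=n-E_n/E_{n-1}=\big(1-\tfrac{E_n}{nE_{n-1}}\big)n$. For the asymptotics I would use that $\sec z+\tan z=\frac{1+\sin z}{\cos z}$ has a simple pole at $z=\pi/2$, so by Lemma~\ref{Lem1} (or standard singularity analysis) $E_n\sim n!\,2^{n+2}/\pi^{n+1}$, whence $E_n/(nE_{n-1})\to 2/\pi$ and $\mathbb E(\des(\mathcal P(\mathcal T))+1)\sim(1-\tfrac{2}{\pi})n$.

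No individual step is difficult, so the main obstacle is essentially organizational: carrying out the differentiation of \eqref{Eq39}, the partial-fraction decomposition of $(1-t^2)/\big(t(t^2+1)^2\big)$, and the attendant inverse Laplace transforms without error. The one genuinely non-mechanical step is recognizing the generating function $1-\sec z+z\tan z$ as the even part encoding $nE_{n-1}-E_n$; the even/odd-part extraction from $\sec z+\tan z$ is the natural device there, and it is also what makes the parity issue ($G_n\equiv 0$ for odd $n$) transparent.
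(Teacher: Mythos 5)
Your proposal is correct and follows essentially the same route as the paper's proof: differentiate Theorem~\ref{Thm12} at $x=1$, compute $\widehat F^{\:\mathsf{FBPT}}_1(z)=\cos z-1$ and $\bigl[\tfrac{\partial}{\partial x}\widehat F^{\:\mathsf{FBPT}}_x(z)\bigr]_{x=1}=1-\cos z-z\sin z$ via the inverse Laplace transform, arrive at $1-\sec z+z\tan z$, and extract coefficients using the Euler-number generating function. One small inaccuracy in a parenthetical: at $x=1$ the $x$-derivative of the radical in \eqref{Eq39} equals $2z^2$ rather than $0$ (it is the derivative of the full numerator $1+2xz^2-\sqrt{1-4(1-x)z^2}$ that vanishes, by cancellation), but your final formula $\bigl[\tfrac{\partial}{\partial x}F^{\mathsf{FBPT}}_x(w)\bigr]_{x=1}=w^2(w^2-1)/(1+w^2)^2$ is nonetheless correct.
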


\begin{proof}
The proof is similar to that of Theorem~\ref{Thm9}. By Theorem~\ref{Thm12}, we have \[\sum_{n\geq 1}\sum_{\mathcal T\in\mathsf{\overline{D}FBPT}_{n-1}}(\des(\mathcal P(\mathcal T))+1)\frac{z^n}{n!}=\left[\frac{\partial}{\partial x}\sum_{n\geq 1}\left(\sum_{\mathcal T\in \mathsf{\overline{D}FBPT}_{n-1}}x^{\des(\mathcal P(\mathcal T))+1}\right)\frac{z^n}{n!}\right]_{x=1}\] 
\begin{equation}\label{Eq42}
=\left[\frac{\partial}{\partial x}(-\log(1+\widehat F^{\:\mathsf{FBPT}}_x(z)))\right]_{x=1}=\left[-\frac{\frac{\partial}{\partial x}\widehat F^{\:\mathsf{FBPT}}_x(z)}{1+\widehat F^{\:\mathsf{FBPT}}_x(z)}\right]_{x=1}=-\frac{\left[\frac{\partial}{\partial x}\widehat F^{\:\mathsf{FBPT}}_x(z)\right]_{x=1}}{1+\widehat F^{\:\mathsf{FBPT}}_1(z)}.
\end{equation} By \eqref{Eq39}, we have $F^{\mathsf{FBPT}}_1(z)=-\dfrac{z^2}{1+z^2}$. Hence, 
\begin{equation}\label{Eq40}
\widehat F^{\:\mathsf{FBPT}}_1(z)=\mathcal L^{-1}\left\{-\frac{(1/t)^2}{1+(1/t)^2}\frac{1}{t}\right\}(z)=\cos(z)-1.
\end{equation}
Also, \[\left[\frac{\partial}{\partial x}\widehat F^{\:\mathsf{FBPT}}_x(z)\right]_{x=1}=\left[\frac{\partial}{\partial x}\left(\mathcal L^{-1}\left\{F^{\:\mathsf{FBPT}}_x(1/t)/t\right\}(z)\right)\right]_{x=1}=\mathcal L^{-1}\left\{\frac{\left[\frac{\partial}{\partial x}F^{\:\mathsf{FBPT}}_x(1/t)\right]_{x=1}}{t}\right\}(z).\] One can now compute $\displaystyle\frac{\left[\frac{\partial}{\partial x}F^{\mathsf{FBPT}}_x(1/t)\right]_{x=1}}{t}=\frac{1-t^2}{t(1+t^2)^2}$ so that 
\begin{equation}\label{Eq41}
\left[\frac{\partial}{\partial x}\widehat F^{\:\mathsf{FBPT}}_x(z)\right]_{x=1}=\mathcal L^{-1}\left\{\frac{1-t^2}{t(1+t^2)^2}\right\}(z)=1-\cos(z)-z\sin(z).
\end{equation} Let us combine this with \eqref{Eq42} and \eqref{Eq40} to see that 
\begin{equation}\label{Eq43}
\sum_{n\geq 1}\sum_{\mathcal T\in\mathsf{\overline{D}FBPT}_{n-1}}(\des(\mathcal P(\mathcal T))+1)\frac{z^n}{n!}=-\frac{1-\cos(z)-z\sin(z)}{\cos(z)}=1-\sec(z)+z\tan(z).
\end{equation} 
Now, $\displaystyle\sec(z)=\sum_{\substack{n\geq 0\\ n\text{ even}}}E_n\dfrac{z^n}{n!}$, and $\displaystyle\tan(z)=\sum_{\substack{n\geq 0\\ n\text{ odd}}}E_n\dfrac{z^n}{n!}$. It follows that if $n\geq 2$ is even, then \[\frac{1}{|\mathsf{\overline{D}FBPT}_{n-1}|}\sum_{\mathcal T\in\mathsf{\overline{D}FBPT}_{n-1}}(\des(\mathcal P(\mathcal T))+1)=\frac{n!}{E_{n-1}}[z^n](1-\sec(z)+z\tan(z))\] \[=\frac{n!}{E_{n-1}}\left(-\frac{E_n}{n!}+\frac{E_{n-1}}{(n-1)!}\right)=\left(1-\frac{E_n}{nE_{n-1}}\right)n,\] as desired. It is known that $E_n\sim 2(2/\pi)^{n+1}n!$, so $\displaystyle\left(1-\frac{E_n}{nE_{n-1}}\right)n\sim\left(1-\frac{2}{\pi}\right)n$.   
\end{proof}

Because $\mathcal I:\mathsf{\overline{D}FBPT}_{n-1}\to S_{n-1}\cap\ALT$ is a bijection when $n$ is even, we can translate Theorem~\ref{Thm13} into the language of stack-sorting. 

\begin{corollary}\label{Cor5}
Suppose $n\geq 2$ is even. If $\sigma$ is chosen uniformly at random from the set of alternating permutations in $S_{n-1}$, then \[\mathbb E(\des(s(\sigma))+1)=\left(1-\frac{E_n}{nE_{n-1}}\right)n\sim\left(1-\frac{2}{\pi}\right)n.\] 
\end{corollary}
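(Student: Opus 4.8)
The plan is to obtain Corollary~\ref{Cor5} as an immediate translation of Theorem~\ref{Thm13} via the bijection between decreasing full binary plane trees and alternating permutations. The key observation is already recorded in Example~\ref{ExamFull1}: when $n-1$ is odd, the in-order reading $\mathcal I$ restricts to a bijection $\mathcal I:\mathsf{\overline{D}FBPT}_{n-1}\to S_{n-1}\cap\ALT$. So first I would note that since $n\geq 2$ is even, $n-1$ is odd and this bijection applies, hence choosing $\mathcal T$ uniformly at random from $\mathsf{\overline{D}FBPT}_{n-1}$ is the same as choosing $\sigma=\mathcal I(\mathcal T)$ uniformly at random from the set of alternating permutations in $S_{n-1}$.

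Next I would invoke the identity $s=\mathcal P\circ\mathcal I^{-1}$ from \eqref{Eq11}. Applying this with the tree $\mathcal T$ gives $s(\mathcal I(\mathcal T))=\mathcal P(\mathcal I^{-1}(\mathcal I(\mathcal T)))=\mathcal P(\mathcal T)$, so $s(\sigma)=\mathcal P(\mathcal T)$ and therefore $\des(s(\sigma))+1=\des(\mathcal P(\mathcal T))+1$ as random variables under the matched uniform distributions. (One small caveat to address: $\mathcal I^{-1}$ as defined lands in $\mathsf{DBPT}$, so I should point out that a decreasing \emph{full} binary plane tree is in particular a decreasing binary plane tree, and that $\mathcal I$ is injective on all of $\mathsf{DBPT}$, so the composition works out on the nose.) Consequently the expected value computed in Theorem~\ref{Thm13} transfers verbatim:
\[\mathbb E(\des(s(\sigma))+1)=\mathbb E(\des(\mathcal P(\mathcal T))+1)=\left(1-\frac{E_n}{nE_{n-1}}\right)n\sim\left(1-\frac{2}{\pi}\right)n.\]

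There is essentially no obstacle here — the entire content is bookkeeping with the bijection, and the analytic work (extracting the coefficient from $1-\sec(z)+z\tan(z)$ and using $E_n\sim 2(2/\pi)^{n+1}n!$) has already been done inside the proof of Theorem~\ref{Thm13}. If anything, the only point requiring a word of care is making sure the parity hypothesis is invoked at the right place: the bijection $\mathcal I:\mathsf{\overline{D}FBPT}_{n-1}\to S_{n-1}\cap\ALT$ requires $n-1$ odd, i.e. $n$ even, which is exactly the hypothesis of both Theorem~\ref{Thm13} and the corollary. So the proof will be three or four sentences: state the bijection, use \eqref{Eq11} to identify $\des(s(\sigma))$ with $\des(\mathcal P(\mathcal T))$, and cite Theorem~\ref{Thm13}.
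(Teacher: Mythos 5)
Your proposal is correct and matches the paper's own (very brief) justification exactly: the paper derives Corollary~\ref{Cor5} from Theorem~\ref{Thm13} precisely by citing the bijection $\mathcal I:\mathsf{\overline{D}FBPT}_{n-1}\to S_{n-1}\cap\ALT$ for $n$ even together with $s=\mathcal P\circ\mathcal I^{-1}$. Your extra remarks about the parity hypothesis and about $\mathcal I^{-1}$ landing in $\mathsf{DBPT}$ are sound bookkeeping and introduce no gap.
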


\begin{remark}\label{Rem4}
One might ask if an analogue of \eqref{Eq34} holds for the random variables associated to full binary plane trees (or alternatively, alternating permutations) considered in this section. It turns out that this is not the case. One can show that if $\sigma$ is chosen uniformly at random from the set of alternating permutations in $S_{2k-1}$, then the probability that $1$ is a descent of $s(\sigma)$ approaches $\dfrac{\pi}{2}-1$ as $k\to\infty$.  
\end{remark}

We also have the following analogue of Conjecture~\ref{Conj3}. Using Theorem~\ref{Thm12} and Mathematica, we have checked this conjecture for all $n\leq 90$. 

\begin{conjecture}\label{Conj4}
For every even $n\geq 2$, the polynomial $\displaystyle\sum_{\mathcal T\in\mathsf{\overline{D}FBPT}_{n-1}}x^{\des(\mathcal P(\mathcal T))+1}$ has only real roots. 
\end{conjecture}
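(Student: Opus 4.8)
Write \(P_n(x)=\sum_{\mathcal T\in\mathsf{\overline{D}FBPT}_{n-1}}x^{\des(\mathcal P(\mathcal T))+1}\); by the in-order bijection \(\mathcal I\colon\mathsf{\overline{D}FBPT}_{n-1}\to S_{n-1}\cap\ALT\) together with \eqref{Eq11}, this also equals \(\sum_{\sigma\in S_{n-1}\cap\ALT}x^{\des(s(\sigma))+1}\). Since the conjecture remains open, what follows is a plan of attack rather than a proof. The only real structural handle is Theorem~\ref{Thm12}, which gives \(\sum_{n\ge1}P_n(x)\frac{z^n}{n!}=-\log\bigl(1+\widehat F^{\mathsf{FBPT}}_x(z)\bigr)\), together with the fact (from the proof of that theorem) that \(M(z)=F^{\mathsf{FBPT}}_x(z)\) is algebraic over \(\mathbb C(x)(z)\). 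The plan is to convert this analytic identity into a recursion for \(P_n(x)\) in \(n\) with coefficients in \(\mathbb C(x)\), and then to establish real-rootedness by an interlacing induction.

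\emph{Step 1: a recursion.} Because \(F^{\mathsf{FBPT}}_x(z)\) is algebraic, so is \(F^{\mathsf{FBPT}}_x(1/t)/t\) in \(t\), hence \(\widehat F^{\mathsf{FBPT}}_x(z)=\mathcal L^{-1}\{F^{\mathsf{FBPT}}_x(1/t)/t\}(z)\) is D-finite in \(z\) over \(\mathbb C(x)[z]\). Setting \(G=\sum_n P_n(x)z^n/n!\) and differentiating \(e^{-G}=1+\widehat F^{\mathsf{FBPT}}_x\) in \(z\) gives \(-G_z\bigl(1+\widehat F^{\mathsf{FBPT}}_x\bigr)=(\widehat F^{\mathsf{FBPT}}_x)_z\); eliminating \(\widehat F^{\mathsf{FBPT}}_x\) against its linear ODE yields a polynomial differential equation for \(G\), and hence a linear recurrence for \(P_n(x)\), which will carry a convolution term coming from the product \(G_z\,\widehat F^{\mathsf{FBPT}}_x\). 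One could instead work directly with the moment EGF \(\widehat M=\widehat F^{\mathsf{FBPT}}_x\), using \(\sum_n P_n z^n/n!=-\log(1+\widehat M)\) and the explicit algebraic equation for \(M\) to keep degrees small.

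\emph{Step 2: interlacing, and the obstacle.} With a recursion in hand, one would induct on \(n\) in steps of \(2\) (only even \(n\) occur), showing that \(P_{n-2}(x)\) and \(P_n(x)\) have interlacing real roots; the finitely many base cases are checked by hand, and indeed \(n\le90\) has been verified. The hard part — and the reason this is still a conjecture — is that the \(\log\) in Theorem~\ref{Thm12} makes the natural recursion \emph{convolutional} (\(P_n\) depends simultaneously on all lower \(P_j\) and on the moments \(m_j\)), rather than of the clean ``leading polynomial times \(P_{n-1}\) plus a nonnegative combination of predecessors'' form for which interlacing inductions run automatically via the Hermite--Biehler theorem or the sufficient conditions in \cite{Branden2}. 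I would look for one of two ways around this: (a) find a multivariate refinement \(\sum_{\mathcal T}x^{\des(\mathcal P(\mathcal T))+1}\prod_i y_i^{(\cdots)}\) that is real stable in all variables and specializes to \(P_n\) at \(y_i=1\), transporting stability through the operations in the proof of Theorem~\ref{Thm12} via the Borcea--Brändén characterization of stability-preserving operators; or (b) give a direct recursive decomposition of decreasing full binary plane trees — peeling off the root's two subtrees and recording the boundary descent created when concatenating their postorder readings — refined by enough auxiliary statistics (such as the relative order of the root of the left subtree and the leftmost label of the right subtree) that the resulting \emph{vector} of polynomials can be shown to be simultaneously real-rooted with the correct interlacings. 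The appearance of \(\sec\) and \(\tan\) in the \(x=1\) specialization \eqref{Eq43} also hints at a link with the derivative polynomials of \(\tan\) and \(\sec\), whose real-rootedness is classical, so an \(x\)-deformation of the Hoffman-type recursions for those polynomials would be a further avenue. Given how much work the analogous real-rootedness statement for \(2\)-stack-sortable permutations required, I expect any of these routes to demand a genuinely new idea rather than a routine computation.
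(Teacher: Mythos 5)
The statement you were asked about is a \emph{conjecture}, and the paper contains no proof of it: the only evidence offered there is numerical verification for all even $n\leq 90$, carried out with Theorem~\ref{Thm12} and Mathematica. Your proposal correctly recognizes this and is candid that what you have written is a plan of attack rather than a proof, so there is nothing in the paper to compare your argument against; the honest verdict is that neither you nor the paper proves the statement.

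That said, your framing of the problem is faithful to the paper's setup. The identification of $P_n(x)$ with $\sum_{\sigma\in S_{n-1}\cap\ALT}x^{\des(s(\sigma))+1}$ via the in-order bijection, the role of Theorem~\ref{Thm12} as the only structural handle, and the appearance of $\sec$ and $\tan$ in the $x=1$ specialization \eqref{Eq43} are all consistent with what the paper establishes. The obstacle you name is the genuine one: the logarithm in Theorem~\ref{Thm12} produces a convolutional recursion in which $P_n$ depends on all lower $P_j$ together with the moments, which is not of the form for which interlacing inductions (in the sense of \cite{Branden2}) run automatically. Be aware, though, that Step 1 as written does not yet yield anything you can induct on --- you would need to exhibit the recursion explicitly and verify sign or interlacing conditions on its coefficients, and there is no reason supplied that such conditions hold. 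The alternative routes you sketch (a real-stable multivariate refinement transported through the construction, a direct tree decomposition refined by auxiliary statistics, or an $x$-deformation of the derivative polynomials of $\tan$ and $\sec$) are reasonable directions, but each remains speculative. In short: the proposal is a sensible research program, correctly calibrated to the fact that the statement is open, but it is not a proof and should not be presented as one.
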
 

\subsubsection{Descents in Postorder Readings of Motzkin Trees} 

Let 
\begin{equation}\label{Eq44}
F^{\mathsf{Mot}}_x(z)=-x\frac{1-z+2xz^2-\sqrt{1-2z-3z^2+4xz^2}}{2(1-xz+x^2z^2)}.
\end{equation} 
We choose the branch of the square root that evaluates to $1$ when $z\to 0$. 
We view $F^{\mathsf{Mot}}_x(z)$ as a power series in the variable $z$ with coefficients in $\mathbb C(x)$ so that $\widehat F^{\:\mathsf{Mot}}_x(z)=\mathcal L^{-1}\{F^{\:\mathsf{Mot}}_x(1/t)/t\}(z)$, where the inverse Laplace transform is taken with respect to the variable $t$. 

\begin{theorem}\label{Thm14}
We have \[\sum_{n\geq 1}\left(\sum_{\mathcal T\in \mathsf{\overline{D}Mot}_{n-1}}x^{\des(\mathcal P(\mathcal T))+1}\right)\frac{z^n}{n!}=-\log(1+\widehat F^{\:\mathsf{Mot}}_x(z)).\] 
\end{theorem}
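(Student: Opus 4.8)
The plan is to follow verbatim the template established in the proofs of Theorem~\ref{Thm8} and Theorem~\ref{Thm12}, since Theorem~\ref{Thm14} is the exact analogue for the troupe $\mathsf{Mot}$. First I would define a sequence $(\kappa_n)_{n\geq 1}$ of free cumulants by $\kappa_n=-xM_{n-2}$, where $M_{n-2}$ denotes a Motzkin number (with $M_{-1}=0$, so $\kappa_1=0$); equivalently, $\kappa_n=-{\bf G}_{n-1}(1)=-|\mathsf{Mot}_{n-1}|$ scaled by $x$. Let $(m_n)_{n\geq 1}$ and $(c_n)_{n\geq 1}$ be the corresponding moments and classical cumulants satisfying \eqref{Eq3}.

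Next I would apply the VHC Cumulant Formula (Corollary~\ref{Cor4}) together with Remark~\ref{Rem2}: since the blocks of $\vert\mathcal H$ have sizes $q_0+1,\ldots,q_k+1$ when ${\bf q}^{\mathcal H}=(q_0,\ldots,q_k)$, we get
\[-c_n=\sum_{\mathcal H\in\VHC(S_{n-1})}(-\kappa_\bullet)_{\vert\mathcal H}=\sum_{\pi\in S_{n-1}}x^{\des(\pi)+1}\sum_{\mathcal H\in\VHC(\pi)}M_{{\bf q}^{\mathcal H}-1}.\]
By \eqref{Eq26} (the specialization $x_1=1$ of the Refined Tree Fertility Formula for $\mathsf{Mot}$), the inner sum equals $|\mathcal P^{-1}(\pi)\cap\mathsf{DMot}|$, so $-c_n=\sum_{\mathcal T\in\mathsf{\overline{D}Mot}_{n-1}}x^{\des(\mathcal P(\mathcal T))+1}$. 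Then \eqref{Eq4} converts this into the exponential generating function identity $\sum_{n\geq 1}(-\sum_{\mathcal T\in\mathsf{\overline{D}Mot}_{n-1}}x^{\des(\mathcal P(\mathcal T))+1})\frac{z^n}{n!}=\log(1+\widehat M(z))$, so it remains only to identify the ordinary moment series $M(z)=\sum_{n\geq 1}m_nz^n$ with $F^{\mathsf{Mot}}_x(z)$.

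For that final identification I would compute the $R$-transform $R(z)=\sum_{n\geq 1}\kappa_nz^n=-x\sum_{n\geq 1}M_{n-2}z^n=-xz\sum_{m\geq 0}M_{m-1}z^{m}$; using \eqref{Eq15} this is $R(z)=-x\cdot\frac{1-z-\sqrt{1-2z-3z^2}}{2}$ (up to the bookkeeping of the shift, which I would check against initial terms). I would then invert this explicitly — solving $w=R(z)$ for $z$ gives a rational $R^{\langle-1\rangle}(w)$ — and apply \eqref{Eq5}, $M^{\langle-1\rangle}(z)=R^{\langle-1\rangle}(z)/(1+z)$, choosing the branch so that $M(0)=0$. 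Solving the resulting algebraic equation for $M(z)$ and picking the branch of the square root equal to $1$ at $z=0$ should yield exactly \eqref{Eq44}. The main obstacle is purely computational: carrying out the compositional inversion of the Motzkin-number generating function and then solving for $M(z)$ without sign or branch errors, so that the radicand $1-2z-3z^2+4xz^2$ and the denominator $1-xz+x^2z^2$ come out precisely as in \eqref{Eq44}; a sanity check against the first few coefficients of $\sum_{n\geq 1}(\sum_{\mathcal T\in\mathsf{\overline{D}Mot}_{n-1}}x^{\des(\mathcal P(\mathcal T))+1})\frac{z^n}{n!}$ (e.g. the $z^1$, $z^2$, $z^3$ terms) will confirm the choice of branch.
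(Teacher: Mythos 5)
Your proposal is correct and follows essentially the same route as the paper's proof, which defines $\kappa_n=-xM_{n-2}$, combines the VHC Cumulant Formula with \eqref{Eq26} and Remark~\ref{Rem2} to identify $-c_n$ with the desired polynomial, and then computes $M(z)=F^{\mathsf{Mot}}_x(z)$ by inverting the $R$-transform via \eqref{Eq5} and selecting the branch with $M(0)=0$. The only remaining work is the routine algebra you already flagged.
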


\begin{proof}
Define a sequence $(\kappa_n)_{n\geq 1}$ of free cumulants by $\kappa_n=-xM_{n-2}$, where $M_{n-2}$ is the $(n-2)^\text{th}$ Motzkin number. Let $(m_n)_{n\geq 1}$ and $(c_n)_{n\geq 1}$ be the corresponding sequences of moments and classical cumulants, respectively. We now repeat the same argument as in the proof of Theorem~\ref{Thm12}, except we invoke the equation \eqref{Eq26} instead of \eqref{Eq25}. This yields the identity \[\sum_{n\geq 1}\left(-\sum_{\mathcal T\in\mathsf{\overline{D}Mot}_{n-1}}x^{\des(\mathcal P(\mathcal T))+1}\right)\frac{z^n}{n!}=\log(1+\widehat M(z)),\] where $\displaystyle M(z)=\sum_{n\geq 1}m_nz^n$ is the moment series. It now suffices to show that $M(z)=F^{\mathsf{Mot}}_x(z)$. 

The $R$-transform is given by \[R(z)=\sum_{n\geq 1}\kappa_nz^n=-x\sum_{n\geq 1}M_{n-2}z^n=-x\frac{1-z-\sqrt{1-2z-3z^2}}{2},\] so $R^{\langle -1\rangle}(z)=\dfrac{xz\pm\sqrt{-3x^2z^2-4x^3z}}{2x^2}$. It follows from \eqref{Eq5} that $\displaystyle M^{\langle -1\rangle}(z)=\dfrac{xz\pm\sqrt{-3x^2z^2-4x^3z}}{2x^2(1+z)}$. From this, we find that \[M(z)=-x\frac{1-z+2xz^2\pm\sqrt{1-2z-3z^2+4xz^2}}{2(1-xz+x^2z^2)}.\]
We must choose the minus sign in the $\pm$ because $M(0)=0$. Thus, $M(z)=F^{\mathsf{Mot}}_x(z)$.  
\end{proof}

\begin{theorem}\label{Thm15}
Suppose we choose $\mathcal T$ uniformly at random from $\mathsf{\overline{D}Mot}_{n-1}$. As $n\to\infty$, \[\mathbb E(\des(\mathcal P(\mathcal T))+1)\sim\left(1-\dfrac{3\sqrt 3}{2\pi}\left(e^{\frac{\pi}{3\sqrt 3}}-1\right)\right)n.\]  
\end{theorem}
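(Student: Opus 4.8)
The plan is to mimic the proof of Theorem~\ref{Thm13} exactly, extracting the asymptotic mean from the derivative at $x=1$ of the generating function in Theorem~\ref{Thm14}. First I would note that, exactly as in \eqref{Eq42}, we have
\[\sum_{n\geq 1}\sum_{\mathcal T\in\mathsf{\overline{D}Mot}_{n-1}}(\des(\mathcal P(\mathcal T))+1)\frac{z^n}{n!}=-\frac{\left[\frac{\partial}{\partial x}\widehat F^{\:\mathsf{Mot}}_x(z)\right]_{x=1}}{1+\widehat F^{\:\mathsf{Mot}}_1(z)}.\]
So the two ingredients I need are $\widehat F^{\:\mathsf{Mot}}_1(z)$ and $\left[\frac{\partial}{\partial x}\widehat F^{\:\mathsf{Mot}}_x(z)\right]_{x=1}$, both obtained by the inverse-Laplace-transform recipe $\widehat F(z)=\mathcal L^{-1}\{F(1/t)/t\}(z)$. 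Setting $x=1$ in \eqref{Eq44} gives $F^{\mathsf{Mot}}_1(z)=-\dfrac{1-z+2z^2-\sqrt{1-2z+z^2}}{2(1-z+z^2)}$; since $\sqrt{1-2z+z^2}=1-z$ (the branch is $1$ at $0$), this collapses to a rational function of $z$, namely $F^{\mathsf{Mot}}_1(z)=-\dfrac{z^2}{1-z+z^2}$. Then $F^{\mathsf{Mot}}_1(1/t)/t=-\dfrac{1}{t(t^2-t+1)}$, and inverting the Laplace transform (using the two complex conjugate roots $e^{\pm i\pi/3}$ of $t^2-t+1$) yields $\widehat F^{\:\mathsf{Mot}}_1(z)$ as a linear combination of $1$ and $e^{z/2}\cos(\sqrt3 z/2)$, $e^{z/2}\sin(\sqrt3 z/2)$. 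The derivative $\left[\frac{\partial}{\partial x}F^{\mathsf{Mot}}_x(1/t)\right]_{x=1}/t$ is again an explicit rational function of $t$ with denominator a power of $t^2-t+1$ times $t$, so its inverse Laplace transform is again an explicit combination of $1$, $e^{z/2}\cos(\sqrt3 z/2)$, $e^{z/2}\sin(\sqrt3 z/2)$ (and $z$ times these).

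Next I would assemble the ratio and read off the asymptotics. After simplification the generating function $G(z):=\sum_{n\geq 1}\sum_{\mathcal T\in\mathsf{\overline{D}Mot}_{n-1}}(\des(\mathcal P(\mathcal T))+1)\frac{z^n}{n!}$ is a function whose only singularities are the zeros of $1+\widehat F^{\:\mathsf{Mot}}_1(z)$, i.e.\ the zeros of (a constant multiple of) $e^{z/2}\cos(\sqrt3 z/2 + \phi)$-type expression, and in fact only the dominant such zero matters. Meanwhile $|\mathsf{\overline{D}Mot}_{n-1}|=|S_{n-1}\cap\EDP|$, whose exponential generating function is $\frac12+\frac{\sqrt3}{2}\tan\!\left(\frac{\sqrt3}{2}z+\frac{\pi}{6}\right)$ (stated in Example~\ref{ExamMotzkin1}); its dominant singularity is the smallest positive pole of the tangent, occurring where $\frac{\sqrt3}{2}z+\frac{\pi}{6}=\frac{\pi}{2}$, i.e.\ at $z=\rho:=\frac{2\pi}{3\sqrt3}$. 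I would check that $\rho$ is also the dominant singularity of $G(z)$, so that by singularity analysis (Lemma~\ref{Lem1}, applied to $G(z)$ and to the egf of $|\mathsf{\overline{D}Mot}_{n-1}|$), the ratio $\mathbb E(\des(\mathcal P(\mathcal T))+1) = \frac{n!\,[z^n]G(z)}{|\mathsf{\overline{D}Mot}_{n-1}|}$ has a limiting growth governed by comparing the residues of $G(z)$ and of $\frac{\sqrt3}{2}\tan(\cdots)$ at $z=\rho$, multiplied by $n$. Carrying out the residue computation should produce exactly the claimed factor $1-\frac{3\sqrt3}{2\pi}\!\left(e^{\pi/(3\sqrt3)}-1\right)$; the constant $e^{\pi/(3\sqrt3)}$ arises because $G(z)$ carries the exponential factor $e^{z/2}$ from the roots $e^{\pm i\pi/3}$ of $t^2-t+1$, evaluated at $z=\rho=\frac{2\pi}{3\sqrt3}$, giving $e^{\rho/2}=e^{\pi/(3\sqrt3)}$, whereas the pure tangent in the denominator has no such factor.

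The main obstacle I anticipate is \textbf{not} the inverse Laplace transforms (those are mechanical, exactly parallel to \eqref{Eq40}--\eqref{Eq41}) but rather the bookkeeping required to (a) verify that $1+\widehat F^{\:\mathsf{Mot}}_1(z)$ indeed vanishes at $z=\rho=\frac{2\pi}{3\sqrt3}$ and that this is its smallest positive zero with no competing complex zeros of equal or smaller modulus, and (b) correctly extract and simplify the residue $\Res_{z=\rho}G(z)$ into the stated closed form. The key sanity check will be that $1+\widehat F^{\:\mathsf{Mot}}_1(z)$ must be, up to elementary factors, proportional to $\cos\!\left(\frac{\sqrt3}{2}z+\frac{\pi}{6}\right)$ divided by $e^{-z/2}$ — that is, its reciprocal must reproduce (via $-\log$ of $1+\widehat M$, using \eqref{Eq4}) the known egf of $|\EDP\cap S_n|$ when we specialize $x=1$ in Theorem~\ref{Thm14}; this consistency check both confirms the location $z=\rho$ and pins down the normalizing constants. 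Once the residues are in hand, the final asymptotic $\mathbb E(\des(\mathcal P(\mathcal T))+1)\sim\left(1-\frac{3\sqrt3}{2\pi}\left(e^{\pi/(3\sqrt3)}-1\right)\right)n$ follows by dividing the two residue expressions and multiplying by $n$, exactly as in the last line of the proof of Theorem~\ref{Thm13}.
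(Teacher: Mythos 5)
Your proposal follows essentially the same route as the paper's proof: the same ratio formula inherited from the proof of Theorem~\ref{Thm13}, the same inverse Laplace transforms yielding combinations of $e^{z/2}\cos(\tfrac{\sqrt 3}{2}z)$ and $e^{z/2}\sin(\tfrac{\sqrt 3}{2}z)$, and the same singularity analysis via Lemma~\ref{Lem1} at the simple pole $z=\tfrac{2\pi}{3\sqrt 3}$, followed by division by the asymptotics of $|\mathsf{\overline{D}Mot}_{n-1}|$ (which the paper simply cites from OEIS A080635 rather than rederiving from the tangent generating function). The plan is correct, and your identification of where the factor $e^{\pi/(3\sqrt 3)}$ comes from matches the paper's computation.
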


\begin{proof}
Repeating the argument from the proof of Theorem~\ref{Thm13}, we find that  
\begin{equation}\label{Eq45}
\sum_{n\geq 1}\sum_{\mathcal T\in\mathsf{\overline{D}Mot}_{n-1}}(\des(\mathcal P(\mathcal T))+1)\frac{z^n}{n!}=-\frac{\left[\frac{\partial}{\partial x}\widehat F^{\:\mathsf{Mot}}_x(z)\right]_{x=1}}{1+\widehat F^{\:\mathsf{Mot}}_1(z)}.
\end{equation} It follows directly from \eqref{Eq44} that $F^{\mathsf{Mot}}_1(z)=-\dfrac{z^2}{1-z+z^2}$. Therefore, 
\begin{equation}\label{Eq46}
\widehat F^{\:\mathsf{Mot}}_1(z)=\mathcal L^{-1}\left\{-\frac{(1/t)^2}{1-(1/t)+(1/t)^2}\frac{1}{t}\right\}(z)=e^{z/2}\left(\cos\left(\frac{\sqrt 3}{2}z\right)-\frac{1}{\sqrt{3}}\sin\left(\frac{\sqrt 3}{2}z\right)\right)-1.
\end{equation}
Next, we compute \[\left[\frac{\partial}{\partial x}\widehat F^{\:\mathsf{Mot}}_x(z)\right]_{x=1}=\left[\frac{\partial}{\partial x}\left(\mathcal L^{-1}\left\{F^{\:\mathsf{Mot}}_x(1/t)/t\right\}(z)\right)\right]_{x=1}=\mathcal L^{-1}\left\{\frac{\left[\frac{\partial}{\partial x}F^{\:\mathsf{Mot}}_x(1/t)\right]_{x=1}}{t}\right\}(z)\] \begin{equation}\label{Eq47}
=\mathcal L^{-1}\left\{\frac{t^2-2t}{(1-t)(1-t+t^2)^2}\right\}(z)=e^{z/2}\left(e^{z/2}-\cos\left(\frac{\sqrt 3}{2}z\right)-\frac{1}{\sqrt 3}(1+2z)\sin\left(\frac{\sqrt 3}{2}z\right)\right).
\end{equation} Combining \eqref{Eq45}, \eqref{Eq46}, and \eqref{Eq47}, we obtain 
\begin{equation}\label{Eq48}
\sum_{n\geq 1}\sum_{\mathcal T\in\mathsf{\overline{D}Mot}_{n-1}}(\des(\mathcal P(\mathcal T))+1)\frac{z^n}{n!}=-\frac{e^{z/2}-\cos\left(\frac{\sqrt 3}{2}z\right)-\frac{1}{\sqrt 3}(1+2z)\sin\left(\frac{\sqrt 3}{2}z\right)}{\cos\left(\frac{\sqrt 3}{2}z\right)-\frac{1}{\sqrt{3}}\sin\left(\frac{\sqrt 3}{2}z\right)}.
\end{equation} 
This last expression, viewed as a function of the complex variable $z$, is meromorphic. Its singularity nearest to the origin is a simple pole at $\dfrac{2\pi}{3\sqrt{3}}$. By Lemma~\ref{Lem1}, we have \[\sum_{\mathcal T\in\mathsf{\overline{D}Mot}_{n-1}}(\des(\mathcal P(\mathcal T))+1)\frac{1}{n!}\sim \left(\frac{2\pi}{3\sqrt 3}\right)^{-n-1}\Res_{z=\frac{2\pi}{3\sqrt 3}}\frac{e^{z/2}-\cos\left(\frac{\sqrt 3}{2}z\right)-\frac{1}{\sqrt 3}(1+2z)\sin\left(\frac{\sqrt 3}{2}z\right)}{\cos\left(\frac{\sqrt 3}{2}z\right)-\frac{1}{\sqrt{3}}\sin\left(\frac{\sqrt 3}{2}z\right)}\] \[=\left(\frac{2\pi}{3\sqrt 3}\right)^{-n-1}\left(1-e^{\frac{\pi}{3\sqrt 3}}+\frac{2\pi}{3\sqrt 3}\right).\] Finally, it is known (see OEIS sequence A080635) that $|\mathsf{\overline{D}Mot}_{n-1}|\sim\left(\dfrac{2\pi}{3\sqrt 3}\right)^{-n}(n-1)!$, so \[\frac{1}{|\mathsf{\overline{D}Mot}_{n-1}|}\sum_{\mathcal T\in\mathsf{\overline{D}Mot}_{n-1}}(\des(\mathcal P(\mathcal T))+1)\sim\frac{n!\left(\frac{2\pi}{3\sqrt 3}\right)^{-n-1}\left(1-e^{\frac{\pi}{3\sqrt 3}}+\frac{2\pi}{3\sqrt 3}\right)}{\left(\frac{2\pi}{3\sqrt 3}\right)^{-n}(n-1)!}\] \[=\left(1-\dfrac{3\sqrt 3}{2\pi}\left(e^{\frac{\pi}{3\sqrt 3}}-1\right)\right)n. \qedhere\] 
\end{proof}

Recall from Example~\ref{ExamMotzkin1} that the in-order reading $\mathcal I$ gives a bijection from $\mathsf{\overline{D}Mot}_{n-1}$ to the set $S_{n-1}\cap\EDP$ of permutations in $S_{n-1}$ in which every descent is a peak. This allows us to translate Theorem~\ref{Thm15} into the following theorem about stack-sorting. 

\begin{corollary}\label{Cor6}
Suppose we choose $\sigma$ uniformly at random from the set of permutations in $S_{n-1}$ whose descents are all peaks. As $n\to\infty$, \[\mathbb E(\des(s(\sigma))+1)\sim\left(1-\dfrac{3\sqrt 3}{2\pi}\left(e^{\frac{\pi}{3\sqrt 3}}-1\right)\right)n.\]
\end{corollary}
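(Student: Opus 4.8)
The plan is to obtain Corollary~\ref{Cor6} as a direct translation of Theorem~\ref{Thm15} via the bijection $\mathcal I:\mathsf{\overline D Mot}_{n-1}\to S_{n-1}\cap\EDP$, so essentially no new work is required beyond bookkeeping. First I would recall (from Example~\ref{ExamMotzkin1}) that the in-order reading $\mathcal I$ restricts to a bijection from $\mathsf{\overline D Mot}_{n-1}$ onto the set of permutations in $S_{n-1}$ whose descents are all peaks, and that for any decreasing colored binary plane tree $\mathcal T$ one has $\des(\mathcal I(\mathcal T))=\des(\mathcal T)=\des(\skel(\mathcal T))$, i.e. the number of descents of the in-order reading equals the number of right edges of the tree. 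Second, I would invoke the identity $s=\mathcal P\circ\mathcal I^{-1}$ from \eqref{Eq11}: if $\sigma\in S_{n-1}\cap\EDP$ then $\sigma=\mathcal I(\mathcal T)$ for a unique $\mathcal T\in\mathsf{\overline D Mot}_{n-1}$, and $s(\sigma)=\mathcal P(\mathcal T)$, so $\des(s(\sigma))+1=\des(\mathcal P(\mathcal T))+1$.

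Next, because $\mathcal I$ is a bijection between the two sets, choosing $\sigma$ uniformly at random from $S_{n-1}\cap\EDP$ is the same as choosing $\mathcal T$ uniformly at random from $\mathsf{\overline D Mot}_{n-1}$ and setting $\sigma=\mathcal I(\mathcal T)$; the random variables $\des(s(\sigma))+1$ and $\des(\mathcal P(\mathcal T))+1$ then have the same distribution, hence the same expectation. Applying Theorem~\ref{Thm15} to $\mathbb E(\des(\mathcal P(\mathcal T))+1)$ immediately gives
\[
\mathbb E(\des(s(\sigma))+1)\sim\left(1-\dfrac{3\sqrt 3}{2\pi}\left(e^{\frac{\pi}{3\sqrt 3}}-1\right)\right)n,
\]
which is the claim.

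There is really no main obstacle here: the only thing one must be slightly careful about is that the two sets being put in bijection are $\mathsf{\overline D Mot}_{n-1}$ (standardized decreasing Motzkin trees on $n-1$ vertices) and $S_{n-1}\cap\EDP$ (permutations of $[n-1]$ in which every descent is a peak, equivalently permutations with no double descents and with $1$ not a descent), and that $\mathcal I$ preserves the descent count of the output permutation $\mathcal P(\mathcal T)=s(\mathcal I(\mathcal T))$, not of the input; both facts were established in Example~\ref{ExamMotzkin1} and via \eqref{Eq11}. So the proof is a two-line invocation of the bijection together with Theorem~\ref{Thm15}.
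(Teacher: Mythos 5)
Your proposal is correct and is essentially identical to the paper's own derivation: Corollary~\ref{Cor6} is obtained by translating Theorem~\ref{Thm15} through the in-order bijection $\mathcal I:\mathsf{\overline{D}Mot}_{n-1}\to S_{n-1}\cap\EDP$ together with the identity $s=\mathcal P\circ\mathcal I^{-1}$, so that $\des(s(\sigma))+1$ and $\des(\mathcal P(\mathcal T))+1$ are equidistributed under the uniform measures on the two sets. No gaps.
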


\begin{remark}\label{Rem5}
Numerical evidence suggests that the natural analogue of \eqref{Eq34} for the random variables associated to Motzkin trees (or alternatively, permutations whose descents are all peaks) considered in this section does not hold.
\end{remark}

To end this section, we state the following analogue of Conjecture~\ref{Conj3}. Using Theorem~\ref{Thm14} and Mathematica, we have checked this conjecture for all $n\leq 31$. 

\begin{conjecture}\label{Conj5} 
For every $n\geq 1$, the polynomial $\displaystyle\sum_{\mathcal T\in\mathsf{\overline{D}Mot}_{n-1}}x^{\des(\mathcal P(\mathcal T))+1}$ has only real roots. 
\end{conjecture}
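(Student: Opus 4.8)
The plan is to derive an explicit recurrence for the polynomials $P_n(x):=\sum_{\mathcal T\in\mathsf{\overline{D}Mot}_{n-1}}x^{\des(\mathcal P(\mathcal T))+1}$ and then to prove real-rootedness by induction, showing that the first-order differential operator implementing the recurrence preserves the class of polynomials with only simple negative real roots.

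\emph{Step 1: a recurrence.} For a decreasing Motzkin tree $\mathcal T$ the smallest label always sits at a leaf. Deleting that leaf yields a decreasing Motzkin tree on one fewer vertex (re-designating a newly orphaned lone child as a left child where necessary), and I would check that this operation is reversible: a decreasing Motzkin tree on $n-1$ vertices arises from one $T$ on $n-2$ vertices by attaching the new minimal label either as the left child of one of the $\peak(T)+1$ leaves of $T$, creating no new peak, or at one of the $(n-3)-2\peak(T)$ unary vertices of $T$, in either of two ways — as a new right child, or as a new left child pushing the old child to the right — each of which creates exactly one new peak. Since $\des(\mathcal T)=\peak(\mathcal T)$ on decreasing Motzkin trees, summing the weights $x^{\des(\mathcal P(\mathcal T))+1}$ over these cases and using $\#\{\text{leaves}\}=\#\{\text{binary vertices}\}+1$ gives
\begin{equation*}
P_n(x)=x(1-4x)\,P_{n-1}'(x)+2(n-1)x\,P_{n-1}(x),\qquad P_2(x)=x.
\end{equation*}
The same recurrence can alternatively be extracted from Theorem~\ref{Thm14} together with the algebraic equation satisfied by the moment series $F^{\mathsf{Mot}}_x(z)$, but the bijective derivation is shorter.

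\emph{Step 2: reduction and induction.} Each $P_n$ with $n\ge 2$ is divisible by $x$; writing $P_n(x)=x\,R_n(x)$ turns the recurrence into $R_n(x)=\bigl(1+(2n-6)x\bigr)R_{n-1}(x)+x(1-4x)R_{n-1}'(x)$, with $R_2=R_3=1$ and $R_n(0)=1$. It suffices to show each $R_n$ is real-rooted, and numerically all its roots lie in $(-\infty,0)$, so the inductive claim I would prove is that $R_n$ has only simple negative real roots and a positive leading coefficient (base cases $R_2=R_3=1$, $R_4=1+2x$). Given $R_{n-1}=a\prod_{i=1}^d(x-r_i)$ with $a>0$ and $r_1<\cdots<r_d<0$, the recurrence gives $R_n(r_i)=r_i(1-4r_i)R_{n-1}'(r_i)$; since $r_i<0$ forces $r_i(1-4r_i)<0$, the sign of $R_n$ at $r_i$ is opposite to that of $R_{n-1}'(r_i)$, which alternates in $i$, so $R_n$ has a root strictly between consecutive $r_i$, accounting for $d-1$ negative roots. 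A brief computation shows $\deg R_n$ equals $d+1$ for $n$ even and $d$ for $n$ odd (the top coefficients cancel in the odd case), with positive leading coefficient in both cases — in the odd case this uses that the subleading coefficient of $R_{n-1}$ is positive, which is immediate from the induction hypothesis. Comparing the sign of $R_n(r_d)$ with $R_n(0)=1>0$ yields one more root in $(r_d,0)$, and when $n$ is even, comparing the sign of $R_n(r_1)$ with the behaviour of $R_n$ as $x\to-\infty$ yields a final root in $(-\infty,r_1)$. In each parity this exhibits exactly $\deg R_n$ distinct negative roots, so $R_n$ is real-rooted and the induction closes; the degenerate cases $d\le 1$ (that is, $n=4,5$) use only the boundary comparisons.

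\emph{Main obstacle.} The delicate point is Step~1: pinning down the insertion/deletion bijection with the correct multiplicities — in particular noticing that a unary vertex admits \emph{two} valid reinsertions rather than one — and confirming that deletion of the minimal leaf never leaves the class $\mathsf{DMot}$. Once the recurrence is in hand, Step~2 is the classical Eulerian-polynomial-style interlacing argument, and the only care required is the parity bookkeeping, since $\deg P_n$ increases only at even $n$ while the degree is stationary at odd $n$.
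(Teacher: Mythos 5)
The statement you are attacking is an open conjecture in the paper (verified there only computationally for $n\leq 31$ via Theorem~\ref{Thm14}), so a correct proof would be genuinely new; unfortunately your argument fails at Step~1 because the recurrence you derive is for the wrong statistic. The polynomial in the conjecture is $\sum_{\mathcal T}x^{\des(\mathcal P(\mathcal T))+1}$, where $\des(\mathcal P(\mathcal T))$ counts descents of the \emph{postorder reading} of $\mathcal T$ --- equivalently, via $s=\mathcal P\circ\mathcal I^{-1}$, descents of $s(\sigma)$ as $\sigma$ ranges over $\EDP\cap S_{n-1}$. Your leaf-insertion recurrence instead tracks $\peak(\mathcal T)$, the number of binary vertices, which for Motzkin trees equals the number of right edges $\des(\mathcal T)$, i.e.\ the number of descents of the \emph{in-order} reading. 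These statistics genuinely differ: for the tree with root $3$, left child $1$, and right child $2$, the postorder reading is $123$ with $0$ descents, while the tree has one binary vertex. Concretely, for $n=4$ the three trees in $\mathsf{\overline{D}Mot}_3$ have postorder readings $123$, $123$, $213$, so the conjecture's polynomial is $2x+x^2$, whereas your recurrence gives $P_4=x(1-4x)P_3'+6xP_3=x+2x^2$ (and $2x+x^2$ is confirmed by expanding the generating function of Theorem~\ref{Thm14}). What your recurrence does correctly compute is $\sum_{\pi\in\EDP\cap S_{n-1}}x^{\des(\pi)+1}$, which is $x$ times a $\gamma$-polynomial of a permutohedron --- a different and much more tractable family.

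This is not repairable by a local fix: whether the minimal label contributes a descent to the postorder word depends on which entry immediately follows it in postorder, and that is determined by global tree structure rather than by the local data (leaf versus unary vertex, left versus right attachment) that your insertion cases record. This is precisely why no first-order differential recurrence of the proposed shape can hold for the postorder statistic, and why the paper approaches these polynomials through the cumulant machinery instead. Your Step~2 is a standard interlacing argument and would be fine if a recurrence of that form existed, but as written the proof establishes real-rootedness of the wrong polynomials and leaves the conjecture open.
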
 

\subsubsection{Descents in Postorder Readings of Schr\"oder $2$-Colored Binary Trees} 

Let 
\begin{equation}\label{Eq49}
F^{\mathsf{Sch}}_x(z)=-x\frac{1-z+xz-\sqrt{1-6z+z^2+2xz+2xz^2+x^2z^2}}{2(1-xz)},
\end{equation} 
where we choose the branch of the square root that evaluates to $1$ when $z\to 0$. We view $F^{\mathsf{Sch}}_x(z)$ as a power series in the variable $z$ with coefficients in $\mathbb C(x)$.

\begin{theorem}\label{Thm20}
We have \[\sum_{n\geq 1}\left(\sum_{\mathcal T\in \mathsf{\overline{D}Sch}_{n-1}}x^{\des(\mathcal P(\mathcal T))+1}\right)\frac{z^n}{n!}=-\log(1+\widehat F^{\:\mathsf{Sch}}_x(z)).\] 
\end{theorem}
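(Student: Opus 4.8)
The plan is to mirror exactly the proofs of Theorem~\ref{Thm12} (full binary plane trees) and Theorem~\ref{Thm14} (Motzkin trees), replacing the relevant tree family by $\mathsf{Sch}$ and the relevant counting identity by \eqref{Eq50}. Concretely, I would define a sequence $(\kappa_n)_{n\geq 1}$ of free cumulants by $\kappa_n=-x\mathscr S_{n-1}$, where $\mathscr S_{n-1}=|\mathsf{Sch}_{n-1}|$ is the $(n-1)^\text{th}$ large Schr\"oder number (so $\kappa_n=-{\bf G}_{n-1}(x,1,1)$ in the notation of Example~\ref{ExamSchroder1}, with $x$ tracking right edges). Let $(m_n)_{n\geq 1}$ and $(c_n)_{n\geq 1}$ be the corresponding moment and classical cumulant sequences satisfying \eqref{Eq3}.

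The combinatorial heart of the argument is the same chain of equalities used in Theorem~\ref{Thm12}. By the VHC Cumulant Formula (Corollary~\ref{Cor4}),
\[
-c_n=\sum_{\mathcal H\in\VHC(S_{n-1})}(-\kappa_\bullet)_{\vert\mathcal H}=\sum_{\mathcal H\in\VHC(S_{n-1})}x^{\#\vert\mathcal H}(\mathscr S_{\bullet-1})_{\vert\mathcal H}=\sum_{\pi\in S_{n-1}}x^{\des(\pi)+1}\sum_{\mathcal H\in\VHC(\pi)}(\mathscr S_{\bullet-1})_{\vert\mathcal H},
\]
using $\#\vert\mathcal H=\des(\pi)+1$. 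Then Remark~\ref{Rem2} (the blocks of $\vert\mathcal H$ have sizes $q_0+1,\ldots,q_k+1$ when ${\bf q}^{\mathcal H}=(q_0,\ldots,q_k)$) together with \eqref{Eq50} gives $\sum_{\mathcal H\in\VHC(\pi)}(\mathscr S_{\bullet-1})_{\vert\mathcal H}=\sum_{\mathcal H\in\VHC(\pi)}\mathscr S_{{\bf q}^{\mathcal H}}=|\mathcal P^{-1}(\pi)\cap\mathsf{\overline D Sch}|$. Summing over $\pi\in S_{n-1}$ and re-indexing by the postorder reading yields $-c_n=\sum_{\mathcal T\in\mathsf{\overline D Sch}_{n-1}}x^{\des(\mathcal P(\mathcal T))+1}$. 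Finally, \eqref{Eq4} converts this into the stated exponential generating function identity, provided the moment series $M(z)=\sum_{n\geq 1}m_nz^n$ equals $F^{\mathsf{Sch}}_x(z)$.

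The remaining work is the power-series identity $M(z)=F^{\mathsf{Sch}}_x(z)$, carried out via the $R$-transform as in the other proofs. The $R$-transform is $R(z)=\sum_{n\geq 1}\kappa_nz^n=-x\sum_{n\geq 1}\mathscr S_{n-1}z^n=-xz\,\dfrac{1-z-\sqrt{1-6z+z^2}}{2z}$, using the large Schr\"oder generating function from the Schr\"oder $2$-colored binary tree example; one then solves for $R^{\langle-1\rangle}(z)$, applies \eqref{Eq5} to get $M^{\langle-1\rangle}(z)=R^{\langle-1\rangle}(z)/(1+z)$, and inverts to obtain $M(z)$, selecting the branch of the square root so that $M(0)=0$. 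I expect this algebraic step — inverting the $R$-transform and then the resulting series to recover the closed form \eqref{Eq49} — to be the only real obstacle, and it is purely a matter of careful (if slightly tedious) manipulation of quadratic equations; the combinatorial part is essentially automatic given the machinery already developed. (A sanity check: setting $x=1$ in \eqref{Eq49} should give $F^{\mathsf{Sch}}_1(z)=-z^2/(1-z)$, whose exponential avatar recovers the count $|\mathsf{\overline D Sch}_{n-1}|$ consistent with Corollary~\ref{Cor12}.)
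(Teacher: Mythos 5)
Your proposal matches the paper's proof of Theorem~\ref{Thm20} essentially verbatim: the paper defines $\kappa_n=-x\mathscr S_{n-1}$, runs the same VHC Cumulant Formula computation using \eqref{Eq50}, and then verifies $M(z)=F^{\mathsf{Sch}}_x(z)$ via the $R$-transform, finding $R^{\langle -1\rangle}(z)=\dfrac{z(z+x)}{x(z-x)}$ and choosing the branch with $M(0)=0$. One small correction to your parenthetical sanity check: setting $x=1$ in \eqref{Eq49} makes the radicand $(1-2z)^2$, so $F^{\mathsf{Sch}}_1(z)=-\dfrac{z}{1-z}$ (not $-z^2/(1-z)$), whose exponential avatar is $1-e^z$, consistent with Corollary~\ref{Cor12}.
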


\begin{proof}
Define a sequence $(\kappa_n)_{n\geq 1}$ of free cumulants by $\kappa_n=-x\mathscr S_{n-1}$, where $\mathscr S_{n-1}$ is the $(n-1)^\text{th}$ large Schr\"oder number. Let $(m_n)_{n\geq 1}$ and $(c_n)_{n\geq 1}$ be the corresponding sequences of moments and classical cumulants, respectively. Repeating the same argument as in the proof of Theorem~\ref{Thm12}, except invoking \eqref{Eq50} instead of \eqref{Eq25}, we obtain \[\sum_{n\geq 1}\left(-\sum_{\mathcal T\in\mathsf{\overline{D}Sch}_{n-1}}x^{\des(\mathcal P(\mathcal T))+1}\right)\frac{z^n}{n!}=\log(1+\widehat M(z)),\] where $\displaystyle M(z)=\sum_{n\geq 1}m_nz^n$ is the moment series. It now suffices to show that $M(z)=F^{\mathsf{Sch}}_x(z)$. 

The $R$-transform is given by \[R(z)=\sum_{n\geq 1}\kappa_nz^n=-x\sum_{n\geq 1}\mathscr S_{n-1}z^n=-x\frac{1-z-\sqrt{1-6z+z^2}}{2},\] so $R^{\langle -1\rangle}(z)=\dfrac{z(z+x)}{x(z-x)}$. It follows from \eqref{Eq5} that $\displaystyle M^{\langle -1\rangle}(z)=\dfrac{z(z+x)}{x(z-x)(1+z)}$. From this, we find that \[M(z)=-x\frac{1-z+xz\pm\sqrt{1-6z+z^2+2xz+2xz^2+x^2z^2}}{2(1-xz)}.\]
We must choose the minus sign in the $\pm$ because $M(0)=0$. Thus, $M(z)=F^{\mathsf{Sch}}_x(z)$.  
\end{proof}

\begin{theorem}\label{Thm23}
Suppose we choose $\mathcal T$ uniformly at random from $\mathsf{\overline{D}Sch}_{n-1}$. As $n\to\infty$, \[\mathbb E(\des(\mathcal P(\mathcal T))+1)\sim\left(1-\frac{1}{2\log 2}\right)n.\]  
\end{theorem}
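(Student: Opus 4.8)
The plan is to follow exactly the template established in the proofs of Theorem~\ref{Thm13} (full binary plane trees) and Theorem~\ref{Thm15} (Motzkin trees), now specialized to the Schr\"oder troupe $\mathsf{Sch}$. First I would differentiate the identity in Theorem~\ref{Thm20} with respect to $x$ and set $x=1$, obtaining
\[\sum_{n\geq 1}\sum_{\mathcal T\in\mathsf{\overline{D}Sch}_{n-1}}(\des(\mathcal P(\mathcal T))+1)\frac{z^n}{n!}=-\frac{\left[\frac{\partial}{\partial x}\widehat F^{\:\mathsf{Sch}}_x(z)\right]_{x=1}}{1+\widehat F^{\:\mathsf{Sch}}_1(z)}.\]
Then I would compute the two ingredients on the right-hand side. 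Setting $x=1$ in \eqref{Eq49} gives a rational function $F^{\mathsf{Sch}}_1(z)$ (the square root should simplify, as it did in the earlier cases, since the discriminant becomes a perfect square at $x=1$), and applying $\mathcal L^{-1}\{F^{\mathsf{Sch}}_1(1/t)/t\}$ yields $\widehat F^{\:\mathsf{Sch}}_1(z)$ as an elementary function (a partial-fraction decomposition of $F^{\mathsf{Sch}}_1(1/t)/t$ in $t$, followed by term-by-term inverse Laplace transform, gives a combination of exponentials). Similarly, computing $\left[\frac{\partial}{\partial x}F^{\mathsf{Sch}}_x(1/t)\right]_{x=1}/t$ as a rational function in $t$ and inverting gives $\left[\frac{\partial}{\partial x}\widehat F^{\:\mathsf{Sch}}_x(z)\right]_{x=1}$ as an elementary function.

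Next I would assemble these into an explicit meromorphic expression for $\sum_{n\geq 1}\sum_{\mathcal T}(\des(\mathcal P(\mathcal T))+1)\frac{z^n}{n!}$. The denominator $1+\widehat F^{\:\mathsf{Sch}}_1(z)$ will be (up to elementary factors) something whose smallest zero is at $z=\log 2$ — this is consistent with the known fact (cf.\ Corollary~\ref{Cor12} and OEIS A000629) that $|\mathsf{\overline{D}Sch}_{n-1}|$ has exponential generating function $-\log(2-e^z)$, whose dominant singularity is at $z=\log 2$ and whose coefficients therefore satisfy $|\mathsf{\overline{D}Sch}_{n-1}|\sim (n-1)!/(\log 2)^n$ by Lemma~\ref{Lem1}. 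I would apply Lemma~\ref{Lem1} to the assembled generating function: locate its dominant singularity (a simple pole at $z=\log 2$), compute the residue there, and conclude that
\[\sum_{\mathcal T\in\mathsf{\overline{D}Sch}_{n-1}}(\des(\mathcal P(\mathcal T))+1)\frac{1}{n!}\sim (\log 2)^{-n-1}\,R\]
for the appropriate residue constant $R$. Dividing by the asymptotics $|\mathsf{\overline{D}Sch}_{n-1}|\sim (\log 2)^{-n}(n-1)!$ then gives $\mathbb E(\des(\mathcal P(\mathcal T))+1)\sim \frac{R\,n}{\log 2}$, and the arithmetic should collapse this to $\left(1-\frac{1}{2\log 2}\right)n$.

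The main obstacle, as in the earlier proofs, is purely the residue bookkeeping: one must correctly carry the elementary-function forms of $\widehat F^{\:\mathsf{Sch}}_1$ and $\left[\frac{\partial}{\partial x}\widehat F^{\:\mathsf{Sch}}_x\right]_{x=1}$, verify that the only singularity inside $|z|\leq\log 2$ is the simple pole at $\log 2$ (so that Lemma~\ref{Lem1} applies cleanly), and compute the residue of the ratio there — a calculation that is routine but error-prone, especially in getting the factor $\frac{1}{2\log 2}$ rather than some near-miss constant. A secondary point to be careful about is confirming the branch choice of the square root in \eqref{Eq49} so that $M(z)=F^{\mathsf{Sch}}_x(z)$ genuinely matches the moment series (already handled in Theorem~\ref{Thm20}, so I would just cite it). I expect no conceptual difficulty beyond this; the proof is a direct analogue of Theorems~\ref{Thm13} and~\ref{Thm15}, so I would write it tersely, saying ``Repeating the argument from the proof of Theorem~\ref{Thm13}'' and then recording the three key elementary identities and the final residue computation.
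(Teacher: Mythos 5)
Your proposal is correct and follows essentially the same route as the paper's proof: differentiate the identity of Theorem~\ref{Thm20} at $x=1$, compute $\widehat F^{\:\mathsf{Sch}}_1(z)=1-e^z$ and $\bigl[\frac{\partial}{\partial x}\widehat F^{\:\mathsf{Sch}}_x(z)\bigr]_{x=1}=1-(2+z)e^z+e^{2z}$ via inverse Laplace transforms, assemble the ratio $-\frac{1-(2+z)e^z+e^{2z}}{2-e^z}$, and apply Lemma~\ref{Lem1} at the simple pole $z=\log 2$ together with $|\mathsf{\overline{D}Sch}_{n-1}|\sim(\log 2)^{-n}(n-1)!$ from Corollary~\ref{Cor12}. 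The only work remaining is the residue arithmetic you already flag, which indeed yields $\log 2-\tfrac12$ and hence the stated constant.
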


\begin{proof}
Repeating the argument from the proof of Theorem~\ref{Thm13}, we find that  
\begin{equation}\label{Eq51}
\sum_{n\geq 1}\sum_{\mathcal T\in\mathsf{\overline{D}Sch}_{n-1}}(\des(\mathcal P(\mathcal T))+1)\frac{z^n}{n!}=-\frac{\left[\frac{\partial}{\partial x}\widehat F^{\:\mathsf{Sch}}_x(z)\right]_{x=1}}{1+\widehat F^{\:\mathsf{Sch}}_1(z)}.
\end{equation} We see from \eqref{Eq49} that $F^{\mathsf{Sch}}_1(z)=-\dfrac{z}{1-z}$, so
\begin{equation}\label{Eq52}
\widehat F^{\:\mathsf{Sch}}_1(z)=\mathcal L^{-1}\left\{-\frac{1/t}{1-(1/t)}\frac{1}{t}\right\}(z)=1-e^z.
\end{equation}
Next, we compute \[\left[\frac{\partial}{\partial x}\widehat F^{\:\mathsf{Sch}}_x(z)\right]_{x=1}=\left[\frac{\partial}{\partial x}\left(\mathcal L^{-1}\left\{F^{\:\mathsf{Sch}}_x(1/t)/t\right\}(z)\right)\right]_{x=1}=\mathcal L^{-1}\left\{\frac{\left[\frac{\partial}{\partial x}F^{\:\mathsf{Sch}}_x(1/t)\right]_{x=1}}{t}\right\}(z)\] \begin{equation}\label{Eq54}
=\mathcal L^{-1}\left\{\frac{2-4t+t^2}{(2-t)(1-t)^2t}\right\}(z)=1-(2+z)e^z+e^{2z}.
\end{equation} Combining \eqref{Eq51}, \eqref{Eq52}, and \eqref{Eq54}, we obtain 
\begin{equation}\label{Eq55}
\sum_{n\geq 1}\sum_{\mathcal T\in\mathsf{\overline{D}Sch}_{n-1}}(\des(\mathcal P(\mathcal T))+1)\frac{z^n}{n!}=-\frac{1-(2+z)e^z+e^{2z}}{2-e^z}.
\end{equation} 
This last expression is meromorphic, and its singularity nearest to the origin is a simple pole at $\log 2$. Invoking Lemma~\ref{Lem1}, we find that \[\sum_{\mathcal T\in\mathsf{\overline{D}Sch}_{n-1}}(\des(\mathcal P(\mathcal T))+1)\frac{1}{n!}\sim \left(\log 2\right)^{-n-1}\Res_{z=\log 2}\frac{1-(2+z)e^z+e^{2z}}{2-e^z}=\left(\log 2\right)^{-n-1}\left(\log 2-\frac{1}{2}\right).\] Similarly, one can use Corollary~\ref{Cor12} and Lemma~\ref{Lem1} to show that $|\mathsf{\overline{D}Sch}_{n-1}|\sim(\log 2)^{-n}(n-1)!$. Thus, \[\frac{1}{|\mathsf{\overline{D}Sch}_{n-1}|}\sum_{\mathcal T\in\mathsf{\overline{D}Sch}_{n-1}}(\des(\mathcal P(\mathcal T))+1)\sim\frac{n!\left(\log 2\right)^{-n-1}\left(\log 2-1/2\right)}{(\log 2)^{-n}(n-1)!}=\left(1-\frac{1}{2\log 2}\right)n. \qedhere\] 
\end{proof}

\begin{remark}\label{Rem6}
Numerical evidence suggests that the natural analogue of \eqref{Eq34} for Schr\"oder $2$-colored binary trees fails to hold, but only by a little. Suppose we choose $\mathcal T\in\mathsf{\overline{D}Sch_{n-1}}$ uniformly at random. It appears that as $n\to\infty$, the probability that $1$ is a descent of $\mathcal P(\mathcal T)$ approaches a constant that is approximately $0.27259$, which is just slightly less than $1-\dfrac{1}{2\log 2}\approx 0.27865$. 
\end{remark}

Using Theorem~\ref{Thm20} and Mathematica, we have checked the following analogue of Conjecture~\ref{Conj3} for all $n\leq 31$. 

\begin{conjecture}\label{Conj6} 
For every $n\geq 1$, the polynomial $\displaystyle\sum_{\mathcal T\in\mathsf{\overline{D}Sch}_{n-1}}x^{\des(\mathcal P(\mathcal T))+1}$ has only real roots. 
\end{conjecture}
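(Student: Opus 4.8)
The plan is to set $P_n(x)=\sum_{\mathcal T\in\mathsf{\overline{D}Sch}_{n-1}}x^{\des(\mathcal P(\mathcal T))+1}$, so that $P_1(x)=x$ and, by Theorem~\ref{Thm20}, $\sum_{n\ge1}P_n(x)\frac{z^n}{n!}=-\log(1+\widehat F_x^{\mathsf{Sch}}(z))$; equivalently $P_n=-c_n$ for the classical cumulants attached to the free cumulants $\kappa_n=-x\mathscr S_{n-1}$. The strategy is the standard one for Eulerian-type real-rootedness results: produce a recurrence for $P_n$ whose defining operator visibly preserves real-rootedness together with an interlacing relation between consecutive terms, and then induct. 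There are two natural ways to obtain such a recurrence. The first is combinatorial: decompose a tree in $\mathsf{\overline{D}Sch}_{m}$ at its root (whose label is $m$), recording the color of the root, whether it has a left child, and one extra statistic --- for instance the first entry of $\mathcal P(\mathcal T)$ --- so that the "junction descent" in $\mathcal P(\mathcal T)=\mathcal P(\mathcal T_L)\,\mathcal P(\mathcal T_R)\,m$ can be controlled; this yields a system of recurrences among refined polynomials $P_n(x,y)$. The second is analytic: since $F_x^{\mathsf{Sch}}(z)$ in \eqref{Eq49} is algebraic of degree $2$, one can write a polynomial equation for $Y:=1+\widehat F_x^{\mathsf{Sch}}(z)$ in $z$ (after the inverse-Laplace step), differentiate to get a first-order ODE in $z$, and translate it into a recurrence expressing $P_{n+1}$ as a binomial convolution of $x$ and $P_1,\dots,P_n$ and their derivatives.

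The heart of the argument is then to recognize the recurrence as the action of a \emph{stability-preserving} operator. Concretely, I would look for a multivariate refinement $\widetilde P_n(x,\mathbf y)$ that is real stable (or at least real-rooted in $x$ for each fixed admissible $\mathbf y$) and satisfies $\widetilde P_{n+1}=\Psi_n(\widetilde P_n)$, where $\Psi_n$ is assembled only from operations known to preserve real stability --- multiplication by stable polynomials, diagonalization $y_i\mapsto x$, and affine substitutions or directional-derivative operators of the form $f\mapsto f+\alpha\,\partial_{y_i}f$ with $\alpha\le 0$, of the kind that appears in the treatments of $s$-Eulerian polynomials by Savage--Visontai and by Br\"and\'en. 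If this can be arranged, the specialization $\mathbf y=(x,\dots,x)$ of $\widetilde P_n$ recovers $P_n$ up to a monomial factor, real-rootedness follows, and interlacing of consecutive $P_n$ comes for free from the same machinery. As consistency checks one should verify the base cases, compare with the Mathematica data already computed for $n\le 31$, and confirm that the analogous computation reproduces the (known) real-rootedness of the in-order analogue $2xA_{n-1}(2x)$ from Corollary~\ref{Cor13}.

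The step I expect to be the real obstacle is producing the right refinement --- i.e., identifying which auxiliary statistics make the recurrence close under a stability-preserving operator. The postorder reading is awkward precisely because it concatenates the two subtrees of the root, so the descent count at the seam depends jointly on the last entry of $\mathcal P(\mathcal T_L)$ and the first entry of $\mathcal P(\mathcal T_R)$; a single extra variable may not suffice, and the constraint that white vertices have no left child breaks the left/right symmetry that would otherwise simplify matters. Moreover, the exponential generating function $-\log(1+\widehat F_x^{\mathsf{Sch}}(z))$ does not obviously factor through a first-order differential operator in $x$ --- the logarithm and the Laplace transform intervene --- so the naive "differentiate the generating function" route, while it does give \emph{a} recurrence, need not give one of Eulerian type. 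For this reason I regard Conjecture~\ref{Conj6} as being of essentially the same difficulty as Conjecture~\ref{Conj3} (the binary-plane-tree case, via $s=\mathcal P\circ\mathcal I^{-1}$), and I suspect a complete proof will require either a genuinely new combinatorial model exhibiting $P_n$ as the descent polynomial of a naturally labeled poset --- so that the Stanley--Brenti $P$-partition theory or Br\"and\'en's sign-graded machinery applies directly --- or a multiplier-sequence argument acting on the whole exponential generating function at once.
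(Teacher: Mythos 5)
The statement you are addressing is an open conjecture in the paper: the author offers no proof, only a numerical verification for all $n\leq 31$ carried out with Theorem~\ref{Thm20} and Mathematica. So there is no argument of the paper's to compare yours against. More importantly, what you have written is not a proof but a research plan, and you say as much yourself. The essential content of a real-rootedness argument of the Savage--Visontai/Br\"and\'en type is precisely the identification of the refinement $\widetilde P_n(x,\mathbf y)$ and the verification that the recurrence operator $\Psi_n$ is assembled from stability preservers; you explicitly defer both steps (``the step I expect to be the real obstacle is producing the right refinement'') and you never exhibit the recurrence, the refinement, or the interlacing relation. Until those objects are produced and checked, nothing has been proved.

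There are two specific places where the plan as stated would need substantial new input. First, the analytic route: differentiating the degree-two algebraic equation for $Y=1+\widehat F^{\:\mathsf{Sch}}_x(z)$ does yield a recurrence for the $P_n$, but it is a binomial convolution involving products $P_iP_j$ together with $x$-derivatives, and such recurrences generically destroy real-rootedness (products drawn from interlacing families need not interlace), so there is no reason to expect the resulting operator to preserve stability without the correct multivariate refinement --- which is exactly the missing piece. Second, the combinatorial route: as you correctly observe, the seam descent in $\mathcal P(\mathcal T)=\mathcal P(\mathcal T_L)\,\mathcal P(\mathcal T_R)\,m$ depends jointly on data from both subtrees, and the Schr\"oder coloring constraint (no white vertex has a left child) breaks the symmetry that makes the in-order/Eulerian analogue of Corollary~\ref{Cor13} tractable, so a single catalytic statistic is unlikely to suffice. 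Your consistency checks (base cases, agreement with the $n\leq 31$ data, recovery of the real-rootedness of $2xA_{n-1}(2x)$) are evidence for the conjecture, not steps toward its proof. The honest conclusion is that the conjecture remains open and your proposal, while a reasonable survey of plausible attack routes, does not close it.
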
 

\subsection{Asymptotics for Sorted Permutations} 

In \cite{Bousquet}, Bousquet-M\'elou investigated \emph{sorted} permutations, which are permutations in the image of the stack-sorting map $s$. She considered the numbers $e_{m,n}$, which count sorted permutations of length $m+n$ according to an additional statistic known as the Zeilberger statistic. For our purposes, it will be sufficient to know that these numbers satisfy the initial conditions $e_{m,-1}=0$ and $e_{0,n}=1$ and that $e_{m,0}=|s(S_m)|$ is the number of sorted permutations in $S_m$. The recurrence 
\begin{equation}\label{Eq35}
e_{m,n}=e_{m-1,n+1}+\sum_{i=1}^{m-1}\sum_{j=0}^{n-1}\binom{m-1}{i}e_{m-i-1,n-j}(e_{i,j}-e_{i,j-1}).
\end{equation} appears in \cite{Bousquet}. Unfortunately, this recurrence does not tell us anything immediately about the asymptotics of these numbers. The purpose of this section is to prove the following new estimates. 

\begin{theorem}
The limit $\displaystyle\lim_{n\to\infty}\left(\frac{|s(S_n)|}{n!}\right)^{1/n}$ exists and satisfies \[0.68631<\lim_{n\to\infty}\left(\frac{|s(S_n)|}{n!}\right)^{1/n}<0.75260.\] 
\end{theorem}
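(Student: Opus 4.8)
The plan is to establish the existence of the limit via a submultiplicativity (Fekete-type) argument and then to obtain the two numerical bounds separately — the lower bound from Bousquet-M\'elou's recurrence \eqref{Eq35} combined with a strengthening of Fekete's lemma due to de Bruijn and Erd\H{o}s, and the upper bound from the asymptotics for valid hook configurations in Corollary~\ref{Cor14}. First I would set $a_n=|s(S_n)|/n!$ and show that the sequence $(a_n)$ is supermultiplicative in the appropriate sense: since every sorted permutation has a valid hook configuration and since a permutation $\pi$ of length $m+n$ decomposes (via a fixed right-bound descent, by the Decomposition Lemma, Corollary~\ref{Cor3}) into pieces that are independently sortable, one gets an inequality of the form $|s(S_{m+n})|\geq \binom{m+n}{m}\,|s(S_m)|\,|s(S_n)|$ (up to lower-order combinatorial bookkeeping), which translates to $a_{m+n}\geq a_m a_n$. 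By Fekete's lemma applied to $-\log a_n$ (which is subadditive), the limit $L=\lim_{n\to\infty}a_n^{1/n}$ exists in $[0,\infty)$, and it equals $\sup_n a_n^{1/n}$.

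For the lower bound, I would use the de Bruijn--Erd\H{o}s strengthening of Fekete's lemma: if a sequence is ``nearly supermultiplicative'' with a controlled error term, then $\lim a_n^{1/n}=\sup_n (a_n/(\text{correction}))^{1/n}$, so it suffices to compute $a_n^{1/n}$ for one explicit value of $n$ (or a few) large enough that the correction is negligible, and verify it exceeds $0.68631$. Bousquet-M\'elou's recurrence \eqref{Eq35} lets me compute $e_{m,0}=|s(S_m)|$ for $m$ up to some moderately large bound by an exact (if somewhat expensive) double recursion; I would run this to a value of $m$ where $(|s(S_m)|/m!)^{1/m}>0.68631$, and the de Bruijn--Erd\H{o}s version guarantees the limit is at least this value. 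The main technical point here is to justify exactly which error term makes the de Bruijn--Erd\H{o}s refinement applicable to the $(a_n)$ sequence; I expect this to require writing the supermultiplicativity inequality carefully with the precise combinatorial defect coming from the choice of splitting index.

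For the upper bound, I would invoke the fact that every sorted permutation $\pi\in S_n$ has at least one valid hook configuration, hence $|s(S_n)|\leq|\VHC(S_n)|$. Then Corollary~\ref{Cor14} gives $|\VHC(S_n)|\sim n!/c^{n+1}$ where $c\approx 1.32874$ is the smallest positive real root of $1-z\,{}_1\hspace{-.03cm}F_2\!\left(\frac{1}{2};\frac{3}{2},2;-z^2\right)$, so $a_n^{1/n}=\left(|s(S_n)|/n!\right)^{1/n}\leq\left(|\VHC(S_n)|/n!\right)^{1/n}\to 1/c\approx 0.75260$. Since $L=\sup_n a_n^{1/n}\leq \limsup_n\left(|\VHC(S_n)|/n!\right)^{1/n}=1/c$, and since one checks $1/c<0.75260$ from the explicit numerical value of $c$, the strict inequality follows (strictness is automatic because $|s(S_n)|<|\VHC(S_n)|$ for $n$ large — there exist valid hook configurations whose underlying permutation is not sorted, e.g. those not of the form ${\bf q}^{\mathcal H}$ admitting a tree, and one can exhibit at least one such for some small $n$, which already forces $\sup_n a_n^{1/n}<1/c$ via the supremum characterization — alternatively one appeals to a strict gap in the asymptotic ratio). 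The hard part, I expect, is not any single estimate but rather pinning down the precise form of the supermultiplicativity inequality from the Decomposition Lemma together with the correct correction term so that the de Bruijn--Erd\H{o}s lemma applies cleanly; the two numerical bounds themselves are then routine given \eqref{Eq35} and Corollary~\ref{Cor14}.
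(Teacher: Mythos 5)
Your proposal follows essentially the same route as the paper: a near-supermultiplicativity inequality for $|s(S_n)|/n!$, the de Bruijn--Erd\H{o}s refinement of Fekete's lemma to obtain existence of the limit together with a supremum characterization, a numerical evaluation of one explicit term via Bousquet-M\'elou's recurrence \eqref{Eq35} for the lower bound, and the bound $|s(S_n)|\leq|\VHC(S_n)|\sim n!/c^{n+1}$ from Corollary~\ref{Cor14} for the upper bound. The one correction: the splitting inequality comes not from the Decomposition Lemma (which counts preimages, not images) but from the recursive description $s(LmR)=s(L)s(R)m$ applied to the concatenation of two sorted permutations with a new maximum appended, giving $|s(S_{m-1})|\,|s(S_{n-1})|\binom{m+n-2}{m-1}\leq|s(S_{m+n-1})|$; the resulting index shift and factor of $m+n-1$ are exactly the ``combinatorial defect'' you anticipated, which the paper absorbs by applying de Bruijn--Erd\H{o}s to $b_n=|s(S_{n-1})|/(n^2(n-1)!)$.
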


\begin{proof}
Suppose $\pi\in s(S_{m-1})$ and $\pi'\in s(S_{n-1})$. We can write $\pi=s(\sigma)$ and $\pi'=s(\sigma')$ for some $\sigma\in S_{m-1}$ and $\sigma'\in S_{n-1}$. Let $A$ be an $(m-1)$-element subset of $[m+n-2]$. Let $\widetilde\pi$ and $\widetilde\sigma$ be the permutations of $A$ whose standardizations (as defined in the proof of Theorem~\ref{Thm11}) are $\pi$ and $\sigma$, respectively. Let $\widetilde\pi'$ and $\widetilde\sigma'$ be the permutations of $[m+n-2]\setminus A$ whose standardizations are $\pi'$ and $\sigma'$, respectively. We have $s(\widetilde\sigma)=\widetilde\pi$ and $s(\widetilde\sigma')=\widetilde\pi'$, so it follows from the recursive description of $s$ given in \eqref{Eq36} that \[s(\widetilde\sigma(m+n-1)\widetilde\sigma')=s(\widetilde\sigma)s(\widetilde\sigma')(m+n-1)=\widetilde\pi\widetilde\pi'(m+n-1).\] This shows that $\widetilde\pi\widetilde\pi'(m+n-1)\in s(S_{m+n-1})$. The map sending the tuple $(\pi,\pi',A)$ to the permutation $\widetilde\pi\widetilde\pi'(m+n-1)$ is injective, so \[|s(S_{m-1})||s(S_{n-1})|\binom{m+n-2}{m-1}\leq |s(S_{m+n-1})|.\] Rearranging, this shows that 
\begin{equation}\label{Eq37}
\frac{|s(S_{m-1})|}{(m-1)!}\frac{|s(S_{n-1})|}{(n-1)!}\leq(m+n-1)\frac{|s(S_{m+n-1})|}{(m+n-1)!}.
\end{equation} 

We will make use of a generalization of Fekete's lemma due to de Bruijn and Erd\H{o}s \cite{deBruijn}, which states that if a sequence of nonnegative real numbers $(b_m)_{m\geq 1}$ satisfies $b_mb_n\leq b_{m+n}$ whenever $1/2\leq n/m\leq 2$, then $\lim\limits_{n\to\infty}b_n^{1/n}$ exists and equals $\sup\limits_{n\geq 1}b_n^{1/n}$. Now let $b_n=\dfrac{|s(S_{n-1})|}{n^2(n-1)!}$ for $n\geq 8$ and $b_n=0$ for $1\leq n\leq 7$. It is not difficult to check that $\dfrac{m+n-1}{m^2n^2}\leq\dfrac{1}{(m+n)^2}$ whenever $m,n\geq 8$ and $1/2\leq n/m\leq 2$. Therefore, it follows from \eqref{Eq37} that \[b_mb_n\leq\frac{m+n-1}{m^2n^2}\frac{|s(S_{m+n-1})|}{(m+n-1)!}\leq\frac{|s(S_{m+n-1})|}{(m+n)^2(m+n-1)!}=b_{m+n}\] whenever $m,n\geq 8$ and $1/2\leq n/m\leq 2$. The inequality $b_mb_n\leq b_{m+n}$ also certainly holds whenever $m$ or $n$ is at most $7$. According to the generalization of Fekete's lemma that we mentioned above, $\lim\limits_{n\to\infty}b_n^{1/n}$ exists and equals $\sup\limits_{n\geq 1}b_n^{1/n}$. It now follows from the definition of $b_n$ that $\displaystyle\lim_{n\to\infty}\left(\frac{|s(S_n)|}{n!}\right)^{1/n}$ exists and equals $\sup\limits_{n\geq 1}b_n^{1/n}$. We have used Bousquet-M\'elou's recurrence \eqref{Eq35} to compute $b_{802}^{1/802}$; its value is slightly larger than $0.68631$. This yields the desired lower bound for the limit. 

To prove the upper bound, note that it follows from the Fertility Formula \eqref{Eq22} that every sorted permutation in $S_n$ has a valid hook configuration. Therefore, $|s(S_n)|\leq|\VHC(S_n)|$. By Corollary~\ref{Cor14}, \[\frac{|s(S_n)|}{n!}\leq\frac{|\VHC(S_n)|}{n!}\sim\frac{1}{c^{n+1}},\] where $c\approx 1.32874$ is the constant appearing in that corollary. The desired upper bound is now immediate because $1/c<0.75260$.  
\end{proof} 

\subsection{The Degree of Noninvertibility of the Stack-Sorting Map}

Recently, Propp and the author \cite{DefantPropp} introduced the \dfn{degree of noninvertibility} of a function $f:X\to X$, where $X$ is a finite set, to be \[\deg(f:X\to X)=\frac{1}{|X|}\sum_{x\in X}|f^{-1}(x)|^2.\] This is a natural measure of how far the function $f$ is from being bijective. It is shown in \cite{DefantPropp} that $\deg(s:S_n\to S_n)$ grows exponentially in $n$, which is interesting because it contrasts the quadratic growth of $\deg({\bf B}:S_n\to S_n)$, where ${\bf B}$ is the bubble sort map. Indeed, Propp and the author showed that $\deg({\bf B}:S_n\to S_n)=\dfrac{(n+1)(n+2)}{6}$. While an exact formula for $\deg(s:S_n\to S_n)$ currently seems out of reach, we can at least obtain estimates for this quantity. It was shown in \cite{DefantPropp} that the limit $\lim\limits_{n\to\infty}\deg(s:S_n\to S_n)^{1/n}$ exists and satisfies 
\begin{equation}\label{Eq38}
1.12462\leq\lim_{n\to\infty}\deg(s:S_n\to S_n)^{1/n}\leq 4.
\end{equation} After running some experiments that used the Decomposition Lemma to compute the fertilities of large random permutations, Propp and the author conjectured that this limit lies in the interval $(1.68,1.73)$. In this section, we show how the tools involving free probability that we have developed allow us to greatly improve upon the lower bound in \eqref{Eq38}. This will bring the known lower bound close to the conjectured value of the limit. 

\begin{theorem}
We have $\displaystyle 1.62924<\lim_{n\to\infty}\deg(s:S_n\to S_n)^{1/n}$.
\end{theorem}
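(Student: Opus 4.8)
The plan is to turn the Fertility Formula \eqref{Eq22} and the VHC Cumulant Formula (Corollary~\ref{Cor4}) into a lower bound for $\deg(s:S_n\to S_n)$ whose value is the exponential growth rate of an explicit sequence of classical cumulants, and then to compute that growth rate by singularity analysis. First I would exploit the ``diagonal'' of the squared Fertility Formula. For $\pi\in S_{n-1}$ write ${\bf q}^{\mathcal H}=(q_0,\dots,q_k)$, so that $|s^{-1}(\pi)|=\sum_{\mathcal H\in\VHC(\pi)}C_{{\bf q}^{\mathcal H}}$ with $C_{{\bf q}^{\mathcal H}}=\prod_{t=0}^{k}C_{q_t}$; since all terms are nonnegative,
\[|s^{-1}(\pi)|^2\ge\sum_{\mathcal H\in\VHC(\pi)}C_{{\bf q}^{\mathcal H}}^2.\]
Summing over $\pi\in S_{n-1}$ and using $\VHC(S_{n-1})=\bigcup_{\pi\in S_{n-1}}\VHC(\pi)$ gives $(n-1)!\,\deg(s:S_{n-1}\to S_{n-1})\ge\sum_{\mathcal H\in\VHC(S_{n-1})}C_{{\bf q}^{\mathcal H}}^2$. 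Now define free cumulants $\kappa_n:=-C_{n-1}^2$. By Remark~\ref{Rem2} the blocks of $\vert\mathcal H$ have sizes $q_0+1,\dots,q_k+1$, so $(-\kappa_\bullet)_{\vert\mathcal H}=\prod_{t=0}^{k}C_{q_t}^2=C_{{\bf q}^{\mathcal H}}^2$, and the VHC Cumulant Formula identifies $\sum_{\mathcal H\in\VHC(S_{n-1})}C_{{\bf q}^{\mathcal H}}^2$ with $-c_n$, the classical cumulant corresponding to $(\kappa_n)_{n\ge1}$. Hence $\deg(s:S_{n-1}\to S_{n-1})\ge \dfrac{-c_n}{(n-1)!}$, and since the limit on the left of the theorem exists by \cite{DefantPropp}, it suffices to determine the exponential growth rate of $(-c_n)_{n\ge1}$.

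To find that rate I would follow the generating-function route of Section~\ref{Sec:Applications}. Put $R(z)=\sum_{n\ge1}\kappa_nz^n=-\sum_{n\ge1}C_{n-1}^2z^n$; since $R(z)=-z+O(z^2)$ it has a compositional inverse, and \eqref{Eq5} gives $M^{\langle-1\rangle}(z)=R^{\langle-1\rangle}(z)/(1+z)$, which one inverts to get the moment series $M(z)=\sum_{n\ge1}m_nz^n$. Because $\kappa_n=-C_{n-1}^2$ grows only exponentially, the moments $m_n=\sum_{\eta\in\NC(n)}(\kappa_\bullet)_\eta$ satisfy the crude bound $|m_n|\le C_n\,16^{n-1}$, so the exponential generating function $\widehat M(z)=\mathcal L^{-1}\{M(1/t)/t\}(z)=\sum_{n\ge1}m_nz^n/n!$ is entire. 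By \eqref{Eq4}, $\sum_{n\ge1}c_nz^n/n!=\log(1+\widehat M(z))$, whose only singularities are the zeros of the entire function $1+\widehat M(z)$; let $\rho$ be one of smallest modulus. Then $\frac{d}{dz}\log(1+\widehat M(z))=\widehat M'(z)/(1+\widehat M(z))$ is meromorphic with a simple pole at $\rho$ and holomorphic on the rest of $\{|z|\le\rho\}$, so Lemma~\ref{Lem1} applied to this derivative yields $c_{n+1}/n!=[z^n]\big(\widehat M'/(1+\widehat M)\big)\sim-\rho^{-n-1}$. Therefore $-c_m/(m-1)!\sim\rho^{-m}$, whence $\deg(s:S_{m-1}\to S_{m-1})\ge-c_m/(m-1)!$ forces
\[\lim_{n\to\infty}\deg(s:S_n\to S_n)^{1/n}\ge\frac{1}{\rho}.\]

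The proof is completed by computing $\rho$ and checking $1/\rho>1.62924$, i.e.\ $\rho<0.62$. Rigor is obtained exactly as in the proof of Corollary~\ref{Cor14}: compute the Taylor coefficients $m_n/n!$ of $\widehat M$ to high order from the power series of $R$ (the tail being controlled by the order‑$1$ growth of the entire function $\widehat M$), then use the argument principle to certify that $1+\widehat M$ has a unique zero of modulus $<0.62$, that this zero is real, positive and simple, and to pin down its location to enough digits. Plugging this $\rho$ into the displayed inequality gives the claimed bound.

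The main obstacle is precisely this explicit computation of $R^{\langle-1\rangle}$, $M$, $\widehat M$, and the dominant zero $\rho$ of $1+\widehat M$. Unlike the examples in Sections~\ref{Sec:TroupsAndCumulants}--\ref{Sec:Applications}, here $R(z)=-\sum_{n\ge1}C_{n-1}^2z^n$ is not algebraic: since $\sum_{n\ge0}\binom{2n}{n}^2z^n={}_2F_1(\tfrac12,\tfrac12;1;16z)$, dividing by $(n+1)^2$ amounts to two integrations, so $R$ is expressed through complete elliptic integrals. Consequently forming the compositional inverses, carrying out the inverse‑Laplace substitution $t\mapsto 1/t$, and then rigorously locating $\rho$ with a certified error small enough to clear $1.62924$ all require careful, computer‑assisted special‑function manipulation together with the argument‑principle bookkeeping borrowed from Corollary~\ref{Cor14}; the remaining steps (the diagonal inequality, the identification with $-c_n$, and the passage to the growth rate via Lemma~\ref{Lem1}) are routine.
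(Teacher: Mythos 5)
Your reduction is exactly the one the paper uses: the diagonal inequality $\sum_{\pi\in S_{n-1}}|s^{-1}(\pi)|^2\geq\sum_{\mathcal H\in\VHC(S_{n-1})}(C_{\bullet-1}^2)_{\vert\mathcal H}$, the choice of free cumulants $\kappa_n=-C_{n-1}^2$, and the VHC Cumulant Formula identifying the right-hand side with $-c_n$, giving $(n-1)!\deg(s:S_{n-1}\to S_{n-1})\geq -c_n$. Where you genuinely diverge is in the endgame. The paper does no singularity analysis at this point: it quotes from \cite{DefantPropp} the identity $\lim_{n\to\infty}\deg(s:S_n\to S_n)^{1/n}=\sup_{n\geq 1}a_n^{1/n}$ with $a_n=\deg(s:S_{n-1}\to S_{n-1})/n^2$ for $n\geq 8$, so a \emph{single} exactly computed cumulant suffices; it computes $c_{1000}$ by exact formal power-series manipulation of $R$, \eqref{Eq5}, \eqref{Eq6}, and \eqref{Eq4}, and checks $\left(-c_{1000}/(1000^2\cdot 999!)\right)^{1/1000}>1.62924$. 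That is a finite, exact-arithmetic certificate. Your route --- locating the dominant zero $\rho$ of $1+\widehat M$ and applying Lemma~\ref{Lem1} to $\widehat M'/(1+\widehat M)$ --- is sound and would in principle yield the \emph{best} constant extractable from this inequality, namely $\limsup_n(-c_n/(n-1)!)^{1/n}=1/\rho$ (your entirety bound $|m_n|\leq C_n16^{n-1}$ and the residue computation are both correct, and $-c_n\geq 0$ settles the sign issue), but it costs you a certified zero of a transcendental entire function. Note that your worry about elliptic integrals is largely moot: neither approach needs a closed form for $R^{\langle-1\rangle}$ or $\widehat M$, since all coefficients are exactly computable rationals; the real extra burden in your version is the tail estimate for $\widehat M$ near $|z|\approx 0.62$ and the argument-principle bookkeeping, which the paper's one-term evaluation avoids entirely. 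Both arguments are valid; the paper's is the path of least resistance, while yours would additionally pin down the exact exponential growth rate of the lower bound.
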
   

\begin{proof}
For convenience, let $d_n=\deg(s:S_n\to S_n)$. Using the Fertility Formula \eqref{Eq22} and Remark~\ref{Rem2}, we find that \[(n-1)!d_{n-1}=\sum_{\pi\in S_{n-1}}|s^{-1}(\pi)|^2=\sum_{\pi\in S_{n-1}}\left(\sum_{\mathcal H\in\VHC(\pi)}(C_{\bullet-1})_{\vert\mathcal H}\right)^2\geq\sum_{\pi\in S_{n-1}}\sum_{\mathcal H\in\VHC(\pi)}((C_{\bullet-1})_{\vert\mathcal H})^2\] \[=\sum_{\pi\in S_{n-1}}\sum_{\mathcal H\in\VHC(\pi)}(C_{\bullet-1}^2)_{\vert\mathcal H}=\sum_{\mathcal H\in\VHC(S_{n-1})}(C_{\bullet-1}^2)_{\vert\mathcal H}.\] If we now define a sequence $(\kappa_n)_{n\geq 1}$ of free cumulants by $\kappa_n=-C_{n-1}^2$, then the VHC Cumulant Formula (Corollary~\ref{Cor4}) tells us that the corresponding classical cumulants are given by \[-c_n=\sum_{\mathcal H\in\VHC(S_{n-1})}(C_{\bullet-1}^2)_{\vert\mathcal H}.\] Hence, $(n-1)!d_{n-1}\geq -c_n$. In the proof of \eqref{Eq38} given in \cite{DefantPropp}, it is shown that \[\lim_{n\to\infty}d_n^{1/n}=\sup_{n\geq 1}a_n^{1/n},\] where $a_n=\dfrac{d_{n-1}}{n^2}$ for $n\geq 8$ and $a_n=0$ for $1\leq n\leq 7$. Thus, \[\lim_{n\to\infty}d_n^{1/n}\geq a_{1000}^{1/1000}=\left(\frac{d_{999}}{1000^2}\right)^{1/1000}\geq\left(\frac{-c_{1000}}{1000^2\cdot 999!}\right)^{1/1000}>1.62924.\] To obtain the last inequality, we computed the exact value of $c_{1000}$ using Mathematica. First, we computed the first $1000$ terms in the series expansion of the $R$-transform $\displaystyle R(z)=\sum_{n\geq 1}\kappa_nz^n=-\sum_{n\geq 1}C_{n-1}^2z^n$. Applying \eqref{Eq5}, then \eqref{Eq6}, and then \eqref{Eq4} allowed us to compute the first $1000$ terms of the series $\displaystyle\sum_{n\geq 1}c_n\dfrac{z^n}{n!}$. In particular, this yielded the value of $c_{1000}$.
\end{proof} 

\begin{remark}
The combination of the Fertility Formula and the VHC Cumulant Formula is useful in the proof of Theorem~\ref{Thm19} because it allows us to compute a lower bound for $\deg(s:S_{999}\to S_{999})$. Trying to compute $\deg(s:S_n\to S_n)$ by brute force, one will not be able to exceed small values of $n$ (around $n=14$). On the other hand, we lose something when we use the inequality \[\sum_{\pi\in S_{n-1}}\left(\sum_{\mathcal H\in\VHC(\pi)}(C_{\bullet-1})_{\vert\mathcal H}\right)^2\geq\sum_{\pi\in S_{n-1}}\sum_{\mathcal H\in\VHC(\pi)}((C_{\bullet-1})_{\vert\mathcal H})^2. \qedhere\]  
\end{remark}

\section{Other Cumulant Conversion Formulas}\label{Sec:OtherFormulas}

\subsection{A Sum over Noncrossing Partitions}
Recall how we derived the VHC Cumulant Formula (Corollary~\ref{Cor4}) in Section~\ref{Subsec:CumulantsVHCs}. We first showed how to obtain a connected set partition $\vert\mathcal H$ from a valid hook configurations $\mathcal H$. We then saw from Theorem~\ref{Thm18} that for each connected partition $\rho$, the number of valid hook configurations $\mathcal H$ such that $\vert\mathcal H=\rho$ is given by the evaluation $T_{G(\rho)}(1,0)$ of the Tutte polynomial of the crossing graph of $\rho$. The VHC Cumulant Formula then followed from Josuat-Verg\`es' formula (Theorem~\ref{Thm18}). Because Josuat-Verg\`es' formula extends to the multivariate setting, the VHC Cumulant Formula also extends to the multivariate formula \eqref{Eq7}.  

We also saw how to obtain a noncrossing set partition $\underline{\mathcal H}$ from a valid hook configuration $\mathcal H$. There is a natural size-preserving bijection from $\vert\mathcal H$ to $\underline{\mathcal H}$, which we denoted by $B\mapsto\wideparen B$. Therefore, we can rewrite the VHC Cumulant Formula as 
\begin{equation}\label{Eq56}
-c_n=\sum_{\mathcal H\in\VHC(S_{n-1})}(-\kappa_\bullet)_{\underline{\mathcal H}}.
\end{equation} By describing the number of valid hook configurations $\mathcal H$ such that $\underline{\mathcal H}$ is equal to a given noncrossing partition $\eta$, we will obtain an analogue of Josuat-Verg\`es' formula in which the sum ranges over noncrossing partitions. This new formula does \emph{not} extend to the multivariate setting. Indeed, it is impossible to find a formula that expresses multivariate classical cumulants in terms of multivariate free cumulants via a sum over noncrossing partitions (see, for example, Remark 8.1 in \cite{Arizmendi}).  

We are going to make use of a very important bijection $K:\NC(n)\to\NC(n)$, known as the \dfn{Kreweras complementation map}. Given $\eta\in\NC(n)$, we define $K'(\eta)$ to be the maximum (in the reverse refinement order) partition of the totally ordered set $\{1'<2'<\cdots<n'\}$ such that $\eta\,\cup K'(\eta)$ is a noncrossing partition of the totally ordered set $\{1<1'<2<2'<\cdots<n<n'\}$. We then define $K(\eta)$ to be the partition in $\NC(n)$ obtained by removing the primes from the elements of the blocks of $K'(\eta)$. For example, Figure~\ref{Fig17} shows that the Kreweras complement of the partition $\eta=\{\{1,4,5\},\{2,3\},\{6\},\{7,8\}\}$ is the partition $K(\eta)=\{\{1,3\},\{2\},\{4\},\{5,6,8\},\{7\}\}$. 

\begin{figure}[ht]
\begin{center} 
\includegraphics[height=1.1cm]{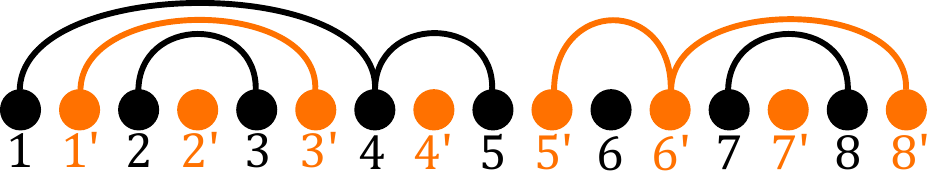}
\end{center}
\caption{The partition $\eta=\{\{1,4,5\},\{2,3\},\{6\},\{7,8\}\}$ (black) and its Kreweras complement $K(\eta)=\{\{1,3\},\{2\},\{4\},\{5,6,8\},\{7\}\}$ (orange).}\label{Fig17}
\end{figure}

Now consider a noncrossing partition $\eta\in\NC(n)$ and its Kreweras complement $K(\eta)$. 
We say $j$ is the \dfn{successor} of $i$ in $K(\eta)$ if $i$ and $j$ are in the same block $B$ of $K(\eta)$, $i<j$, and there are no elements $k\in B$ with $i<k<j$. Consider the elements of $[n]$ as the vertices of a directed graph. If $i\in[n-1]$ is the largest element of its block in $K(\eta)$, draw a directed edge from $i$ to $i+1$. If $i$ is not maximal in its block in $K(\eta)$, draw a directed edge from $i+1$ to $i$. If $j$ is the successor of $i$ in $K(\eta)$, then draw a directed edge from $i$ to $j$. We call the resulting directed graph the \dfn{arc graph} of $K(\eta)$ (see Figure~\ref{Fig20} for an example). Let $\mathcal L(K(\eta))$ denote the set of permutations $\sigma\in S_n$ such that $i$ appears to the left of $j$ in $\sigma$ whenever there is a directed edge from $i$ to $j$ in the arc graph of $K(\eta)$. The set $\mathcal L(K(\eta))$ is nonempty if and only if the arc graph of $K(\eta)$ is acyclic (i.e., has no directed cycles). One can check that this occurs if and only if there do not exist two consecutive integers in the same block of $K(\eta)$, which occurs if and only if $n=1$ or $\eta$ contains no singleton blocks. In this case, we can also view $\mathcal L(K(\eta))$ as the set of linear extensions of the poset $([n],\preceq)$ defined by declaring that $i\preceq j$ if and only if there is a directed path from $i$ to $j$ in the arc graph of $K(\eta)$. Let $\widetilde{\NC}(n)$ be the set of pairs $(\eta,\sigma)$ such that $\eta\in\NC(n)$ and $\sigma\in\mathcal L(K(\eta))$. 

\begin{figure}[ht]
\begin{center} 
\includegraphics[height=3.3cm]{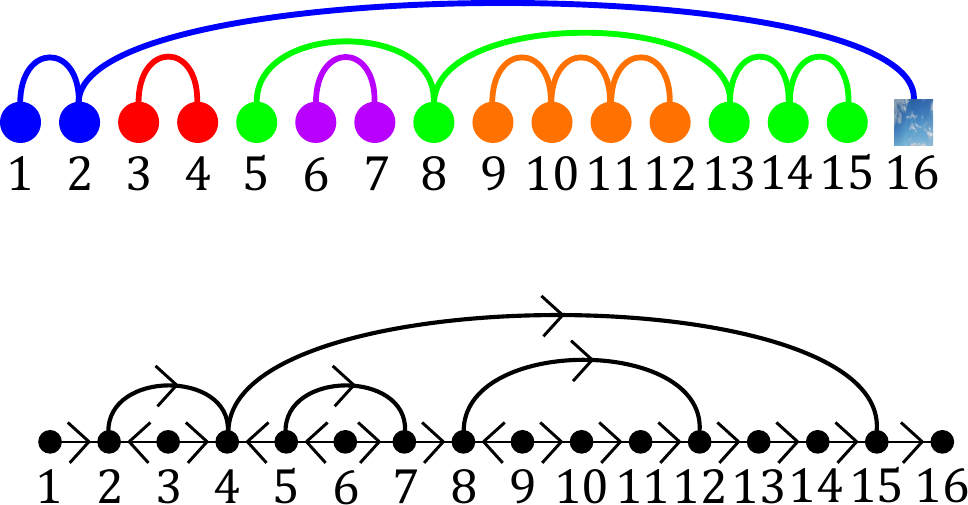}
\end{center}
\caption{On top is the noncrossing partition $\underline{\mathcal H}$, where $\mathcal H$ is the valid hook configuration whose modified diagram is shown in Figure~\ref{Fig10}. Its Kreweras complement is $K(\underline{\mathcal H})=\{\{1\},\{2,4,15\},\{3\},\{5,7\},\{6\},\{8,12\},\{9\},\{10\},\{11\},\{13\},\{14\},\{16\}\}$, whose arc graph is shown on the bottom. }\label{Fig20} 
\end{figure}

The reader may find it helpful to refer to Figures~\ref{Fig10} and \ref{Fig20} in the following proof. If $\pi=\pi_1\cdots\pi_{n-1}\in S_{n-1}$, then $\pi^{-1}$ denotes the permutation in $S_{n-1}$ whose $\pi_{i}^\text{th}$ entry is $i$ for all $i\in[n-1]$. 

\begin{theorem}\label{Thm54}
For $\pi\in S_{n-1}$ and $\mathcal H\in\VHC(\pi)$, let $\Psi(\mathcal H)=(\underline{\mathcal H},\pi^{-1}n)$, where $\pi^{-1} n\in S_n$ is the concatenation of $\pi^{-1}$ and $n$. The map $\Psi:\VHC(S_{n-1})\to\widetilde{\NC}(n)$ is a bijection. 
\end{theorem}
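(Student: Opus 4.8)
The plan is to establish that $\Psi$ is a bijection by exhibiting it as a ``decoration'' of the already-known bijection $\Phi : \VHC(S_{n-1}) \to \widetilde{\Pi}^{\con}(n)$ from Theorem~\ref{Thm18}, combined with the Kreweras complementation map $K$ and a careful bookkeeping of how the sheltered/unsheltered structure of a valid hook configuration records a linear order. Concretely, the first task is to understand what $\underline{\mathcal H}$ and $\pi^{-1}n$ encode. Recall from the discussion preceding Theorem~\ref{Thm18} that $\underline{\mathcal H}$ is obtained by projecting the colored points of the modified diagram downward onto the floor; it is noncrossing by the geometry of Definition~\ref{Def5}. I would first verify that $\underline{\mathcal H}$ has no singleton blocks unless $n=1$: each block corresponds to a color class, and every non-blue color (a hook $H_i$) has a southwest endpoint $(d_i,\pi_{d_i})$ \emph{and} the point $(d_i+1,\pi_{d_i+1})$ of the same color (as noted in the paragraph defining ${\bf q}^{\mathcal H}$), while the blue block contains $1$ and $n$; hence every block has size $\geq 2$ (equivalently, $q_t \geq 1$). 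By the characterization recalled just before the theorem, this is exactly the condition that $\mathcal L(K(\underline{\mathcal H}))$ is nonempty, so the target is at least well-defined.

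The core of the argument is then a two-step reconstruction. \textbf{Step 1: recovering $\underline{\mathcal H}$ and the skeleton of $\mathcal H$.} Given $(\eta,\sigma) \in \widetilde{\NC}(n)$, I want to produce the unique $\mathcal H$ with $\underline{\mathcal H} = \eta$ and $\pi^{-1}n = \sigma$. The noncrossing partition $\eta$ together with its block structure determines the ``shape'' of the hook configuration up to the horizontal placement of points — i.e., it determines which positions carry which colors and the nesting pattern of the hooks, but not the heights $\pi_i$. This is where the Kreweras complement enters: I expect that $K(\eta)$ records precisely the \emph{vertical} adjacency data (which color is directly above which), dual to the horizontal adjacency data recorded by $\eta$; this is the combinatorial content of the observation in Section~\ref{Subsec:CumulantsVHCs} that projecting onto the wall gives the connected partition $\vert\mathcal H$ while projecting onto the floor gives $\underline{\mathcal H}$, the two being Kreweras-complementary-flavored. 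The arch graph of $K(\eta)$ should be exactly the Hasse-type diagram governing the order in which the entries of $\pi$ must appear vertically; a linear extension $\sigma$ of this poset then assigns actual heights to the points, i.e., determines $\pi$ (via $\pi = (\sigma \text{ with the final } n \text{ stripped})^{-1}$, reading $\sigma$ as recording, for each value, its horizontal position). \textbf{Step 2: checking validity and uniqueness.} Once $\pi$ is reconstructed, one must check that the hooks, whose southwest endpoints sit at the descent tops $(d_i,\pi_{d_i})$ and whose northeast endpoints are forced by the color classes, actually satisfy conditions \eqref{Item1}--\eqref{Item3} of Definition~\ref{Def5}; conditions \eqref{Item2} and \eqref{Item3} follow from $\eta$ being noncrossing, and \eqref{Item1} together with the requirement that $\sigma$ be a linear extension of the arch-graph poset is what forces the $d_i$ to be exactly the descents of $\pi$. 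Injectivity of $\Psi$ follows because $\underline{\mathcal H}$ plus the heights $\pi$ (recoverable from $\pi^{-1}n$) clearly determine $\mathcal H$ — the hooks are then uniquely pinned down by Remark~\ref{Rem1}-style reasoning.

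I would organize the write-up as: (i) a lemma that $\Psi(\mathcal H)$ genuinely lands in $\widetilde{\NC}(n)$ (noncrossingness of $\underline{\mathcal H}$, no singletons, and $\pi^{-1}n \in \mathcal L(K(\underline{\mathcal H}))$), where the last clause is the substantive claim and amounts to verifying that the vertical-order constraints imposed by a valid hook configuration are precisely the arch-graph edges of $K(\underline{\mathcal H})$; (ii) construction of the inverse map and verification that it is well-defined and two-sided inverse to $\Psi$. \textbf{The main obstacle} I anticipate is clause (i)'s hard direction — proving that the partial order on heights coming from ``point $a$ lies below point $a'$ and they interact via a hook'' matches, edge for edge, the arch graph of the Kreweras complement of $\underline{\mathcal H}$. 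This requires translating the geometric ``directly above/successor'' relations in the modified diagram into the three edge types defining the arch graph (maximal-in-block $\to$ next integer, non-maximal $\to$ predecessor's role, successor edges) and checking the correspondence is exact, not just up to refinement. A clean way to handle this may be to factor $\Psi$ through $\Phi$: since Theorem~\ref{Thm18} already gives $\Phi(\mathcal H) = (\vert\mathcal H, \alpha_{\mathcal H}) \in \widetilde{\Pi}^{\con}(n)$, and since the bijection $B \mapsto \wideparen B$ identifies $\vert\mathcal H$ with $\underline{\mathcal H}$ size-preservingly, it should suffice to show that the acyclic orientation $\alpha_{\mathcal H}$ of the crossing graph $G(\vert\mathcal H)$, transported to $\underline{\mathcal H}$, is equivalent data to the pair (the arch graph of $K(\underline{\mathcal H})$, the linear extension $\pi^{-1}n$) — reducing the problem to a statement purely about noncrossing partitions, Kreweras complements, and acyclic orientations, independent of the stack-sorting machinery.
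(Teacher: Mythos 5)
Your main route --- show directly that $\pi^{-1}n\in\mathcal L(K(\underline{\mathcal H}))$ by matching the edge types of the arch graph against the geometry of the hooks, then build the inverse by reading hook endpoints off the successor relation in $K(\eta)$ --- is exactly the paper's proof, and the step you flag as the ``main obstacle'' is indeed where all the content lies. The paper carries it out by observing that the edge between $i$ and $i+1$ points from $i+1$ to $i$ iff $i$ is not maximal in its block of $K(\underline{\mathcal H})$ iff $i+1$ is minimal in its block of $\underline{\mathcal H}$ iff $(i,\pi_i)$ is a southwest endpoint iff $i$ is a descent of $\pi$, and that successor edges $i\to j$ in $K(\underline{\mathcal H})$ correspond exactly to hooks from $(i,\pi_i)$ to $(j,\pi_j)$, whence $\pi_i<\pi_j$. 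Your outline defers this verification, so as written it is a plan rather than a proof, but it is the right plan.

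Two concrete cautions. First, the suggested fallback of factoring $\Psi$ through $\Phi$ will not reduce the problem to a statement ``purely about noncrossing partitions, Kreweras complements, and acyclic orientations'': the fibers of $\mathcal H\mapsto\vert\mathcal H$ and of $\mathcal H\mapsto\underline{\mathcal H}$ have different cardinalities in general ($T_{G(\rho)}(1,0)$ versus $|\mathcal L(K(\eta))|$), and the size-preserving correspondence $B\mapsto\wideparen B$ between $\vert\mathcal H$ and $\underline{\mathcal H}$ depends on $\mathcal H$ itself, so the composite $\Psi\circ\Phi^{-1}$ cannot be described without re-running the VHC reconstruction; nothing is gained. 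Second, your inverse construction is missing two points: (a) you must argue that $n$ is the unique sink of the arch graph of $K(\eta)$ (every $i\in[n-1]$ has positive outdegree), which is what forces $\sigma$ to end in $n$ and lets you peel off $\pi^{-1}$; and (b) condition (2) of Definition~\ref{Def5} (no point of the plot lies directly above a hook) does not follow formally from noncrossingness --- the paper needs a separate induction over the hooks, working backwards from $H_k$. There is also a small slip in your no-singletons argument: the southwest endpoint $(d_i,\pi_{d_i})$ is \emph{not} colored like $H_i$ (it looks around the left side of its own hook), so the size-$\geq 2$ claim for a non-blue block should cite $(d_i+1,\pi_{d_i+1})$ together with the northeast endpoint of $H_i$; equivalently, the blocks have sizes $q_t+1\geq 2$ by Remark~\ref{Rem2}.
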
 

\begin{proof}
If $n=1$, then the proof is trivial because $\pi$ is the empty permutation, $\mathcal H$ is the empty valid hook configuration, and $\underline{\mathcal H}=\{\{1\}\}$. Assume $n\geq 2$. We first prove that the image of $\Psi$ is contained in $\widetilde{\NC}(n)$. Let $\tau=\pi n=(\pi^{-1}n)^{-1}$. We need to show that $\tau^{-1}\in\mathcal L(K(\underline{\mathcal H}))$. This is equivalent to showing that $\tau_a<\tau_b$ whenever there is a directed edge from $a$ to $b$ in the arc graph of $K(\underline{\mathcal H})$. Choose $i\in[n-1]$, and consider the edge between $i$ and $i+1$ in the arc graph of $K(\underline{\mathcal H})$. It follows from the definition of the arc graph that this edge is directed from $i+1$ to $i$ if and only if $i$ is not maximal in its block in $K(\underline{\mathcal H})$. By inspecting the definition of the Kreweras complementation map, we see that this occurs if and only if $i+1$ is minimal in its block in $\underline{\mathcal H}$. This occurs if and only if $(i,\pi_i)$ is the southwest endpoint of a hook in $\mathcal H$, which happens if and only if $i$ is a descent of $\pi$. The descents of $\pi$ are the same as the descents of $\tau$, so the edge between $i$ and $i+1$ in the arc graph of $K(\underline{\mathcal H})$ is directed from $i+1$ to $i$ if and only if $\tau_i>\tau_{i+1}$. We also have a directed edge from $i$ to $j$ in the arc graph of $K(\underline{\mathcal H})$ if $j$ is the successor of $i$ in $K(\underline{\mathcal H})$. It is straightforward to check that this occurs if and only if there is a hook with southwest endpoint $(i,\pi_i)$ and northeast endpoint $(j,\pi_j)$ in $\mathcal H$. In this case, $\tau_i=\pi_i<\pi_j=\tau_j$, as desired. 

To see that $\Psi$ is bijective, we exhibit its inverse. Suppose we start with $(\eta,\sigma)\in\widetilde{\NC}(n)$. Consider $i\in[n-1]$. If $i$ is maximal in its block in $K(\eta)$, then there is an edge directed from $i$ to $i+1$ in the arc graph of $K(\eta)$. Otherwise, $i$ has a successor $j$ in $K(\eta)$, and there is a directed edge from $i$ to $j$ in the arc graph of $K(\eta)$. In either case, $i$ is not a sink (i.e., a vertex with outdegree $0$) in the arc graph of $K(\eta)$. As this is true for all $i\in[n-1]$, the number $n$ is forced to be the unique sink in this directed graph. It follows that $n$ appears last in the permutation $\sigma$, so we can write $\sigma=\pi^{-1}n$ for some $\pi\in S_{n-1}$. One can check that $d$ is a descent of $\pi$ if and only if $d$ is not maximal in its block in $K(\eta)$. Therefore, each descent $d$ of $\pi$ has a successor $b$ in $K(\eta)$. Let $d_1<\cdots<d_k$ be the descents of $\pi$. For all $1\leq\ell\leq k$, let $b_\ell$ be the successor of $d_\ell$ in $K(\eta)$; we have $\pi_{d_\ell}<\pi_{b_\ell}$ because $\pi^{-1}n\in\mathcal L(K(\eta))$. Therefore, we can draw a hook $H_\ell$ with southwest endpoint $(d_\ell,\pi_{d_\ell})$ and northeast endpoint $(b_\ell,\pi_{b_\ell})$ for all $1\leq\ell\leq k$; this produces a configuration $\mathcal H=(H_1,\ldots,H_k)$ of hooks of $\pi$. The fact that $K(\eta)$ is noncrossing implies that none of these hooks cross or intersect each other, except possibly when the southwest endpoint of one is the northeast endpoint of another. One can check by induction on $m$ that $H_{k-m}$ does not pass underneath any points in the plot of $\pi$. It follows that $\mathcal H$ is a valid hook configuration. The resulting map $(\eta,\sigma)\mapsto\mathcal H$ is the inverse of $\Psi$.   
\end{proof}

Combining \eqref{Eq56} with Theorem~\ref{Thm54} yields the following corollary. 

\begin{corollary}\label{Cor15}
If $(\kappa_n)_{n\geq 1}$ is a sequence of free cumulants, then the corresponding classical cumulants are given by \[-c_n=\sum_{\eta\in\NC(n)}|\mathcal L(K(\eta))|(-\kappa_\bullet)_{\eta}.\]
\end{corollary}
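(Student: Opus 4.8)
The plan is to derive the formula directly from the two facts that precede the statement: the rewritten VHC Cumulant Formula \eqref{Eq56}, which reads $-c_n=\sum_{\mathcal H\in\VHC(S_{n-1})}(-\kappa_\bullet)_{\underline{\mathcal H}}$, and the bijection $\Psi\colon\VHC(S_{n-1})\to\widetilde{\NC}(n)$ of Theorem~\ref{Thm54}. The key observation is that $\Psi(\mathcal H)=(\underline{\mathcal H},\pi^{-1}n)$, so the noncrossing partition component of $\Psi(\mathcal H)$ is exactly $\underline{\mathcal H}$. Thus grouping the sum in \eqref{Eq56} according to the value $\eta=\underline{\mathcal H}$ is the same as grouping the pairs $(\eta,\sigma)\in\widetilde{\NC}(n)$ according to their first coordinate.

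First I would fix $\eta\in\NC(n)$ and count the valid hook configurations $\mathcal H\in\VHC(S_{n-1})$ with $\underline{\mathcal H}=\eta$. Since $\Psi$ is a bijection onto $\widetilde{\NC}(n)=\{(\eta,\sigma):\eta\in\NC(n),\ \sigma\in\mathcal L(K(\eta))\}$, these $\mathcal H$ are in bijection (via $\Psi$) with the pairs $(\eta,\sigma)$ having that fixed first coordinate, i.e.\ with the set $\mathcal L(K(\eta))$. Hence the number of such $\mathcal H$ is exactly $|\mathcal L(K(\eta))|$. (When $\mathcal L(K(\eta))=\emptyset$ — which by the discussion before Theorem~\ref{Thm54} happens precisely when $\eta$ has a singleton block and $n\geq 2$ — there are simply no valid hook configurations mapping to that $\eta$, and the corresponding term contributes $0$, consistent with $|\mathcal L(K(\eta))|=0$.)

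Then I would substitute into \eqref{Eq56}:
\begin{equation*}
-c_n=\sum_{\mathcal H\in\VHC(S_{n-1})}(-\kappa_\bullet)_{\underline{\mathcal H}}=\sum_{\eta\in\NC(n)}\sum_{\substack{\mathcal H\in\VHC(S_{n-1})\\ \underline{\mathcal H}=\eta}}(-\kappa_\bullet)_{\underline{\mathcal H}}=\sum_{\eta\in\NC(n)}|\mathcal L(K(\eta))|\,(-\kappa_\bullet)_{\eta},
\end{equation*}
where in the last step $(-\kappa_\bullet)_{\underline{\mathcal H}}=(-\kappa_\bullet)_\eta$ depends only on $\eta$ (it is a product over the blocks of $\eta$ of $-\kappa_{|B|}$), so it pulls out of the inner sum, leaving the number of terms, which is $|\mathcal L(K(\eta))|$. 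This is exactly the claimed identity.

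There is essentially no obstacle here: the entire content of the corollary is already packaged in \eqref{Eq56} and Theorem~\ref{Thm54}, and the proof is a one-line reorganization of a sum along the fibers of the bijection $\Psi$. The only point that deserves a sentence of care is the vanishing case (singleton blocks in $\eta$), and that is handled automatically by the convention that an empty set of linear extensions has cardinality zero. So the write-up will be short — mostly just citing the two earlier results and invoking the fiber decomposition.
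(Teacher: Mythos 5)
Your proposal is correct and matches the paper's argument exactly: the paper derives the corollary by "combining \eqref{Eq56} with Theorem~\ref{Thm54}," which is precisely your fiber decomposition of the sum over $\VHC(S_{n-1})$ along the bijection $\Psi$, with $|\mathcal L(K(\eta))|$ counting the configurations $\mathcal H$ having $\underline{\mathcal H}=\eta$. Your extra sentence about the vanishing case for partitions with singleton blocks is a harmless (and correct) elaboration.
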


\begin{remark}\label{Rem3}
It might be interesting to see if there are alternative methods for computing (or possibly other combinatorial descriptions of) the numbers $|\mathcal L(K(\eta))|$, especially when the noncrossing partitions $\eta$ have special forms. For example, if $n$ is even and we put \[\eta=\{\{1,n\},\{2,3\},\{4,5\},\ldots,\{n-2,n-1\}\}\quad\text{and}\quad\eta'=\{\{1,n\},\{2,n-1\},\ldots,\{n/2,n/2+1\}\},\] then one can show that $|\mathcal L(K(\eta))|=(n-3)!!$ and $|\mathcal L(K(\eta'))|=C_{n/2-1}$. 
\end{remark} 

\subsection{A Sum over $231$-Avoiding Valid Hook Configurations} 

Recall the definition of a $231$-avoiding permutation from Example~\ref{ExamBinary2}. Let $\Av_{n-1}(231)$ denote the set of $231$-avoiding permutations in $S_{n-1}$. We say a valid hook configuration is \dfn{$231$-avoiding} if its underlying permutation is $231$-avoiding, and we write $\VHC(\Av_{n-1}(231))$ for the set of $231$-avoiding valid hook configurations in $\VHC(S_{n-1})$. In this section, we will rewrite the VHC Cumulant Formula as a sum over $231$-avoiding valid hook configurations. 

To begin, suppose $P$ is an $(n-1)$-element poset, and let $\mathcal L(P)$ denote the set of linear extensions of $P$, which we view as labelings of $P$ with the elements of $[n-1]$. Given $L\in\mathcal L(P)$, define $p_\ell(L)\in\mathcal L(P)$ as follows. If the element of $P$ with label $\ell$ in $L$ is comparable to (equivalently, covered by) the element with label $\ell+1$, then let $p_\ell(L)=L$. Otherwise, let $p_\ell(L)$ be the linear extension obtained from $L$ by swapping the labels $\ell$ and $\ell+1$. This defines an involution $p_\ell:\mathcal L(P)\to\mathcal L(P)$. Let $\mathfrak S_{\mathcal L(P)}$ denote the set of bijections from $\mathcal L(P)$ to itself. It is not difficult to show (by induction on $n$) that the subgroup of $\mathfrak S_{\mathcal L(P)}$ generated by $p_1,\ldots,p_{n-2}$ acts transitively on $\mathcal L(P)$.  

We now once again make use of the in-order bijection $\mathcal I:\mathsf{\overline{D}BPT}_{n-1}\to S_{n-1}$. Let $T\in\mathsf{BPT}$. The tree $T$ represents a poset in which a vertex $u$ is less than a vertex $v$ whenever $u$ is a descendant of $v$. From this point of view, standardized decreasing binary plane trees with skeleton $T$ correspond to linear extensions of $T$. We can use this correspondence to transfer the maps $p_\ell$ to the set $S_{n-1}$. Doing so, we obtain the following alternative description. 

Suppose $\pi\in S_{n-1}$. If there exists an entry $a>\ell+1$ that appears between $\ell$ and $\ell+1$ in $\pi$, let $p_\ell(\pi)$ be the permutation obtained from $\pi$ by swapping the entries $\ell$ and $\ell+1$. If no such entry $a$ exists, let $p_\ell(\pi)=\pi$. This defines an involution $p_\ell:S_{n-1}\to S_{n-1}$, which we can view as an element of the group $\mathfrak S_{S_{n-1}}$ of all bijections from $S_{n-1}$ to $S_{n-1}$. Let $\mathscr P_{n-1}=\langle p_1,\ldots,p_{n-2}\rangle$ be the subgroup of $\mathfrak S_{S_{n-1}}$ generated by $p_1,\ldots, p_{n-2}$. The result mentioned in the previous paragraph implies that for every permutation $\pi\in S_{n-1}$, the $\mathscr P_{n-1}$-orbit of $\pi$ is the set of permutations $\pi'$ such that $\skel(\mathcal I^{-1}(\pi'))=\skel(\mathcal I^{-1}(\pi))$.\footnote{In \cite{Hivert}, the $\mathscr P_{n-1}$-orbits are called \dfn{sylvester classes}. In \cite{Bjorner}, it is shown that $\mathscr P_{n-1}$-orbits form intervals in the weak order on $S_n$. In \cite{Postnikov}, it is shown that $\mathscr P_{n-1}$-orbits naturally label vertices of associahedra when associahedra are viewed as generalized permutohedra.}  

Notice that the set of descents of $\pi$ is the same as the set of descents of $p_\ell(\pi)$. Suppose $\mathcal H=(H_1,\ldots,H_k)\in\VHC(\pi)$. If $H_r$ has southwest endpoint $(i,\pi_i)$ and northeast endpoint $(j,\pi_j)$, then let $\widetilde H_r$ be the hook of $p_\ell(\pi)$ with southwest endpoint $(i,(p_\ell(\pi))_i)$ and northeast endpoint $(j,(p_\ell(\pi))_j)$. One can easily verify that $\widetilde{\mathcal H}:=(\widetilde H_1,\ldots,\widetilde H_k)$ is a valid hook configuration of $p_\ell(\pi)$ satisfying $\underline{\widetilde{\mathcal H}}=\underline{\mathcal H}$ and that the resulting map $\VHC(\pi)\to\VHC(p_\ell(\pi))$ given by $\mathcal H\mapsto\widetilde{\mathcal H}$ is a bijection. It follows that the set $\{\underline{\mathcal H}:\mathcal H\in\VHC(\pi)\}$ of noncrossing partitions associated to valid hook configurations of $\pi$ only depends on the $\mathscr P_{n-1}$-orbit of $\pi$. Therefore, we can rewrite \eqref{Eq56} as 
\begin{equation}\label{Eq58}
-c_n=\sum_{\pi}\mathscr T_\pi\sum_{\mathcal H\in\VHC(\pi)}(-\kappa_\bullet)_{\underline{\mathcal H}},
\end{equation} where the first sum ranges over a set of representatives for the $\mathscr P_{n-1}$-orbits in $S_{n-1}$ and $\mathscr T_\pi$ denotes the size of the $\mathscr P_{n-1}$-orbit containing $\pi$. 

For each valid hook configuration $\mathcal H$ of a permutation $\pi\in S_{n-1}$, let $\mathscr T_{\mathcal H}=\mathscr T_\pi$. Letting $T_\pi=\skel(\mathcal I^{-1}(\pi))$, we see from the above remarks that $\mathscr T_{\mathcal H}$ is the number of standardized decreasing binary plane trees with skeleton $T_\pi$. This description is useful because there is a well-known hook length formula for the number of linear extensions of a poset whose Hasse diagram is a rooted tree (originally due to Knuth in \cite{Knuth2}). For each of the $n-1$ vertices $v$ of $T_\pi$, let $h_v$ denote the size of the subtree of $T_v$ with root $v$ (including $v$ itself). Then \[\mathscr T_{\mathcal H}=\frac{(n-1)!}{\prod_vh_v}.\] 

For every binary plane tree $T$ with $n-1$ vertices, there is a unique decreasing binary plane tree $\ddot T$ with skeleton $T$ such that $\mathcal I(\ddot T)\in\Av_{n-1}(231)$. Indeed, $\ddot T$ is obtained by labeling the vertices of $T$ so that the postorder reading $\mathcal P(\ddot T)$ is $123\cdots(n-1)$. Consequently, $\Av_{n-1}(231)$ is a set of representatives for the $\mathscr P_{n-1}$-orbits in $S_{n-1}$. Referring back to \eqref{Eq58}, we obtain the following theorem.  

\begin{theorem}\label{Thm25}
If $(\kappa_n)_{n\geq 1}$ is a sequence of free cumulants, then the corresponding classical cumulants are given by \[-c_n=\sum_{\mathcal H\in\VHC(\Av_{n-1}(231))}\mathscr T_{\mathcal H}(-\kappa_\bullet)_{\underline{\mathcal H}}.\]
\end{theorem}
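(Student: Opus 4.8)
The plan is to derive the formula by rewriting the VHC Cumulant Formula in the form \eqref{Eq56}, namely $-c_n=\sum_{\mathcal H\in\VHC(S_{n-1})}(-\kappa_\bullet)_{\underline{\mathcal H}}$, and then partitioning the index set $\VHC(S_{n-1})$ according to the $\mathscr P_{n-1}$-orbit of the underlying permutation. First I would recall the key stability property established just before the statement: for each generator $p_\ell$ there is a size-preserving bijection $\VHC(\pi)\to\VHC(p_\ell(\pi))$, $\mathcal H\mapsto\widetilde{\mathcal H}$, with $\underline{\widetilde{\mathcal H}}=\underline{\mathcal H}$. Iterating, the map $\mathcal H\mapsto\underline{\mathcal H}$ and the multiset $\{\underline{\mathcal H}:\mathcal H\in\VHC(\pi)\}$ depend only on the $\mathscr P_{n-1}$-orbit of $\pi$. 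This is exactly what licenses grouping the sum in \eqref{Eq56} over orbit representatives, each representative's contribution counted with multiplicity $\mathscr T_\pi$ (the orbit size), giving \eqref{Eq58}: $-c_n=\sum_{\pi}\mathscr T_\pi\sum_{\mathcal H\in\VHC(\pi)}(-\kappa_\bullet)_{\underline{\mathcal H}}$, the outer sum over a transversal.

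Next I would justify the specific choice of transversal. Since $\mathscr P_{n-1}$-orbits in $S_{n-1}$ coincide with skeleton classes of $\mathcal I^{-1}$ (the Sylvester classes), and since every binary plane tree $T$ with $n-1$ vertices admits a unique decreasing labeling $\ddot T$ whose postorder reading is $123\cdots(n-1)$ — equivalently whose in-order reading $\mathcal I(\ddot T)$ is $231$-avoiding, by the bijection $\mathsf{BPT}_{n-1}\to\Av_{n-1}(231)$ recalled in Example~\ref{ExamBinary2} — the set $\Av_{n-1}(231)$ is a complete set of orbit representatives. Substituting this transversal into \eqref{Eq58}, and writing $\mathscr T_{\mathcal H}:=\mathscr T_\pi$ for $\mathcal H\in\VHC(\pi)$, collapses the double sum to a single sum over $\mathcal H\in\VHC(\Av_{n-1}(231))=\bigcup_{\pi\in\Av_{n-1}(231)}\VHC(\pi)$, yielding $-c_n=\sum_{\mathcal H\in\VHC(\Av_{n-1}(231))}\mathscr T_{\mathcal H}(-\kappa_\bullet)_{\underline{\mathcal H}}$, which is the claim. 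The identification of $\mathscr T_{\mathcal H}$ as the number of linear extensions of the rooted-tree poset $T_\pi=\skel(\mathcal I^{-1}(\pi))$, computable via Knuth's hook-length formula $\mathscr T_{\mathcal H}=(n-1)!/\prod_v h_v$, is already recorded in the text and needs only to be cited, not reproved.

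The main obstacle is not any single computation but ensuring the orbit-grouping is watertight: I must verify that the bijections $\mathcal H\mapsto\widetilde{\mathcal H}$ for different generators $p_\ell$ are compatible enough that iterating them gives, for any two permutations $\pi,\pi'$ in the same orbit, a genuine bijection $\VHC(\pi)\to\VHC(\pi')$ preserving $\underline{(\cdot)}$ — this is where one uses the transitivity of the $\mathscr P_{n-1}$-action on each Sylvester class (equivalently, on $\mathcal L(T_\pi)$), a fact stated in the excerpt as provable by induction on $n$. Once transitivity is in hand, the multiset $\{\underline{\mathcal H}:\mathcal H\in\VHC(\pi)\}$ is literally orbit-invariant, so summing over the orbit multiplies the representative's contribution by the orbit size, and no double-counting or omission occurs. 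Everything else — the $231$-avoiding transversal, the collapse of the double sum, and the hook-length description of $\mathscr T_{\mathcal H}$ — is bookkeeping that follows directly from results already in place, so the proof is essentially a careful assembly of \eqref{Eq56}, the $p_\ell$-equivariance of valid hook configurations, and the Sylvester-class/$231$-avoiding correspondence.
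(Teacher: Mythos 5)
Your proposal is correct and follows essentially the same route as the paper: rewrite the VHC Cumulant Formula as a sum over $\underline{\mathcal H}$, use the $p_\ell$-equivariant bijections $\VHC(\pi)\to\VHC(p_\ell(\pi))$ preserving $\underline{(\cdot)}$ to group the sum over $\mathscr P_{n-1}$-orbits with multiplicity $\mathscr T_\pi$, and then take $\Av_{n-1}(231)$ as the transversal via the Sylvester-class correspondence. The hook-length description of $\mathscr T_{\mathcal H}$ is, as you note, already recorded and only cited.
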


Let us remark that $231$-avoiding valid hook configurations were enumerated in \cite{DefantMotzkin}, where they were shown to be in bijection with $132$-avoiding valid hook configurations (this essentially follows from the above remarks because every $\mathscr P_{n-1}$-orbit contains a unique $231$-avoiding permutation and a unique $132$-avoiding permutation). In \cite{Maya}, Sankar gave an intricate bijection between $132$-avoiding valid hook configurations and intervals in Motzkin-Tamari posets. 

\section{$2$-Stack-Sortable and $3$-Stack-Sortable Permutations}\label{Sec:2-stack}

A permutation $\pi$ is called \dfn{$t$-stack-sortable} if $s^t(\pi)$ is increasing. Let $\mathcal W_t(n)$ denote the set of $t$-stack-sortable permutation in $S_n$. In \cite{Knuth}, Knuth proved that $\mathcal W_1(n)$ is the set $\Av_n(231)$ of $231$-avoiding permutations in $S_n$ and that $|\mathcal W_1(n)|=|\Av_n(231)|=C_n$. In his thesis, West \cite{West} conjectured that $|\mathcal W_2(n)|=\frac{2}{(n+1)(2n+1)}\binom{3n}{n}$. This was later proved by Zeilberger \cite{Zeilberger}. Since then, several papers devoted to the enumerative properties of $2$-stack-sortable permutations have emerged \cite{BonaSimplicial, Bousquet98, Branden3, Cori, DefantCounting, Dulucq, Dulucq2, Egge, Fang, Goulden}. The problem of finding a polynomial-time algorithm for enumerating $3$-stack-sortable permutations was open for 30 years, and was solved recently in \cite{DefantCounting}. 

If ${\bf T}\subseteq\mathsf{BPT}$, then the in-order map $\mathcal I$ gives a bijection between $\mathsf{\overline D}{\bf T}$ and a set $\mathcal I(\mathsf{\overline D}{\bf T})$ of standardized permutations associated to ${\bf T}$. For example, $\mathcal I(\mathsf{\overline{D}BPT})$ is the set of all standardized permutations, $\mathcal I(\mathsf{\overline{D}FBPT})$ is the set of standardized alternating permutations of odd length, and $\mathcal I(\mathsf{\overline{D}Mot})$ is the set of standardized permutations in which every descent is a peak. A consequence of the main theorem in this section will provide a way to enumerate $2$-stack-sortable permutations in $\mathcal I(\mathsf{\overline D}{\bf T})$ whenever ${\bf T}\subseteq\mathsf{BPT}$ is a troupe. We will also see that in many cases, the generating function that counts trees in $\mathcal P^{-1}(\Av(231))\cap\mathsf{\overline D}{\bf T}$ according to a collection of insertion-additive tree statistics is algebraic. There has been a great deal of work devoted to proving the algebraicity of various generating functions of combinatorial interest (see the survey \cite{BousquetAlgebraic}). We will also obtain a recurrence that counts $3$-stack-sortable permutations in $\mathcal I(\mathsf{\overline D}{\bf T})$.

The \dfn{tail length} of a permutation $\pi=\pi_1\cdots\pi_n\in S_n$, denoted $\tl(\pi)$, is the largest integer $\ell\in\{0,\ldots,n\}$ such that $\pi_i=i$ for all $i\in\{n-\ell+1,\ldots,n\}$. For example, $\tl(324156)=2$, $\tl(3421)=0$, and $\tl(12345)=5$. Let $\mathcal D_{\geq\ell}(n)=\{\pi\in\Av_{n+\ell}(231):\tl(\pi)\geq\ell\}$. In particular, $D_{\geq 0}(n)=\Av_n(231)$. Let ${\bf T}$ be a troupe, and let $f_1,\ldots,f_r$ be insertion-additive tree statistics. Let \[{\bf G}^{(x_1,\ldots,x_r)}(y)=\sum_{\ell\geq 0}{\bf G}_\ell(x_1,\ldots,x_r)y^\ell=\sum_{\ell\geq 0}\sum_{T\in{\bf T}_\ell}x_1^{f_1(T)}\cdots x_r^{f_r(T)}y^\ell.\] We are interested in the generating function \[I_{x_1,\ldots,x_r}(z,y)=\sum_{\ell\geq 0}\sum_{n\geq 0}\sum_{\mathcal T\in\mathcal P^{-1}(\mathcal D_{\geq\ell}(n))\cap\mathsf{\overline D}{\bf T}}x_1^{\ddot f_1(\mathcal T)}\cdots x_r^{\ddot f_r(\mathcal T)}z^ny^\ell.\] In truth, we will be primarily interested in the specialization \[I_{x_1,\ldots,x_r}(z,0)=\sum_{n\geq 0}\sum_{\mathcal T\in\mathcal P^{-1}(\Av_n(231))\cap\mathsf{\overline D}{\bf T}}x_1^{\ddot f_1(\mathcal T)}\cdots x_r^{\ddot f_r(\mathcal T)}z^n.\]

\begin{theorem}\label{Thm21}
With notation as above, we have \[(I_{x_1,\ldots,x_r}(z,y)-I_{x_1,\ldots,x_r}(z,0))(I_{x_1,\ldots,x_r}(z,y)-{\bf G}^{(x_1,\ldots,x_r)}(y))\] 
\[=\frac{I_{x_1,\ldots,x_r}(z,y)-{\bf G}^{(x_1,\ldots,x_r)}(y)}{z}-\frac{I_{x_1,\ldots,x_r}(z,y)-I_{x_1,\ldots,x_r}(z,0)}{y}.\] 
\end{theorem}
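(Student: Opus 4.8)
The plan is to expand the Tree Decomposition Lemma (Corollary~\ref{Cor1}) in the style of the Iterative Decomposition Procedure, but applied to a $231$-avoiding permutation $\pi$ with a prescribed tail length, and then to track how the decomposition interacts with both the tail and the $231$-avoidance. The key structural fact I would use is this: if $\pi \in \Av_n(231)$ is not increasing and $d$ is its largest descent, then $\pi$ can be written uniquely in the form $\pi = L\, m\, R$ where $m = \max(\pi)$, and because $\pi$ avoids $231$, the entries of $L$ are all smaller than the entries of $R$, with $R$ increasing and equal to the tail (after accounting for the position of $m$). More precisely, the largest descent $d$ is the position just before $m$ in the appropriate normalization, and applying the Tree Decomposition Lemma at $d$ produces an unsheltered part that is again $231$-avoiding with a \emph{longer} tail (since we have peeled off $m$ together with the increasing run it sits atop), and a sheltered part $\pi_S^H$ which is itself a $231$-avoiding permutation (on a shifted alphabet) with tail length $0$ in general, but whose value is recorded by $I_{x_1,\ldots,x_r}(z,0)$. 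The hook $H$ chosen in $\SW_d(\pi)$ is forced by $231$-avoidance up to the choice of how far into the increasing tail its northeast endpoint reaches; this choice is exactly what is encoded by the variable $y$ tracking tail length.

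The heart of the argument is to set up the bijective/generating-function recursion carefully. First I would recall, via the Refined Tree Decomposition Lemma (Theorem~\ref{Thm17}) specialized to ${\bf T}$, that for a $231$-avoiding $\pi$ with largest descent $d$,
\[
\sum_{\mathcal T\in\mathcal P^{-1}(\pi)\cap\mathsf D{\bf T}} x_1^{\ddot f_1(\mathcal T)}\cdots x_r^{\ddot f_r(\mathcal T)}
= \sum_{H\in\SW_d(\pi)}\Big(\sum_{\mathcal T_U}\cdots\Big)\Big(\sum_{\mathcal T_S}\cdots\Big),
\]
and then observe that each $\pi_U^H$ is $231$-avoiding with tail length one greater than some controllable quantity, while each $\pi_S^H$ is (a standardization of) a $231$-avoiding permutation with arbitrary tail length. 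Summing over all $231$-avoiding $\pi$ of all lengths, weighted by $z^n y^\ell$ where $\ell = \tl(\pi)$, the right-hand side factors: the unsheltered piece contributes a copy of $I_{x_1,\ldots,x_r}(z,y)$ (with the tail bookkeeping shifted), the sheltered piece contributes $I_{x_1,\ldots,x_r}(z,0)$, and the base case of increasing permutations contributes ${\bf G}^{(x_1,\ldots,x_r)}(y)$. The shifts in the exponents of $z$ and $y$ produced by peeling off one hook are what generate the $\tfrac{1}{z}$ and $\tfrac{1}{y}$ terms on the right-hand side of the stated identity. After assembling these contributions and clearing denominators, I expect to land exactly on
\[
(I - I|_{y=0})(I - {\bf G}) = \frac{I-{\bf G}}{z} - \frac{I - I|_{y=0}}{y},
\]
with $I = I_{x_1,\ldots,x_r}(z,y)$ and ${\bf G} = {\bf G}^{(x_1,\ldots,x_r)}(y)$.

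The main obstacle, I anticipate, is getting the tail-length bookkeeping exactly right: when we decompose $\pi$ at its largest descent and choose a hook $H$ whose northeast endpoint lands $i$ steps into the increasing tail, the resulting unsheltered permutation $\pi_U^H$ has a tail whose length is determined by $i$ and by the previous tail length, and one must check that summing over the admissible $i$ reproduces a clean geometric-series-type relation rather than something more complicated. In particular one must verify that $231$-avoidance is preserved by \emph{both} halves of the decomposition and that, conversely, every $231$-avoiding tree in $\mathcal P^{-1}(\pi)\cap\mathsf{\overline D}{\bf T}$ arises this way—i.e.\ that the hook is genuinely forced to have its southwest endpoint at the largest descent and that no spurious $231$-patterns are created when we reattach $\mathcal T_S$. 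A secondary subtlety is the boundary term corresponding to $\pi$ increasing (no hooks), which must be handled separately and is exactly what supplies the ${\bf G}^{(x_1,\ldots,x_r)}(y)$ terms; I would treat $n=0$ and the all-increasing case first, then run the decomposition recursion on everything else. Once the bijection is pinned down, extracting the functional equation is a routine manipulation of the three generating-function sums.
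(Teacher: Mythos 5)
Your overall strategy---iterating the Refined Tree Decomposition Lemma over $231$-avoiding permutations while tracking the tail length with the catalytic variable $y$, with the increasing permutations supplying the ${\bf G}^{(x_1,\ldots,x_r)}(y)$ boundary term---is the right one, and it is the approach the paper takes (the paper's proof simply cites the case ${\bf T}=\mathsf{BPT}$ from \cite{DefantCounting} and notes that Theorem~\ref{Thm17} substitutes for Corollary~\ref{Cor2}). But what you have written is a plan, not a proof: the single step that constitutes the entire content of the theorem---checking that the decomposition yields exactly the stated functional equation---is announced (``I expect to land exactly on\dots'') rather than carried out, and the structural claims you do commit to along the way are not correct. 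For $\pi\in\mathcal D_{\geq\ell}(n)$ with $\ell\geq 1$ the maximum is the \emph{last} entry, so in $\pi=L\,m\,R$ the block $R$ is empty and cannot be ``the tail''; for a general $231$-avoider the block $R$ need not be increasing (e.g.\ $1432$); and the largest descent need not sit just before the maximum (e.g.\ $21435\in\mathcal D_{\geq 1}(4)$ has largest descent $3$ while $5$ occupies position $5$).

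More importantly, the factorization you anticipate---unsheltered part contributing $I_{x_1,\ldots,x_r}(z,y)$ and sheltered part contributing $I_{x_1,\ldots,x_r}(z,0)$---does not match the identity to be proved. Extracting the coefficient of $z^ny^\ell$ from the claimed equation, one finds that what must be shown is
\[\sum_{\substack{\pi\in\Av_{n+\ell+1}(231)\\ \tl(\pi)=\ell}}\ \sum_{\mathcal T\in\mathcal P^{-1}(\pi)\cap\overline{\mathsf D}{\bf T}}x_1^{\ddot f_1(\mathcal T)}\cdots x_r^{\ddot f_r(\mathcal T)}=\sum_{\substack{n_1+n_2=n,\ n_2\geq 1\\ \ell_1+\ell_2=\ell,\ \ell_1\geq 1}}A_{n_1,\ell_1}A_{n_2,\ell_2},\]
where $A_{n',\ell'}$ denotes the analogous sum over $\mathcal D_{\geq\ell'}(n')$. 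Thus $I_{x_1,\ldots,x_r}(z,0)$ and ${\bf G}^{(x_1,\ldots,x_r)}(y)$ enter only through the subtraction of the $\ell=0$ and $n=0$ slices, not as standalone factors, and the two halves of the decomposition must each be shown to lie in a set $\mathcal D_{\geq\ell_i}(n_i)$ with the lengths and tail lengths splitting additively over exactly the indicated ranges. Establishing this correspondence (including that every admissible hook choice arises exactly once) is precisely the bookkeeping you defer, so the proposal as it stands does not establish the theorem; to repair it you should either carry out this verification in full or follow the paper and transport the argument of \cite{DefantCounting} verbatim, replacing the Refined Decomposition Lemma by Theorem~\ref{Thm17}.
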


\begin{proof}
The specific case in which ${\bf T}=\mathsf{BPT}$, $f_1(T)=\des(T)+1$, $f_2(T)=\peak(T)+1$ was proven in Section 4 of \cite{DefantCounting}, except that the proof there was written in the language of stack-sorting instead of postorder readings. The exact same proof applies, \emph{mutatis mutandis}, in this more general setting. The main difference is that one must now use the Refined Tree Decomposition Lemma (Theorem~\ref{Thm17}) instead of the Refined Decomposition Lemma (Corollary~\ref{Cor2}). We omit the details. 
\end{proof}

Let $\mathbb K=\mathbb C(x_1,\ldots,x_r)$. Suppose the generating function ${\bf G}^{(x_1,\ldots,x_r)}(y)$ is algebraic over $\mathbb K(y)$, meaning that it satisfies a polynomial equation with coefficients in $\mathbb K(y)$. Then we can solve the equation in Theorem~\ref{Thm21} for ${\bf G}^{(x_1,\ldots,x_r)}(y)$, substitute the result into the polynomial satisfied by ${\bf G}^{(x_1,\ldots,x_r)}(y)$, and clear denominators in order to obtain a polynomial equation of the form \[Q(I_{x_1,\ldots,x_r}(z,y),I_{x_1,\ldots,x_r}(z,0),z,y)=0.\] This is a polynomial equation with one ``catalytic variable'' $y$. Therefore, the next result follows immediately from Theorem 3 in \cite{BousquetJehanne}. 

\begin{corollary}\label{Cor8}
Preserve the notation from above. If ${\bf G}^{(x_1,\ldots,x_r)}(y)$ is algebraic over $\mathbb K(y)$, then $I_{x_1,\ldots,x_r}(z,y)$ is algebraic over $\mathbb K(z,y)$ and, consequently, $I_{x_1,\ldots,x_r}(z,0)$ is algebraic over $\mathbb K(z)$. 
\end{corollary}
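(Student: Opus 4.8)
\textbf{Proof proposal for Corollary~\ref{Cor8}.} The plan is to take the functional equation of Theorem~\ref{Thm21}, eliminate the auxiliary series ${\bf G}^{(x_1,\ldots,x_r)}(y)$, and recognize the result as a polynomial equation with a single catalytic variable so that the Bousquet-M\'elou--Jehanne machinery \cite{BousquetJehanne} applies verbatim. First I would rename, for brevity, $I(z,y)=I_{x_1,\ldots,x_r}(z,y)$, $I_0(z)=I_{x_1,\ldots,x_r}(z,0)$, and $G(y)={\bf G}^{(x_1,\ldots,x_r)}(y)$, all regarded as elements of (a suitable completion of) $\mathbb K[[z,y]]$ with $\mathbb K=\mathbb C(x_1,\ldots,x_r)$. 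Theorem~\ref{Thm21} reads
\[
(I(z,y)-I_0(z))(I(z,y)-G(y))=\frac{I(z,y)-G(y)}{z}-\frac{I(z,y)-I_0(z)}{y}.
\]
This is linear in $G(y)$: collecting the terms containing $G(y)$ gives $G(y)\big((I_0(z)-I(z,y))+\tfrac1z\big)$ on one side, so as long as $(I_0(z)-I(z,y))+\tfrac1z$ is not the zero series (which it is not, since its constant term in $z$ is a nonzero multiple of $1/z$ — more carefully, one clears the $1/z$ first by multiplying through by $z$, and then the coefficient of $G(y)$ is $1+z(I_0(z)-I(z,y))$, a unit in $\mathbb K[[z,y]]$), one can solve:
\[
G(y)=\frac{\,z(I(z,y)-I_0(z))I(z,y)+\tfrac{z}{y}(I(z,y)-I_0(z))-I(z,y)\,}{\,z(I_0(z)-I(z,y))-1\,}.
\]
Thus $G(y)$ is a rational function of $I(z,y)$, $I_0(z)$, $z$, and $y$ over $\mathbb K$.

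The second step is to use the hypothesis that $G(y)$ is algebraic over $\mathbb K(y)$: there is a nonzero polynomial $P(W,y)\in\mathbb K[W,y]$ (or with coefficients in $\mathbb K(y)$, which one clears) with $P(G(y),y)=0$. Substituting the rational expression for $G(y)$ found above into $P$ and clearing all denominators, I obtain a polynomial identity
\[
Q\big(I(z,y),\,I_0(z),\,z,\,y\big)=0,
\]
where $Q\in\mathbb K[W,V,z,y]$ is not identically zero in $W$ (one must check this — see below). This is precisely the shape of a polynomial equation in the single catalytic variable $y$, in the terminology of \cite{BousquetJehanne}: a polynomial relation among the bivariate unknown $I(z,y)$, finitely many of its sections (here only the one section $I_0(z)=I(z,0)$), and $z,y$. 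Theorem~3 of \cite{BousquetJehanne} then asserts that any power-series solution $I(z,y)$ of such an equation is algebraic over $\mathbb K(z,y)$; specializing $y=0$ shows $I_0(z)$ is algebraic over $\mathbb K(z)$. (One should note that the paper's convention allows $\mathbb K$ itself to be a field of rational functions — the catalytic-variable theorem is insensitive to which field of constants one works over, so taking $\mathbb K=\mathbb C(x_1,\ldots,x_r)$ causes no trouble.)

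The step I expect to be the main obstacle — really the only point requiring genuine care rather than bookkeeping — is verifying the nondegeneracy hypotheses needed to invoke \cite[Thm.~3]{BousquetJehanne}: namely that after the elimination the polynomial $Q(W,V,z,y)$ genuinely involves $W=I(z,y)$ (is not a polynomial in $V,z,y$ alone), that it has the required form $W = \mathrm{Pol}(W,W_{\text{sections}},z,y)$ up to reindexing, and that the relevant leading coefficients do not vanish identically, so that the initial-data argument of Bousquet-M\'elou and Jehanne indeed pins down a unique and algebraic power-series solution. Concretely I would argue that $G(y)$ is a \emph{nonconstant} rational function of $W=I(z,y)$ (its expression above has $W$ appearing quadratically in the numerator and linearly in the denominator, and these cannot cancel because the denominator $z(I_0(z)-W)-1$ has $z$-adic valuation $0$ in its $W$-free part while the numerator vanishes at $W=I_0(z)$), so substituting into $P(\cdot,y)\not\equiv 0$ produces a $Q$ that does not collapse. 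Everything else — solving the linear equation for $G$, clearing denominators, and the final specialization $y=0$ — is routine and can be compressed to a sentence or two, exactly as the proof of Theorem~\ref{Thm21} defers to \cite{DefantCounting} and the surrounding discussion defers to \cite{BousquetJehanne}.
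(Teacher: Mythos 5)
Your proposal is correct and follows essentially the same route as the paper: the paper's proof is exactly the paragraph preceding the corollary, which solves the (linear in ${\bf G}^{(x_1,\ldots,x_r)}(y)$) equation of Theorem~\ref{Thm21} for ${\bf G}^{(x_1,\ldots,x_r)}(y)$, substitutes into its algebraic equation, clears denominators, and invokes Theorem~3 of \cite{BousquetJehanne}; your extra attention to the nondegeneracy of the resulting equation is a reasonable supplement to the paper's ``follows immediately.'' One small slip: the denominator in your displayed expression for $G(y)$ should be $z\bigl(I(z,y)-I_0(z)\bigr)-1$ rather than $z\bigl(I_0(z)-I(z,y)\bigr)-1$, but this does not affect the argument.
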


The preceding corollary yields the algebraicity of several generating functions associated to troupes. For specific examples, suppose ${\bf T}$ and $f_1,\ldots, f_r$ are as in one of the Examples \ref{ExamBinary1}, \ref{ExamFull1}, \ref{ExamMotzkin1}, or \ref{ExamSchroder1}. In each of these cases, we saw that the generating function ${\bf G}^{(x_1,\ldots,x_r)}(y)$ is algebraic over $\mathbb K(y)$, so it follows that $I_{x_1,\ldots,x_r}(z,0)$ is algebraic over $\mathbb K(z)$. To make this even more concrete, we will show how to use the methods from \cite{BousquetJehanne} to find an explicit algebraic equation satisfied by $I_{x_1,\ldots,x_r}(z,0)$ when ${\bf T}=\mathsf{FBPT}$ and $r=0$. In this case, \[I_{x_1,\ldots,x_r}(z,0)=I(z,0)=\sum_{n\geq 0}|\mathcal P^{-1}(\Av_n(231))\cap\mathsf{\overline{D}FBPT}|z^n.\] Note that $s^{-1}(\Av_n(231))=s^{-1}(\mathcal W_1(n))=\mathcal W_2(n)$. Using \eqref{Eq11}, we find that the in-order reading gives a bijection between $\mathcal P^{-1}(\Av_n(231))\cap\mathsf{\overline{D}FBPT}$ and the set $\mathcal W_2(n)\cap\ALT$ when $n$ is odd. Thus, $I(z,0)$ is the generating function for (standardized) $2$-stack-sortable alternating permutations of odd length. 

\begin{corollary}\label{Cor7}
Let \[I(z,0)=\sum_{n\geq 0}|\mathcal P^{-1}(\Av_n(231))\cap\mathsf{\overline{D}FBPT}|z^n=\sum_{k\geq 0}|\mathcal W_2(2k+1)\cap\ALT|z^{2k+1}\] be the generating function that counts (standardized) $2$-stack-sortable alternating permutations of odd length. Then $\mathcal R(I(z,0),z)=0$, where \[\mathcal R(v,z)=(-z + 27 z^3) + (1 - 33 z^2) v + (4 z + 33 z^3) v^2 + (6 z^2 + 
    z^4) v^3 + 4 z^3 v^4 + z^4 v^5.\] 
\end{corollary}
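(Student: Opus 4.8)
The plan is to specialize Corollary~\ref{Cor8} to the troupe $\mathbf{T} = \mathsf{FBPT}$ with $r = 0$ (no tree statistics), and then carry out the elimination procedure from \cite{BousquetJehanne} explicitly. First I would record the relevant inputs. With $r = 0$, the generating function $\mathbf{G}^{()}(y) = \sum_{\ell \geq 0} |\mathsf{FBPT}_\ell| y^\ell = \sum_{k \geq 0} C_k y^{2k+1} = \dfrac{1 - \sqrt{1 - 4y^2}}{2y}$ by \eqref{Eq14}; equivalently, it is the algebraic function $G = G(y)$ satisfying $G = y + y G^2$. Writing $I(z,y) = I_{()}(z,y)$ and $I_0 = I(z,0)$, Theorem~\ref{Thm21} becomes
\[
(I(z,y) - I_0)(I(z,y) - G(y)) = \frac{I(z,y) - G(y)}{z} - \frac{I(z,y) - I_0}{y}.
\]

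Next I would eliminate $G(y)$. Solving the functional equation $G = y + yG^2$ and the displayed identity simultaneously for $G$ — or, more cleanly, substituting $G = I - \big(\text{stuff}\big)$ from the functional equation into the quadratic $G = y + yG^2$ — yields a single polynomial equation $Q(I(z,y), I_0, z, y) = 0$ with polynomial coefficients, after clearing denominators in $z$ and $y$. Concretely, from the Theorem~\ref{Thm21} identity one solves for $G$ as a rational function of $I$, $I_0$, $z$, $y$ (the equation is linear in $G$ once expanded, since $G$ appears only through $I - G$ to the first power on each side), then plugs into $G - y - yG^2 = 0$. This is the polynomial-with-one-catalytic-variable $Q(I(z,y), I_0, z, y) = 0$ to which Theorem 3 of \cite{BousquetJehanne} applies.

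Then I would run the kernel/elimination method of Bousquet-Mélou and Jehanne: differentiate $Q$ with respect to $y$, treating $I(z,y)$ as the unknown series in $y$ and $I_0 = I(z,0)$ as a constant (in $y$); introduce the standard auxiliary series $Y = Y(z)$ defined by $\partial_{\,\text{(appropriate slot)}} Q = 0$ (the "catalytic" specialization $y \mapsto Y(z)$), obtaining the system
\[
Q(I(z,Y), I_0, z, Y) = 0, \qquad \frac{\partial Q}{\partial v}\Big|_{v = I(z,Y)} = 0, \qquad \frac{\partial Q}{\partial y}\Big|_{y = Y} = 0,
\]
where $v$ denotes the first argument slot. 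Eliminating $Y$ and $I(z,Y)$ from these three equations via resultants leaves a single polynomial relation $\mathcal R(I_0, z) = 0$ between $I_0 = I(z,0)$ and $z$. The claim is that this resultant, after removing spurious factors and normalizing, is exactly
\[
\mathcal R(v,z) = (-z + 27z^3) + (1 - 33z^2)v + (4z + 33z^3)v^2 + (6z^2 + z^4)v^3 + 4z^3 v^4 + z^4 v^5,
\]
and one checks that $I(z,0) = z + z^3 + \cdots$ (matching the first few values $|\mathcal{W}_2(1) \cap \mathrm{ALT}| = 1$, $|\mathcal{W}_2(3) \cap \mathrm{ALT}| = 1$, etc., obtainable directly from \eqref{Eq25} and small valid hook configurations) is the branch of $\mathcal R(v,z) = 0$ that vanishes at $z = 0$, which pins down $\mathcal R$ up to the correct irreducible factor and sign. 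The interpretation of $I(z,0)$ as the generating function for $2$-stack-sortable alternating permutations of odd length follows from $s = \mathcal P \circ \mathcal I^{-1}$ in \eqref{Eq11}, the bijection $\mathcal I \colon \mathsf{\overline{D}FBPT}_n \to S_n \cap \mathrm{ALT}$ for odd $n$, and the identity $s^{-1}(\Av_n(231)) = \mathcal W_2(n)$.

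The main obstacle I anticipate is purely the size of the symbolic computation: the elimination of $G$, the setup of $Q$, the triple system, and especially the two resultants (first eliminating $I(z,Y)$, then $Y$) produce large polynomials, and one must correctly identify and discard the extraneous factors introduced by resultants (typically powers of $z$, discriminant-type factors, and the "trivial" branch factors) to isolate the degree-$5$-in-$v$ polynomial $\mathcal R$. I would carry out this step in a computer algebra system (the paper already indicates Mathematica is used elsewhere) rather than by hand, and then verify the final answer independently by expanding $I(z,0)$ as a power series from $\mathcal R(I(z,0),z) = 0$ and comparing the first several coefficients against the values of $|\mathcal P^{-1}(\Av_n(231)) \cap \mathsf{\overline{D}FBPT}|$ computed directly via the Tree Fertility Formula \eqref{Eq25}. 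Since Corollary~\ref{Cor8} already guarantees $I(z,0)$ is algebraic over $\mathbb{C}(z)$, the remaining content is entirely the explicit determination of its minimal polynomial, so a coefficient check of sufficiently high order constitutes a rigorous confirmation of the stated $\mathcal R$.
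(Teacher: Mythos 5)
Your proposal is correct and follows essentially the same route as the paper: both eliminate ${\bf G}(y)$ from the Theorem~\ref{Thm21} identity using $y{\bf G}(y)^2+y-{\bf G}(y)=0$ to obtain a polynomial equation $Q(I(z,y),I(z,0),z,y)=0$ with one catalytic variable, and then apply the Bousquet-M\'elou--Jehanne elimination (the paper uses their Theorem~14, i.e.\ the iterated-discriminant form $\Delta_y\widehat Q(I(z,0),z)=0$ rather than your three-equation resultant system, but these are the same method) before discarding extraneous factors and confirming the correct branch via initial coefficients. The only point to be careful about is your closing remark: the coefficient check alone does not rigorously confirm $\mathcal R$ without a degree bound; it is the elimination itself that proves $\mathcal R(I(z,0),z)=0$, with the series comparison serving only to select the correct irreducible factor.
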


\begin{proof}
We have ${\bf G}_\ell=|\mathsf{FBPT}_\ell|=C_{(\ell-1)/2}$, so \[{\bf G}(y)=\sum_{\ell\geq 0}{\bf G}_\ell y^\ell=\sum_{k\geq 0}C_kz^{2k+1}=\frac{1-\sqrt{1-4z^2}}{2z}.\] Therefore, $y{\bf G}(y)^2+y-{\bf G}(y)=0$. We can solve the equation in Theorem~\ref{Thm21} for ${\bf G}(y)$, substitute the result into the identity $y{\bf G}(y)^2+y-{\bf G}(y)=0$, and clear denominators to find that \begin{equation}\label{Eq62}
Q(I(z,y),I(z,0),z,y)=0,
\end{equation} where 
\[\begin{split}
Q(u,v,z,y)=\: &(1 - u z + vz)^2 y^2 - (1 - uz + vz) (vz - u^2 z y + 
    u (y + z (-1 + v y))) \\ 
    &+ (v z - u^2 z y + u 
    (y + z (-1 + vy)))^2.
    \end{split}\]

Let $Q_u'=\dfrac{\partial}{\partial u}Q(u,v,z,y)$. There is a unique fractional power series (Puiseux series) $Y=Y(z)$ such that $Y(z)=z+O(z^2)$ and 
\begin{equation}\label{Eq63}
Q_u'(I(z,Y),I(z,0),z,Y)=0.
\end{equation} Indeed, one can calculate the coefficients of $Y(z)$ one at a time from the equation \eqref{Eq63} after initially computing sufficiently many terms of $I(z,y)$ via its combinatorial definition. Let $\Delta_uQ(v,z,y)$ be the discriminant of $Q(u,v,z,y)$ with respect to the variable $u$. Using Mathematica, we find that this discriminant is $\Delta_uQ(v,z,y)=z^6(1 - 4 y^2)^2y^3 \widehat Q(v,z,y)$, where \[\begin{split}
\widehat Q(v,z,y)=\: &-4 z^3 + y z^2 (-3 + v z)^2 + y^3 (1 + 2 v z + z^2 + v^2 z^2)^2  \\ 
&+ 
 2 y^2 z (-3 + 5 z^2 - v^2 z^2 + v^3 z^3 + v z (-5 + z^2)).
\end{split}
\] We now use Theorem 14\footnote{In the notation of \cite{BousquetJehanne}, we are applying Theorem 14 with $k=1$. Our polynomial $Q(u,v,z,y)$, power series $I(z,y)$, and power series $I(z,0)$ are playing the roles of $P(x_0,\ldots,x_k,t,v)$, $F(t,u)$, and $F_1(t)$, respectively, from that article.} from the paper \cite{BousquetJehanne}, which allows us to deduce from \eqref{Eq62} and \eqref{Eq63} that $y=Y(z)$ is a repeated root of $\Delta_uQ(I(z,0),z,y)$. Since $Y(z)=z+O(z^2)$, we know that $z^6(1-4Y^2)^2Y^3\neq 0$. Consequently, $y=Y(z)$ is a repeated root of $\widehat Q(I(z,0),z,y)$. The discriminant of a polynomial with a repeated root must be $0$. This means that $\Delta_y\widehat Q(I(z,0),z)=0$, where $\Delta_y\widehat Q(v,z)$ is the discriminant of $\widehat Q(v,z,y)$ with respect to $y$. Computing $\Delta_y\widehat Q(v,z)$ explicitly and ignoring extraneous factors, we find that $\mathcal R(I(z,0),z)=0$, where $\mathcal R(v,z)$ is as in the statement of the corollary.  
\end{proof}

\begin{remark}
Using the techniques from \cite[Chapter VII]{Flajolet}, one can deduce from Corollary~\ref{Cor7} that for odd $n$, the number $|\mathcal W_2(n)\cap\ALT|$ of $2$-stack-sortable alternating permutations in $S_n$ satisfies the asymptotic formula \[|\mathcal W_2(n)\cap\ALT|\sim\beta n^{-5/2}\gamma^n,\] where $\beta\approx0.68444$ and $\gamma\approx 4.10868$.  The exponential growth rate for the number of $2$-stack-sortable permutations in $S_n$ is $\lim\limits_{n\to\infty}|\mathcal W_2(n)|^{1/n}=6.75$, so the probability that a randomly-chosen $2$-stack-sortable permutation in $S_n$ is alternating is roughly $(\gamma/6.75)^n\approx0.60869^n$. On the other hand, the probability that a randomly-chosen permutation in $S_n$ is alternating is roughly $(2/\pi)^n\approx 0.63662^n$. Therefore, if we choose $\pi\in S_n$ uniformly at random, where $n$ is large and odd, then the events ``$\pi$ is $2$-stack-sortable" and ``$\pi$ is alternating" are negatively correlated. 
\end{remark}

Next, suppose ${\bf T}=\mathsf{Mot}$. Set $r=0$ so that \[I(z,0)=\sum_{n\geq 0}|\mathcal P^{-1}(\Av_n(231))\cap\mathsf{\overline{D}Mot}|z^n.\] Note that the in-order reading gives a bijection from $\mathcal P^{-1}(\Av_n(231))\cap\mathsf{\overline{D}Mot}$ to the set $\mathcal W_2(n)\cap\EDP$ of $2$-stack-sortable permutations in $S_n$ in which every descent is a peak. 

\begin{corollary}\label{Cor9}
Let \[I(z,0)=\sum_{n\geq 0}|\mathcal P^{-1}(\Av_n(231))\cap\mathsf{\overline{D}Mot}|z^n=\sum_{n\geq 0}|\mathcal W_2(n)\cap\EDP|z^n\] be the generating function the counts (standardized) $2$-stack-sortable permutations whose descents are all peaks. Then $\mathcal R(I(z,0),z)=0$, where \[\mathcal R(v,z)=(-z + 3 z^2 + 24 z^3 + 
   z^4) + (1 - 4 z - 27 z^2 + 26 z^3 + 4 z^4) v + (4 z - 4 z^2 + 
    29 z^3 + 7 z^4) v^2 \] \[+ (6 z^2 + 4 z^3 + 7 z^4) v^3 + 
 4 (z^3 + z^4) v^4 + z^4 v^5.\] 
\end{corollary}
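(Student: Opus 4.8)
The plan is to mimic the proof of Corollary~\ref{Cor7} exactly, substituting the Motzkin generating function for the full-binary-plane-tree one. First I would record the relevant generating function identity for the troupe $\mathsf{Mot}$ with $r=0$: from \eqref{Eq15} we have ${\bf G}(y)=\sum_{\ell\geq 0}|\mathsf{Mot}_\ell|y^\ell=\sum_{\ell\geq 0}M_{\ell-1}y^\ell=\dfrac{1-y-\sqrt{1-2y-3y^2}}{2y}$, which satisfies the polynomial equation $y{\bf G}(y)^2+(y-1){\bf G}(y)+y=0$ (equivalently $y\,{\bf G}(y)^2 + y\,{\bf G}(y) + y - {\bf G}(y) = 0$, reflecting the decomposition of a Motzkin tree into root, left-only child, or two children). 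This is the only place where the specific troupe enters; everything downstream is formally identical to the $\mathsf{FBPT}$ case.

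Next I would invoke Theorem~\ref{Thm21} with $r=0$, which gives the single functional equation relating $I(z,y)$, $I(z,0)$, and ${\bf G}(y)$:
\[
(I(z,y)-I(z,0))(I(z,y)-{\bf G}(y))=\frac{I(z,y)-{\bf G}(y)}{z}-\frac{I(z,y)-I(z,0)}{y}.
\]
Solving this linear-in-${\bf G}(y)$ relation for ${\bf G}(y)$ and substituting into the quadratic $y{\bf G}(y)^2+(y-1){\bf G}(y)+y=0$, then clearing denominators, produces a polynomial equation $Q(I(z,y),I(z,0),z,y)=0$ with a single catalytic variable $y$. At this point the kernel method of Bousquet-M\'elou and Jehanne \cite{BousquetJehanne} (their Theorem~14, applied with $k=1$, exactly as in the proof of Corollary~\ref{Cor7}) applies: one introduces the Puiseux series $Y=Y(z)=z+O(z^2)$ characterized by $Q_u'(I(z,Y),I(z,0),z,Y)=0$, computes the discriminant $\Delta_uQ(v,z,y)$ of $Q$ with respect to its first variable, strips off the extraneous factors (powers of $z$ and $y$ and whatever analogue of $(1-4y^2)^2$ appears here) that are manifestly nonzero at $y=Y(z)$ to isolate a ``reduced'' polynomial $\widehat Q(v,z,y)$, and concludes that $y=Y(z)$ is a repeated root of $\widehat Q(I(z,0),z,y)$. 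Hence $\Delta_y\widehat Q(I(z,0),z)=0$, and after discarding extraneous factors the surviving factor is the claimed $\mathcal R(v,z)$.

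The main obstacle is purely computational rather than conceptual: one must carry out the elimination — substituting the solved expression for ${\bf G}(y)$, clearing denominators to get $Q$, computing two successive discriminants ($\Delta_u$ then $\Delta_y$), and factoring out the spurious components — and verify that the irreducible factor vanishing on $I(z,0)$ is precisely the degree-$5$-in-$v$ polynomial stated in Corollary~\ref{Cor9}. This is a finite Mathematica computation of the same flavor as the one behind Corollary~\ref{Cor7}, and the interpretation step (the in-order reading $\mathcal I$ restricts to a bijection $\mathcal P^{-1}(\Av_n(231))\cap\mathsf{\overline{D}Mot}\to \mathcal W_2(n)\cap\EDP$, using $s^{-1}(\Av_n(231))=\mathcal W_2(n)$ from \eqref{Eq11} together with the bijection $\mathcal I:\mathsf{DMot}\to\EDP$ from Example~\ref{ExamMotzkin1}) is immediate. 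One should also double-check the initial terms of $I(z,0)$ computed combinatorially against the series root of $\mathcal R(v,z)$ near the origin to pin down the correct branch and rule out sign or normalization slips. I would present the argument by stating the ${\bf G}(y)$ identity, citing Theorem~\ref{Thm21} and \cite{BousquetJehanne} Theorem~14 verbatim as in Corollary~\ref{Cor7}'s proof, and then asserting the explicit polynomials with the remark that the discriminant computations were performed in Mathematica.
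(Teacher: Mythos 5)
Your proposal follows the paper's proof essentially verbatim: the same quadratic $y{\bf G}(y)^2+(y-1){\bf G}(y)+y=0$ for the Motzkin generating function, the same elimination via Theorem~\ref{Thm21} to produce $Q(I(z,y),I(z,0),z,y)=0$, and the same two-discriminant kernel-method argument via Theorem~14 of \cite{BousquetJehanne} (the extraneous factor you anticipate is indeed $z^6y^3(-1+2y+3y^2)^2$). This is correct and matches the paper's route.
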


\begin{proof}
We have ${\bf G}_\ell=|\mathsf{Mot}_\ell|=M_{\ell-1}$, so \[{\bf G}(y)=\sum_{\ell\geq 0}{\bf G}_\ell y^\ell=\sum_{\ell\geq 0}M_{\ell-1}y^\ell=\frac{1-y-\sqrt{1-2y-3y^2}}{2y}.\]  It follows that $y{\bf G}(y)^2+(y-1){\bf G}(y)+y=0$. We can solve the equation in Theorem~\ref{Thm21} for ${\bf G}(y)$, substitute the result into the identity $y{\bf G}(y)^2+(y-1){\bf G}(y)+y=0$, and clear denominators to find that $Q(I(z,y),I(z,0),z,y)=0$, where 
\[\begin{split}
Q(u,v,z,y)=\: &y^2 (1 - u z + v z)^2 + (-1 + u) (-1 + u z - v z) (-v z + u^2 y z - 
    u (y - z + v y z))  \\ 
    &+ (v z - u^2 y z + u (y - z + v y z))^2.
    \end{split}\]

We now proceed exactly as in the proof of Corollary~\ref{Cor7}, computing the discriminant $\Delta_uQ(v,z,y)$ $=z^6y^3(-1 + 2 y + 3 y^2)^2  \widehat Q(v,z,y)$, where \[\begin{split}
\widehat Q(v,z,y)=\: &-4 z^3 + y z^2 (9 + (2 - 6 v) z + (1 + v)^2 z^2) + 
 y^3 (1 + z + 2 v z + (1 + v + v^2) z^2)^2  \\ 
&+ 
 2 y^2 z (-3 - (2 + 5 v) z - (-4 + v^2) z^2 + (1 + 2 v + 2 v^2 + 
       v^3) z^3),
\end{split}
\] and deducing that $\Delta_y\widehat Q(I(z,0),z)=0$. Computing $\Delta_y\widehat Q(v,z)$ explicitly and ignoring extraneous factors, we find that $\mathcal R(I(z,0),z)=0$, where $\mathcal R(v,z)$ is as desired.  
\end{proof}

\begin{remark}
Using the techniques from \cite[Chapter VII]{Flajolet}, one can deduce from Corollary~\ref{Cor9} that the number $|\mathcal W_2(n)\cap\EDP|$ of $2$-stack-sortable permutations in $S_n$ whose descents are all peaks satisfies the asymptotic formula \[|\mathcal W_2(n)\cap\EDP|\sim\beta n^{-5/2}\gamma^n,\] where $\beta\approx0.42022$ and $\gamma\approx 5.46152$. Since $\lim\limits_{n\to\infty}|\mathcal W_2(n)|^{1/n}=6.75$, the probability that a randomly-chosen $2$-stack-sortable permutation in $S_n$ is in $\EDP$ is roughly $(\gamma/6.75)^n\approx0.80911^n$. On the other hand, the probability that a randomly-chosen permutation in $S_n$ is in $\EDP$ is roughly $\left(\dfrac{3\sqrt{3}}{2\pi}\right)^n\approx 0.82699^n$. Therefore, if we choose $\pi\in S_n$ uniformly at random, where $n$ is large, then the events ``$\pi$ is $2$-stack-sortable" and ``every descent of $\pi$ is a peak" are negatively correlated. 
\end{remark}

\begin{remark}
One could easily refine Corollary~\ref{Cor9} by taking into account the statistic $\des$. This would simply amount to replacing the generating function ${\bf G}(y)=\sum_{\ell\geq 0}M_{\ell-1} y^\ell$ with ${\bf G}^{(x_1)}(y)=\sum_{\ell\geq 0}M_{\ell-1}(x_1) y^\ell$, where $M_{\ell-1}(x_1)$ denotes a Motzkin polynomial. 
\end{remark} 

We end this section with a theorem about postorder preimages of $2$-stack-sortable permutations. Using the in-order reading, one can transfer the statement of the theorem to a statement about the enumeration of $3$-stack-sortable permutations associated to troupes. For example, taking ${\bf T}=\mathsf{FBPT}$, one obtains a recurrence for counting $3$-stack-sortable alternating permutations of odd length. 

\begin{theorem}
Let ${\bf T}$ be a troupe, and let $f_1,\ldots,f_r$ be insertion-additive tree statistics. Let \[{\bf G_\ell}={\bf G}_\ell(x_1,\ldots,x_r)=\sum_{T\in{\bf T}_\ell}x_1^{f_1(T)}\cdots x_r^{f_r(T)}.\] If $n\geq 1$, then \[\sum_{\mathcal T\in\mathcal P^{-1}(\mathcal W_2(n))\cap\mathsf{\overline{D}}{\bf T}_n}x_1^{\ddot f_1(\mathcal T)}\cdots x_r^{\ddot f_r(\mathcal T)}=\sum_{g=1}^{n+1}E_{\geq 0}^{(g)}(n),\] where $E_{\geq \ell}^{(g)}(n)=E_{\geq \ell}^{(g)}(n)(x_1,\ldots,x_r)$ are polynomials in $\mathbb C[x_1,\ldots,x_r]$ satisfying the following relations. We have $E_{\geq\ell}^{(0)}(n)=0$ and \[E_{\geq\ell}^{(g)}(1)=\begin{cases} 0, & \mbox{if } g\neq 2; \\ {\bf G}_{\ell+1}, & \mbox{if } g=2. \end{cases}\] If $n,g\geq 1$ and $\ell\geq 0$, then 
\[E_{\geq\ell}^{(g)}(n+1)=\sum_{j=1}^\ell\left(\sum_{a=2}^{n}\sum_{b=\max\{2,g-a\}}^{g-1}\sum_{i=a-1}^{n-b+1}E_{\geq j-1}^{(a)}(i) E_{\geq \ell-j+1}^{(b)}(n-i)+ E_{\geq j-1}^{(g-1)}(n){\bf G}_{\ell-j+1}\right)\] \[+ E_{\geq\ell+1}^{(g-1)}(n).\]
\end{theorem}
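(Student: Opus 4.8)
The plan is to iterate the Tree Decomposition Lemma (Corollary~\ref{Cor1}) twice, once to peel off the $s$ applied on top and once to recurse into the postorder preimages. Recall that $\mathcal W_2(n)=s^{-1}(\mathcal W_1(n))=s^{-1}(\Av_n(231))$, and that for any permutation $\pi$, the decreasing binary plane trees in $\mathcal P^{-1}(\pi)\cap\mathsf{DBPT}$ biject (via $\mathcal I$) with the preimages $s^{-1}(\pi)$. So a tree $\mathcal T\in\mathcal P^{-1}(\mathcal W_2(n))\cap\mathsf{\overline D}{\bf T}_n$ corresponds, after composing with $\mathcal P$, to a permutation $\sigma\in S_n$ with $s(\sigma)\in\Av_n(231)$ together with the data of the tree itself. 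The key structural input is that, just as in Theorem~\ref{Thm21}, if we use the Refined Tree Decomposition Lemma (Theorem~\ref{Thm17}) at the largest descent, then $231$-avoidance of the target permutation forces a rigid nesting structure on the hooks: each unsheltered subpermutation is again $231$-avoiding with a controlled tail, and the sheltered subpermutations are the recursive pieces. I would introduce the auxiliary polynomials $E_{\geq\ell}^{(g)}(n)$ to record, for trees whose underlying $231$-avoiding target has tail length at least $\ell$, a ``size'' parameter $g$ that plays the role of the length of $\pi^{(0)}$ plus one (equivalently the number of blue points plus the tail contribution) — this is precisely the bookkeeping variable that makes the recursion close.

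First I would set up the precise correspondence: a tree in $\mathcal P^{-1}(\mathcal D_{\geq\ell}(n))\cap\mathsf{\overline D}{\bf T}_n$ amounts to a choice of $\pi\in\mathcal D_{\geq\ell}(n)$, a valid hook configuration of $\pi$ (this is forced/constrained by $231$-avoidance, via the bijection of Theorem~\ref{Thm21}'s proof from \cite{DefantCounting} between $231$-avoiding permutations with tail conditions and certain hook data), and for each monochromatic region a tree in the appropriate $\mathsf D{\bf T}$ with increasing postorder — which by Lemma~\ref{Lem2} contributes a factor ${\bf G}_{q_t}(x_1,\ldots,x_r)$. The parameter $g$ should be defined so that $E_{\geq\ell}^{(g)}(n)$ is the generating sum over such trees of prescribed ``top block size'' $g$; the base cases are then immediate: a single-vertex target ($n=1$) forces the whole tree to be a single increasing-postorder ${\bf T}$-tree of size $\ell+1$ contributing ${\bf G}_{\ell+1}$, and the top block must have size $g=2$ (one point plus the tail, or one descent's worth), giving the stated $E_{\geq\ell}^{(g)}(1)$. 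The convention $E_{\geq\ell}^{(0)}(n)=0$ records that there is no ``empty'' top block.

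Second I would derive the main recursion for $E_{\geq\ell}^{(g)}(n+1)$ by Tree-Decomposition-peeling the top vertex/hook. There are two cases according to whether the point just after the largest descent of the target lands inside a recursive sheltered block or whether the outermost hook's northeast endpoint is in the tail. The nested term $E_{\geq\ell+1}^{(g-1)}(n)$ comes from the second case: removing the outermost hook reduces $g$ by one and increases the tail-length requirement by one while dropping one vertex. The double-sum term comes from the first case: splitting the target into an unsheltered part (of size recorded by $a$, with tail requirement dropping from $\ell$ to $j-1$ where $j$ indexes how much of the old tail is consumed) and a sheltered part (size $b$, tail requirement $\ell-j+1$), with $g=a+b$ forcing the range $b\in[\max\{2,g-a\},g-1]$, and the $i$-summation distributing the remaining $n$ vertices between the two pieces subject to the natural size constraints $i\ge a-1$ and $n-i\ge b-1$; the companion term $E_{\geq j-1}^{(g-1)}(n){\bf G}_{\ell-j+1}$ handles the degenerate sub-case where the sheltered part is an increasing-postorder ${\bf T}$-tree attached directly (contributing ${\bf G}_{\ell-j+1}$ rather than a recursive $E$). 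Finally, summing over the possible sizes $g$ of the top block of a tree with $n+1$ vertices and no tail requirement ($\ell=0$) gives $\sum_{g=1}^{n+1}E_{\geq 0}^{(g)}(n+1)$, and re-indexing matches the claimed formula for $n\ge 1$; the insertion-additivity of $f_1,\ldots,f_r$ is what lets every region contribute multiplicatively, exactly as in the proof of Theorem~\ref{Thm22}.

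The main obstacle I anticipate is getting the index bookkeeping exactly right — in particular, pinning down what $g$ counts so that all four summation ranges ($a$ from $2$ to $n$, $b$ from $\max\{2,g-a\}$ to $g-1$, $i$ from $a-1$ to $n-b+1$, $j$ from $1$ to $\ell$) come out as stated, and verifying the case analysis is exhaustive and non-overlapping. This is essentially the same combinatorial core as Section~4 of \cite{DefantCounting} (which handled ${\bf T}=\mathsf{BPT}$, $f_1=\des+1$, $f_2=\peak+1$), so I would follow that argument \emph{mutatis mutandis}, substituting the Refined Tree Decomposition Lemma for the Refined Decomposition Lemma and keeping the ${\bf G}_\ell$'s general; I expect no genuinely new difficulty beyond careful transcription, and I would omit the fully routine index-chasing details.
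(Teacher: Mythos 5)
Your proposal matches the paper's proof, which simply observes that the statement is Theorem 5.3 of \cite{DefantCounting} (the case ${\bf T}=\mathsf{BPT}$ with $f_1=\des+1$, $f_2=\peak+1$, phrased via stack-sorting) and that the same argument carries over \emph{mutatis mutandis} once the Refined Decomposition Lemma is replaced by the Refined Tree Decomposition Lemma, with all details omitted. Your additional sketch of the recursion's combinatorial meaning is consistent with that source, so this is essentially the same approach.
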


\begin{proof}
This theorem appears as Theorem 5.3 in \cite{DefantCounting} in the specific case in which ${\bf T}=\mathsf{BPT}$, $f_1(T)=\des(T)+1$, and $f_2(T)=\peak(T)+1$, although it is phrased in terms of stack-sorting in that article. The exact same proof applies, \emph{mutatis mutandis}, in this more general setting. The main difference is that one must now use the Refined Tree Decomposition Lemma (Theorem~\ref{Thm17}) instead of the Refined Decomposition Lemma (Corollary~\ref{Cor2}). We omit the details. 
\end{proof}

\section{The Troupe Transform}\label{Sec:Transform}
We saw in Theorem~\ref{Thm19} that a troupe ${\bf T}$ is completely determined by its set of branch generators ${\bf T}\cap\mathsf{Branch}$. We would like to know more about the enumerative relationships between a troupe and its set of branch generators. As a starting point, let us prove that the sequence $(|{\bf T}_n|)_{n\geq 0}$ is determined by the sequence $(|{\bf T}_n\cap\mathsf{Branch}|)_{n\geq 0}$. 

\begin{theorem}\label{Thm24}
Let ${\bf T}$ and $\widetilde {\bf T}$ be troupes. If $|{\bf T}_n\cap\mathsf{Branch}|=|\widetilde{\bf T}_n\cap\mathsf{Branch}|$ for all $n\geq 0$, then $|{\bf T}_n|=|\widetilde{\bf T}_n|$ for all $n\geq 0$. 
\end{theorem}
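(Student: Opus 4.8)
The plan is to set up a recursive decomposition of an arbitrary tree in a troupe ${\bf T}$ in terms of its branch generators, and to observe that the resulting recursion involves only the counting sequence $(|{\bf T}_n\cap\mathsf{Branch}|)_{n\geq 0}$. By Theorem~\ref{Thm19}, every troupe ${\bf T}$ equals $\InsCl({\bf T}\cap\mathsf{Branch})$, so every tree $T\in{\bf T}$ with at least one vertex of degree-$2$ can be written uniquely as $\nabla_v(T_1,T_2)$ for a suitable black vertex $v^*$. The idea is to make this decomposition canonical: among all vertices with $2$ children in $T$, pick a distinguished one (say, the one whose label is forced by some total order on vertices — e.g.\ the vertex with $2$ children that comes first in the postorder traversal of $T$), and let $\Delta_{v^*}(T)=(T_1,T_2)$ for that $v^*$. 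Since ${\bf T}$ is decomposition-closed, $(T_1,T_2)\in{\bf T}\times{\bf T}$, and $T_2$ is a proper subtree (strictly fewer vertices), while $T_1$ has one vertex with $2$ children removed (also strictly fewer vertices, unless $T$ had only one such vertex, in which case $T_1\in{\bf T}\cap\mathsf{Branch}$).

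\medskip

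\noindent First I would make the canonical decomposition truly bijective: I want the map $T\mapsto(\text{data determining }T_1, T_2)$ to be invertible, so I need to record \emph{where} in $T_1$ the vertex $v$ sits. A cleaner route is to peel off branch generators one at a time in a canonical order and encode $T$ as a branch $B\in{\bf T}\cap\mathsf{Branch}$ together with a list of ``insertion instructions.'' Concretely: if $T$ is a branch, it is its own data; otherwise, decompose canonically as above into $(T_1,T_2)$ with $v$ the distinguished left child, where $T_2\in{\bf T}$ has strictly fewer vertices; recurse on $T_1$ and on $T_2$. Unrolling, any $T\in{\bf T}_n$ is determined by a rooted plane tree structure whose nodes are labeled by branch generators of ${\bf T}$ (with the sizes summing appropriately, accounting for the extra vertex $v^*$ created at each insertion) together with, at each node, a choice of which vertex of the partially-built tree receives the insertion. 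The key point for the theorem is that the \emph{number} of ways to make the ``which vertex'' choices depends only on the sizes of the pieces, not on the identity of ${\bf T}$: inserting $T_2$ into $T_1$ at a vertex $v$ offers exactly $|V(T_1)|$ choices, and $|V(T_1)|$ is a number we are tracking. Therefore the generating function $G_{\bf T}(z)=\sum_{n\geq 0}|{\bf T}_n|z^n$ satisfies a fixed-point equation of the form $G_{\bf T}(z)=B_{\bf T}(z)+(\text{universal functional expression in }G_{\bf T}(z)\text{ and }z)$, where $B_{\bf T}(z)=\sum_{n\geq 0}|{\bf T}_n\cap\mathsf{Branch}|z^n$ and the ``universal functional expression'' — which records insertion — is the \emph{same} for all troupes. (This is essentially the troupe transform alluded to in the introduction.) Since this functional equation has a unique power-series solution given $B_{\bf T}(z)$, equality of the branch-generator sequences forces equality of the full sequences.

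\medskip

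\noindent To write this carefully I would: (1) fix the canonical choice of distinguished degree-$2$ vertex and verify, via the reversibility already established in the proof of Theorem~\ref{Thm17} and in Lemma~\ref{Lem3}, that $\Delta$ and $\nabla$ give mutually inverse bijections once the distinguished vertex and the insertion site are recorded; (2) set up a bijection between ${\bf T}_n$ (for $n\geq 1$) and the disjoint union, over decompositions $n = p + q$ with the bookkeeping for the created vertex, of (branch generators of size contributing the ``root piece'') $\times$ (smaller trees of ${\bf T}$) $\times$ (a set of size equal to the number of vertices of the root piece at the moment of insertion); (3) translate this into the functional equation for $G_{\bf T}$, being careful that every coefficient appearing, apart from those of $B_{\bf T}$, is a universal integer (indeed it is the count of ordered sequences of insertion sites, expressible via products of the $q_t$'s exactly as in the compositions ${\bf q}^{\mathcal H}$ of Section~\ref{Sec:TreeFertility}); (4) invoke uniqueness of the power-series solution.

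\medskip

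\noindent The main obstacle I anticipate is step~(1)--(2): getting a genuinely canonical, genuinely bijective decomposition. The subtlety is that $T_1$ in $\nabla_v(T_1,T_2)$ is recovered by \emph{contracting} the edge $v$--$v^*$, so the reconstruction needs to know which vertex of $T_1$ was ``$v$''; naively $v$ is just ``the left child of the distinguished $v^*$,'' which is determined by $T$ but one must check it is determined consistently after the contraction so the recursion closes. I expect the cleanest fix is to order the degree-$2$ vertices of $T$ by their position in the postorder reading of $T$ and always decompose at the \emph{last} one in this order (equivalently, the one whose right subtree $T_2$ contains no further degree-$2$ vertices in a suitable sense) — this makes $T_2$ a branch generator at the top level and lets the recursion strip off branch generators cleanly, making the bookkeeping of sizes and insertion-site counts transparent. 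Once the bijection is nailed down, the passage to generating functions and the uniqueness argument are routine.
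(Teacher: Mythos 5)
Your high-level strategy (decompose at a canonical vertex with two children, reduce to the branch generators, observe that the insertion bookkeeping is universal) is the right starting point, and your choice of the \emph{last} vertex with $2$ children in postorder is exactly the canonical choice the paper makes. But there is a genuine gap at the heart of your counting step: you cannot simultaneously have a \emph{unique} encoding of each $T$ and ``exactly $|V(T_1)|$ choices'' of insertion site. If the decomposition is canonical --- always at the last vertex $v^*$ with $2$ children in postorder --- then a triple $(T_1,T_2,v)$ reconstructs to a tree whose canonical decomposition returns that same triple only if $v$ is read no earlier, in the postorder of $T_1$, than every vertex of $T_1$ with $2$ children; otherwise some such vertex of $T_1$ lands after the new $v^*$ in the postorder of $\nabla_v(T_1,T_2)$. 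The number of admissible $v$ therefore depends on where the last vertex with $2$ children sits inside $T_1$, which varies among trees of the same size, so it is \emph{not} determined by the size data you track, and the ``universal functional expression'' in $G_{\bf T}(z)$ alone does not exist in the form you describe. (Relatedly, your parenthetical claim that this choice makes $T_2$ a branch generator is false: the right subtree of the last vertex with $2$ children in postorder can itself contain vertices with $2$ children, since all of its vertices are read \emph{before} $v^*$.) If instead you allow all $|V(T_1)|$ insertion sites, the encoding is no longer unique: a tree with $p$ vertices having $2$ children arises from exactly $p$ triples, so the naive functional equation overcounts.

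There are two ways to close the gap. The paper keeps the canonical decomposition but proves a stronger, more refined statement: writing ${\bf T}_n^p$ for the set of trees in ${\bf T}_n$ with exactly $p$ vertices having $2$ children, it constructs by induction on $p$ a \emph{shape-preserving} bijection ${\bf T}_n^p\to\widetilde{\bf T}_n^p$, where the shape of $T$ records, for every $i$, the number of children of the vertex read $i^{\text{th}}$ in postorder. Shape is precisely the extra statistic needed to transfer the set of admissible insertion sites from $T_1$ to its image $\widetilde T_1$, which is what makes the recursive construction invertible. Alternatively, your counting idea can be salvaged by abandoning canonicity and double counting: the map $(T_1,T_2,v)\mapsto(\nabla_v(T_1,T_2),v^*)$ is a bijection onto pairs consisting of a tree in ${\bf T}$ and one of its vertices with $2$ children, which yields $p\,|{\bf T}_n^p|=\sum n_1\,|{\bf T}_{n_1}^{p_1}|\,|{\bf T}_{n_2}^{p_2}|$, the sum being over $n_1+n_2=n-1$ and $p_1+p_2=p-1$; for $p\geq 1$ this determines $|{\bf T}_n^p|$ from data with smaller $p$, the $p=0$ data being the branch counts. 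Either repair works, but as written your argument does not go through.
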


\begin{proof}
Given a colored binary plane tree $T$ with $n$ vertices, we can view the postorder as a total ordering on the set of vertices of $T$. More precisely, the vertex of $T$ that is read $i^\text{th}$ in postorder is the vertex with label $i$ in the unique decreasing colored binary plane tree $\ddot T$ that satisfies $\skel(\ddot T)=T$ and $\mathcal P(\ddot T)=123\cdots n$. Let ${\bf T}_n^p$ (respectively, $\widetilde{\bf T}_n^p$) be the set of trees in ${\bf T}_n$ (respectively, $\widetilde{\bf T}_n$) in which exactly $p$ vertices have $2$ children. Let us say two trees $T$ and $\widetilde T$ with $n$ vertices have the same \dfn{shape} if for every $i\in[n]$, the vertex of $T$ that is read $i^\text{th}$ in postorder has the same number of children as the vertex of $\widetilde T$ that is read $i^\text{th}$ in postorder. We say a map $\varphi:{\bf T}_n^p\to\widetilde{\bf T}_n^p$ is \dfn{shape-preserving} if for every $T\in{\bf T}_n^p$, the trees $T$ and $\varphi(T)$ have the same shape. We will prove that for all $n,p\geq 0$, there is a shape-preserving bijection $\varphi_n^p:{\bf T}_n^p\to\widetilde{\bf T}_n^p$. When $p=0$, this is immediate from the hypothesis that $|{\bf T}_n\cap\mathsf{Branch}|=|\widetilde{\bf T}_n\cap\mathsf{Branch}|$ for all $n\geq 0$. We now proceed by induction on $p$. 

Let $T\in{\bf T}_n^p$ for some $p\geq 1$. Among the $p$ vertices of $T$ that have $2$ children, let $v^*$ be the one that is read last in postorder. Let $\Delta_{v^*}(T)=(T_1,T_2)$. Because ${\bf T}$ is decomposition-closed, we have $T_1\in{\bf T}_{n_1}^{p_1}$ and $T_2\in{\bf T}_{n_2}^{p_2}$ for some $n_1,n_2<n$ and $p_1,p_2<p$. Let $\widetilde T_1=\varphi_{n_1}^{p_1}(T_1)$ and $\widetilde T_2=\varphi_{n_2}^{p_2}(T_2)$. Let $v$ be the left child of $v^*$ in $T$. Then $v$ is also a vertex in $T_1$; say it is the vertex of $T_1$ that is read $r^\text{th}$ in postorder. Let $\widetilde v$ be the vertex of $\widetilde T_1$ that is read $r^\text{th}$ in postorder. Let $\varphi_n^p(T)=\widetilde T=\nabla_{\widetilde v}(\widetilde T_1,\widetilde T_2)$. Let $\widetilde v^*$ be the parent of $\widetilde v$ in $\widetilde T$. By the induction hypothesis, $\varphi_{n_1}^{p_1}$ and $\varphi_{n_2}^{p_2}$ are shape-preserving. This means that $T_1$ and $\widetilde T_1$ have the same shape and that $T_2$ and $\widetilde T_2$ have the same shape. Notice that $v^*$ is the vertex of $T$ read $(r+n_2+1)^\text{th}$ in postorder and that $\widetilde v^*$ is the vertex of $\widetilde T$ read $(r+n_2+1)^\text{th}$ in postorder. It follows that $T$ and $\widetilde T$ have the same shape, so the resulting map $\varphi_n^p:{\bf T}_n^p\to\widetilde{\bf T}_n^p$ is shape-preserving. Furthermore, among the $p$ vertices of $\widetilde T$ that have $2$ children, $\widetilde v^*$ is the one that is read last in postorder. 

By induction, the maps $\varphi_n^{p'}$ with $p'<p$ are bijections; let $\psi_n^{p'}$ denote their inverses. If we perform the same construction as above, except with the roles of ${\bf T}$ and $\widetilde{\bf T}$ switched and with the maps $\varphi_n^{p'}$ with $p'<p$ replaced by the maps $\psi_n^{p'}$, then we obtain a map $\psi_n^p:\widetilde{\bf T}_n^p\to{\bf T}_n^p$. Using the observation made in the last sentence of the preceding paragraph, we find that $\psi_n^p$ is the inverse of $\varphi_n^p$.
\end{proof}

The preceding theorem yields a new transform on sequences of nonnegative integers, which we call the \dfn{troupe transform}. Indeed, for any sequence of nonnegative integers $(\omega_n)_{n\geq 0}$, we can find a set $B$ of branches that has $\omega_n$ elements with $n$ vertices for all $n\geq 0$. For this, we might have to use an infinite set of colors, but this will not cause any harm as long as there are only finitely many elements of $B$ with each fixed number of vertices. We can then consider the troupe $\InsCl(B)$ generated by $B$ and define the new sequence $(\widecheck{\omega}_n)_{n\geq 0}$ by letting $\widecheck{\omega}_n$ be the number of elements of $\InsCl(B)$ with $n$ vertices. For example, if $(\omega_n)_{n\geq 0}=1,1,2,4,8,16,\ldots$ is the sequence enumerating the set $\mathsf{BPT}\cap\mathsf{Branch}$, then $(\widecheck\omega_n)_{n\geq 0}=1,1,2,5,14,42,\ldots$ is the sequence enumerating $\mathsf{BPT}$. Similar considerations for the troupes $\mathsf{FBPT}$, $\mathsf{Mot}$, and $\mathsf{Sch}$ show that $0,1,0,0,0,0,\ldots$ transforms into the sequence $0,1,0,1,0,2,0,5,\ldots$ of aerated Catalan numbers, that $0,1,1,1,1,1,\ldots$ transforms into the sequence $0,1,1,2,4,9,\ldots$ of Motzkin numbers, and that $0, 2, 6, 18, 54, 162,\ldots$ (whose $n^\text{th}$ term is $2\cdot 3^{n-1}$ for $n\geq 1$) transforms into the sequence $1, 2, 6, 22, 90, 394,\ldots$ of large Schr\"oder numbers.

\section{Concluding Remarks and Open Problems}\label{Sec:Conclusion}

In Section~\ref{Subsec:Troupes}, we defined insertion and decomposition, with which we initiated the development of a theory of troupes. We believe that there is likely much more to be done in this line of work. Here, we state some specific open problems and conjectures.

We saw in Theorem~\ref{Thm24} that the sequence that enumerates a troupe ${\bf T}$ is determined by the sequence that enumerates the set ${\bf T}\cap\mathsf{Branch}$ of branch generators of ${\bf T}$. This led us to define the troupe transform of a sequence of nonnegative integers. It would interesting to have a better understanding of this transform, especially on the level of generating functions. In view of Corollary~\ref{Cor8}, we are also interested in the algebraicity of the generating function that enumerates a troupe. 

\begin{question}\label{Quest1} 
Let ${\bf T}$ be a troupe. What can we deduce about the generating function $\displaystyle\sum_{n\geq 0}|{\bf T}_n|z^n$ from the generating function $\displaystyle\sum_{n\geq 0}|{\bf T}_n\cap\mathsf{Branch}|z^n$? Under what conditions will the former be algebraic? 
\end{question}  

There has been interest in binary plane trees and decreasing binary plane trees in algebraic settings \cite{Hivert, Loday1, Loday2}. It could be interesting to see if there are algebraic aspects of the insertion and decomposition operations or of troupes.

It would certainly be interesting to prove any of Conjectures~\ref{Conj3}, \ref{Conj4}, \ref{Conj5}, or \ref{Conj6}, which concern the real-rootedness of the polynomials that count specific families of decreasing colored binary plane trees according to the number of descents in their postorder readings. In fact, it would be nice just to have a proof that one of these polynomials has unimodal coefficients. In general, there exist troupes ${\bf T}$ and positive integers $n$ such that the polynomials $\displaystyle\sum_{\mathcal T\in\mathsf{\overline{D}}{\bf T}_{n-1}}x^{\des(\mathcal P(\mathcal T))+1}$ do not have unimodal coefficients and, consequently, have some nonreal roots. For an example, let $\Upsilon$ be the set of branches with $7$ vertices in which each vertex is either black or white. Let ${\bf T}$ be the troupe whose branch generators are the elements of $\Upsilon$ and the tree consisting of a single black vertex. Then ${\bf T}_7=\Upsilon\cup\mathsf{FBPT}_7$, and one can compute that \[\sum_{\mathcal T\in\mathsf{\overline{D}}{\bf T}_7}x^{\des(\mathcal P(\mathcal T))+1}=8197x+71x^2+140x^3+56x^4.\] To produce this example, we have exploited our freedom to color vertices. Thus, we have the following question concerning troupes whose trees only have black vertices. 
\begin{question}\label{Quest2}
Do there exist a troupe ${\bf T}\subseteq \mathsf{BPT}$ and a positive integer $n$ such that the coefficients of $\displaystyle\sum_{\mathcal T\in\mathsf{\overline{D}}{\bf T}_{n-1}}x^{\des(\mathcal P(\mathcal T))+1}$ are not unimodal? 
\end{question}

Recall Conjecture~\ref{Conj2}, which states that the random variables $D_n$ are asymptotically normally distributed. While explaining a potential approach to this conjecture (which is likely to fail), we observed the strange fact that $\lim\limits_{n\to\infty}\mathbb E(D_{n,1})=\lim\limits_{n\to\infty}\dfrac{\mathbb E(D_n)}{n}$ (see \eqref{Eq34}). This says that if $\sigma\in S_{n-1}$ is chosen uniformly at random, then the probability that $1$ is a descent of $s(\sigma)$ is asymptotically equal to the probability that a random index $i\in[n-2]$ is a descent of $s(\sigma)$. This is suspiciously similar to the fact that if $\pi$ is chosen uniformly at random from the set of uniquely sorted permutations in $S_{2k+1}$, then the expected value of the first entry of $\pi$ is $k+1$ (this follows from Theorem 5.7 in \cite{DefantEngenMiller}), which is also the expected value of a random entry of $\pi$. It would be very interesting to provide a deeper explanation for these observations. On the other hand, we mentioned in Remark~\ref{Rem4} that the analogue of \eqref{Eq34} for $\mathsf{FBPT}$ does not hold. We also saw in Remarks~\ref{Rem5} and \ref{Rem6} that analogues of \eqref{Eq34} for $\mathsf{Mot}$ and $\mathsf{Sch}$ are probably false as well. 

\begin{question}\label{Quest3}
Suppose we choose $\mathcal T\in\mathsf{\overline{D}Mot_{n-1}}$ uniformly at random. As $n\to\infty$, does the probability that $1$ is a descent of $\mathcal T$ approach a limit? If so, what is its value?  
\end{question}

\begin{question}\label{Quest4}
Suppose we choose $\mathcal T\in\mathsf{\overline{D}Sch_{n-1}}$ uniformly at random. As $n\to\infty$, does the probability that $1$ is a descent of $\mathcal T$ approach a limit? If so, what is its value?  
\end{question}

Define the \dfn{fertility distribution} on $S_{n-1}$ to be the probability distribution on $S_{n-1}$ in which the probability of a permutation $\pi$ is $|s^{-1}(\pi)|$. With this alternative terminology, one can view the results in Section~\ref{Subsec:DescentsSorted} as an analysis of the distribution of the descent statistic with respect to the fertility distribution. It could be interesting to consider the distributions of other permutation statistics with respect to this distribution. 

It is likely that several of the results concerning the stack-sorting map, especially those in \cite{DefantCounting, DefantEnumeration, DefantFertility, DefantFertilityWilf, DefantClass}, could be generalized to the setting of troupes using the Refined Tree Decomposition Lemma and the Refined Tree Fertility Formula. We illustrated this in Section~\ref{Sec:2-stack}, but it is possible that pushing this line of work further could lead to some interesting results. For example, it should be possible to enumerate standardized permutations $\pi$ whose descents are all peaks and with the property that $s(\pi)$ avoids some collection of patterns (say, $132$ and $231$). 

Recall from Theorem~\ref{Thm7} that Lassalle's sequence counts uniquely sorted permutations. Lassalle \cite{Lassalle} proved that for $k\geq 3$, the number $\mathscr{A}_k$ is odd if and only if $k+1$ is a power of $2$. This is analogous to our Conjecture~\ref{Conj1}, which states that if $n\geq 3$, then $|\VHC(S_{n-1})|$ is odd if and only if $n+1$ is a power of $2$. It would be interesting to have a combinatorial proof of Lassalle's result and/or Conjecture~\ref{Conj1}.

Finally, let us recall Problem~\ref{Prob1} and Remark~\ref{Rem3}. The former asks for a formula for the number of alternating permutations in $s^{-1}(\pi)$ when $\pi$ has even length. The latter asks for alternative methods for computing the numbers $|\mathcal L(K(\eta))|$ for $\eta\in\NC(n)$.

\section{Acknowledgments}
The author thanks Octavio Arizmendi, Mikl\'os B\'ona, Darij Grinberg, and Takahiro Hasebe for interesting comments. The author was supported by a Fannie and John Hertz Foundation Fellowship and an NSF Graduate Research Fellowship.

\end{document}